\numberwithin{equation}{section}
\newcommand{\bA}{\bm{A}}
\newcommand{\bmB}{\bm{B}}
\newcommand{\bC}{\bm{C}}
\newcommand{\bD}{\bm{D}}
\newcommand{\bF}{\bm{F}}
\newcommand{\bI}{\bm{I}}
\newcommand{\bM}{\bm{M}}
\newcommand{\bS}{\bm{S}}
\newcommand{\bU}{\bm{U}}
\newcommand{\bV}{\bm{V}}
\newcommand{\bW}{\bm{W}}
\newcommand{\bZ}{\bm{Z}}
\newcommand{\bs}{\bm{s}}
\newcommand{\bu}{\bm{u}}
\newcommand{\bv}{\bm{v}}
\newcommand{\bPhi}{\bm{\Phi}}
\newcommand{\bLambda}{\bm{\Lambda}}
\newcommand{\blambda}{\bm{\lambda}}
\newcommand{\bone}{\bm{1}}
\newcommand{\bcA}{\bm{\mathcal{A}}}
\newcommand{\bcB}{\bm{\mathcal{B}}}
\newcommand{\bcC}{\bm{\mathcal{C}}}
\newcommand{\cA}{\mathcal{A}}
\newcommand{\cB}{\mathcal{B}}
\newcommand{\cF}{\mathcal{F}}
\newcommand{\cH}{\mathcal{H}}
\newcommand{\cN}{\mathcal{N}}
\newcommand{\E}{\mathbb{E}}
\newcommand{\N}{\mathbb{N}}
\renewcommand{\P}{\mathbb{P}}
\newcommand{\bR}{\mathbb{R}}
\newcommand{\sfH}{\mathsf{H}}
\newcommand{\sfK}{\mathsf{K}}
\newcommand{\sfKmat}{\bm{\mathsf{K}}}
\newcommand{\pto}{\overset{P}{\rightarrow}}
\newcommand{\dto}{\overset{D}{\rightarrow}}
\newcommand{\Var}{\mathrm{Var}}
\newcommand{\diag}{\mathrm{diag}}
\newcommand{\rd}{\mathrm{d}}
\newcommand{\perm}{\mathrm{perm}}
\newcommand{\one}{\mathbbm{1}}
\newcommand{\vep}{\varepsilon}
\newcommand{\ra}{\rightarrow}
\newcommand{\red}{\textcolor{black}}
\newcommand{\blue}{\textcolor{black}}
\newcommand{\cblue}{\color{black}}
\newcommand{\cblack}{\color{black}}
\newcommand{\cred}{\color{black}}
\newtheorem{theorem}{Theorem}[section]
\newtheorem{corollary}{Corollary}[section]
\newtheorem{lemma}{Lemma}[section] 
\newtheorem{prop}{Proposition}[section]
\newtheorem{assumption}{Assumption}[section]
\theoremstyle{definition} 
\newtheorem{definition}{Definition}[section]
\newtheorem{remark}{Remark}[section]
\begin{document}

\title[]{Fluctuation of the largest eigenvalue of a Kernel Matrix with application in Graphon-based random graphs}
\author[Chatterjee and Huang]{Anirban Chatterjee and Jiaoyang Huang} 
\address{Department of Statistics and Data Science\\ University of Pennsylvania\\ Philadelphia\\ PA 19104\\ United States}
\email{anirbanc@wharton.upenn.edu}
\address{Department of Statistics and Data Science\\ University of Pennsylvania\\ Philadelphia\\ PA 19104\\ United States}
\email{huangjy@wharton.upenn.edu}

\begin{abstract}
In this article, we explore the spectral properties of general random kernel matrices $\left[\sfK(U_i,U_j)\right]_{1\leq i\neq j\leq n}$ from a Lipschitz kernel $\sfK$ with $n$ independent random variables $U_1,U_2,\ldots, U_n$ distributed uniformly over $[0,1]$. In particular, we identify a dichotomy in the extreme eigenvalue of the kernel matrix, where, if the kernel $\sfK$ is degenerate, the largest eigenvalue of the kernel matrix (after proper normalization) converges weakly to a weighted sum of independent chi-squared random variables. In contrast, for non-degenerate kernels, it converges to a normal distribution extending and reinforcing earlier results from Koltchinskii and Gin\'e (2000). Further, we apply this result to show a dichotomy in the asymptotic behavior of extreme eigenvalues of Graphon-based random graphs, which are pivotal in modeling complex networks and analyzing large-scale graph behavior. These graphs are generated using a kernel $W$, termed as graphon, by connecting vertices $i$ and $j$ with probability $W(U_i, U_j)$. Our results show that for a Lipschitz graphon $W$, if the degree function is constant, the fluctuation of the largest eigenvalue (after proper normalization) converges to the weighted sum of independent chi-squared random variables and an independent Gaussian variable. Otherwise, it converges to a normal distribution.



\end{abstract}

\maketitle

{\hypersetup{linkcolor=black}
\setcounter{tocdepth}{1}
\tableofcontents
}

\section{Introduction}

In recent years, the study of graph theory has gained significant momentum, owing to its applicability in diverse fields ranging from biology and physics to social sciences and computer networks  \cite{bondy1976graph, mason2007graph, stam2007graph, borgatti2009network}. 
Many interesting properties of graphs are revealed by the extreme eigenvalues and eigenvectors
of their adjacency matrices. To mention some, we refer the readers to the books \cite{brouwer2011spectra,chung1997spectral} for a general discussion on spectral graph theory, the survey article \cite{hoory2006expander} for the connection between eigenvalues and expansion properties
of graphs, and the articles \cite{mohar1997some, mohar1993eigenvalues, pothen1990partitioning, shi2000normalized, spielman2012spectral, spielman1996spectral} on the applications of eigenvalues and eigenvectors in
various algorithms, i.e., combinatorial optimization, spectral partitioning and clustering. The Erd\H{o}s--R{\'e}nyi graphs and random $d$-regular graphs serve as the two prototypical models for random graphs,  and their extreme eigenvalues have been extensively studied \cite{10.1214/19-AOP1378,vu2005spectral,latala2018dimension, alt2021extremal, tikhomirov2021outliers, erdHos2013spectral, he2021fluctuations, huang2022edge, lee2021higher, huang2021spectrum, sarid2023spectral, 10.1214/18-AOP1263, he2022spectral, bauerschmidt2020edge, 10.1214/17-AOP1180, friedman2003proof, friedman2008proof, bordenave2015new}.

 In this paper, we study the extreme eigenvalues of random graphs generated from graphons, which are  generalizations of Erd\H{o}s--R{\'e}nyi graphs. Recall that a graphon, denoted as $W$, is a symmetric, measurable function that offers a powerful framework for understanding the limiting behavior of large graph sequences, see \cite{lovasz2006limits,lovasz2012large}. A graphon $W$ gives rise to a way of generating random graphs. This construction leads to the $W$-random graphs, which serve as a fundamental tool for modeling and analyzing the behavior of large-scale networks. These graphs are generated using a graphon $W$ by first creating an $n\times n$ random kernel matrix $\left[W(U_i, U_j)\right]_{1\leq i\neq j\leq n}$ (we do not allow self-loops) using $n$ independent numbers $U_1, U_2, \cdots, U_n$ uniformly distributed over $[0, 1]$. This random kernel matrix then gives rise to a random simple graph: connecting nodes $i$ and $j$ with probability $W(U_i, U_j)$. The focus of this paper is the spectral analysis of the random kernel matrices, and the $W$-random graphs.

Our first main result concerns about the extreme eigenvalues of random kernel matrices formed from a general integral kernel $\sfK$ (graphons are special examples). The spectral properties of such random kernel matrices have been studied in the pioneer work \cite{koltchinskii2000random}. It is proved that the $L_2$ distance between the ordered spectrum
of the random kernel matrices $\{\sfK(U_i, U_j)\}_{1\leq i\neq j\leq n}$ and the ordered spectrum of $\sfK$ tends to zero. Under certain technical conditions, distributional limit theorems for the eigenvalues of the random kernel matrices
are also obtained. However, the conditions in \cite{koltchinskii2000random} are not easy to check unless $\sfK$ is of finite rank, see \Cref{r:cond}. Moreover the distributional limit theorem in \cite{koltchinskii2000random} is trivial (the limit is a normal with variance $0$) when the kernel $\sfK$ is degenerate, namely the eigenfunction corresponding to the largest eigenvalue is a constant function.  Notice that this notion of degeneracy is related to the notion of degenerate kernels appearing in the study of $U$-statistics (see \cite{van2000asymptotic}).

We revisit the spectral problem of random kernel matrices $\{\sfK(U_i, U_j)\}_{1\leq i\neq j\leq n}$ and extend the distributional limit theorems in \cite{koltchinskii2000random} in two ways. First we identify a simple condition that as long as the kernel is Lipschitz (probably can be further relaxed to piecewise lipschitz), the largest eigenvalue converges to a normal random variable. Secondly, in the degenerate case, if we further rescale by a factor $\sqrt n$, the largest eigenvalue converges to a generalized chi-squared distribution. We obtain an explicit characterization of it in terms of the spectrum of $\sfK$. This leads to \Cref{t:main1} showing a dichotomy in the extreme eigenvalues of random kernel matrices coming from a Lipschitz kernel. Specifically, if the kernel is degenerate, the largest eigenvalue converges weakly to a weighted (possibly infinite) sum of independent chi-squared random variables. In contrast, for non-degenerate kernels, it converges to a normal distribution.

To study the spectra of random kernel matrices, we first derive a master equation \eqref{eq:1AB1=12}, which characterizes their largest eigenvalues. Such master equation has been used intensively in random matrix theory to study random perturbation of low rank matrices, see \cite{bai2012sample,bai2008central,huang2018mesoscopic,benaych2011fluctuations,benaych2011eigenvalues,tao2013outliers}. However, our case is not of low rank, instead we need to invert a full rank matrix. To address this challenge, we implement a finite rank approximation, which effectively transforms our problem into one of finite rank perturbation. This can be analyzed using the Woodbury formula. A crucial aspect of our approach is to establish that the error introduced by the finite rank approximation is minor and does not impact the distribution limit theorems we aim to prove. This is particularly pertinent in the degenerate case, where the fluctuation of the largest eigenvalue is of order $O(1)$, contrasting with the $O(\sqrt{n})$ order typically expected. This is done through detailed resolvent expansion analyses, and the error can be made arbitrarily small by selecting a sufficiently high rank for our approximation.

Our second main result concerns about the extreme eigenvalues of $W$-random graphs from a graphon $W$. As an intermediate step, we study the adjacency matrix $\bA_n$ conditioning on the connectivity probability matrix $\bW_n=\{W(U_i, U_j)\}_{1\leq i\neq j\leq n}$. This can be viewed as an inhomogeneous Erd{\H o}s-R{\'e}nyi model, where edges are added independently among the $n$ vertices with varying probabilities $p_{ij} = W(U_i, U_j)$. Many popular random graph models arise as special cases of inhomogeneous Erd{\H o}s-R{\'e}nyi model such as random graphs with given expected degrees \cite{chung2003spectra} and stochastic block models \cite{holland1983stochastic}. 

The adjacency matrix $\bA_n$ decomposes as the sum of the centered adjacency matrix and the connectivity probability matrix:
\begin{align}\label{e:decomp1}
\bA_n=\left(\bA_n-\E\left[\bA_n\middle|U_1,\ldots,U_n\right]\right)+\E\left[\bA_n\middle|U_1,\ldots,U_n\right]=(\bA_n-\bW_n) + \bW_n.
\end{align}
The empirical eigenvalue distributions and the behavior of extreme eigenvalues of centered adjacency matrices in inhomogeneous Erd{\H o}s--R{\'e}nyi graphs have been the subject of extensive study, as detailed in \cite{zhu2020graphon,benaych2020spectral,benaych2019largest}. Some of these findings also cover sparse graph regimes. In the context of the uncentered adjacency matrix $\bA_n$, it has been established \cite{chakrabarty2021spectra} that in sparse settings the empirical eigenvalue distributions converge towards a deterministic measure. 
 The fluctuations of the extreme eigenvalues of adjacency matrix $\bA_n$ have been studied in a recent work \cite{chakrabarty2020eigenvalues}. It has been proven that if the connectivity probability matrix is of finite rank $k$, then the joint distribution of the $k$ largest eigenvalues of $\bA_n$ converge jointly to a multivariate Gaussian law. When the connectivity probability matrix is constant, these results coincide with the established fluctuations of the maximum eigenvalue in homogeneous Erd{\H o}s--R{\'e}nyi graphs, \cite{erdHos2013spectral}. 
 Our result, \Cref{p:main3}  extends these results to the general infinite rank connectivity probability matrix constructed from Lipschitz graphons. This together with \Cref{t:main1} leads to our second main result, \Cref{t:main2} regarding $W$-random graphs from a Lipschitz graphon. If the graphon's degree function is constant, the fluctuation of the largest eigenvalue converges to the generalized chi-squared distribution. Otherwise, it converges to a normal distribution.

When the connectivity matrix $\bW_n$ in \eqref{e:decomp1} is of finite rank, the extreme eigenvalues of \eqref{e:decomp1} can be studied as a spiked Wigner matrix model, which has been intensively studied in the past decades \cite{feral2007largest, baik2005phase,chakrabarty2020eigenvalues,furedi1981eigenvalues,baik2006eigenvalues,capitaine2009largest,knowles2013isotropic,knowles2014outliers}.
Full rank deformation of the Gaussian unitary matrix and Wigner matrices have also been studied in \cite{capitaine2016fluctuations, lee2016extremal}. In our case, it turns out the connectivity probability matrix $\bW_n$ is dominant (it is of full rank). This prompts us to consider the adjacency matrix $\bA_n$ as a small perturbation of $\bW_n$. Similarly to the study of the random kernel matrix, we again derive a master equation \eqref{eq:Anresolvent} which characterizes the largest eigenvalue of $\bA_n$. We then analyze the master equation by a perturbation argument, and express the largest eigenvalue in terms of the kernel matrix $\bW_n$. Using the estimates on the eigenvalue and eigenvectors of the kernel matrix from the first part, we finally show that the difference between the largest eigenvalue of $\bA_n$ and $\bW_n$ has a Gaussian fluctuation, independent of the contribution of $\bW_n$ conditional on the node information $U_1,\ldots, U_n$. The decomposition in \eqref{e:decomp1} along with a standard application of Weyl's inequality shows that in \blue{the} non-degenerate case the difference $\bA_n - \bW_n$ has \blue{a} negligible contribution. On the other hand, in the degenerate case, the fluctuation of $\lambda_1(\bA_n)$ follows from the contribution of $\lambda_1(\bW_n)$ and the independent Gaussian contribution of $\lambda_1(\bA_n) - \lambda_1(\bW_n)$.

The remaining part of the paper is organized as following: the main results of the paper \Cref{t:main1} and \Cref{t:main2} are stated in \Cref{s:main}. We validate the main results through numerical experiments in \Cref{sec:simulations}. and outline the proof of our main results in \Cref{s:outline}. We collect some preliminary results on kernel matrices in \Cref{s:kernel} and their proofs are deferred to Appendix \ref{s:kernelproof} and Appendix \ref{s:kernelproof2}. Proof details for \Cref{t:main1} are presented in \Cref{s:Kerneleig} and Appendix \ref{s:Kerneleigproof}. Proof details for \Cref{t:main2} are given in \Cref{s:Matrixeig} and Appendix \ref{s:Matrixeigproof}.
We collect some useful facts on the spectrum of self-adjoint compact operators in Appendix \ref{s:operator}. 

\subsection{Notations}\label{sec:notations}
In this section we collect common notations that are used throughout the article.
\begin{itemize}
    \cblue
    \item We use $\pto$ and $\dto$ to denote convergences in probability and distribution respectively as $n\ra\infty$.\cblack
    \item A random variable $X_n = O_p(a_n)$ implies that for all $\vep>0$ there exists $M_\vep>0$ such that, $\P\left[\left|X_n/a_n\right|>M_\vep\right]\leq \vep\text{ for all large enough }n$.
    \item The notations $a\lesssim_{\theta}b$ and $a=O_{\theta}(b)$ are used to say $a\leq C_{\theta}b$ for some constant $C_\theta>0$ depending on a parameter $\theta$. A similar definition applies to $a\gtrsim_\theta b$.
    \item A random variable $X_n = o_p(a_n)$ implies the ratio $X_n/a_n$ converges in probability to $0$ as $n\ra\infty$ and in the deterministic case $a_n = o(b_n)$ implies the ratio $a_n/b_n\ra 0$ as $n\ra\infty$. \cred
    \item For a symmetric and bounded function $f:[0,1]^2\ra \bR$ we use the notation $\sigma(f)$ to denote the spectrum of $f$, that is the set of eigenvalues of the integral operator $T_f:L^2[0,1]\ra L^2[0,1]$ defined as $T_f(g)(\cdot) = \int f(\cdot,y)g(y)\mathrm dy$.
    \item Enumerate the eigenvalues of the integral operator $T_f$, as $\lambda_1(f)\geq \lambda_2(f)\geq \cdots\geq 0$ and $\lambda_1^{\prime}(f)\leq \lambda_2^\prime(f)\leq \cdots\leq 0$. Furthermore for all $j\geq 1$, let $\phi_{j, f}$ and $\phi_{j, f}^\prime$ be the orthonormal eigenfunctions corresponding to the eigenvalues $\lambda_j(f)$ and $\lambda_j^\prime(f)$ respectively.\cblack
    \item To denote the Gaussian distribution with mean $\mu$ and variance $\sigma^2$ the notation $\cN(\mu,\sigma^2)$ is used and to denote Uniform distribution on $[0,1]$ the notation $\text{Unif}\ [0,1]$ is used.
    \item We use the notation $C$ to denote a universal positive constant.
\end{itemize}

\section{Main Results}\label{s:main}
\blue{
We begin by first formally defining the \textit{kernel} function.
\begin{definition}[Kernel]
    A kernel is a measurable function $\sfK:[0,1]^2\mapsto \bR$ which is symmetric that is $\sfK(x,y) = \sfK(y,x)$ for all $x,y\in [0,1]$.
\end{definition}}
We make the following assumptions on the kernel \blue{function}. The first one requires that the kernel is Lipschitz continuous, and the second one requires that there is a spectral gap.
\begin{assumption}\label{assmp:assumptionkernel}
    We assume the kernel $\sfK: [0,1]^2\mapsto \bR$, has the following properties:
    \begin{enumerate}
        \item [1.] \blue{$\|\sfK\|_{\infty}\leq 1$, $\sfK$ is symmetric, that is $\sfK(x,y) = \sfK(y,x)$ for all $x,y\in [0,1]$} and $\sfK$ is Lipschitz continuous with Lipschitz constant $L_{\sfK}>0$.
        \item [2.] Recalling notations from Section \ref{sec:notations} enumerate the eigenvalues of $T_\sfK$ as $\lambda_{1}(\sfK)\geq \lambda_{2}(\sfK)\geq\cdots\geq 0$ and $\lambda_{1}^{\prime}(\sfK)\leq \lambda_{2}^{\prime}(\sfK)\leq\cdots\leq 0$, \blue{$\phi_{j, \sfK}$ and $\phi_{j, \sfK}^{\prime}$} being the orthonormal eigenfunctions corresponding to the eigenvalues $\lambda_{j}(\sfK)$ and $\lambda_{j}^{\prime}(\sfK)$ respectively. Then,
        \begin{align*}
            \left|\lambda_{1}(\sfK)-\lambda_{2}(\sfK)\right|>0.
        \end{align*}
    \end{enumerate}
\end{assumption}

In the following we formally introduce the notion of degeneracy of a kernel $\sfK$. The behavior of the largest eigenvalue of the random kernel matrix depends on the degeneracy of the kernel. 
\begin{definition}\label{def:degen}
    Let $\sfK$ be a kernel with \blue{$\phi_{1, \sfK}$} the eigenfunction corresponding to the largest eigenvalue. Then $\sfK$ is called degenerate if \blue{$\phi_{1, \sfK}$} is almost surely constant.
\end{definition}
Notice that by Definition \ref{def:degen} a graphon $W$ is degenerate if and only if the degree function of $W$ is almost surely constant. In other words, a graphon is degenerate if and only if it is degree regular. A similar notion of degeneracy has been studied with respect to small subgraph counts in \cite{hladky2021limit} and \cite{bhattacharya2023fluctuations}.

Our first main result is on the extreme eigenvalues of the $n\times n$ kernel matrix $\sfKmat_{n}$ constructed from $\sfK$,
\begin{align}\label{e:defKn}
(\sfKmat_{n})_{i,j}=\sfK(U_i, U_j)\delta_{i\neq j},\quad 1\leq i,j\leq n,
\end{align}
where  $\bm{U}_{n}:= (U_{1},\ldots, U_{n})\overset{i.i.d.}{\sim}\text{Unif}[0,1]$. We discover a dichotomous behavior of the extreme eigenvalues of the kernel matrix $\sfKmat_{n}$.

\begin{theorem}\label{t:main1}
Adopt \Cref{assmp:assumptionkernel}, and construct the kernel matrix $\sfKmat_{n}$ as in \eqref{e:defKn}. We denote the largest eigenvalue of $\sfKmat_{n}$ as $\lambda_1(\sfKmat_{n})$, then \blue{as $n\ra\infty$ we have the following results:}
\begin{enumerate}
\item If  $\sfK$ is not degenerate, namely \blue{$\phi_{1, \sfK}$} is not a constant function, then 
\begin{align}\label{e:normal}
    \sqrt{n}\bigg(\frac{\lambda_{1}(\sfKmat_{n})}{n} - \lambda_{1}(\sfK)\bigg)\dto \cN\left(0,\lambda_{1}(\sfK)^2\Var\left(\blue{\phi_{1,\sfK}^2(U)}\right)\right),
\end{align}
where $U{\sim}\emph{Unif}[0,1]$.
\item If $\sfK$ is degenerate, namely \blue{$\phi_{1,\sfK}$} is a constant function, then
\begin{align*}
    \lambda_{1}(\sfKmat_{n}) - (n-1)\lambda_{1}(\sfK)\dto\zeta_{\infty}
\end{align*}
where  
\begin{align}\label{e:zeta}
\zeta_\infty:= \sum_{\lambda\in \sigma(\sfK)\setminus\{\lambda_{1}(\sfK)\}}\frac{\lambda_{1}(\sfK)\lambda}{\lambda_{1}(\sfK) - \lambda}(Z_{\lambda}^2-1) + \sum_{\lambda\in \sigma(\sfK)\setminus\{\lambda_{1}(\sfK)\}}\frac{\lambda^2}{\lambda_{1}(\sfK) - \lambda},
\end{align}
and $\{Z_\lambda: \lambda\in \sigma(\sfK)\}$ are generated independently from the standard normal distribution.
\end{enumerate}
\end{theorem}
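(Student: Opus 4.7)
The plan is to reduce the eigenvalue problem to a finite-dimensional determinantal (``master'') equation by spectrally truncating $\sfK$, analyze that equation by a resolvent/Woodbury expansion, and then show that the truncation error is negligible in the limit. Fix $r\ge 1$ and split $\sfK=\sfK^{(r)}+\sfK^{\mathrm{err}}$, where $\sfK^{(r)}$ retains the $r$ largest-in-modulus eigenpairs $(\lambda_j,\phi_j)$ of $\sfK$, so $\|\sfK^{\mathrm{err}}\|_{\mathrm{op}}\to 0$ as $r\to\infty$ because $\sfK$ is compact (in fact Hilbert--Schmidt under the Lipschitz hypothesis). Passing to the $n$ sample points yields
\begin{equation*}
\sfKmat_n = V^{(r)}\Lambda^{(r)}(V^{(r)})^\top - D^{(r)} + \sfKmat_n^{\mathrm{err}},
\end{equation*}
where $V^{(r)}_{ij}=\phi_j(U_i)$, $\Lambda^{(r)}=\mathrm{diag}(\lambda_j)$, and $D^{(r)}=\mathrm{diag}(\sfK^{(r)}(U_i,U_i))$ compensates for the removed self-loops in \eqref{e:defKn}.

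Ignoring $\sfKmat_n^{\mathrm{err}}$ for the moment, any eigenvalue $z$ of the truncated matrix satisfies, by the Schur/Sylvester determinant identity, the master equation
\begin{equation*}
\det\!\bigl(I_r-\Lambda^{(r)}M^{(r)}(z)\bigr)=0,\qquad M^{(r)}_{jk}(z)=\sum_{i=1}^n \frac{\phi_j(U_i)\phi_k(U_i)}{z+D^{(r)}_{ii}}.
\end{equation*}
Each entry $M^{(r)}_{jk}(z)$ is an i.i.d.\ sum, concentrating around $(n/z)\delta_{jk}$ by the law of large numbers and orthonormality of the $\phi_j$, so at leading order the master equation places the largest eigenvalue near $z=n\lambda_1$.

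The dichotomy emerges at the next order of the expansion about $z\approx n\lambda_1$. In the non-degenerate case, the scalar reduction $\lambda_1 M^{(r)}_{11}(z)\approx 1$ reads $z/n\approx \lambda_1\cdot \tfrac{1}{n}\sum_i\phi_1(U_i)^2$, and the classical CLT applied to the mean-zero i.i.d.\ sum $\sum_i(\phi_1(U_i)^2-1)$ immediately yields the Gaussian limit \eqref{e:normal} with variance $\lambda_1(\sfK)^2\mathrm{Var}(\phi_1^2(U))$. In the degenerate case $\phi_1\equiv 1$, the identity $\sum_i\phi_1(U_i)^2=n$ kills this $\sqrt n$ fluctuation, so one expands the master equation about $z=(n-1)\lambda_1$ one order further. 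The surviving stochastic contributions are the off-diagonal entries
\begin{equation*}
M^{(r)}_{1k}(z)\approx \frac{1}{z}\sum_{i=1}^n \phi_k(U_i),\qquad k\ne 1,
\end{equation*}
which, normalized by $1/\sqrt{n}$, converge jointly to independent standard normals $Z_k$ by the multivariate CLT. Expanding the cofactor/Schur complement of the $r\times r$ master equation around its rank-one leading solution shows that the $O(1)$ fluctuation of $z-(n-1)\lambda_1$ becomes a quadratic form $\sum_{k\ne 1}\lambda_1\lambda_k(\lambda_1-\lambda_k)^{-1}(Z_k^2-1)$ plus a deterministic shift $\sum_{k\ne 1}\lambda_k^2(\lambda_1-\lambda_k)^{-1}$, which is exactly \eqref{e:zeta} truncated at rank $r$.

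The main obstacle is justifying the finite-rank approximation in the degenerate regime, where the target fluctuation is only $O(1)$ whereas a naive operator-norm bound on $\sfKmat_n^{\mathrm{err}}$ scales like $n\|\sfK^{\mathrm{err}}\|_{\mathrm{op}}$, far too large. One must instead perform a detailed resolvent expansion
\begin{equation*}
(zI-\sfKmat_n)^{-1}=(zI-\sfKmat_n^{(r)})^{-1}+(zI-\sfKmat_n^{(r)})^{-1}\sfKmat_n^{\mathrm{err}}(zI-\sfKmat_n)^{-1},
\end{equation*}
iterate it, and use concentration of the bilinear forms $\phi_j^\top(zI-\sfKmat_n^{(r)})^{-1}\phi_k$ together with the Hilbert--Schmidt decay $\sum_{j>r}\lambda_j^2\to 0$ to show that the tail modifies the master equation only by $o_{\mathbb{P}}(1)$ once $r\to\infty$ after $n\to\infty$. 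Continuity of the limit $\zeta_\infty$ in the truncation rank, combined with a standard $\varepsilon$-$r$-$n$ argument, then closes the proof.
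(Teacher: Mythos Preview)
Your proposal is correct and follows essentially the same strategy as the paper: finite-rank truncation of $\sfK$, a determinantal/master equation via the Sylvester--Weinstein--Aronszajn identity, Woodbury/resolvent expansion to extract the leading fluctuation, and a careful $\varepsilon$--$r$--$n$ argument to control the truncation error (which you correctly flag as the crux in the degenerate regime). The only organizational difference is that the paper first isolates the rank-one piece $\lambda_1\Phi_1\Phi_1^\top$ to obtain a \emph{scalar} master equation $\lambda_1(\sfK)\Phi_1^\top(\bcA_n-\bcB_n)^{-1}\Phi_1=1$ and only afterwards truncates the residual operator $\bcB_n$ to finite rank, whereas you truncate first and carry an $r\times r$ determinant; the two are algebraically equivalent via the Schur complement, and your ``cofactor expansion around the rank-one leading solution'' reproduces exactly the paper's $T_{n,m}^{(1)},T_{n,m}^{(2)}$ decomposition.
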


\cblue
Under the assumption $\|\mathsf{K}\|_{\infty} \le 1$, it can be easily seen that the infinite series in \eqref{e:zeta} converges in the $L^2$ sense.
\cblack
Notice that the above Theorem holds true whenever the kernel $\sfK$ satisfies Assumption \ref{assmp:assumptionkernel} and the matrix $\sfKmat_n$ has zero as \blue{the} diagonal elements. The result can be easily modified whenever the diagonal entries are given by $\sfK(U_i, U_i)$ for all $1\leq i\leq n$. 

\begin{corollary}\label{cor:diag}
    Adopt Assumption \ref{assmp:assumptionkernel}, and consider the kernel matrix $$\sfKmat_n = (K(U_i,U_j))_{i,j=1}^n.$$ Then for the largest eigenvalue $\lambda_1(\sfKmat_n)$,

    \begin{enumerate}
        \item If  $\sfK$ is not degenerate, namely \blue{$\phi_{1,\sfK}$} is not a constant function, then 
        \begin{align*}
            \sqrt{n}\bigg(\frac{\lambda_{1}(\sfKmat_{n})}{n} - \lambda_{1}(\sfK)\bigg)\dto \cN\left(0,\lambda_{1}(\sfK)^2\Var\left(\blue{\phi_{1, \sfK}^2(U)}\right)\right),
        \end{align*}
        where $U{\sim}\emph{Unif}[0,1]$.
        \item If $\sfK$ is degenerate, namely \blue{$\phi_{1,\sfK}$} is a constant function and $\sum_{\lambda\in \sigma(\sfK)}|\lambda|<\infty$, then
        \begin{align*}
            \lambda_{1}(\sfKmat_{n}) - (n-1)\lambda_{1}(\sfK)\dto\sum_{\lambda\in \sigma(\sfK)\setminus\{\lambda_{1}(\sfK)\}}\frac{\lambda_{1}(\sfK)\lambda}{\lambda_{1}(\sfK) - \lambda}Z_{\lambda}^2
        \end{align*}
        where $\{Z_\lambda: \lambda\in \sigma(\sfK)\}$ are generated independently from the standard normal distribution.
    \end{enumerate}
\end{corollary}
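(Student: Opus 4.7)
The plan is to derive Corollary~\ref{cor:diag} from Theorem~\ref{t:main1} by isolating the contribution of the diagonal entries. Writing $\sfKmat_{n}^{(\mathrm d)} := (\sfK(U_i,U_j))_{i,j=1}^n$ for the matrix in the corollary and $\sfKmat_n$ for the zero-diagonal matrix in~\eqref{e:defKn}, we have the decomposition
\begin{align*}
\sfKmat_{n}^{(\mathrm d)} = \sfKmat_n + D_n, \qquad D_n = \mathrm{diag}\bigl(\sfK(U_1,U_1),\ldots,\sfK(U_n,U_n)\bigr),
\end{align*}
with $\|D_n\|_{\mathrm{op}}\le 1$ by Assumption~\ref{assmp:assumptionkernel}. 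This lets me treat $\sfKmat_n^{(\mathrm d)}$ as a bounded perturbation of the object already analyzed in Theorem~\ref{t:main1}.

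In the non-degenerate case, Weyl's inequality gives $|\lambda_1(\sfKmat_{n}^{(\mathrm d)}) - \lambda_1(\sfKmat_n)| \le 1$. Dividing by $n$ and multiplying by $\sqrt n$ makes this deformation $O_p(1/\sqrt n)$, which is negligible at the CLT scale of Theorem~\ref{t:main1}(1), and the conclusion follows by Slutsky's theorem.

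The degenerate case is more delicate since the fluctuation of $\lambda_1(\sfKmat_n)$ is only $O(1)$, comparable to the diagonal shift. My plan is first-order spectral perturbation theory. Letting $v_1$ denote the unit top eigenvector of $\sfKmat_n$,
\begin{align*}
\lambda_1(\sfKmat_n^{(\mathrm d)}) = \lambda_1(\sfKmat_n) + v_1^\top D_n v_1 + O\!\left(\frac{\|D_n\|_{\mathrm{op}}^2}{\lambda_1(\sfKmat_n) - \lambda_2(\sfKmat_n)}\right),
\end{align*}
and since the spectral gap is of order $n$ (as established in the proof of Theorem~\ref{t:main1}), the remainder is $O(1/n) = o(1)$. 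In the degenerate regime $\phi_1 \equiv 1$, and the eigenvector estimates from \Cref{s:kernel} used for Theorem~\ref{t:main1}(2) yield $v_1 = \mathbf 1/\sqrt n + o_p(1)$ in $\ell_2$. Consequently $v_1^\top D_n v_1 = \tfrac 1n \sum_{i=1}^n \sfK(U_i,U_i) + o_p(1)$, which by the law of large numbers converges in probability to $\int_0^1 \sfK(u,u)\,\mathrm du$. The trace-class hypothesis $\sum_{\lambda\in\sigma(\sfK)}|\lambda|<\infty$ identifies this integral with $\sum_{\lambda}\lambda$.

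Combining these pieces, I obtain $\lambda_1(\sfKmat_n^{(\mathrm d)}) - (n-1)\lambda_1(\sfK) = \bigl(\lambda_1(\sfKmat_n) - (n-1)\lambda_1(\sfK)\bigr) + v_1^\top D_n v_1 + o_p(1)$, and Theorem~\ref{t:main1}(2) together with the LLN above pins down the joint limit. Using~\eqref{e:zeta} and the algebraic identity $\sum_{\lambda\ne\lambda_1(\sfK)}\tfrac{\lambda^2-\lambda_1(\sfK)\lambda}{\lambda_1(\sfK)-\lambda} = -\sum_{\lambda\ne\lambda_1(\sfK)}\lambda$, the constant contributions collapse and the limit reduces to the stated weighted sum of $Z_\lambda^2$. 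The main technical obstacle is making the eigenvector approximation and the perturbative remainder quantitative and uniform over the (possibly infinite) sub-leading spectrum so that tail terms do not contaminate the $O(1)$ fluctuation; the machinery for this is already in place in the proof of Theorem~\ref{t:main1}.
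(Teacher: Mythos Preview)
For part~(1) your argument via Weyl's inequality is exactly the paper's. For part~(2) the routes differ. The paper does not perturb off Theorem~\ref{t:main1}(2); instead it reruns the master-equation derivation with the diagonal retained, so that now $\bcA_n=\lambda_1(\sfKmat_n)\bI_n$ (no diagonal correction) and $\bcB_n$ is unchanged. Tracing through Proposition~\ref{p:la1decomposition} in this setting one simply finds $T_{n,m}^{(1)}\equiv 0$, and the rest of the argument is identical, yielding the limit $T^{(2)}$ directly. Your approach is more conceptual and avoids re-entering the Woodbury machinery, at the cost of importing the eigenvector delocalisation bound and the trace identity $\int_0^1\sfK(u,u)\,\rd u=\sum_\lambda\lambda$; the paper's rerun needs neither.

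Two concrete points. First, the eigenvector estimate you invoke is not in \Cref{s:kernel} nor used in the proof of Theorem~\ref{t:main1}; it is Proposition~\ref{prop:fv1approxphi1} (combined with Lemma~\ref{lemma:fphiapproxphi}), developed later for the adjacency-matrix analysis. It applies here since $\sfKmat_n$ plays the role of $\bW_n$, but you should cite it correctly. Second, your final algebra slips: the constant part of $\zeta_\infty$ in~\eqref{e:zeta} is $-\sum_{\lambda\neq\lambda_1(\sfK)}\lambda$, while $v_1^\top D_n v_1\to\int_0^1\sfK(u,u)\,\rd u=\lambda_1(\sfK)+\sum_{\lambda\neq\lambda_1(\sfK)}\lambda$, so the constants do not fully collapse---a residual $\lambda_1(\sfK)$ remains. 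In other words your argument actually yields the stated $\chi^2$-sum when centering at $n\lambda_1(\sfK)$ rather than $(n-1)\lambda_1(\sfK)$; this is the same output the paper's derivation produces from $\lambda_1(\sfKmat_n)/n-\lambda_1(\sfK)\approx T_{n,m}^{(2)}/n$.
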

In contrast to Theorem \ref{t:main1}(2) the additional assumption on summability of eigenvalues of the operator $\sfK$ in Corollary \ref{cor:diag}(2) is needed to ensure existence of the asymptotic distribution.

\begin{remark}
We remark that if $\sfK$ is degenerate, namely \blue{$\phi_{1,\sfK}$} is a constant function, then $\Var(\blue{\phi_{1,\sfK}^2(U)})=0$ for $U{\sim}\text{Unif}[0,1]$, and the righthand side of \eqref{e:normal} degenerates. The limit $\zeta_\infty$ from \eqref{e:zeta} degenerates, namely $\zeta_\infty\equiv0$, only if $\lambda=0$ for all $\lambda\in \sigma(\sfK)\setminus\{\lambda_{1}(\sfK)\}$. In this case $\sfK=\lambda_1(\sfK)\bm1$ is a constant kernel. 

\end{remark}

\begin{remark}
    \red{Our proofs can be easily adapted to extend the results in \Cref{t:main1} and \Cref{cor:diag} to other eigenvalues of $\sfKmat_n$. For $t>1$, denote $\lambda_t(\sfKmat_n)$ as the $t$-th largest eigenvalue of $\sfKmat_n$ and let $\phi_{t,\sfK}$ be the $t^{th}$ eigenfunction of $T_\sfK$. If $|\lambda_t(\sfK)-\lambda_{t-1}(\sfK)|, |\lambda_t(\sfK)-\lambda_{t+1}(\sfK)|>0$, then we will have similar dichotomous distributional convergence results for $\lambda_t(\sfKmat_n)$ as in \Cref{t:main1} and \Cref{cor:diag} with $\lambda_1(\sfK)$ replaced by $\lambda_t(\sfK)$. Furthermore, in the non-degenerate setting, that is where $\phi_{t,\sfK}$ is not a constant function, in the limiting distribution $\phi_{1,\sfK}$ is replaced by $\phi_{t,\sfK}$. In the degenerate case, that is where $\phi_{t,\sfK}$ is a constant function, the sum in the limiting distribution is now taken over $\lambda\in \sigma(\sfK)\setminus\{\lambda_{t}(\sfK)\}$.}
\end{remark}


\begin{remark}\label{r:cond}
The convergence to the normal distribution \eqref{e:normal} has been proven in \cite[Theorem 5.1]{koltchinskii2000random} for all eigenvalues under the following assumptions: there exists a sequence $R_n\rightarrow \infty$ such that
\begin{align}\label{e:c1}
\sum_{|r|>R_n}\lambda_r^2(\sfK)=o(n^{-1})
\end{align}
and 
\begin{align}\label{e:c2}
\sum_{|r|\leq R_n, |s|\leq R_n}\int_0^1 \phi_r^2 \phi_s^2 \rd x \sum_{|r|\leq R_n, |s|\leq R_n}(\lambda_r(\sfK)^2+\lambda_s(\sfK)^2)\int_0^1 \phi_r^2 \phi_s^2 \rd x=o(n).
\end{align}
The conditions \eqref{e:c1} and \eqref{e:c2} are not easy to check. Our main result \Cref{t:main1} only requires that $\sfK$ is Lipschitz, which is easier to check. We remark that Lipschitz kernels in general does not satisfy the assumptions \eqref{e:c1} and \eqref{e:c2}. Since for Lipschitz kernels, the eigenvalues decay like $1/n^{3/2+}$ \cite[Section 4]{reade1983eigen}, so we can take $R_n=\sqrt n$ in the \eqref{e:c1}.  Then in \eqref{e:c2}, if the eigenvector integrals are atleast $O(1)$, then the lefthand side of \eqref{e:c2} simplifies
\begin{align*}
&\phantom{{}={}}\sum_{|r|\leq R_n, |s|\leq R_n}\int_0^1 \phi_r^2 \phi_s^2 \rd x \sum_{|r|\leq R_n, |s|\leq R_n}(\lambda_r^2+\lambda_s^2)\int_0^1 \phi_r^2 \phi_s^2 \rd x\\
&\asymp \sum_{|r|\leq R_n, |s|\leq R_n} 1 \sum_{|r|\leq R_n, |s|\leq R_n}(\frac{1}{r^{3}}+\frac{1}{s^{3}})\asymp R_n^3=n^{3/2}.\nonumber
\end{align*}
This fails the assumption \eqref{e:c2}. In \Cref{assmp:assumptionkernel}, we assume that $\sfK$ is Lipschitz, which can possibly be weakend to piecewise Lipschitz, or even piecewise H{\"o}lder continuous. But we will pursue it in the future work. 
\end{remark}

\begin{remark}
More generally, we can consider any probability space $(\bR, \cB, \mu)$, where $\cB$ is the Borel sigma algebra on $\bR$ and $\mu$ is a probability measure on $\bR$. Let $\sfH:\Omega^2 \mapsto \bR$ be a symmetric kernel, that is, a measurable function symmetric in its two entries. Let $\bm X_n=(X_1, X_2, \cdots, X_n)\overset{i.i.d.}{\sim} \mu$, and we can construct the following random matrix, 
\begin{align*}
(\bm\sfH_{n})_{i,j}=\sfH(X_i, X_j)\delta_{i\neq j},\quad 1\leq i,j\leq n.
\end{align*}
Our result \Cref{t:main1} gives fluctuation of the largest eigenvalue of $\bm\sfH_{n}$.
Denote the cumulative density function of $\mu$ as $F_\mu$, and its functional inverse as $F_\mu^{-1}$, then $F_\mu^{-1}(U_i)$ has the same law as $X_i$, where $U_1, U_2, \cdots, U_n$ are i.i.d. uniform distributed on $[0,1]$. Denote the pull back kernel under $F_\mu^{-1}$ as
\begin{align}\label{e:constructK}
\sfK(\cdot, \cdot)=\sfH(F_\mu^{-1}(\cdot), F_\mu^{-1}(\cdot)),
\end{align}
and the corresponding random kernel matrix
\begin{align*}
(\bm\sfK_{n})_{i,j}=\sfK(U_i, U_j)\delta_{i\neq j}=\sfH(F_\mu^{-1}(U_i), F_\mu^{-1}(U_j))\delta_{i\neq j},\quad 1\leq i,j\leq n.
\end{align*}
Then $\bm\sfK_{n}$ has the same law as $\bm\sfH_{n}$, and \Cref{t:main1} holds for $\bm\sfH_{n}$, provided that $\sfK$ constructed in \eqref{e:constructK} satisfies \blue{\Cref{assmp:assumptionkernel}.}
\end{remark}

Our second main result concerns the largest eigenvalue of the adjacency matrix coming from a graphon $W$. \blue{Before stating the results, we first define the graphon $W$ and the adjacency matrix $\bA_n$ coming from $W$.
\begin{definition}[Graphon]\label{def:graphon}
    A graphon is measurable function $W:[0,1]^2\mapsto [0,1]$ which is symmetric, that is for all $x,y\in [0,1]$, $W(x,y)=W(y,x)$.
\end{definition}}
Note that the graphon $W$ can be considered as a kernel and thus we assume that $W$ satisfies \Cref{assmp:assumptionkernel}.
\blue{Suppose $U_{1},\ldots, U_{n}$ are} generated independently from $\blue{\text{Unif}\ [0,1]}$. Then we consider an adjacency matrix $\bA_{n}$ defined as
\begin{align}\label{eq:defAn}
    \bA_{n}(i,j)\sim \text{Ber}(W(U_{i}, U_{j})),\ 1\leq i<j\leq n.
\end{align}
In this section we consider the fluctuation of the eigenvalues of $\bA_{n}$, in particular the largest eigenvalue $\lambda_{1}(\bA_{n})$. 

\blue{We begin by introducing the notion of the degree function of a graphon and relate it to the largest eigenfunction of $W$.
\begin{definition}
    The degree function of a graphon $W$ is defined as
    \begin{align*}
        d_W(x) = \int W(x,y)\rd y,\quad x\in [0,1].
    \end{align*}
\end{definition}
Notice that if the largest eigenfunction $\phi_{1,W}$ is a constant function, then by definition the degree function $d_W(x)$ is also a constant function. On the other hand, if the degree function $d_W(x)$ is a constant function, say $d_W(x) \equiv C\geq 0$ then for any eigenvalue $\lambda\in \sigma(W)$ with corresponding orthonormal eigenfunction $\phi_\lambda$, using Cauchy-Schwarz inequality we have,
\begin{align}\label{eq:eigenbddeg}
\lambda = \int \phi_\lambda(x)W(x,y)\phi_\lambda(y)\rd y\rd x \leq \left(\int \phi_\lambda(x)^2W(x,y)\rd y\rd x\right) = C.
\end{align}
Thus $C$ is the largest eigenvalue of $W$ with the constant eigenfunction $1$. This shows that if $d_W$ is a constant function then $\phi_{1,W}$ is also a constant function. With the above relation we are now ready to state our second main result.}

\begin{theorem}\label{t:main2}
Fix a graphon $W$ satisfying \Cref{assmp:assumptionkernel}, denote its largest eigenvalue as $\lambda_1(W)$ and the associated eigenfunction \blue{$\phi_{1,W}$}. We consider the adjacency matrix $\bA_{n}$ corresponding to the graphon $W$ as in \eqref{eq:defAn}, and denote its largest eigenvalue as $\lambda_1( \bA_{n})$, then 
\begin{enumerate}
\item If  the degree function of $W$ is not a constant, namely \blue{$\phi_{1,W}$} is not a constant function, then 
\begin{align*}
    \sqrt{n}\bigg(\frac{\lambda_{1}(\bA_{n})}{n} - \lambda_{1}(W)\bigg)\dto \cN\left(0,\lambda_{1}(W)^2\Var\left(\phi_{1}^2(U)\right)\right),
\end{align*}
where $U{\sim}\emph{Unif}[0,1]$.
\item If the degree function of $W$ is a constant, namely \blue{$\phi_{1,W}$} is a constant function, then
\begin{align}\label{eq:secondconvg}
    \lambda_{1}(\bA_{n}) - (n-1)\lambda_{1}(W)\dto\zeta_{\infty} + \cN(\alpha,\sigma^2)
\end{align}
where  
\begin{align*}
\zeta_\infty:= \sum_{\lambda\in \sigma(W)\setminus\{\lambda_{1}(W)\}}\frac{\lambda_{1}(W)\lambda}{\lambda_{1}(W) - \lambda}(Z_{\lambda}^2-1) + \sum_{\lambda\in \sigma(W)\setminus\{\lambda_{1}(W)\}}\frac{\lambda^2}{\lambda_{1}(W) - \lambda},
\end{align*}
$\{Z_\lambda: \lambda\in \sigma(W)\}$ are generated independently from the standard normal distribution, and $\cN(\alpha,\sigma^2)$ represents an independent normal distribution with mean $\alpha$ and variance $\sigma^2$ given by,
\begin{align*}
&\alpha=\frac{1}{\lambda_{1}(W)}\int\frac{\blue{\phi_{1,W}^{2}(x) + \phi_{1,W}^{2}(y)}}{2}W(x,y)(1-W(x,y))\rd x\rd y,\\
&\sigma^2= 2\int \blue{\phi_{1,W}^{2}(x)\phi_{1,W}^{2}(y)}W(x,y)(1-W(x,y))\rd x\rd y.
\end{align*}
\end{enumerate}
\end{theorem}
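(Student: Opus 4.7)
The overall plan is to exploit the decomposition \eqref{e:decomp1}, $\bA_{n}=\bW_{n}+(\bA_{n}-\bW_{n})$, so that Theorem \ref{t:main1} already governs $\lambda_{1}(\bW_{n})$, and the remaining task is to analyze the perturbation $\lambda_{1}(\bA_{n})-\lambda_{1}(\bW_{n})$. Conditional on $\bU_{n}=(U_{1},\ldots,U_{n})$, the matrix $\bm{E}_{n}:=\bA_{n}-\bW_{n}$ has independent mean-zero entries with variances $W(U_{i},U_{j})(1-W(U_{i},U_{j}))$. I will show that, conditional on $\bU_{n}$, the perturbation is asymptotically $\cN(\alpha,\sigma^{2})$ with a limit law that does not depend on $\bU_{n}$, which yields asymptotic independence from the $\bW_{n}$-contribution.

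The heart of the argument is a master equation for $\lambda_{1}(\bA_{n})$ paralleling \eqref{e:mastereq}. Writing $\det(\bA_{n}-\lambda I)=\det(\bW_{n}-\lambda I)\det(I+(\bW_{n}-\lambda I)^{-1}\bm{E}_{n})$ and isolating the singular direction $u_{1}$ (the top eigenvector of $\bW_{n}$) via a Schur-complement reduction with $Q=I-u_{1}u_{1}^{\top}$, one obtains
\begin{align*}
\lambda_{1}(\bA_{n})-\lambda_{1}(\bW_{n}) = u_{1}^{\top}\bm{E}_{n}u_{1}+u_{1}^{\top}\bm{E}_{n}\bigl(\lambda_{1}(\bW_{n})Q-Q\bW_{n}Q\bigr)^{-1}\bm{E}_{n}u_{1}+\text{higher-order terms}.
\end{align*}
Using the eigenvector concentration $u_{1,i}\approx\phi_{1}(U_{i})/\sqrt{n}$ developed in Section \ref{s:kernel}, the linear-in-$\bm{E}_{n}$ term $u_{1}^{\top}\bm{E}_{n}u_{1}=\sum_{i\neq j}u_{1,i}u_{1,j}(\bm{E}_{n})_{ij}$ is, conditional on $\bU_{n}$, a sum of independent bounded random variables. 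Applying Lindeberg--Feller conditional on $\bU_{n}$, followed by the strong law on $\bU_{n}$, yields convergence to a centered normal with variance $\sigma^{2}=2\int\phi_{1}^{2}(x)\phi_{1}^{2}(y)W(x,y)(1-W(x,y))\,\rd x\,\rd y$. The quadratic-in-$\bm{E}_{n}$ term concentrates around its conditional mean: using the spectral expansion $(\lambda_{1}(\bW_{n})Q-Q\bW_{n}Q)^{-1}=\sum_{k\geq 2}(\lambda_{1}(\bW_{n})-\lambda_{k}(\bW_{n}))^{-1}u_{k}u_{k}^{\top}$ and Parseval-type identities on the eigenbasis of $\bW_{n}$ (together with the eigenvector approximation $u_{k,i}\approx\phi_{k}(U_{i})/\sqrt{n}$), this mean converges to the stated shift $\alpha$.

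Having established $\lambda_{1}(\bA_{n})-\lambda_{1}(\bW_{n})\dto\cN(\alpha,\sigma^{2})$ independently of $\bU_{n}$, the two parts of the theorem follow. In the non-degenerate case, Theorem \ref{t:main1} gives $\lambda_{1}(\bW_{n})-n\lambda_{1}(W)$ of order $\sqrt{n}$ while the perturbation is $O_{p}(1)$; dividing by $\sqrt{n}$ makes the perturbation $o_{p}(1)$, and by Slutsky's theorem only the Gaussian limit of Theorem \ref{t:main1} survives. In the degenerate case, $\lambda_{1}(\bW_{n})-(n-1)\lambda_{1}(W)\dto\zeta_{\infty}$ by Theorem \ref{t:main1} while $\lambda_{1}(\bA_{n})-\lambda_{1}(\bW_{n})\dto\cN(\alpha,\sigma^{2})$ independently, so their sum converges to $\zeta_{\infty}+\cN(\alpha,\sigma^{2})$ as stated in \eqref{eq:secondconvg}. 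The main obstacle I anticipate is the quadratic-in-$\bm{E}_{n}$ term: since $\bW_{n}$ is full rank, the resolvent $(\lambda_{1}(\bW_{n})Q-Q\bW_{n}Q)^{-1}$ is genuinely infinite-rank, so both identifying $\alpha$ explicitly and verifying that the cubic and higher corrections in the master equation are $o_{p}(1)$ require precise spectral estimates on the near-top part of $\bW_{n}$---the same technical difficulty that drove the finite-rank approximation strategy behind the proof of Theorem \ref{t:main1}.
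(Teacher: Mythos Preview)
Your overall architecture matches the paper's proof exactly: the same decomposition $\lambda_1(\bA_n)=\lambda_1(\bW_n)+(\lambda_1(\bA_n)-\lambda_1(\bW_n))$, the same master equation isolating the top eigenvector of $\bW_n$, the same two-term expansion into a linear and a quadratic piece in $\bm E_n=\bA_n-\bW_n$, the conditional Lindeberg/Lyapunov CLT for the linear term, and the Slutsky combination with Theorem~\ref{t:main1}. The paper's Proposition~\ref{p:Cninvnorm} and Proposition~\ref{prop:vphiconc} are precisely your linear and quadratic pieces, and Proposition~\ref{p:main3} is your conditional $\cN(\alpha,\sigma^2)$ limit.

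The one place where your plan diverges from what the paper actually does is the quadratic term, and your anticipated obstacle is slightly mis-diagnosed. You propose to identify $\alpha$ via the spectral expansion of the resolvent together with eigenvector approximations $u_{k,i}\approx\phi_k(U_i)/\sqrt n$ for \emph{all} $k$, and you expect a finite-rank truncation as in Theorem~\ref{t:main1} to be needed. The paper avoids both. It only ever approximates the \emph{top} eigenvector, replacing $\bv_1$ by $\bPhi_1$ (Proposition~\ref{prop:fv1approxphi1}); for the resolvent it passes to $\bcC_n=\lambda_1(\bW_n)\bI-\bW_n+\lambda_1(\bW_n)\bPhi_1\bPhi_1^\top$ (Lemma~\ref{lemma:replaceC}) and then applies a conditional Hanson--Wright inequality to concentrate $\bPhi_1^\top\bm E_n\bcC_n^{-1}\bm E_n\bPhi_1$ around its conditional mean. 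The point is that this conditional mean depends on $\bcC_n^{-1}$ only through its \emph{diagonal}, and a one-line resolvent identity shows $\sum_i\bigl|(\bcC_n^{-1})_{ii}-1/\lambda_1(\bW_n)\bigr|\lesssim n^{-1/2}$ (Lemma~\ref{lemma:Ctermfinalconvg}). Thus $\alpha$ falls out without any control on non-top eigenvectors and without any finite-rank approximation. Your route via $u_{k,i}\approx\phi_k(U_i)/\sqrt n$ would require uniform-in-$k$ eigenvector bounds that the paper never establishes and that would be genuinely delicate for large $k$; the Hanson--Wright plus diagonal argument sidesteps this entirely.
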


\begin{remark}
When the graphon $W$ has a constant degree profile, the largest eigenvalue of the adjacency matrix fluctuates on the scale $\Omega(1)$. When the graphon $W$ has an irregular degree profile, the largest eigenvalue fluctuates on a much larger scale, $\Omega(\sqrt n)$.
\end{remark}

\begin{remark}
\blue{Our result can be extended to other eigenvalues of  $\bA_n$. Once again for $t>1$, denote $\lambda_t(\bA_n)$ as the $t$-th largest eigenvalue of $\bA_n$ and assume $|\lambda_t(W)-\lambda_{t-1}(W)|, |\lambda_t(W)-\lambda_{t+1}(W)|>0$. In contrast to \Cref{t:main2}, in this case we will not have a dichotomy in the limiting distribution. This follows by noticing that if $\phi_{t,W}$ is a constant function, then the degree function $d_W\equiv \lambda_t(W)$ and hence by \eqref{eq:eigenbddeg} we must have that $\lambda_t(W) = \lambda_{1}(W)$ which violates the spectral gap assumption from above. Instead, we will always have the convergence to the normal distribution:
\begin{align*}
    \sqrt{n}\bigg(\frac{\lambda_{t}(\bA_n)}{n} - \lambda_{t}(W)\bigg)\dto \cN\left(0,\lambda_{t}(W)^2\Var\left(\phi_{t, W}^2(U)\right)\right),
\end{align*}
where $U{\sim}\mathrm{Unif}[0,1]$.}
\end{remark}

\begin{remark}
    A similar dichotomy of distributional convergence is also present for motif counts in random graphs generated as in \eqref{eq:defAn}. In particular, \cite{hladky2021limit} extended the notion of edge-regularity \blue{(constant degree function)} to clique-regularity and showed that if a Graphon $W$ is regular with respect to a clique $K_{r}$ then the asymptotic distribution of $K_{r}$ counts in the random graph in $\eqref{eq:defAn}$ has a structure similar to \eqref{eq:secondconvg} with a centered Gaussian and a Non-Gaussian component, where the non-Gaussian component is a weighted sum of independent chi-squared random variables with the weights related to the spectrum of a graphon derived from $W$. On the other hand, for $K_{r}$-irregular graphons, we get the familiar gaussian convergence. This result was further extended for general subgraphs by \cite{bhattacharya2023fluctuations}, who extended the notion of clique regularity to general subgraph regularity and showed a similar dichotomous asymptotic distribution. 
\end{remark}

\section{Simulations}\label{sec:simulations}
In this section we validate the asymptotic distributions from Theorem \ref{t:main1} and Theorem \ref{t:main2}. In particular we construct example of Graphons (which also acts as kernels) satisfying Assumption \ref{assmp:assumptionkernel} and the conditions of Theorems \ref{t:main1} and \ref{t:main2}. Define, $\phi_1(x) = 1$, $\phi_2(x) = \sqrt{3}(2x-1)$, $\phi_3(x) = \sqrt{5}(6x^2 - 6x + 1)$ and $\phi_4(x) = \sqrt{7}\left(20x^3 - 30x^2 + 12x - 1\right)$. Notice that $\phi_i, 1\leq i\leq 4$ are the first four ``Shifted'' Legendre Polynomial. By definition it is easy to notice that the collection $\{\phi_i, 1\leq i\leq 5\}\in L^2[0,1]$ and are orthonormal. Now we define the graphons as follows,
\begin{align*}
    W_1(x,y) = \frac{1}{2}\phi_2(x)\phi_2(y) + \frac{1}{9}\phi_3(x)\phi_3(y) + \frac{1}{30}\phi_4(x)\phi_4(y),
\end{align*}
and
\begin{align*}
    W_2(x,y) = \frac{1}{5}\phi_1(x)\phi_1(y) + \frac{1}{9}\phi_2(x)\phi_2(y) + \frac{1}{30}\phi_3(x)\phi_3(y).
\end{align*}
Notice that by construction $W_1$ and $W_2$ satisfies assumption \ref{assmp:assumptionkernel} and $W_2$ has constant largest eigenfunction, while for $W_1$ the largest eigenfunction is non-constant.\\

\subsection{Largest Eigenvalue of Kernel Matrix} For $U_1,\ldots,U_n$ generated randomly from $\mathrm{Unif}[0,1]$ distribution we consider the asymptotic distribution of largest eigenvalue of the kernel matrices constructed using $W_1$ and $W_2$ as in \eqref{e:defKn}. For $W_1$ and $W_2$ we consider $n=1000$ and $n=100$ respectively, and repeat the experiment $500$ times to get repeated samples of the largest eigenvalue and construct histogram of the properly scaled samples (according to Theorem \ref{t:main1}). We consider the asymptotic distribution from Theorem \ref{t:main1} and generate $10^5$ samples from it to provide a histogram. The comparison between sample distribution and asymptotic distribution is provided in Figure \ref{fig:kernmat}. The comparison presented in Figures \ref{fig:sub1} and \ref{fig:sub2} validates the asymptotic distribution presented in Theorem \ref{t:main1}.\\

\begin{figure}
    \centering
    \begin{subfigure}{7cm}
      \centering
      \includegraphics[scale = 0.43]{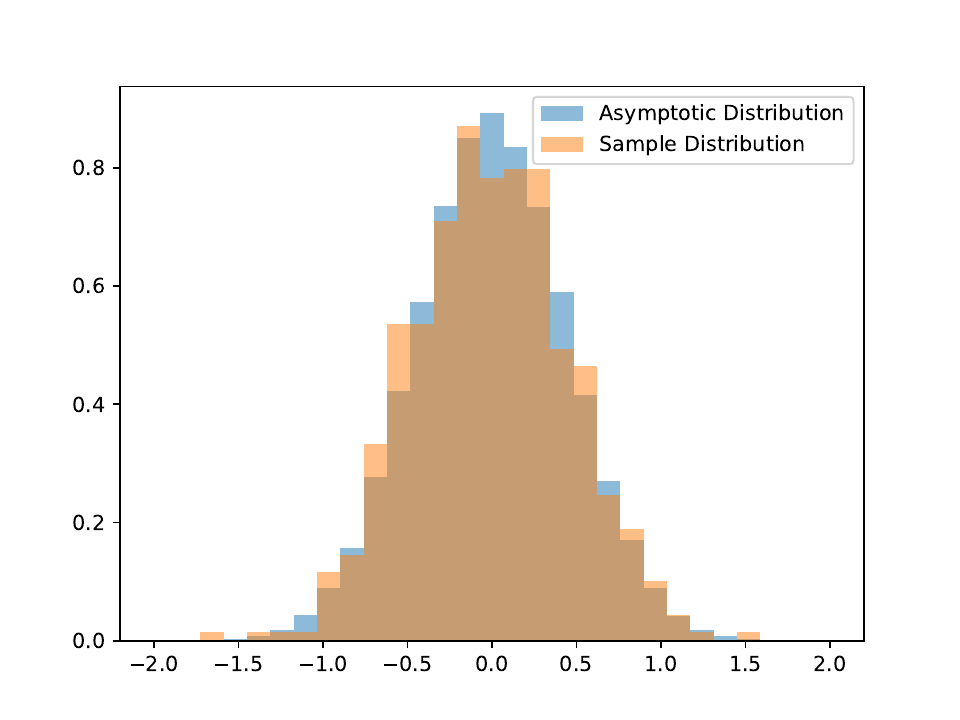}
      \caption{Kernel Matrix with $W_1$.}
      \label{fig:sub1}
    \end{subfigure}%
    \begin{subfigure}{7cm}
      \centering
      \includegraphics[scale = 0.43]{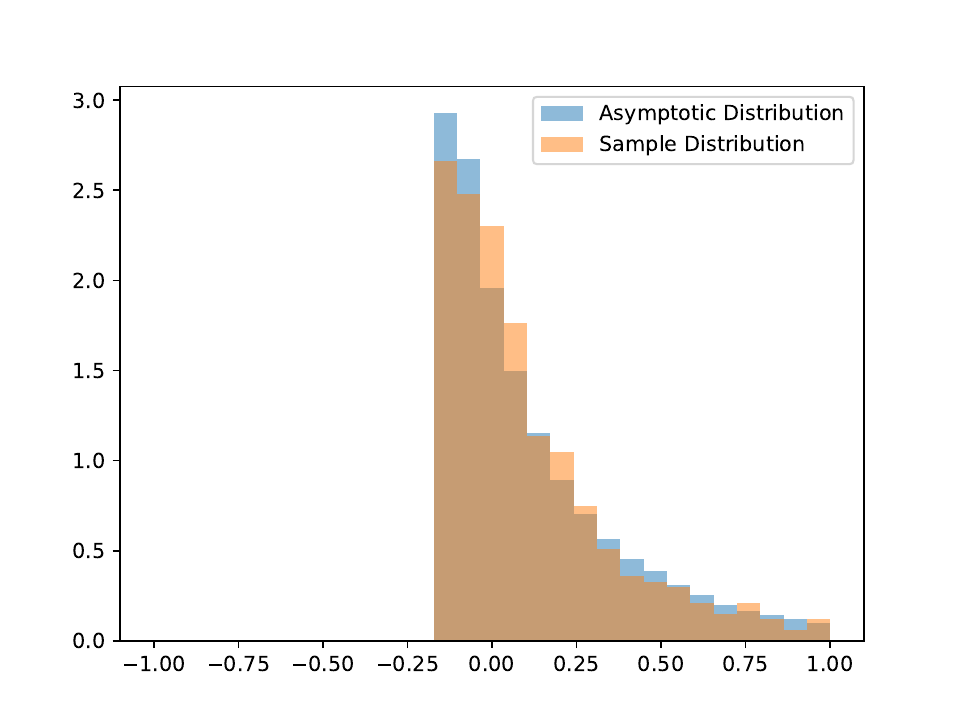}
      \caption{Kernel Matrix with $W_2$.}
      \label{fig:sub2}
    \end{subfigure}
    \caption{Sample and Asymptotic Distribution of Largest Eigenvalue of Kernel Matrices.}
    \label{fig:kernmat}
\end{figure}

\subsection{Largest Eigenvalue of Adjacency Matrix} Here we once again generate $U_1,\ldots,U_n$ randomly from $\mathrm{Unif}[0,1]$ and construct Adjacency Matrix using the Graphon $W_1$ and $W_2$ following \eqref{eq:defAn}. Once again as above we consider $n=1000$ and $n=100$ for $W_1$ and $W_2$ respectively, and calculate the largest eigenvalue of the Adjacency matrix. We repeat the experiment $500$ times to have $500$ sample for the largest eigenvalue and follow the scalings from Theorem \ref{t:main2} to provide the histogram of the samples. To compare with the asymptotic distribution we once again generate $10^5$ samples from the asymptotic distribution and provide histogram using the samples. The comparison between sample and asymptotic distribution is provided in Figure \ref{fig:Adjmat}, in particular the asymptotic distribution presented in Theorem \ref{t:main2} is validated by the comparison from Figures \ref{fig:sub1Adj} and \ref{fig:sub2Adj}.

\begin{figure}
    \centering
    \begin{subfigure}{7cm}
      \centering
      \includegraphics[scale = 0.43]{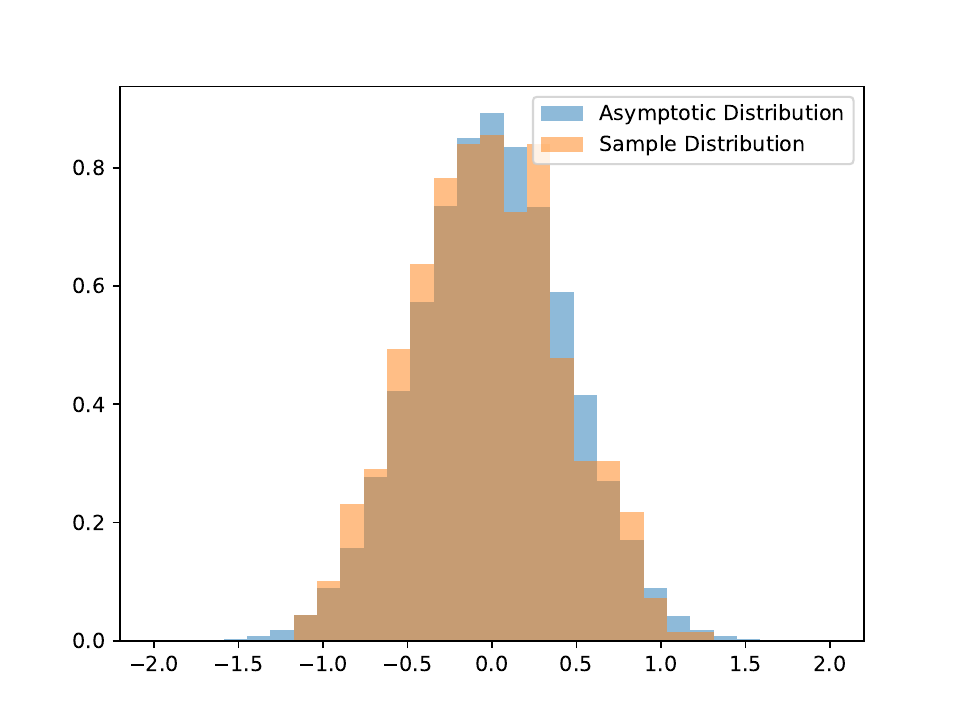}
      \caption{Adjacency Matrix with $W_1$.}
      \label{fig:sub1Adj}
    \end{subfigure}%
    \begin{subfigure}{7cm}
      \centering
      \includegraphics[scale = 0.43]{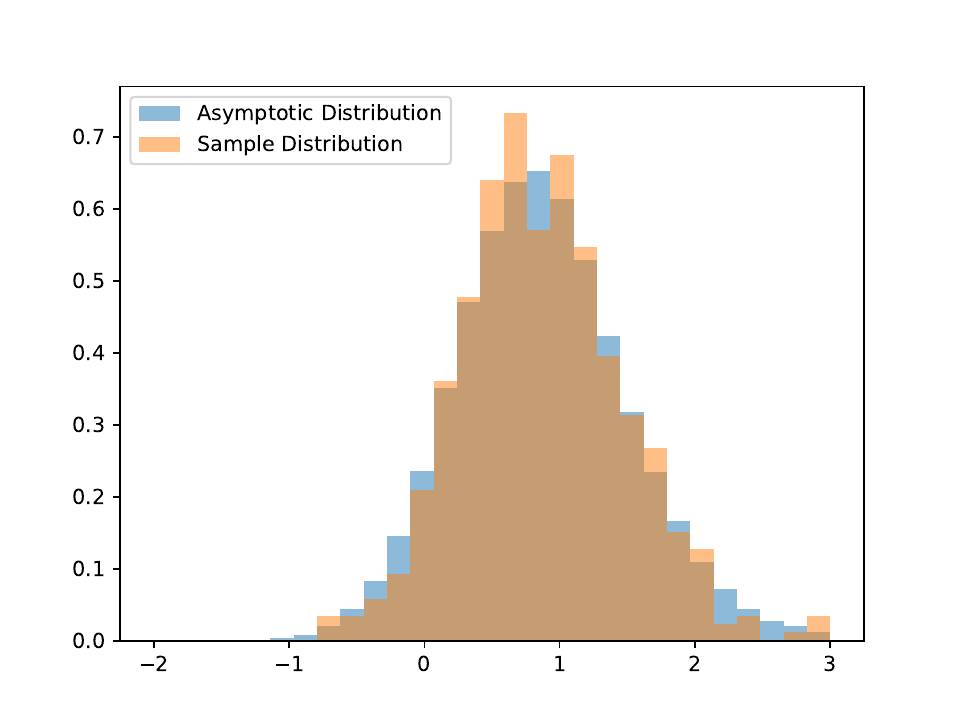}
      \caption{Adjacency Matrix with $W_2$.}
      \label{fig:sub2Adj}
    \end{subfigure}
    \caption{Sample and Asymptotic Distribution of Largest Eigenvalue of Adjacency Matrices.}
    \label{fig:Adjmat}
\end{figure}

\section{Proof Outlines}\label{s:outline}

\subsection{Proof of Theorem \ref{t:main1}}\label{s:prft1}
We recall the notations and assumptions on the kernel $\sfK$ from assumption \ref{assmp:assumptionkernel}. In the following for ease of exposition we suppress the dependence of eigenfunctions on the kernel $\sfK$ and write $\phi_j:=\phi_{j, \sfK}$ and similarly for $\phi_{j,\sfK}^\prime$. 

Since $\sfK$ is a self-adjoint integral operator (which is compact), we have the expansion
\begin{align}\label{e:Wdecomp}
    \sfK(x,y) = \sum_{j=1}^{\infty}\lambda_{j}(\sfK)\phi_{j}(x)\phi_{j}(y) + \sum_{j=1}^{\infty}\lambda_{j}^{\prime}(\sfK)\phi_{j}^{\prime}(x)\phi_{j}^{\prime}(y),
\end{align}
where the equality holds in $L_{2}$ sense. In this section we provide an outline of the proof of Theorem \ref{t:main1} by studying the largest eigenvalues of the $n\times n$ kernel matrix $\sfKmat_{n}$ defined in \eqref{e:defKn}. Recalling the decomposition \eqref{e:Wdecomp}, we can rewrite $\sfKmat_n$ as
\begin{align*}
    \sfKmat_{n} = \sum_{j=1}^{\infty}\lambda_{j}(\sfK)\left(\Phi_{j}(\bU_{n})\Phi_{j}(\bU_{n})^{\top} - \bD_{n,j}\right) + \sum_{j=1}^{\infty}\lambda_{j}^{\prime}(\sfK)\left(\Phi_{j}^{\prime}(\bU_{n})\Phi_{j}^{\prime}(\bU_{n})^{\top} - \bD_{n,j}^{\prime}\right),
\end{align*}
where $\Phi_{j}(\bU_{n}) := (\phi_{j}(U_{1}),\cdots,\phi_{j}(U_{n}))$, $\bD_{n,j}:= \diag(\phi_{j}^{2}(U_{1}),\cdots,\phi_{j}^{2}(U_{n}))$, and $\Phi_{j}^{\prime}(\bU_{n})$, $\bD_{n,j}^{\prime}$ are defined similarly through $\phi_{j}^{\prime}$ and the equality is in coordinate-wise $L_{2}$ sense. By definition the largest eigenvalue $\lambda_{1}(\sfKmat_{n})$ of $\sfKmat_{n}$ satisfies the equation,
\begin{align}\label{e:la1}
    \det\left(\lambda_{1}(\sfKmat_{n})\bI_{n} - \sfKmat_{n}\right) = 0.
\end{align}
In the following we will use \eqref{e:la1} as a starting point to get a simple equation of $\lambda_1(\sfKmat_n)$ (see \eqref{eq:1AB1=12}). We first start with a weak estimate of $\lambda_{1}(\sfK)$, which can be viewed as a law of large number statement. \cblue The following lemma follows as a direct consequence of Lemma \ref{lemma:lambda1Wnconc}.
\begin{lemma}\label{l:la1est}
    Under the assumptions of Theorem \ref{t:main1}, the following estimate holds,
    \begin{align*}
    \lambda_{1}(\sfKmat_{n})/n\pto \lambda_{1}(\sfK).
    \end{align*}
\end{lemma}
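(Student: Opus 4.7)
My plan is to prove the law of large numbers by truncating the spectral decomposition \eqref{e:Wdecomp} of $\sfK$ at a large rank $r$, controlling the spectral tail via a Hilbert--Schmidt bound, analyzing the finite rank part by a small Gram matrix, and then letting $r\to\infty$ after $n\to\infty$. Since $|\sfK|\le 1$ gives $\sfK\in L^{2}([0,1]^{2})$, Parseval yields $\sum_{j\ge 1}(\lambda_{j}(\sfK)^{2}+\lambda_{j}'(\sfK)^{2})=\|\sfK\|_{L^{2}}^{2}<\infty$. For each integer $r\ge 1$ define the rank-$2r$ truncated kernel $\sfK^{(r)}(x,y):=\sum_{j=1}^{r}\lambda_{j}(\sfK)\phi_{j}(x)\phi_{j}(y)+\sum_{j=1}^{r}\lambda_{j}'(\sfK)\phi_{j}'(x)\phi_{j}'(y)$, the associated zero-diagonal kernel matrix $\sfKmat_{n}^{(r)}$ constructed from $\sfK^{(r)}$ as in \eqref{e:defKn2}, and the residual $\sfKmat_{n}^{(>r)}:=\sfKmat_{n}-\sfKmat_{n}^{(r)}$.

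For the tail, independence of the $U_{i}$'s gives $\mathbb{E}\|\sfKmat_{n}^{(>r)}\|_{\mathrm{HS}}^{2}=n(n-1)\|\sfK-\sfK^{(r)}\|_{L^{2}}^{2}$, while Parseval in the orthonormal tensor basis of eigenfunctions yields $\|\sfK-\sfK^{(r)}\|_{L^{2}}^{2}=\sum_{j>r}(\lambda_{j}(\sfK)^{2}+\lambda_{j}'(\sfK)^{2})\to 0$ as $r\to\infty$. Since $\|\cdot\|_{\mathrm{op}}\le\|\cdot\|_{\mathrm{HS}}$, Markov's inequality gives $\|\sfKmat_{n}^{(>r)}\|_{\mathrm{op}}/n=O_{p}\!\bigl(\|\sfK-\sfK^{(r)}\|_{L^{2}}\bigr)$, which can be made arbitrarily small uniformly in $n$ by choosing $r$ large.

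For the finite rank part, write $\sfKmat_{n}^{(r)}=\bV_{n}C_{r}\bV_{n}^{\top}-\Delta_{n}^{(r)}$, where $\bV_{n}$ is the $n\times 2r$ matrix whose columns are $\Phi_{j}(\bU_{n})$ and $\Phi_{j}'(\bU_{n})$ for $1\le j\le r$, $C_{r}$ is the $2r\times 2r$ diagonal matrix holding the corresponding $\lambda_{j}(\sfK)$ and $\lambda_{j}'(\sfK)$, and $\Delta_{n}^{(r)}$ is the diagonal correction with entries $\sum_{j\le r}(\lambda_{j}(\sfK)\phi_{j}^{2}(U_{i})+\lambda_{j}'(\sfK)\phi_{j}'^{2}(U_{i}))$. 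The nonzero spectrum of $\bV_{n}C_{r}\bV_{n}^{\top}/n$ coincides with that of the $2r\times 2r$ matrix $C_{r}(\bV_{n}^{\top}\bV_{n}/n)$; by the strong law of large numbers applied to each of the $O(r^{2})$ entries of $\bV_{n}^{\top}\bV_{n}/n$, together with the orthonormality of $\{\phi_{j},\phi_{j}'\}$, the Gram matrix $\bV_{n}^{\top}\bV_{n}/n$ converges almost surely to $I_{2r}$. Hence $\lambda_{1}(\bV_{n}C_{r}\bV_{n}^{\top})/n\to\lambda_{1}(C_{r})=\lambda_{1}(\sfK)$ almost surely, the last equality holding for every $r\ge 1$ since $\lambda_{1}(\sfK)\ge 0\ge \lambda_{j}'(\sfK)$ is the largest entry of $C_{r}$. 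Because $\sfK$ is Lipschitz, each $\phi_{j}=\lambda_{j}(\sfK)^{-1}T_{\sfK}\phi_{j}$ and $\phi_{j}'$ is continuous on $[0,1]$ and hence bounded, so the diagonal entries of $\Delta_{n}^{(r)}$ are uniformly bounded for fixed $r$, giving $\|\Delta_{n}^{(r)}\|_{\mathrm{op}}/n=O(1/n)$.

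Putting the pieces together through Weyl's inequality, $|\lambda_{1}(\sfKmat_{n})/n-\lambda_{1}(\sfK)|\le \|\sfKmat_{n}^{(>r)}\|_{\mathrm{op}}/n+|\lambda_{1}(\sfKmat_{n}^{(r)})/n-\lambda_{1}(\sfK)|$; choosing $r$ large first makes the tail term $O_{p}(\varepsilon)$ uniformly in $n$, and then $n\to\infty$ kills the finite-rank term by the Gram argument above. The main technical point is the tail bound: absent any summability assumption stronger than $\sfK\in L^{2}$, one cannot control the individual eigenvalues of $\sfKmat_{n}^{(>r)}$ one-by-one and must instead bound the operator norm through the Hilbert--Schmidt norm, which is adequate here because only a law of large numbers, not a fluctuation statement, is being proved.
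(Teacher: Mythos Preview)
Your proof is correct and takes a genuinely different route from the paper. The paper derives this lemma as an immediate consequence of the quantitative \Cref{lemma:lambda1Wnconc}: it embeds the kernel matrix as a step-function operator $T_{f_n^\circ}$ on $L^2[0,1]$, uses concentration of uniform order statistics together with the Lipschitz continuity of $\sfK$ to show $\|T_{\sfK}-T_{f_n^\circ}\|_{2\to2}\lesssim_{\sfK}\log n/\sqrt{n}$ with high probability (\Cref{lemma:TW0minusTW}), and then reads off the eigenvalue convergence with an explicit rate via operator perturbation.

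Your finite-rank-truncation argument is more elementary and, notably, does not use the Lipschitz hypothesis at all: boundedness of $\sfK$ alone gives $|\phi_j|\le 1/|\lambda_j(\sfK)|$ (so the SLLN for the Gram entries and the diagonal bound both go through), and the Hilbert--Schmidt tail bound needs only $\sfK\in L^2$. The price is that you obtain only $\pto$ with no rate. This is fine for the present lemma, but be aware that the paper relies on the quantitative version repeatedly downstream---for instance \eqref{eq:blambdanconc} in \Cref{lemma:AnBninvertible} and \eqref{eq:lambda1Wnconc} in the proof of \Cref{prop:IIandIIIreplace}---so your argument, while a clean self-contained proof of \Cref{l:la1est}, would not replace the operator-embedding machinery that the paper needs anyway.
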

\cblack
To extend the result to the fluctuation of $\lambda_1(\sfKmat_n)$, we need to introduce the following notations,
\begin{align}\begin{split}\label{e:defAB}
    \bcA_{n}&:= \lambda_{1}(\sfKmat_{n})\bI_{n} + \sum_{\ell=1}^{\infty}\lambda_{\ell}(\sfK)\bD_{n,j} + \sum_{\ell=1}^{\infty}\lambda_{j}^{\prime}(\sfK)\bD_{n,\ell}^{\prime},\\
    \bcB_{n}&:= \sum_{\ell=2}^{\infty}\lambda_{\ell}(\sfK)\Phi_{\ell}(\bU_{n})\Phi_{\ell}(\bU_{n})^{\top} + \sum_{\ell=1}^{\infty}\lambda_{\ell}^{\prime}(\sfK)\Phi_{\ell}^{\prime}(\bU_{n})\Phi_{\ell}^{\prime}(\bU_{n})^{\top}.
\end{split}\end{align}
We remark that $\bcA_n$ is a diagonal matrix, and  the last two infinite sum in its definition gives the diagonal matrix $\diag (\{\sfK(U_i, U_i)\}_{1\leq i\leq n})$; $\bcB_{n}$ is the kernel matrix of $\sfK$ \blue{(with diagonal terms)} with the first eigenvalue removed. With the above notations we can now rewrite \eqref{e:la1} as,
\begin{align}\label{e:AB}
    \lambda_{1}(\sfKmat_{n})\bI_{n} - \sfKmat_{n} = \bcA_{n} - \bcB_{n} - \lambda_{1}(\sfK)\Phi_{1}(\bU_{n})\Phi_{1}(\bU_{n})^{\top}.
\end{align}
To further simplify \eqref{e:la1} we first show that with high probability $\bcA_n-\bcB_n$ is invertible. 
\begin{lemma}
Let $\bcA_n, \bcB_n$ be as defined in \eqref{e:defAB}, then $\bcA_n-\bcB_n$ is invertible with probability at least $1-16n\exp\left(-\frac{1}{6}(\log n)^2\right)$.
\end{lemma}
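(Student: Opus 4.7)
I would prove the stronger statement that $\bcA_n - \bcB_n$ is strictly positive definite on the claimed event, from which invertibility is immediate. By Weyl's inequality it suffices to show
\[
\lambda_{\min}(\bcA_n) \;>\; \|\bcB_n\|_{\mathrm{op}},
\]
and the probability $1-16n\exp(-\tfrac{1}{6}(\log n)^2)$ will emerge as a union bound over two concentration estimates, one for each side of this inequality.

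For the lower bound on $\lambda_{\min}(\bcA_n)$, recall $\bcA_n = \lambda_1(\sfKmat_n)\bI_n + \diag(\sfK(U_i,U_i))$ is diagonal, so $\lambda_{\min}(\bcA_n) \ge \lambda_1(\sfKmat_n) - 1$ using $|\sfK|\le 1$. To turn Lemma 4.1 into a quantitative bound I would test the Rayleigh quotient at the discretized eigenfunction $\Phi_1(\bU_n) = (\phi_1(U_1),\ldots,\phi_1(U_n))^\top$: the numerator $\sum_{i\ne j}\sfK(U_i,U_j)\phi_1(U_i)\phi_1(U_j)$ is a bounded $U$-statistic with mean $n(n-1)\lambda_1(\sfK)$, and the denominator $\sum_i\phi_1(U_i)^2$ is a sum of bounded i.i.d.\ terms with mean $n$. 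Hoeffding's inequality for $U$-statistics on the numerator and for i.i.d.\ bounded sums on the denominator, each with deviation of order $\sqrt n\log n$, would yield
\[
\lambda_1(\sfKmat_n) \;\ge\; n\lambda_1(\sfK) - C\sqrt n\log n
\]
off an event of probability at most $C\exp(-\tfrac{1}{6}(\log n)^2)$.

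For the upper bound on $\|\bcB_n\|_{\mathrm{op}}$, I would exploit that $\bcB_n$ is the full kernel matrix (including diagonal) of the residual kernel $(x,y)\mapsto\sfK(x,y)-\lambda_1(\sfK)\phi_1(x)\phi_1(y)$, whose largest eigenvalue in magnitude is $\lambda^\ast := \max(\lambda_2(\sfK),|\lambda_1'(\sfK)|) < \lambda_1(\sfK)$ by the spectral-gap hypothesis in \Cref{assmp:assumptionkernel}. Truncating the Mercer expansion \eqref{e:Wdecomp} at rank $R = \lceil\log n\rceil$, each rank-one summand $\lambda_\ell \Phi_\ell(\bU_n)\Phi_\ell(\bU_n)^\top$ has operator norm $|\lambda_\ell|\|\Phi_\ell(\bU_n)\|^2$, and a union bound over the $O(\log n)$ Hoeffding events $\{|\|\Phi_\ell(\bU_n)\|^2 - n|\le\sqrt n\log n\}$ and the corresponding cross terms gives $\|\bcB_n^{(R)}\|_{\mathrm{op}} \le \lambda^\ast n + o(n)$; the tail $\bcB_n - \bcB_n^{(R)}$ is controlled using the decay $|\lambda_j(\sfK)|\lesssim j^{-3/2+o(1)}$ for Lipschitz kernels noted in \Cref{r:cond}. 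The result is $\|\bcB_n\|_{\mathrm{op}} \le \lambda^\ast n + o(n)$ off an event of probability at most $O(n)\exp(-\tfrac{1}{6}(\log n)^2)$, which is what produces the factor of $n$ in the final bound.

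On the intersection of the two events Weyl's inequality gives $\lambda_{\min}(\bcA_n - \bcB_n) \ge (\lambda_1(\sfK) - \lambda^\ast)n - o(n) - 1 > 0$ for all $n$ sufficiently large, so $\bcA_n - \bcB_n$ is invertible; tracking the constants in the union bound delivers the claimed failure probability $16n\exp(-\tfrac{1}{6}(\log n)^2)$. \textbf{The main obstacle is the operator-norm bound on $\bcB_n$:} its infinite-rank structure rules out a naive bound such as $\|\bcB_n\|_{\mathrm{op}}\le n\|\bcB_n\|_\infty$, which would lose the crucial gap $\lambda_1(\sfK) - \lambda^\ast$. The truncation rank must be chosen carefully, balancing the union-bound cost on the finite-rank piece against the operator-norm decay of the tail provided by Lipschitz continuity.
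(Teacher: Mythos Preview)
Your high-level strategy---show $\lambda_{\min}(\bcA_n-\bcB_n)>0$ by separately controlling $\lambda_{\min}(\bcA_n)$ and $\|\bcB_n\|_{\mathrm{op}}$---matches the paper's, but the route to $\|\bcB_n\|_{\mathrm{op}}\le \lambda^\ast n+o(n)$ is different. The paper avoids Mercer truncation entirely: it writes $\bcA_n-\bcB_n=\lambda_1(\sfKmat_n)\bI_n-(\widetilde{\bF}_n-\lambda_1(\sfK)\bD_{n,1})$, where $\widetilde{\bF}_n$ is the zero-diagonal kernel matrix of the residual kernel $\widetilde{\sfK}(x,y)=\sfK(x,y)-\lambda_1(\sfK)\phi_1(x)\phi_1(y)$, and then exploits the fact that $\widetilde{\sfK}$ is itself Lipschitz (Lemma~\ref{lemma:efuncbdd}). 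Embedding $\widetilde{\bF}_n$ as a step-function integral operator and invoking Lemma~\ref{lemma:TW0minusTW} gives $\|T_{h_{\widetilde{\bF}_n}}-T_{\widetilde{\sfK}}\|_{2\to2}\lesssim\log n/\sqrt n$ with the stated probability; since every eigenvalue of $T_{\widetilde{\sfK}}$ sits at distance $\ge|\lambda_1(\sfK)-\lambda_2(\sfK)|$ from $\lambda_1(\sfK)$, the spectral separation follows immediately (this is Proposition~\ref{lemma:AnBninvertible}, with the constant $16$ coming from a union of the $8n\exp(\cdot)$ event in Lemma~\ref{lemma:lambda1Wnconc} and the $8n\exp(\cdot)$ event in the operator comparison).

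Your truncation route is in principle workable but two steps need more than you give them. First, bounding each rank-one summand of $\bcB_n^{(R)}$ separately and adding yields $n\sum_\ell|\lambda_\ell|$, not $\lambda^\ast n$; to recover the sharp constant you must show the Gram matrix $V^\top V$ of the discretized eigenfunctions equals $n\bI+o(n)$ in operator norm, which means controlling $O(R^2)$ off-diagonal inner products whose Hoeffding bounds degrade with $\ell$ because $\|\phi_\ell\|_\infty\le1/|\lambda_\ell|$ grows. Second, the tail matrix $\bcB_n-\bcB_n^{(R)}$ has entries bounded only by $O(R^{5/2})$ pointwise, so concentrating its Frobenius norm around $n\|\widetilde{\sfK}_{-R}\|_2$ with the required sub-Gaussian tail is not immediate from eigenvalue decay alone---a Hoeffding-for-$U$-statistics argument with $R$-dependent envelopes is needed. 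Neither issue is fatal, but the paper's operator-embedding approach sidesteps both at once by working directly with the Lipschitz kernel $\widetilde{\sfK}$, so no truncation is required and the probability accounting is transparent; getting exactly $16n\exp(-\tfrac16(\log n)^2)$ from your bookkeeping would be accidental.
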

\begin{proof}
By Proposition \ref{lemma:AnBninvertible} and Lemma \ref{lemma:lambda1Wnconc}, we have that  $\bcA_n-\bcB_n$ is invertible with probability at least $1-16n\exp\left(-\frac{1}{6}(\log n)^2\right)$. 
\end{proof}

\cblue
Now plugging \eqref{e:AB} into \eqref{e:la1} and using the Weinstein-Aronszajn identity, we conclude that with probability at least $1-16n\exp\left(-\frac{1}{6}(\log n)^2\right)$, $\lambda_{1}(\sfKmat_{n})$ is characterized by the equation,
\begin{align}\label{eq:1AB1=12}
    \lambda_{1}(\sfK)\Phi_{1}(\bU_{n})^{\top}(\bcA_{n} - \bcB_{n})^{-1}\Phi_{1}(\bU_{n}) = 1.
\end{align}
\red{Notice that here we need to invert the matrix $\bcA_n - \bcB_n$, which is potentially full rank matrix. To analyze \eqref{eq:1AB1=12} we will now provide an approximation using finite (fixed) rank objects.} Towards that, for $m\geq 2$, we define the following finite (fixed) rank approximations of $\bcA_n$ and $\bcB_n$,
\begin{align}\begin{split}\label{e:defABk}
    \bcA_{n}^{(m)} &:= \lambda_{1}(\sfKmat_{n})\bI_{n} + \sum_{\ell=1}^{m}\lambda_{\ell}(\sfK)\bD_{n,j}^{\prime} + \sum_{\ell=1}^{m}\lambda_{j}^{\prime}(\sfK)\bD_{n,\ell}^{\prime},\\
    \bcB_{n}^{(m)} &:= \sum_{\ell=2}^{m}\lambda_{\ell}(\sfK)\Phi_{\ell}(\bU_{n})\Phi_{\ell}(\bU_{n})^{\top} + \sum_{\ell=1}^{m}\lambda_{\ell}^{\prime}(\sfK)\Phi_{\ell}^{\prime}(\bU_{n})\Phi_{\ell}^{\prime}(\bU_{n})^{\top}.
\end{split} \end{align}
\red{For simplification we assume that for all $1\leq \ell\leq m$, $\lambda_\ell(\sfK)>0$ and $\lambda_\ell^\prime(\sfK)<0$. The following proofs go through without this assumption, by defining the matrices $\bcA_{n}^{(m)}$ and $\bcB_{n}^{(m)}$ using only the non-zero eigenvalues up to index $m$ but with additional notational complications.} The following Proposition \ref{prop:IIandIIIreplace} with proof given in Section \ref{sec:proofofIIandIIIreplace} states that for sufficiently large enough $m$, we can replace $\bcA_{n} - \bcB_{n}$ in \eqref{eq:1AB1=12} by $\bcA_{n}^{(m)} - \bcB_{n}^{(m)}$ with arbitrarily small error.

\begin{prop}\label{prop:IIandIIIreplace}
    Recall the matrices $\bcA_{n}, \bcB_{n}, \bcA_{n}^{(m)} ,\bcB_{n}^{(m)}$ from \eqref{e:defAB} and \eqref{e:defABk}. For any fixed $\vep>0$ there exists $m(\vep)\in\mathbb{N}$ such that for any fixed $m\geq m(\vep)$ and for all $n\geq n(m,\vep)$ the following holds with probability $1-8\sqrt{\vep}$,
    \begin{align}\label{eq:reducedtok}
        \left|\frac{1}{\lambda_{1}(\sfK)} - \Phi_{1}(\bU_{n})^{\top}\left(\bcA_{n}^{(m)} - \bcB_{n}^{(m)}\right)^{-1}\Phi_{1}(\bU_{n})\right|\lesssim_{\sfK} \frac{\sqrt{\vep}}{n}.
    \end{align}
\end{prop}

Comparing \eqref{eq:reducedtok} with \eqref{eq:1AB1=12}, we need to invert the matrix $\bcA_{n}^{(m)} - \bcB_{n}^{(m)}$ instead of $\bcA_{n} - \bcB_{n}$. The advantage here is that because of the finite (fixed) rank, namely rank at most $2m$, and we can use the Woodbury formula to invert $\bcA_{n}^{(m)} - \bcB_{n}^{(m)}$. This leads to the following proposition, and we postpone its proof to Section \ref{sec:proofofla1decomposition}. We begin by introducing the following notations,
\begin{align*}
        r_{n,\ell}(\bU_n)= \lambda_{\ell}(\sfK)\sum_{i=1}^{n}\phi_{1}(U_{i})^2\phi_{\ell}(U_{i})^2\text{ and } s_{n,\ell}(\bU_n)= \frac{\lambda_{\ell}(\sfK)\lambda_{1}(\sfK)}{\lambda_{1}(\sfK) - \lambda_{\ell}(\sfK)}\left(\sum_{i=1}^{n}\phi_{1}(U_{i})\phi_{\ell}(U_{i})\right)^2,
\end{align*}
\normalsize
and define $r_{n,\ell}^\prime,s_{n,\ell}^\prime$ analogously using $\phi_\ell^\prime$ and $\lambda_{\ell}^\prime(\sfK)$ in place of $\phi_{\ell}$ and $\lambda_{\ell}(\sfK)$ respectively.

\begin{prop}\label{p:la1decomposition}
    We introduce the following quantities $T_{n,m,1}$ and $T_{n,m,2}$ given by
    \begin{align*}
        T_{n,m,1} := \frac{\lambda_{1}(\sfK)}{\lambda_{1}(\sfKmat_{n})}\sum_{\ell=1}^{m}r_{n,\ell}(\bU_n) + r_{n,\ell}^\prime(\bU_n) \text{ and } T_{n,m,2} := \frac{\lambda_{1}(\sfK)}{\lambda_{1}(\sfKmat_{n})}\sum_{\ell=2}^{m}s_{n,\ell}(\bU_n) + \sum_{\ell=1}^{m}s_{n,\ell}^\prime(\bU_n).
    \end{align*}
    \normalsize
    Fix any small $\vep>0$, choose $m(\vep)$ satisfying \eqref{eq:reducedtok} and fix $m\geq m(\vep)$. Then there exists $n(m,\vep)$ satisfying \eqref{eq:reducedtok} such that for any $n\geq n(m,\vep)$, the following holds with probability at least $1-9\sqrt{\vep}$,
    \begin{align}\label{eq:lambdanflucexpand}
        \left|\frac{\lambda_{1}(\sfKmat_{n})}{n} - \lambda_{1}(\sfK) - \lambda_{1}(\sfK)\left[\frac{\left\|\Phi_{1}(\bU_{n})\right\|_{2}^2}{n} - 1\right] - \frac{1}{n}\left(T_{n,m,2} - T_{n,m,1}\right) - \frac{\lambda_{1}(\sfKmat_{n})}{n}t_{n}\right|\lesssim_{\sfK}\frac{\sqrt{\vep}}{n}
    \end{align}
    where $|t_{n}|\lesssim_{\sfK,m} n^{-3/2}(\log n)^3$.
\end{prop}
The proof is now completed by analysing the fluctuation of the terms $\left[\|\Phi_{1}(\bU_{n})\|_{2}^2/n - 1\right]$, $T_{n,m,1}$ and $T_{n,m,2}$. We postpone the technical details to Section \ref{sec:completeproofkernel} where we show that under non-degeneracy of $\sfK$, the terms $T_{n,m,1}$ and $T_{n,m,2}$ are $o_{p}(\sqrt{n})$ and the dominant contribution is coming from $\left[\|\Phi_{1}(\bU_{n})\|_{2}^2/n - 1\right]$, whereas under degeneracy of $\sfK$, the term $T_{n,m,2} - T_{n,m,1} + \lambda_1(\sfK)$ converges to the limiting distribution $\zeta_\infty$ as in Theorem \ref{t:main1}. 
\cblack
\subsection{Proof of Corollary \ref{cor:diag}}
In the following, we sketch the proof of Corollary \ref{cor:diag}. The proof of part $(1)$ follows immediately from part $(1)$ in Theorem \ref{t:main1} and Weyl's inequality. The conclusion from part $(2)$ can be proved along the lines of proof of part $(2)$ in Theorem \ref{t:main1}. Hence, in the following we present a sketch of the proof for part $(2)$. Notice that the largest eigenvalue satisfies the equation,
\begin{align*}
    \det\left(\lambda_{1}(\sfKmat_n)\one_n - \sfKmat_n\right) = 0.
\end{align*}
As in \eqref{e:AB} the above equation can be rewritten as,
\begin{align*}
    \det\left(\lambda_{1}(\sfKmat_{n})\bI_{n} - \sfKmat_{n}\right) =\det\left( \bcA_{n} - \bcB_{n} - \lambda_{1}(\sfK)\Phi_{1}(\bU_{n})\Phi_{1}(\bU_{n})^{\top}\right) = 0,
\end{align*}
where $\bcA_n = \lambda_1(\sfKmat_n)\one_n$ and $\bcB_n$ is defined in \eqref{e:defAB}. Now, we can replicate the proof of Theorem \ref{t:main1} and for given $\varepsilon>0$ there exists $m(\vep)\geq 1$ such that for all $m\geq m(\vep)$ and $n\geq n(m,\vep)$ we get
\begin{align*}
    \left|\frac{\lambda_{1}(\sfKmat_{n})}{n} - \lambda_{1}(\sfK)- \frac{T_{n,m,2}}{n} - \frac{\lambda_{1}(\sfKmat_{n})}{n}t_{n}\right|\lesssim_{\sfK}\frac{\sqrt{\vep}}{n},
\end{align*}
where $T_{n,m, 2}$ is defined in Proposition \ref{p:la1decomposition} and $|t_{n}|\lesssim_{\sfK,m} n^{-3/2}(\log n)^3$. Notice that the above equation is similar to the one in \eqref{eq:lambdanflucexpand2} with $T_{n,m, 1} = 0$. This follows by recalling the proof of Proposition \ref{p:la1decomposition} and noticing that the term $T_{n,m, 1}$ was contributed because of the adjustment coming from the missing diagonal terms. The rest of the proof now follows along the arguments presented in \eqref{e:TTdiff}, \eqref{e:zetainf} and \eqref{e:mid1}. 

\subsection{Proof of Theorem \ref{t:main2}}\label{s:prft2}
In this section we \blue{provide} the proof of our main result, Theorem \ref{t:main2}. Define $\bW_n$ to be the $n\times n$ matrix with the $(i,j)^{th}$ entry given by, $W(U_i, U_j)$ for all $1\leq i\neq j\leq n$ and with empty diagonal. To analyse the fluctuation of the largest eigenvalue $\lambda_1(\bA_n)$, we will use the following decomposition,
\begin{align*}
    \lambda_1(\bA_n) = \lambda_1(\bA_n) - \lambda_1(\bW_n) + \lambda_1(\bW_n).
\end{align*}
Since $W$ satisfies \Cref{assmp:assumptionkernel} then from \Cref{t:main1} we know the fluctuation of $\lambda_{1}(\bW_{n})$. Thus, here we first proceed with finding out the fluctuation of the eigenvalue difference $\lambda_{1}(\bA_{n}) - \lambda_{1}(\bW_{n})$. Consider the eigendecomposition of $\bW_{n}$ as
\begin{align*}
    \bW_{n} = \sum_{i=1}^{n}\lambda_{i}(\bW_{n})\bv_{i}\blue{\bv_i^\top},
\end{align*}
where $\lambda_{1}(\bW_{n})\geq \lambda_{2}(\bW_{n})\geq \cdots\geq \lambda_{n}(\bW_{n})$ are the eigenvalues of the matrix $\bW_{n}$ with orthonormal eigenvectors $\bv_{1}, \bv_{2},\ldots, \bv_{n}$ respectively. Then define
\begin{align}\label{eq:defAtilde}
    \widetilde{\bA}_{n} = \sum_{i=2}^{n}\lambda_{i}(\bW_{n})\bv_{i}\bv_{i}^\top + \bA_{n} - \bW_{n},
\end{align}
and note that $\bA_{n} = \lambda_{1}(\bW_{n})\bv_{1}\bv_{1}^{\top} + \widetilde{\bA}_{n}.$ The following lemma, with proof provided in Section \ref{sec:proofofAninv}, states that with high probability $\lambda_{1}(\bA_{n})$ is not an eigenvalue of the matrix $\widetilde{\bA}_{n}$. 
\begin{lemma}\label{lemma:Aninv}
    Consider the matrix $\widetilde{\bA}_{n}$ defined in \eqref{eq:defAtilde} and consider $\sigma(\widetilde{\bA}_n)$ to be the eigenvalues of $\widetilde{\bA}_n$. Then
    \begin{align*}
        \inf\left\{|\lambda_{1}(\bA_{n}) - \lambda|:\lambda\in \sigma(\widetilde{\bA}_{n})\right\}\geq \frac{|\lambda_{1}(W) - \lambda_{2}(W)|}{4},
    \end{align*}
    with probability at least $1-Cn\exp\left(-\frac{1}{6}(\log n)^2\right)$.
\end{lemma}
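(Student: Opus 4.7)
The plan is to show that the entire spectrum of $\widetilde{\bA}_n$ sits strictly below $\lambda_1(\bA_n)$ with the required gap, by exploiting the spectral gap of $\bW_n$ together with a concentration bound on $\|\bA_n - \bW_n\|_{op}$. Any negative eigenvalues of $\widetilde{\bA}_n$ are automatically far from $\lambda_1(\bA_n) \approx n\lambda_1(W) > 0$, so it suffices to control $\lambda_{\max}(\widetilde{\bA}_n)$.

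Step 1 is to observe that, since $\{\bv_i\}$ is an orthonormal eigenbasis of $\bW_n$ with eigenvalues $\lambda_1(\bW_n) \geq \cdots \geq \lambda_n(\bW_n)$, the matrix $\bW_n - \lambda_1(\bW_n)\bv_1\bv_1^\top$ has spectrum $\{0,\lambda_2(\bW_n),\ldots,\lambda_n(\bW_n)\}$. Writing $\widetilde{\bA}_n = (\bW_n - \lambda_1(\bW_n)\bv_1\bv_1^\top) + (\bA_n - \bW_n)$ and applying Weyl's inequality yields
\begin{align*}
\lambda_{\max}(\widetilde{\bA}_n) \leq \max(0,\lambda_2(\bW_n)) + \|\bA_n - \bW_n\|_{op}.
\end{align*}
Applying Weyl's inequality to $\bA_n = \bW_n + (\bA_n - \bW_n)$ gives $\lambda_1(\bA_n) \geq \lambda_1(\bW_n) - \|\bA_n - \bW_n\|_{op}$, so
\begin{align*}
\lambda_1(\bA_n) - \lambda_{\max}(\widetilde{\bA}_n) \geq \lambda_1(\bW_n) - \max(0,\lambda_2(\bW_n)) - 2\|\bA_n - \bW_n\|_{op}.
\end{align*}

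Step 2 is to control each of the three terms on the right with probability $1 - Cn\exp(-(\log n)^2/6)$. For $\lambda_1(\bW_n)$ and $\lambda_2(\bW_n)$, I will use the already-established law-of-large-numbers estimate (Lemma \ref{l:la1est} and its analogue for $\lambda_2$) upgraded to the stated concentration scale: this comes from Hoeffding's inequality applied to the bilinear form $\bu^\top \bW_n \bu$ on an $\varepsilon$-net of a top-eigenspace, using that entries of $\bW_n$ are bounded functions of independent $U_i$'s. This will give $|\lambda_i(\bW_n)/n - \lambda_i(W)| \leq (\log n)^2/\sqrt n$ with the required probability for $i=1,2$. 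For $\|\bA_n - \bW_n\|_{op}$, the entries are independent, centered and uniformly bounded conditional on $\bU_n$, so a standard $\varepsilon$-net argument combined with Hoeffding's inequality (the source of the unusual $\exp(-(\log n)^2/6)$ form) gives $\|\bA_n - \bW_n\|_{op} \leq C\sqrt n \log n$ with probability $1 - Cn\exp(-(\log n)^2/6)$.

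Step 3 is to combine. By \Cref{assmp:assumptionkernel} we have $\lambda_1(W) > \lambda_2(W)$, hence $\lambda_1(W) - \max(0,\lambda_2(W)) \geq \lambda_1(W) - \lambda_2(W) > 0$ (note $\lambda_1(W)>0$). Thus for all $n$ large enough, on the event above,
\begin{align*}
\lambda_1(\bA_n) - \lambda_{\max}(\widetilde{\bA}_n) \geq \tfrac{n}{2}|\lambda_1(W) - \lambda_2(W)| \gg \tfrac{1}{4}|\lambda_1(W) - \lambda_2(W)|,
\end{align*}
and any eigenvalue of $\widetilde{\bA}_n$ below $\lambda_{\max}(\widetilde{\bA}_n)$ is then even farther from $\lambda_1(\bA_n)$. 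Small $n$ can be absorbed by inflating $C$ in the probability bound.

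The main obstacle is extracting the specific tail form $Cn\exp(-(\log n)^2/6)$: it is not a standard matrix Bernstein rate, but rather what Hoeffding's inequality delivers at the threshold $t \asymp \sqrt n \log n$, so the proof must route every concentration claim (both for the top eigenvalues of $\bW_n$ and for $\|\bA_n - \bW_n\|_{op}$) through Hoeffding plus a discrete $\varepsilon$-net union bound, rather than through sharper but differently-shaped concentration inequalities.
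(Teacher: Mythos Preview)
Your Weyl-inequality skeleton (Step 1 and Step 3) is exactly the paper's argument: decompose $\widetilde{\bA}_n$ as the rank-deflated $\bW_n$ plus $\bA_n-\bW_n$, apply Weyl twice, and reduce everything to concentration of $\lambda_1(\bW_n),\lambda_2(\bW_n)$ and $\|\bA_n-\bW_n\|_{op}$. The paper's proof (Section \ref{sec:proofofAninv}) is line-for-line the same, and like you it actually obtains a gap of order $n$, far stronger than the stated constant.

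The gap is in your Step 2, specifically the mechanism for concentrating the eigenvalues of $\bW_n$. The entries $W(U_i,U_j)$ are \emph{not} independent---they are deterministic functions of the $n$ i.i.d.\ variables $U_1,\dots,U_n$---so Hoeffding does not apply to the bilinear form $\bu^\top\bW_n\bu$ as a sum over $(i,j)$. And even if you had a subgaussian bound for each fixed $\bu$, an $\varepsilon$-net on the unit sphere in $\bR^n$ has $\sim(C/\varepsilon)^n$ points; a union bound over it requires per-vector tails of order $e^{-cn}$, which cannot produce the form $Cn\exp(-(\log n)^2/6)$. That particular tail is not a Hoeffding artifact at all: in the paper it is inherited from concentration of uniform \emph{order statistics} (Lemma \ref{lemma:concorderU}), via the route of embedding $\bW_n$ as a step-function kernel and comparing $T_{W_n^\circ}$ to $T_W$ in operator norm (Lemma \ref{lemma:TW0minusTW}, then Lemma \ref{lemma:lambda1Wnconc}). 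You should simply invoke Lemma \ref{lemma:lambda1Wnconc} (and its straightforward analogue for $\lambda_2$) rather than attempt an $\varepsilon$-net argument for $\bW_n$. Your treatment of $\|\bA_n-\bW_n\|_{op}$ is fine; conditionally on $\bU_n$ the entries are independent and bounded, and the paper just cites Vershynin for \eqref{eq:Bnbddvershynin}.
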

Then with probability at least $1-Cn\exp\left(-\frac{1}{6}(\log n)^2\right)$, we can invert the matrix $\lambda_1(\bA_n)\bI_n - \widetilde{\bA}_n$ and by arguments similar to \eqref{eq:1AB1=12} we get,
\begin{align}\label{eq:Anresolvent}
    \lambda_{1}(\bW_{n})\bv_{1}^{\top}(\lambda_{1}(\bA_{n})\bI_{n} - \widetilde{\bA}_{n})^{-1}\bv_{1} = 1.
\end{align}
Now to invert the matrix $\lambda_{1}(\bA_{n})\bI_{n} - \widetilde{\bA}_{n}$ we introduce the following notations,
\begin{align}\label{eq:defCn}
    \bmB_{n} = \bA_{n} - \bW_{n}\text{ and }\bC_{n} = \lambda_{1}(\bA_{n})\bI_{n} - \sum_{j\neq 1}\lambda_{j}(\bW_{n})\bv_{j}\bv_{j}^{\top}.
\end{align}
Then we can rewrite the matrix in \eqref{eq:Anresolvent} as $\lambda_{1}(\bA_{n})\bI_{n} - \widetilde{\bA}_{n}=\bC_n-\bmB_n$. The following proposition collects some properties of $\bmB_n, \bC_n$. We postpone its proof to Section \ref{sec:proofofCinvnorm}.
\begin{prop}\label{p:Cninvnorm}
    Recall the matrices $\bmB_n, \bC_{n}$ from \eqref{eq:defCn}. Then with probability at least $1-Cn\exp\left(-\frac{1}{6}(\log n)^2\right)$, the norm of $\bmB_n$ is bounded
    \begin{align}\label{eq:Bnbddvershynin}
    \|\bmB_{n}\|_{2\ra 2}\lesssim_{W}\sqrt{n},
    \end{align}
    and
     $\bC_{n}$ is invertible and,
    \begin{align}\label{eq:Cnorm}
        \|\bC_{n}^{-1}\|_{2\ra 2}\lesssim_{W}\frac{1}{n}.
    \end{align}
The equation from \eqref{eq:Anresolvent} combined with the above estimates implies
\begin{align}\label{eq:taylorfirstdiff}
    \frac{\lambda_{1}(\bA_{n})}{\lambda_{1}(\bW_{n})}(\lambda_{1}(\bA_{n}) - \lambda_{1}(\bW_{n})) = \bv_{1}^{\top}\bmB_{n}\bv_{1} + \bv_{1}^{\top}\bmB_{n}\bC_{n}^{-1}\bmB_{n}\bv_{1} + O_{W}\left(\frac{1}{\sqrt{n}}\right),
\end{align}
with probability at least $1-Cn\exp\left(-\frac{1}{6}(\log n)^2\right)$.
\end{prop}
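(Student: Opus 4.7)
The plan breaks cleanly into three logical steps, after which \eqref{eq:taylorfirstdiff} follows by a direct algebraic rearrangement of the master equation \eqref{eq:Anresolvent}.

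\emph{Step 1 (spectral-norm bound \eqref{eq:Bnbddvershynin} for $\bmB_{n}$).} Conditionally on $U_{1},\ldots,U_{n}$, the matrix $\bmB_{n} = \bA_{n} - \bW_{n}$ is a symmetric random matrix whose off-diagonal entries $(\bA_{n})_{ij} - W(U_{i},U_{j})$ are independent centered variables bounded by $1$ with variances at most $1/4$, and whose diagonal $-W(U_{i},U_{i})$ is deterministic and bounded by $1$. The bound $\|\bmB_{n}\|_{2\to 2}\lesssim \sqrt{n}$ with the stated failure probability $1-Cn\exp(-\tfrac16(\log n)^{2})$ then follows from standard spectral-norm concentration for symmetric matrices with bounded independent centered entries, e.g.\ a matrix Bernstein or Bandeira--Van Handel bound, or an $\varepsilon$-net on the unit sphere combined with Hoeffding's inequality.

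\emph{Step 2 (invertibility and norm bound \eqref{eq:Cnorm} of $\bC_{n}$).} By construction $\bC_{n}$ is diagonalized by $\bv_{1},\ldots,\bv_{n}$, with eigenvalues $\lambda_{1}(\bA_{n})$ (in direction $\bv_{1}$) and $\lambda_{1}(\bA_{n}) - \lambda_{j}(\bW_{n})$ (in direction $\bv_{j}$ for $j \geq 2$). Hence $\|\bC_{n}^{-1}\|_{2\to 2}^{-1}$ equals the minimum of $|\lambda_{1}(\bA_{n})|$ and $\min_{j\geq 2}|\lambda_{1}(\bA_{n}) - \lambda_{j}(\bW_{n})|$. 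Applying \Cref{l:la1est} to the kernel $W$ (permitted because $W$ satisfies \Cref{assmp:assumptionkernel}) gives $\lambda_{1}(\bW_{n})/n \to \lambda_{1}(W)$ in probability, and an analogous bulk estimate $\|\bW_{n} - \lambda_{1}(\bW_{n})\bv_{1}\bv_{1}^{\top}\|_{2\to 2}/n \to \max\{\lambda_{2}(W),|\lambda_{1}^{\prime}(W)|\}$ in probability controls the remaining $|\lambda_{j}(\bW_{n})|$ for $j\geq 2$. Combined with Weyl's inequality $|\lambda_{1}(\bA_{n}) - \lambda_{1}(\bW_{n})| \leq \|\bmB_{n}\|_{2\to 2} = O(\sqrt n)$ from Step 1 and the spectral-gap hypothesis $\lambda_{1}(W) > \lambda_{2}(W)$ from \Cref{assmp:assumptionkernel}, this yields $\min_{j\geq 2}|\lambda_{1}(\bA_{n}) - \lambda_{j}(\bW_{n})| \gtrsim_{W} n$, and hence $\|\bC_{n}^{-1}\|_{2\to 2}\lesssim_{W} 1/n$. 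The same gap simultaneously delivers \Cref{lemma:Aninv}.

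\emph{Step 3 (Neumann expansion producing \eqref{eq:taylorfirstdiff}).} Inserting $(\bC_{n} - \bmB_{n})^{-1} = \sum_{k\geq 0}(\bC_{n}^{-1}\bmB_{n})^{k}\bC_{n}^{-1}$ into \eqref{eq:Anresolvent} and using $\bC_{n}^{-1}\bv_{1} = \bv_{1}/\lambda_{1}(\bA_{n})$, the $k = 0, 1, 2$ contributions to $\bv_{1}^{\top}(\bC_{n} - \bmB_{n})^{-1}\bv_{1}$ are respectively
\begin{align*}
\frac{1}{\lambda_{1}(\bA_{n})},\qquad \frac{\bv_{1}^{\top}\bmB_{n}\bv_{1}}{\lambda_{1}(\bA_{n})^{2}},\qquad \frac{\bv_{1}^{\top}\bmB_{n}\bC_{n}^{-1}\bmB_{n}\bv_{1}}{\lambda_{1}(\bA_{n})^{2}}.
\end{align*}
For $k\geq 3$, Steps 1--2 give $\|\bC_{n}^{-1}\bmB_{n}\|_{2\to 2} = O_{W}(n^{-1/2})$ and $\|\bC_{n}^{-1}\bv_{1}\| = O_{W}(n^{-1})$, so the tail is bounded in absolute value by $\sum_{k\geq 3} n^{-k/2 - 1}\lesssim_{W} n^{-5/2}$. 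Substituting into $\lambda_{1}(\bW_{n})\bv_{1}^{\top}(\bC_{n}-\bmB_{n})^{-1}\bv_{1} = 1$, rearranging, and multiplying through by $\lambda_{1}(\bA_{n})^{2}/\lambda_{1}(\bW_{n}) = O_{W}(n)$ yields \eqref{eq:taylorfirstdiff} with additive error $O_{W}(n^{-3/2})$, which is subsumed in the claimed $O_{W}(n^{-1/2})$.

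\emph{Main obstacle.} The delicate point is Step 2: one must simultaneously have an $\Omega(n)$ separation between $\lambda_{1}(\bW_{n})$ and every other eigenvalue of $\bW_{n}$ (this is where the spectral-gap hypothesis in \Cref{assmp:assumptionkernel} is essential, via the bulk estimates for Lipschitz-kernel matrices collected in the preliminaries) together with the observation that the perturbation $\bmB_{n}$ shifts $\lambda_{1}(\bA_{n})$ only on the much smaller scale $\sqrt{n}$. Once this separation of scales is in place, the Neumann series converges geometrically at rate $O(n^{-1/2})$ per step and only the constant, linear, and quadratic terms in $\bmB_{n}$ survive; the rest of the argument is bookkeeping.
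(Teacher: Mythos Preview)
Your proposal is correct and follows essentially the same three-step route as the paper: a standard random-matrix bound for $\|\bmB_n\|_{2\to 2}$ (the paper cites Vershynin's Corollary 4.4.8 directly), identification of the eigenvalues of $\bC_n$ followed by a spectral-gap argument via Weyl's inequality (the paper uses its \Cref{lemma:lambda1Wnconc} in place of your appeal to \Cref{l:la1est} plus a bulk estimate), and then the Neumann expansion with the observation $\bC_n^{-1}\bv_1=\bv_1/\lambda_1(\bA_n)$. Your error accounting in Step 3 is in fact sharper than the paper's stated $O_W(n^{-1/2})$, but both are correct.
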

\cblue
The identity from \eqref{eq:taylorfirstdiff} follows by a Taylor expansion of \eqref{eq:Anresolvent} and using the estimates from \Cref{p:Cninvnorm}. Now, to analyse the fluctuation of $\lambda_1(\bA_n) - \lambda_1(\bW_n)$, in the next proposition, we consider simplification of the first two terms $\bv_{1}^{\top}\bmB_{n}\bv_{1}$ and $\bv_{1}^{\top}\bmB_{n}\bC_{n}^{-1}\bmB_{n}\bv_{1}$ on the righthand side of \eqref{eq:taylorfirstdiff}. We postpone its proof to Section \ref{sec:proofofvphiconc}.
\cblack
\begin{prop}\label{prop:vphiconc}
 Recall the matrices $\bmB_{n}$ and $\bC_{n}$ from \eqref{eq:defCn}.
    Denote $\phi_{1}$ the eigenfunction of $W$ corresponding to the eigenvalue $\lambda_{1}(W)$ as in Assumption \ref{assmp:assumptionkernel}, and define
    \begin{align}\label{e:defPhi}
        \bPhi_{1} = \frac{1}{\sqrt{n}}\left(\phi_{1}(U_{1}),\cdots,\phi_{1}(U_{n})\right)^{\top}
    \end{align}
    where $U_{1},\ldots, U_{n}$ are as considered in \eqref{eq:defAn}. Then
    \begin{align}\label{e:replace}
        \left|\bv_{1}^{\top}\bmB_{n}\bv_{1} - \bPhi_{1}^{\top}\bmB_{n}\bPhi_{1}\right|\lesssim_{W}\left(\frac{\log^{3}n}{\sqrt{n}}\right)^{1/2},
    \end{align}
     and
       \begin{align}\label{e:vBCBv}
        \left|\bv_{1}^{\top}\bmB_{n}\bC_{n}^{-1}\bmB_{n}\bv_{1} - \frac{1}{\lambda_{1}(W)}\int\frac{\phi_{1}^{2}(x) + \phi_{1}^{2}(y)}{2}W(x,y)(1-W(x,y))\rd x\rd y\right|\lesssim_{W}\left(\frac{\log n}{\sqrt{n}}\right)^{\frac{1}{2}}
    \end{align}
    with probability at least $1-Cn\exp\left(-\frac{1}{6}(\log n)^2\right)$.
\end{prop}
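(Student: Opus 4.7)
My plan is first to show that the top unit eigenvector $\bv_1$ of $\bW_n$ is close to $\bPhi_1$: expanding $W(U_i,U_j) = \sum_k \lambda_k(W)\phi_k(U_i)\phi_k(U_j)$ and using the CLT-type estimate $\frac{1}{n}\sum_j \phi_k(U_j)\phi_1(U_j) = \delta_{k1}+O((\log n)^C/\sqrt n)$ gives $\|\bW_n\bPhi_1 - n\lambda_1(W)\bPhi_1\|_2 \lesssim \sqrt n (\log n)^C$, so Davis--Kahan combined with the spectral gap from \Cref{assmp:assumptionkernel} yields $\|\bv_1 - \bPhi_1\|_2 \lesssim (\log n)^C/\sqrt n$ with high probability. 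Because $W$ is Lipschitz, the eigenfunction $\phi_1 = T_W\phi_1/\lambda_1(W)$ is bounded, so $\|\bPhi_1\|_\infty,\|\bv_1\|_\infty \lesssim 1/\sqrt n$. With these in hand, to prove \eqref{e:replace} I decompose
\begin{equation*}
\bv_1^\top\bmB_n\bv_1 - \bPhi_1^\top\bmB_n\bPhi_1 = 2\,\bPhi_1^\top\bmB_n(\bv_1-\bPhi_1) + (\bv_1-\bPhi_1)^\top\bmB_n(\bv_1-\bPhi_1).
\end{equation*}
The quadratic piece is bounded by $\|\bv_1-\bPhi_1\|_2^2 \|\bmB_n\|_{2\to 2} \lesssim (\log n)^C/\sqrt n$ using \eqref{eq:Bnbddvershynin}. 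For the linear piece, conditioning on $U_1,\ldots,U_n$ freezes $\bv_1-\bPhi_1$; it is then a weighted sum of the independent centered Bernoulli variables $\{(\bmB_n)_{ij}\}_{i<j}$ with conditional variance $\lesssim \|\bv_1-\bPhi_1\|_2^2\|\bPhi_1\|_2^2 = O((\log n)^C/n)$, so Bernstein's inequality produces a bound of order $(\log n)^{C'}/\sqrt n$, comfortably below the target.

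\textbf{Leading piece of \eqref{e:vBCBv}.} Writing $\bC_n^{-1} = \lambda_1(\bA_n)^{-1}\bI + \widetilde{\bm{Q}}$, where $\widetilde{\bm{Q}}\bv_1 = \mathbf{0}$ since $\bC_n\bv_1 = \lambda_1(\bA_n)\bv_1$, I split
\begin{equation*}
\bv_1^\top\bmB_n\bC_n^{-1}\bmB_n\bv_1 = \frac{\|\bmB_n\bv_1\|_2^2}{\lambda_1(\bA_n)} + \bv_1^\top\bmB_n\widetilde{\bm{Q}}\bmB_n\bv_1.
\end{equation*}
Conditional on $U_1,\ldots,U_n$, the first term is a quadratic form in the independent entries of $\bmB_n$ with conditional mean $\lambda_1(\bA_n)^{-1}\sum_{i\neq j}v_{1,j}^2 W(U_i,U_j)(1-W(U_i,U_j))$. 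Substituting $v_{1,j}^2 \mapsto \phi_1(U_j)^2/n$ (error controlled by the preliminary step), applying the law of large numbers to the double sum, using the symmetry $W(x,y)=W(y,x)$ to symmetrize $\phi_1(y)^2 \mapsto \tfrac12(\phi_1(x)^2+\phi_1(y)^2)$, and dividing by $\lambda_1(\bA_n) \approx n\lambda_1(W)$, the conditional mean converges exactly to $\alpha$; Hanson--Wright then controls the fluctuation around the mean on the order of $1/\sqrt n$.

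\textbf{Correction piece of \eqref{e:vBCBv} and main obstacle.} For the remaining term, expanding $\widetilde{\bm{Q}}$ in the eigenbasis of $\bW_n$ gives
\begin{equation*}
\bv_1^\top\bmB_n\widetilde{\bm{Q}}\bmB_n\bv_1 = \sum_{j\neq 1}\frac{\lambda_j(\bW_n)}{\lambda_1(\bA_n)(\lambda_1(\bA_n)-\lambda_j(\bW_n))}(\bv_j^\top\bmB_n\bv_1)^2,
\end{equation*}
whose conditional expectation splits, via $\mathbb{E}[(\bv_j^\top\bmB_n\bv_1)^2\mid U_1,\ldots,U_n]$, into a diagonal piece $\sum_{i\neq k}v_{j,i}^2 v_{1,k}^2 \rho_{ik}$ and a cross piece $\sum_{i\neq k} v_{j,i}v_{j,k}v_{1,i}v_{1,k}\rho_{ik}$ with $\rho_{ik} := W(U_i,U_k)(1-W(U_i,U_k))$. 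Both are controlled by combining $\|\bv_1\|_\infty^2\lesssim 1/n$, the row-sum bound $\|\rho\|_{2\to 2}\lesssim n$, and the summability
\begin{equation*}
\sum_{j\neq 1}\left|\frac{\lambda_j(\bW_n)}{\lambda_1(\bA_n)(\lambda_1(\bA_n)-\lambda_j(\bW_n))}\right| \lesssim \frac{1}{n}\sum_{j\neq 1}|\lambda_j(W)| \lesssim \frac{1}{n},
\end{equation*}
which relies on $\sum_j |\lambda_j(W)|<\infty$ for Lipschitz kernels (cf.\ \Cref{r:cond}). The conditional mean is thus $O(1/n)$, and Hanson--Wright controls the fluctuation. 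The main obstacle lies precisely here: the naive operator-norm estimate $\|\widetilde{\bm{Q}}\|_{2\to 2}\|\bmB_n\bv_1\|_2^2 \sim (1/n)\cdot n = O(1)$ is too coarse, so squeezing it down to $o(1)$ requires simultaneously exploiting $\widetilde{\bm{Q}}\bv_1=\mathbf{0}$, the orthogonality $\bv_j\perp\bv_1$ for $j\neq 1$ inside the cross term, the $L^\infty$ eigenvector bound coming from Lipschitz regularity of $\phi_1$, and the summability of $|\lambda_j(W)|$ coming from eigenvalue decay for Lipschitz kernels.
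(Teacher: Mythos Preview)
Your approach differs from the paper's in two substantive ways. First, you obtain $\|\bv_1-\bPhi_1\|_2\lesssim(\log n)^C/\sqrt n$ via a direct Davis--Kahan argument, whereas the paper only proves the weaker bound $(\log n/\sqrt n)^{1/2}$ through an operator-embedding argument (Proposition~\ref{prop:fv1approxphi1}); your route is cleaner and sharper. Second, for \eqref{e:vBCBv} you split $\bC_n^{-1}=\lambda_1(\bA_n)^{-1}\bI+\widetilde{\bm Q}$ spectrally, while the paper first replaces $\bC_n$ by the $\sigma(\bU_n)$-measurable matrix $\bcC_n=\lambda_1(\bW_n)\bI-\bW_n+\lambda_1(\bW_n)\bPhi_1\bPhi_1^\top$ (Lemma~\ref{lemma:replaceC}) and then shows entrywise that $(\bcC_n^{-1})_{ii}\approx\lambda_1(\bW_n)^{-1}$ with small off-diagonal contribution (Lemma~\ref{lemma:Ctermfinalconvg}).

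That said, there are two genuine gaps. The more serious one is that $\bC_n$, and hence both $\lambda_1(\bA_n)^{-1}$ and $\widetilde{\bm Q}$, depend on $\lambda_1(\bA_n)$, which is \emph{not} $\sigma(U_1,\ldots,U_n)$-measurable: it depends on $\bmB_n$. So when you write ``conditional on $U_1,\ldots,U_n$, the first term is a quadratic form in the independent entries of $\bmB_n$'' and invoke Hanson--Wright, the coefficient matrix is still random given $\bU_n$, and the concentration inequality does not apply as stated. The fix is precisely what the paper's Lemma~\ref{lemma:replaceC} does: first replace $\lambda_1(\bA_n)$ by $\lambda_1(\bW_n)$ (the induced error in $\bC_n^{-1}$ is $O(n^{-3/2})$ in operator norm by Weyl and \eqref{eq:Bnbddvershynin}), after which everything is $\sigma(\bU_n)$-measurable and conditional Hanson--Wright becomes legitimate.

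The second gap is in your summability step. You write
\[
\sum_{j\neq 1}\frac{|\lambda_j(\bW_n)|}{\lambda_1(\bA_n)\,|\lambda_1(\bA_n)-\lambda_j(\bW_n)|}\;\lesssim\;\frac{1}{n}\sum_{j\neq 1}|\lambda_j(W)|,
\]
but the left side runs over the $n$ eigenvalues of the random matrix $\bW_n$, while the right side runs over eigenvalues of the operator $W$; the bulk eigenvalues of $\bW_n$ (those of size $O(\sqrt n\log n)$) do not correspond to any $\lambda_j(W)$, so this identification is unjustified. Fortunately the crude bound $\sum_{j}|\lambda_j(\bW_n)|\le\sqrt n\,\|\bW_n\|_F\le n^{3/2}$ gives $\sum_{j\neq 1}|c_j|\lesssim n^{-1/2}$, and combined with $\mathbb{E}[(\bv_j^\top\bmB_n\bv_1)^2\mid\bU_n]=O(1)$ this still places the conditional mean of the correction term at $O(n^{-1/2})$, well inside the target $(\log n/\sqrt n)^{1/2}$.
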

\cblue
Now, by applying the decompositions given in 
\eqref{eq:taylorfirstdiff}, \eqref{e:replace}, and \eqref{e:vBCBv}, we complete the proof by establishing the asymptotic normality of the term $\bPhi_{1}^{\top}\bmB_{n}\bPhi_{1}$. More precisely, by conditioning on \(\bU_n\) and invoking a Gaussian distributional convergence result for \(\bPhi_{1}^{\top}\bmB_{n}\bPhi_{1}\), we deduce the convergence in distribution of 
\(
\lambda_1(\bA_n) - \lambda_1(\bW_n).
\)
This result is formalized in the following proposition.
\cblack
\begin{prop}\label{p:main3}
    Fix a graphon $W$ satisfying \Cref{assmp:assumptionkernel}. We consider the adjacency matrix $\bA_{n}$ corresponding to the graphon $W$ as in \eqref{eq:defAn}, and denote its largest eigenvalue as $\lambda_1( \bA_{n})$, then there exists a set $\cA$ of $(U_1, U_2,U_3,\cdots)$ such that $\mathbb P(\cA)=1$ on the set $\cA$,
    \begin{align}\label{e:cltla}
        (\lambda_{1}(\bA_{n}) - \lambda_{1}(\bW_n))|\bU_n\dto \cN(\alpha,\sigma^2),
    \end{align}
    where
    \begin{align*}
    &\alpha=\frac{1}{\lambda_{1}(W)}\int\frac{\phi_{1}^{2}(x) + \phi_{1}^{2}(y)}{2}W(x,y)(1-W(x,y))\rd x\rd y,\\
    &\sigma^2= 2\int \phi_{1}^{2}(x)\phi_{1}^{2}(y)W(x,y)(1-W(x,y))\rd x\rd y.
    \end{align*}
\end{prop}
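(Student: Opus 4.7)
My plan is to take the scalar master equation \eqref{eq:taylorfirstdiff} as the starting point, substitute the two deterministic approximations from Proposition \ref{prop:vphiconc}, and then run a conditional Lyapunov central limit theorem for the remaining bilinear form $\bPhi_1^\top \bmB_n \bPhi_1$.

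First I would plug \eqref{e:replace} and \eqref{e:vBCBv} into \eqref{eq:taylorfirstdiff}; absorbing the error terms of orders $((\log^3 n)/\sqrt n)^{1/2}$, $((\log n)/\sqrt n)^{1/2}$ and $1/\sqrt n$ into a single $o_W(1)$, this yields, on an event of probability at least $1 - Cn\exp(-(\log n)^2/6)$, the identity
\begin{equation*}
\frac{\lambda_1(\bA_n)}{\lambda_1(\bW_n)}\bigl(\lambda_1(\bA_n) - \lambda_1(\bW_n)\bigr) \;=\; \bPhi_1^\top \bmB_n \bPhi_1 \,+\, \alpha \,+\, o_W(1),
\end{equation*}
with $\alpha$ as in the statement. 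The prefactor $\lambda_1(\bA_n)/\lambda_1(\bW_n)$ tends to $1$ in probability: \Cref{l:la1est} applied to $\sfK = W$ gives $\lambda_1(\bW_n)/n \pto \lambda_1(W)$, and combining Weyl's inequality with the operator-norm bound \eqref{eq:Bnbddvershynin} on $\|\bmB_n\|_{2\to2}$ gives the same limit for $\lambda_1(\bA_n)/n$. Since $n\exp(-(\log n)^2/6)$ is summable in $n$, Borel--Cantelli upgrades these high-probability statements to events that hold eventually almost surely, hence almost surely conditional on $\bU_n$. By Slutsky's lemma it then suffices to establish $\bPhi_1^\top \bmB_n \bPhi_1 \mid \bU_n \dto N(0,\sigma^2)$ on a set of $U$-sequences of full probability.

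Next, conditional on $\bU_n$ the vector $\bPhi_1$ is deterministic and
\begin{equation*}
\bPhi_1^\top \bmB_n \bPhi_1 \;=\; \frac{2}{n}\sum_{i<j}\phi_1(U_i)\phi_1(U_j)\bigl(\bA_n(i,j) - W(U_i,U_j)\bigr)
\end{equation*}
is a sum of $\binom{n}{2}$ independent bounded mean-zero summands. Its conditional variance $s_n^2$, as written in the text, is a U-statistic of order two in $(U_i)$ with bounded symmetric kernel, so by Theorem~5.4.A of \cite{serfling2009approximation} there is a set $\cA$ of $U$-sequences with $\P(\cA)=1$ on which $s_n^2 \to \sigma^2$. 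The Lyapunov third-moment condition is immediate: each summand is bounded by $C_W/n$ in absolute value, so the sum of absolute third moments is $\lesssim_W n^2 \cdot n^{-3} \to 0$. Hence on $\cA$ the conditional CLT gives $\bPhi_1^\top \bmB_n \bPhi_1 \mid \bU_n \dto N(0,\sigma^2)$, and combining with the first paragraph via Slutsky's lemma delivers the stated convergence $(\lambda_1(\bA_n) - \lambda_1(\bW_n))\mid \bU_n \dto N(\alpha,\sigma^2)$.

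The main obstacle is not any single step but the bookkeeping that converts the various ``with probability at least $1 - Cn\exp(-(\log n)^2/6)$'' statements from Propositions \ref{prop:vphiconc} and \ref{p:Cninvnorm} into almost-sure conditional statements on the $U$-sequence, so that they can be composed with the conditional CLT. This is handled by Borel--Cantelli applied to the summable tail together with Fubini, after which the remaining argument is a routine conditional Lyapunov CLT combined with Slutsky's lemma and the almost-sure U-statistic convergence of $s_n^2$.
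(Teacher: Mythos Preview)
Your proposal is correct and follows essentially the same route as the paper: combine \eqref{eq:taylorfirstdiff} with Proposition~\ref{prop:vphiconc}, reduce the prefactor $\lambda_1(\bA_n)/\lambda_1(\bW_n)$ to $1$ via Weyl's inequality and the concentration of $\lambda_1(\bW_n)/n$, and finish with a conditional Lyapunov CLT for $\bPhi_1^\top\bmB_n\bPhi_1$ using the almost-sure U-statistic limit of $s_n^2$. Your explicit invocation of Borel--Cantelli to convert the $1-Cn\exp(-(\log n)^2/6)$ bounds into eventually-almost-sure statements is a detail the paper leaves implicit, but otherwise the arguments coincide.
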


\section{Preliminary Results on Kernel Matrices}
\label{s:kernel}
\blue{In this section we collect some preliminary results on the kernel matrices. The proofs of these results are given in Appendix A and Appendix B. We start with the definition of the Hilbert Schmidt operator associated with a symmetric Lipschitz function.} For any symmetric Lipschitz continuous function $f:[0,1]^2\ra\mathbb R$, we associate it with an integral operator $T_f: L^2[0,1]\mapsto L^2[0,1]$:
\begin{align*}
T_f(\phi)(x)=\int_0^1 f(x,y)\phi(y)\rd y.
\end{align*}

The following lemma gives bounds on the eigenfunctions of the Hilbert Schmidt operator derived from the symmetric Lipschitz function $f:[0,1]^2\ra \bR$.
\begin{lemma}\label{lemma:efuncbdd}
    Consider a symmetric Lipschitz continuous function $f:[0,1]^2\ra\mathbb R,$ with Lipschitz constant $L_{f}$ such that $|f|\leq B_{f}$ and suppose,
    \begin{align*}
        \lambda_{1}(f)> \lambda_{2}(f)\geq\cdots\geq 0,\ \lambda_{1}^{\prime}(f)\leq\lambda_{2}^{\prime}(f)\leq\cdots\leq0
    \end{align*}
    be the eigenvalues of $T_{f}$ with corresponding eigenfunctions $\phi_{i}$ and $\phi_{i}^{\prime}$ for $i\geq 1$. \blue{Then whenever $\lambda_j(f),\lambda_j^\prime(f)\neq 0$, the following holds.}
    \begin{enumerate}
    \item[(a)] The eigenfunctions $\phi_j(x)$ and $\phi'_j(x)$ are uniformly bounded by $\frac{\blue{B_f}}{|\lambda_{j}(f)|}$ and $\frac{\blue{B_f}}{|\lambda_{j}^{\prime}(f)|}$ respectively. 
    \item[(b)] The eigenfunctions $\phi_j(x)$ and $\phi'_j(x)$ are Lipschitz with Lipschitz constant $\frac{L_{f}}{|\lambda_{j}(f)|}$ and $\frac{L_{f}}{|\lambda_{j}^{\prime}(f)|}$ respectively.
    \end{enumerate}
\end{lemma}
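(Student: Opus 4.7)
The plan is to work directly from the eigenvalue identity $\lambda_j(f)\phi_j(x)=\int_0^1 f(x,y)\phi_j(y)\,\rd y$, which, after dividing through by $\lambda_j(f)$, expresses the eigenfunction pointwise as a weighted integral of itself. This is what allows pointwise and Lipschitz control of $f$ to be transferred to $\phi_j$, at the cost of a factor $1/|\lambda_j(f)|$. Throughout, the only fact about the eigenfunctions I would use is their $L^2$-normalization $\|\phi_j\|_{L^2[0,1]}=1$ (and $\|\phi_j'\|_{L^2[0,1]}=1$).

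For part (a), I would bound
\begin{equation*}
|\phi_j(x)| \;\le\; \frac{1}{|\lambda_j(f)|}\int_0^1 |f(x,y)||\phi_j(y)|\,\rd y
\end{equation*}
and apply Cauchy--Schwarz, using $|f|\le B_f$ in the first factor and $\|\phi_j\|_2=1$ in the second. This yields $|\phi_j(x)|\le B_f/|\lambda_j(f)|$ uniformly in $x$, which matches the claimed bound under the paper's standing normalization $|\sfK|\le 1$ (so $B_f=1$). The argument for $\phi_j'$ is identical with $\lambda_j'(f)$ in place of $\lambda_j(f)$.

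For part (b), I would subtract the eigenvalue equation at two points $x,x'\in[0,1]$, giving
\begin{equation*}
\phi_j(x)-\phi_j(x')\;=\;\frac{1}{\lambda_j(f)}\int_0^1\bigl(f(x,y)-f(x',y)\bigr)\phi_j(y)\,\rd y.
\end{equation*}
Applying the Lipschitz bound $|f(x,y)-f(x',y)|\le L_f|x-x'|$ pulls $L_f|x-x'|$ out of the integral, leaving $\int_0^1|\phi_j(y)|\,\rd y$, which by Cauchy--Schwarz against the constant function $1$ is at most $\|\phi_j\|_2=1$. This gives the desired Lipschitz constant $L_f/|\lambda_j(f)|$, and the same calculation with primes proves the bound for $\phi_j'$.

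The proof is essentially a two-line application of Cauchy--Schwarz, so there is no real obstacle; the only conceptual point worth flagging is that one must use the $L^2$-normalization of the eigenfunction rather than any pointwise bound on $\phi_j$ (using such a bound would make the argument circular). The spectral-gap part of \Cref{assmp:assumptionkernel} enters only implicitly, to ensure that the denominator $\lambda_j(f)$ is nonzero for the relevant indices.
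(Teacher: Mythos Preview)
Your proposal is correct and follows essentially the same approach as the paper: part (b) is line-for-line identical, and for part (a) the paper simply cites an external lemma plus Cauchy--Schwarz, which unpacks to exactly the argument you wrote (with the same caveat that the stated bound $1/|\lambda_j(f)|$ tacitly uses $B_f=1$). Your remark about avoiding circularity by using the $L^2$-normalization rather than a pointwise bound is the key observation, and it is precisely what the paper's ``orthonormal, hence by Cauchy--Schwarz'' step is doing.
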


Consider $U_{1},U_{2},\ldots,U_{n}$ to be randomly drawn samples from the Uniform distribution on $[0,1]$. Let $U_{(1)}\leq U_{(2)}\leq \cdots\leq U_{(n)}$ be the arrangement of $\{U_{i}:1\leq i\leq n\}$ in increasing order. We consider a $n\times n$ matrix with elements $f(U_{(i)}, U_{(j)})$ and study the concentration of an operator derived from such a matrix by embedding it in $[0,1]^2$. 

   The following lemma \blue{implies} that spectrum of the above matrix $f(U_{(i)}, U_{(j)})$ is the same as that of $f(U_{i}, U_{j})$.

    \begin{lemma}\label{lemma:permmat}
    Consider a function $f:[0,1]^2\ra\mathbb{R}$ and let $U_{1},U_{2}.\ldots, U_{n}$ generated randomly from $\blue{\text{Unif}\ [0,1]}$. Then there exists a permutation matrix $\Pi_{n}$ such that,
    \begin{align}\label{eq:UorderPieq}
        \left(\left(f(U_{(i)}, U_{(j)})\right)\right)_{i\neq j} = \Pi_{n}\left(\left(f(U_{i}, U_{j})\right)\right)_{i\neq j}\Pi_{n}^{\top}
    \end{align}
    where $U_{(1)}\leq U_{(2)}\leq \cdots\leq U_{(n)}$.
    \end{lemma}

In the following lemma we show that largest eigenvalue of the sample kernel matrix is close to the largest eigenvalue of the operator $T_{f}$ with high probability.

    \begin{lemma}\label{lemma:lambda1Wnconc}
    Let $f$ be a Lipschitz continuous symmetric function such that $|f|\leq B_{f}$ and Lipschitz constant $L_{f}$. Consider $U_{1},U_{2}.\ldots, U_{n}$ generated randomly from $\blue{\text{Unif}\ [0,1]}$ and let
    \begin{align*}
        \bm F_{n}:= ((f(U_{i},U_{j})))_{i\neq j=1}^{n}.
    \end{align*}
    Further suppose,
    \begin{align*}
        \lambda_{1}(f)> \lambda_{2}(f)\geq\cdots\geq 0,\ \lambda_{1}^{\prime}(f)\leq\lambda_{2}^{\prime}(f)\leq\cdots\leq0
    \end{align*}
    be the eigenvalues of $T_{f}$ and let $\lambda_{1}(\bm F_{n})$ to be the largest eigenvalue of $\bm F_{n}$. Then,
    \begin{align*}
        \left|\frac{\lambda_{1}(\bm F_{n})}{n} - \lambda_{1}(f)\right|\lesssim_{f}\frac{\log n}{\sqrt{n}}
    \end{align*}
    with probability at least $1-8n\exp\left(-\frac{1}{6}(\log n)^2\right)$.
    \end{lemma}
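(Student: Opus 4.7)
The plan is to compare the matrix $\bm F_{n}$ to the integral operator $T_{f}$ by embedding $\bm F_{n}$ as the integral operator of a step kernel on $L^{2}([0,1])$, and then combining spectral perturbation with standard concentration of the uniform order statistics.

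By \Cref{lemma:permmat}, the spectrum of $\bm F_{n}$ coincides with that of $\bm F_{n}' = (f(U_{(i)}, U_{(j)})\delta_{i\neq j})_{i,j=1}^n$, so we may work with the sorted samples $U_{(1)}\leq\cdots\leq U_{(n)}$. Partition $[0,1]$ into equal blocks $I_{i}=[(i-1)/n, i/n)$ and define
\begin{align*}
\tilde f_{n}(x,y) = f(U_{(i)}, U_{(j)})\,\delta_{i\neq j}, \qquad (x,y)\in I_{i}\times I_{j}.
\end{align*}
Any eigenfunction of $T_{\tilde f_{n}}$ corresponding to a nonzero eigenvalue must be constant on each $I_{i}$ (since $\tilde f_{n}$ is), and the eigenvalue equation reduces to $\mu \bm c = n^{-1}\bm F_{n}' \bm c$. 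Hence the nonzero spectrum of $T_{\tilde f_{n}}$ is exactly $\{\lambda_{k}(\bm F_{n})/n\}$, and in particular $\lambda_{1}(T_{\tilde f_{n}}) = \lambda_{1}(\bm F_{n})/n$ once $n$ is large enough to ensure positivity.

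By Weyl's inequality for compact self-adjoint operators followed by the Hilbert--Schmidt bound,
\begin{align*}
\left|\frac{\lambda_{1}(\bm F_{n})}{n}-\lambda_{1}(f)\right| \;\leq\; \|T_{\tilde f_{n}} - T_{f}\|_{\mathrm{op}} \;\leq\; \|\tilde f_{n}-f\|_{L^{2}([0,1]^2)}.
\end{align*}
The diagonal blocks $I_{i}\times I_{i}$ contribute $O(B_{f}^{2}/n)$ to $\|\tilde f_{n}-f\|_{L^{2}}^{2}$. On an off-diagonal block $I_{i}\times I_{j}$, Lipschitz continuity yields $|\tilde f_{n}(x,y)-f(x,y)|\leq L_{f}(|U_{(i)}-i/n| + |U_{(j)}-j/n| + 2/n)$, so the off-diagonal contribution is controlled by $L_{f}^{2}\bigl(\max_{i}|U_{(i)}-i/n|\bigr)^{2}$ up to lower order in $1/n$.

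Finally, since $U_{(i)}\sim \mathrm{Beta}(i,n-i+1)$, a Chernoff/Bernstein estimate gives $\P(|U_{(i)}-i/n|>t)\leq C\exp(-cnt^{2})$ for $t\lesssim 1$; choosing $t = c'\log n/\sqrt n$ and union-bounding over $i\in\{1,\ldots,n\}$ produces $\max_{i}|U_{(i)}-i/n|\lesssim \log n/\sqrt n$ with probability at least $1-8n\exp(-(\log n)^{2}/6)$ once the absolute constants are chosen appropriately. On this event $\|\tilde f_{n}-f\|_{L^{2}}\lesssim_{f}\log n/\sqrt n$, which yields the claim. The main technical effort is tuning the constants in the tail bound for $U_{(i)}$ to match the stated exponent $(\log n)^{2}/6$; the rest is a routine spectral comparison once the step-kernel embedding is in place.
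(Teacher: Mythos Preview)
Your proposal is correct and follows essentially the same route as the paper: embed $\bm F_n$ as a step-kernel operator (the paper's $f_n^\circ$ in \Cref{lemma:TW0minusTW}), apply Weyl-type perturbation for compact self-adjoint operators (the paper's \Cref{l:eigdist}), and control the kernel difference via concentration of the uniform order statistics (the paper's \Cref{lemma:concorderU}, which quotes the same Beta tail bound you invoke). The only cosmetic differences are that you bound the operator norm by the Hilbert--Schmidt $L^2$ norm while the paper uses a sup-norm estimate on $f-f_n$, and the paper isolates the event $\lambda_1(\bm F_n)>0$ as a separate preliminary step rather than absorbing it into the phrase ``once $n$ is large enough to ensure positivity.''
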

    
 In the next lemma, we show that the integral operator $T_f$, can be approximated by the integral operator associated with a discrete approximation of $f$ obtained by embedding $\{f(U_{(i)}, U_{(j)})\}_{i\neq j}$ in $[0,1]^2$.

 \begin{lemma}\label{lemma:TW0minusTW}
        For a Lipschitz continuous and symmetric function $f$ such that $|f|\leq B_{f}$ with Lipschitz constant $L_{f}$ and $U_{1},U_{2}.\ldots, U_{n}$ generated randomly from $\blue{\text{Unif}\ [0,1]}$ define,
        \begin{align}\label{eq:Wncirc}
            f_{n}^{\circ}(x,y) = \sum_{i\neq j}f\left(U_{(i)}, U_{(j)}\right)\one\left\{\frac{i-1}{n}<x\leq \frac{i}{n}, \frac{j-1}{n}<y\leq \frac{j}{n}\right\}.
        \end{align}
        Then,
        \begin{align*}
            \|T_f - T_{f_{n}^{\circ}}\|_{2\ra 2}\lesssim_{f} \frac{\log n}{\sqrt{n}} + \frac{1}{\sqrt{n}}\text{ with probability at least } 1 - 4n\exp\left(-\frac{1}{6}(\log n)^2\right).
        \end{align*}
    \end{lemma}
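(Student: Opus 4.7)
The plan is to bound the operator norm by passing through the Hilbert--Schmidt norm, since for any integral operator $T_g$ on $L^2([0,1])$ one has $\|T_g\|_{2\to 2}\leq \|g\|_{L^2([0,1]^2)}$. Thus it suffices to control
\begin{align*}
\|f-f_n^{\circ}\|_{L^2}^2=\sum_{i,j=1}^n \int_{(i-1)/n}^{i/n}\!\!\int_{(j-1)/n}^{j/n}\bigl(f(x,y)-f_n^{\circ}(x,y)\bigr)^2\,\rd x\,\rd y,
\end{align*}
which I would split into the off-diagonal contribution ($i\neq j$) and the diagonal contribution ($i=j$). On each off-diagonal cell $f_n^{\circ}(x,y)=f(U_{(i)},U_{(j)})$, while on each diagonal cell $f_n^{\circ}\equiv 0$.

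For the off-diagonal cells, the Lipschitz property of $f$ yields, for $(x,y)\in((i-1)/n,i/n]\times((j-1)/n,j/n]$,
\begin{align*}
|f(x,y)-f(U_{(i)},U_{(j)})|\leq L_f\bigl(|x-U_{(i)}|+|y-U_{(j)}|\bigr)\leq L_f\Bigl(\tfrac{2}{n}+|U_{(i)}-i/n|+|U_{(j)}-j/n|\Bigr).
\end{align*}
The key probabilistic input is therefore a uniform concentration bound for the order statistics of the form $\max_{1\leq i\leq n}|U_{(i)}-i/n|\lesssim \log n/\sqrt n$, holding with the claimed probability. I would derive this via the identity $\{U_{(i)}\leq a\}=\{|\{k:U_k\leq a\}|\geq i\}$, where the counting variable is $\mathrm{Binomial}(n,a)$. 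Applying Hoeffding's inequality at the two choices $a=i/n\pm \log n/\sqrt n$ produces a deviation probability of $2\exp(-(\log n)^2/6)$ for each $i$ (using the standard Hoeffding constant $2$ in the exponent), and a union bound over $i=1,\ldots,n$ introduces the factor $n$, giving the overall probability $1-4n\exp(-(\log n)^2/6)$.

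On this high-probability event, combining the Lipschitz estimate with the concentration bound gives $|f(x,y)-f_n^{\circ}(x,y)|\lesssim_f \log n/\sqrt n$ uniformly on the off-diagonal cells; since their total area is at most $1$, the off-diagonal contribution to $\|f-f_n^{\circ}\|_{L^2}$ is $\lesssim_f \log n/\sqrt n$. For the diagonal cells, $|f-f_n^{\circ}|\leq B_f$ and the total area is $n\cdot(1/n)^2=1/n$, contributing at most $B_f/\sqrt n$ to the $L^2$ norm. Adding the two contributions and applying the Hilbert--Schmidt bound on the operator norm yields the desired estimate.

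The main obstacle is the uniform concentration of the order statistics; once that is in place, the rest of the argument is a routine combination of the Lipschitz regularity of $f$, a bookkeeping splitting of $[0,1]^2$ into $n^2$ dyadic cells, and the standard inequality between the operator and Hilbert--Schmidt norms. A minor subtlety worth noting is that the operator norm of the matrix $(f(U_i,U_j))_{i\neq j}$ depends only on its multiset of entries by \Cref{lemma:permmat}, so working with the ordered configuration $(U_{(i)})$ in the definition of $f_n^{\circ}$ is not a loss of generality and simplifies the geometric arrangement of cells.
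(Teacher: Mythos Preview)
Your proposal is correct and follows essentially the same approach as the paper. The paper organizes the argument slightly differently---it first introduces the full step function $f_n$ (including the diagonal cells), proves $\sup_{x,y}|f-f_n|\lesssim_f \log n/\sqrt{n}$ via the order-statistics concentration (their \Cref{lemma:concorderU}, obtained from a Beta-tail bound rather than Hoeffding on binomial counts), uses this to bound $\|T_f-T_{f_n}\|_{2\to 2}$, and then handles the diagonal difference $\|T_{f_n}-T_{f_n^\circ}\|_{2\to 2}\leq 2B_f/\sqrt{n}$ separately before applying the triangle inequality---but the ingredients (uniform concentration of $U_{(i)}$ about $i/n$, Lipschitz control on off-diagonal cells, boundedness on diagonal cells) are identical to yours.
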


In the next proposition, we study the matrix $\bF_n:=\{f(U_{i}, U_{j})\}_{i\neq j}$ in $[0,1]^2$. It roughly says that the largest eigenvalue of  $\bF_n$ is well separated from its other eigenvalues.

\begin{prop}\label{lemma:AnBninvertible}
    Consider a symmetric Lipschitz continuous function $f:[0,1]^2\ra\mathbb R,$ with Lipschitz constant $L_{f}$ such that $|f|\leq B_{f}$ and suppose,
    \begin{align*}
        \lambda_{1}(f)> \lambda_{2}(f)\geq\cdots\geq 0,\ \lambda_{1}^{\prime}(f)\leq\lambda_{2}^{\prime}(f)\leq\cdots\leq0
    \end{align*}
    be the eigenvalues of $T_{f}$ with corresponding eigenfunctions $\phi_{i}(x)$ and $\phi_{i}^{\prime}(x)$ for $i\geq 1$. Let, $\Phi_{1}(\bU) = (\phi_{1}(U_{1}),\ldots,\phi_{1}(U_{n}))^{\top}$ and let $\bF_{n}$ to be a $n\times n$ matrix with $0's$ on the diagonal and the $(i,j)^{th}$ entry given by $f(U_i, U_j)$ for all $1\leq i\neq j\leq n$ where $U_{1},\ldots,U_{n}$ are generated independently from $\blue{\text{Unif}\ [0,1]}$. Further consider $\blambda_n\in \bR$ such that,
    \begin{align}\label{eq:blambdanconc}
        \left|\frac{\blambda_{n}}{n} - \lambda_{1}(f)\right|\lesssim_{f}\frac{\log n}{\sqrt{n}}
    \end{align} 
    with probability at least $1-8n\exp\left(-\frac{1}{6}(\log n)^2\right)$. Define,
    \begin{align}\label{e:defXn}
        \bm{X}_{n} := \blambda_{n}\mathbb{I}_{n} + \lambda_{1}(f)\bD_{n} - (\bF_{n} - \lambda_{1}(f)(\Phi_{1}(\bU)\Phi_{1}(\bU)^{\top} - \bD_{n}))
    \end{align}
    where $\bD_{n} = \diag(\phi_{1}^{2}(U_{1}),\ldots,\phi_{1}^{2}(U_{n}))$. Then for large enough $n$, $\bm X_{n}$ is invertible and,
    \begin{align}\label{eq:normXninvbd}
        \|\bm X_{n}^{-1}\|_{2\ra 2} \leq \frac{|\lambda_{1}(f) - \lambda_{2}(f)|}{2n}.
    \end{align}
    with probability at least $1-16n\exp\left(-\frac{1}{6}(\log n)^2\right)$.
\end{prop}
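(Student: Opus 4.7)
The plan is to rewrite $\bm{X}_n$ so as to expose a dominant positive-definite piece together with a lower-order perturbation. Expanding the definition \eqref{e:defXn}, the two copies of $\lambda_1(f)\bD_n$ cancel, giving
\begin{align*}
\bm{X}_n = \blambda_n\bI_n - \bigl(\bF_n - \lambda_1(f)\Phi_1(\bU)\Phi_1(\bU)^\top\bigr).
\end{align*}
I would introduce the reduced kernel $\tilde f(x,y):= f(x,y) - \lambda_1(f)\phi_1(x)\phi_1(y)$ and its zero-diagonal sample matrix $\tilde{\bF}_n := ((\tilde f(U_i,U_j)))_{i\neq j}$. A direct entry-wise comparison yields $\bF_n - \lambda_1(f)\Phi_1(\bU)\Phi_1(\bU)^\top = \tilde{\bF}_n - \lambda_1(f)\bD_n$ (the diagonal defect comes from $\Phi_1\Phi_1^\top$ having nonzero diagonal while $\bF_n$ does not), and therefore
\begin{align*}
\bm{X}_n = \bigl(\blambda_n\bI_n - \tilde{\bF}_n\bigr) + \lambda_1(f)\bD_n.
\end{align*}

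Next, I would control the spectrum of $\tilde{\bF}_n$ by applying Lemma \ref{lemma:lambda1Wnconc} twice. The kernel $\tilde f$ is symmetric, bounded, and Lipschitz, since Lemma \ref{lemma:efuncbdd} provides both a uniform bound $|\phi_1|\leq 1/|\lambda_1(f)|$ and a Lipschitz bound for $\phi_1$, and these transfer to $\tilde f$ with constants depending only on $f$. The spectrum of $T_{\tilde f}$ is exactly $\{\lambda_i(f):i\geq 2\}\cup\{\lambda_j'(f):j\geq 1\}$, whose top eigenvalue is $\lambda_2(f)$, while the top eigenvalue of $T_{-\tilde f}$ is $|\lambda_1'(f)|$. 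Applying Lemma \ref{lemma:lambda1Wnconc} to $\tilde f$ and to $-\tilde f$, a union bound shows that off an event of probability $16n\exp(-\tfrac16(\log n)^2)$ (which also absorbs the concentration event \eqref{eq:blambdanconc} for $\blambda_n$), every eigenvalue of $\tilde{\bF}_n$ lies in
\begin{align*}
\bigl[\,n\lambda_1'(f) - C_f\sqrt{n}\log n,\; n\lambda_2(f) + C_f\sqrt{n}\log n\,\bigr].
\end{align*}
Combining this with \eqref{eq:blambdanconc}, every eigenvalue of $\blambda_n\bI_n - \tilde{\bF}_n$ is at least $n(\lambda_1(f)-\lambda_2(f)) - C_f'\sqrt{n}\log n \geq \tfrac12 n(\lambda_1(f)-\lambda_2(f))$ for all $n$ sufficiently large, so $\blambda_n\bI_n - \tilde{\bF}_n$ is positive definite with $\|(\blambda_n\bI_n - \tilde{\bF}_n)^{-1}\|_{2\to 2} \leq 2/[n(\lambda_1(f)-\lambda_2(f))]$.

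It remains to absorb the perturbation $\lambda_1(f)\bD_n$. By Lemma \ref{lemma:efuncbdd}(a) one has $\|\lambda_1(f)\bD_n\|_{2\to 2} = \lambda_1(f)\max_i \phi_1^2(U_i) \leq 1/|\lambda_1(f)|$, which is an $f$-dependent constant of order $O_f(1)$, hence negligible compared to the $\Omega(n)$ minimum eigenvalue of $\blambda_n\bI_n - \tilde{\bF}_n$. A Neumann series expansion
\begin{align*}
\bm{X}_n^{-1} = (\blambda_n\bI_n - \tilde{\bF}_n)^{-1}\sum_{k\geq 0}\bigl[-\lambda_1(f)\bD_n(\blambda_n\bI_n - \tilde{\bF}_n)^{-1}\bigr]^k
\end{align*}
then converges for $n$ large, gives invertibility of $\bm{X}_n$, and yields the stated $O(1/n)$ bound on $\|\bm{X}_n^{-1}\|_{2\to 2}$ after absorbing a geometric factor. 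I do not anticipate a serious obstacle; the only mildly delicate step is confirming that $\tilde f$ inherits a quantitative Lipschitz constant depending only on $f$ (so that Lemma \ref{lemma:lambda1Wnconc} applies with explicit constants), which is immediate from Lemma \ref{lemma:efuncbdd}(b) applied to $\phi_1$.
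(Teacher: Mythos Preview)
Your argument is correct and rests on the same core idea as the paper's: rewrite $\bm{X}_n$ via the reduced kernel $\tilde f = f - \lambda_1(f)\phi_1\otimes\phi_1$, and use that the spectrum of $\tilde{\bF}_n$ concentrates (through the operator embedding) near that of $T_{\tilde f}$, which misses $\lambda_1(f)$ by the gap $\lambda_1(f)-\lambda_2(f)$. The packaging differs: the paper keeps the diagonal term $\lambda_1(f)\bD_n$ attached to $\tilde{\bF}_n$, absorbs it immediately by Weyl's inequality, and then proves two ad-hoc lemmas (Lemmas~\ref{lemma:firstlambdalbd} and~\ref{lemma:upbdong}) that essentially redo the operator-embedding concentration for $\tilde f$; you instead invoke Lemma~\ref{lemma:lambda1Wnconc} off-the-shelf for $\pm\tilde f$ to pin down the extreme eigenvalues of $\tilde{\bF}_n$, and peel off $\lambda_1(f)\bD_n=O_f(1)$ at the end by a Neumann series. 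Your route is the cleaner of the two, and in fact the application to $-\tilde f$ is not even needed, since the lower spectral bound for $\blambda_n\bI_n-\tilde{\bF}_n$ uses only the upper bound on $\lambda_1(\tilde{\bF}_n)$.

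One caveat worth flagging: Lemma~\ref{lemma:lambda1Wnconc} as stated carries the hypothesis $\lambda_1(\cdot)>\lambda_2(\cdot)$, which $\tilde f$ need not satisfy (nothing in the assumptions rules out $\lambda_2(f)=\lambda_3(f)$ or even $\lambda_2(f)=0$). Its proof, however, only uses Lemma~\ref{lemma:TW0minusTW} together with Lemma~\ref{l:eigdist} and does not actually rely on the strict gap, so your argument goes through; you should either remark on this or bypass the issue by citing those two underlying lemmas directly for $\tilde f$.
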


\section{Proof of results from Section \ref{s:prft1}}\label{s:Kerneleig}
In this section we complete the proof of \Cref{t:main1}. We start with the postponed analysis from the end of Section \ref{s:prft1} and then provide proofs of Proposition \ref{prop:IIandIIIreplace} and Proposition \ref{p:la1decomposition}.

\subsection{Completing the proof of \Cref{t:main1}}\label{sec:completeproofkernel}
\cblue
We begin by recalling the conclusion of \Cref{prop:IIandIIIreplace} and \Cref{p:la1decomposition}. From \Cref{p:la1decomposition} recall the notations,
\begin{align*}
    T_{n,m,1} = \frac{\lambda_{1}(\sfK)}{\lambda_{1}(\sfKmat_{n})}\sum_{\ell=1}^{m}\left(\lambda_{\ell}(\sfK)\sum_{i=1}^{n}\phi_{1}(U_{i})^2\phi_{\ell}(U_{i})^2 + \lambda_{\ell}^{\prime}(\sfK)\sum_{i=1}^{n}\phi_{1}(U_{i})^2\phi_{\ell}^{\prime}(U_{i})^2\right)
\end{align*}
and,
\begin{align*}
    T_{n,m,2} = \frac{\lambda_{1}(\sfK)}{\lambda_{1}(\sfKmat_{n})}\Bigg[\sum_{\ell=2}^{m}\frac{\lambda_{\ell}(\sfK)\lambda_{1}(\sfK)}{\lambda_{1}(\sfK) - \lambda_{\ell}(\sfK)}
    & \left(\sum_{i=1}^{n}\phi_{1}(U_{i})\phi_{\ell}(U_{i})\right)^2\\
    &+ \sum_{\ell=1}^{m}\frac{\lambda_{\ell}^{\prime}(\sfK)\lambda_{1}(\sfK)}{\lambda_{1}(\sfK) - \lambda_{\ell}^{\prime}(\sfK)}\left(\sum_{i=1}^{n}\phi_{1}(U_{i})\phi_{\ell}^{\prime}(U_{i})\right)^2\Bigg].
\end{align*}
\normalsize
Now fix $\vep>0$, then there exists $m(\vep)\in \N$ such that for all $m\geq m(\vep)$ and $n\geq n(m,\vep)$, we have,
\begin{align*}
    \left|\frac{\lambda_{1}(\sfKmat_{n})}{n} - \lambda_{1}(\sfK) - \lambda_{1}(\sfK)\left[\frac{\left\|\Phi_{1}(\bU_{n})\right\|_{2}^2}{n} - 1\right] - \frac{1}{n}\left(T_{n,m,2} - T_{n,m,1}\right) - \frac{\lambda_{1}(\sfKmat_{n})}{n}t_{n}\right|\lesssim_{\sfK}\frac{\sqrt{\vep}}{n}
\end{align*}
where $|t_{n}|\lesssim_{\sfK,m} n^{-3/2}(\log n)^3$.\cblack
By Lemma \ref{l:la1est} note that $\lambda_{1}(\sfKmat_{n})/n\pto \lambda_{1}(\sfK)$. Then, for fixed $m$, by a weak law of large numbers argument, $T_{n,m,1} = o_{p}(\sqrt{n})$ as $n\ra\infty$. By the standard central limit theorem we have,
\begin{align*}
    \left(\sum_{i=1}^{n}\phi_{1}(U_{i})\phi_{\ell}(U_{i})\right)^2 = O_{p}(n), 2\leq \ell\leq m\text{ and }\left(\sum_{i=1}^{n}\phi_{1}(U_{i})\phi_{\ell}^{\prime}(U_{i})\right)^2 = O_{p}(n), 1\leq \ell\leq m.
\end{align*}
\normalsize
Together with $\lambda_{1}(\sfKmat_{n})/n\pto \lambda_{1}(\sfK)$ (from Lemma \ref{l:la1est}) we conclude $T_{n,m,2} = o_{p}(\sqrt{n})$, and 
\begin{align}\label{eq:Tnm12opsqrtn}
    -T_{n,m,1} + T_{n,m,2} = o_{p}(\sqrt{n}).
\end{align}
for fixed $m\geq 2$. Again by the standard central limit theorem
\begin{align}\label{e:laclt}
\sqrt{n} \lambda_{1}(\sfK)\left[\frac{\left\|\Phi_{1}(\bU_{n})\right\|_{2}^2}{n} - 1\right] 
= \lambda_{1}(\sfK)\frac{\sum_{i=1}^n (\phi_1(U_i)^2-1)}{\sqrt n}\rightarrow \cN\left(0,\lambda_{1}(\sfK)^2\Var\left(\phi_{1}^2(U)\right)\right).
\end{align}
\Cref{p:la1decomposition} gives that
\begin{align}\begin{split}\label{e:nla1}
    &\P\bigg(\sqrt{n}\bigg(\frac{\lambda_{1}(\sfKmat_{n})}{n}- \lambda_{1}(\sfK)\bigg)\leq t\bigg)\\
    &\leq \P\left(\sqrt{n}\lambda_{1}(\sfK)\left[\frac{\left\|\Phi_{1}(\bU_{n})\right\|_{2}^2}{n} - 1\right] + \frac{1}{\sqrt{n}}\left(-T_{n,m,1} + T_{n,m,2}\right) + \frac{\lambda_{1}(\sfKmat_{n})}{\sqrt{n}}t_{n}\leq t + O_{\sfK}\left(\frac{\sqrt{\vep}}{\sqrt{n}}\right)\right)\\
    & + 9\sqrt{\vep}.
\end{split}\end{align}
\normalsize
Taking $n\ra\infty$ and recalling \Cref{l:la1est},  \eqref{eq:Tnm12opsqrtn} and \eqref{e:laclt} it is now easy to see that,
\small
\begin{align}\label{e:nla2}
    \P\left(\sqrt{n}\lambda_{1}(\sfK)\left[\frac{\left\|\Phi_{1}(\bU_{n})\right\|_{2}^2}{n} - 1\right] + \frac{1}{\sqrt{n}}\left(-T_{n,m,1} + T_{n,m,2}\right) + \frac{\lambda_{1}(\sfKmat_{n})}{\sqrt{n}}t_{n}\leq t + O_{\sfK}\left(\frac{\sqrt{\vep}}{\sqrt{n}}\right)\right)\ra\P(Z\leq t)
\end{align}
\normalsize
where $Z\sim \cN\left(0,\lambda_{1}(\sfK)^2\Var\left(\phi_{1}^2(U)\right)\right)$. Thus \eqref{e:nla1} and \eqref{e:nla2} together imply,
\begin{align*}
    \limsup_{n\ra\infty}\P\bigg(\sqrt{n}\bigg(\frac{\lambda_{1}(\sfKmat_{n})}{n} - \lambda_{1}(\sfK)\bigg)\leq t\bigg)\leq \P(Z\leq t) + 9\sqrt{\vep}.
\end{align*}
Similarly one can show that,
\begin{align*}
    \limsup_{n\ra\infty}\P\bigg(\sqrt{n}\bigg(\frac{\lambda_{1}(\sfKmat_{n})}{n} - \lambda_{1}(\sfK)\bigg)> t\bigg)\leq \P(Z> t) + 9\sqrt{\vep}.
\end{align*}
and recalling that $\vep$ is chosen arbitrarily small, we conclude,
\begin{align*}
    \sqrt{n}\bigg(\frac{\lambda_{1}(\sfKmat_{n})}{n} - \lambda_{1}(\sfK)\bigg)\dto \cN\left(0,\lambda_{1}(\sfK)^2\Var\left(\phi_{1}^2(U)\right)\right).
\end{align*}
This finishes proof of the first statement in \Cref{t:main1}. For the second statement in \Cref{t:main1} we know that $\phi_{1}^2\equiv 1$, and note that \eqref{eq:lambdanflucexpand} simplifies to,
\begin{align}\label{eq:lambdanflucexpand2}
    \left|\frac{\lambda_{1}(\sfKmat_{n})}{n} - \lambda_{1}(\sfK)- \frac{1}{n}\left(-T_{n,m,1} + T_{n,m,2}\right) - \frac{\lambda_{1}(\sfKmat_{n})}{n}t_{n}\right|\lesssim_{\sfK}\frac{\sqrt{\vep}}{n}
\end{align}
where now
\begin{align*}
    T_{n,m,1} = n\frac{\lambda_{1}(\sfK)^2}{\lambda_{1}(\sfKmat_{n})} + \frac{\lambda_{1}(\sfK)}{\lambda_{1}(\sfKmat_{n})}\sum_{\ell=2}^{m}\lambda_{\ell}(\sfK)\sum_{i=1}^{n}\phi_{\ell}(U_{i})^2 + \frac{\lambda_{1}(\sfK)}{\lambda_{1}(\sfKmat_{n})}\sum_{\ell=1}^{m}\lambda_{\ell}^{\prime}(\sfK)\sum_{i=1}^{n}\phi_{\ell}^{\prime}(U_{i})^2
\end{align*}
and,
\small
\begin{align*}
    T_{n,m,2} := \frac{\lambda_{1}(\sfK)}{\lambda_{1}(\sfKmat_{n})}\left[\sum_{\ell=2}^{m}\frac{\lambda_{\ell}(\sfK)\lambda_{1}(\sfK)}{\lambda_{1}(\sfK) - \lambda_{\ell}^{\prime}(\sfK)}\left(\sum_{i=1}^{n}\phi_{\ell}(U_{i})\right)^2+ \sum_{\ell=1}^{m}\frac{\lambda_{\ell}^{\prime}(\sfK)\lambda_{1}(\sfK)}{\lambda_{1}(\sfK) - \lambda_{\ell}^{\prime}(\sfK)}\left(\sum_{i=1}^{n}\phi_{\ell}^{\prime}(U_{i})\right)^2\right].
\end{align*}
\normalsize
The following proposition states that $ T_{n,m,1}$ converges in probability, and $ T_{n,m,2}$ converges to a chi-square distribution. Its proof is postponed to Section \ref{sec:proofofTnm12convg}.
\begin{prop}\label{prop:Tnm12convg}
    Fix $m\geq 2$. Then,
    \begin{align*}
       T_{n,m,1}\pto  \sum_{\ell=1}^{m}\left[\lambda_{\ell}(\sfK) + \lambda_{\ell}^{\prime}(\sfK)\right]\text{ and }T_{n,m,2}\dto \sum_{\ell=2}^{m}\frac{\lambda_{\ell}(\sfK)\lambda_{1}(\sfK)}{\lambda_{1}(\sfK) - \lambda_{\ell}(\sfK)}Z_{\ell}^2 +\sum_{\ell=1}^{m}\frac{\lambda_{\ell}^{\prime}(\sfK)\lambda_{1}(\sfK)}{\lambda_{1}(\sfK) - \lambda_{\ell}^{\prime}(\sfK)}\widetilde{Z}_{\ell}^2
    \end{align*}
    where $Z_{2},\ldots,Z_{m}$ and $\widetilde{Z}_{1},\ldots, \widetilde{Z}_{m}$ are independently generated from $\cN(0,1)$.
\end{prop}

 Applying the convergences from Proposition \ref{prop:Tnm12convg} along with Slutsky's Lemma shows,
\begin{align}\begin{split}\label{e:TTdiff}
    T_{n,m,2} - T_{n,m,1}+\lambda_1(\sfK)\dto\zeta_{m} :=
    &\sum_{\ell=2}^{m}\frac{\lambda_{\ell}(\sfK)\lambda_{1}(\sfK)}{\lambda_{1}(\sfK) - \lambda_{\ell}(\sfK)}(Z_{\ell}^2-1) +\sum_{\ell=1}^{m}\frac{\lambda_{\ell}^{\prime}(\sfK)\lambda_{1}(\sfK)}{\lambda_{1}(\sfK) - \lambda_{\ell}^{\prime}(\sfK)}(\widetilde{Z}_{\ell}^2-1) \\
    & + \sum_{\ell=2}^{m}\frac{\lambda_{\ell}(\sfK)^2}{\lambda_{1}(\sfK) - \lambda_{\ell}(\sfK)} +  \sum_{\ell=1}^{m}\frac{\lambda_{\ell}^{\prime}(\sfK)^2}{\lambda_{1}(\sfK) - \lambda_{\ell}^{\prime}(\sfK)}
\end{split}\end{align}
\normalsize
where $Z_{2},\ldots,Z_{m}$ and $\widetilde{Z}_{1},\ldots, \widetilde{Z}_{m}$ are independently generated from $\cN(0,1)$. 
Recalling that $\sfK\in L_{2}[0,1]^2$ it is easy to conclude that as $m\ra\infty$,
\begin{align}\begin{split}\label{e:zetainf}
    \zeta_{m}\dto \zeta_{\infty}:= &\sum_{\ell=2}^{\infty}\frac{\lambda_{\ell}(\sfK)\lambda_{1}(\sfK)}{\lambda_{1}(\sfK) - \lambda_{\ell}(\sfK)}(Z_{\ell}^2-1) +\sum_{\ell=1}^{\infty}\frac{\lambda_{\ell}^{\prime}(\sfK)\lambda_{1}(\sfK)}{\lambda_{1}(\sfK) - \lambda_{\ell}^{\prime}(\sfK)}(\widetilde{Z}_{\ell}^2-1) \\
    & + \sum_{\ell=2}^{\infty}\frac{\lambda_{\ell}(\sfK)^2}{\lambda_{1}(\sfK) - \lambda_{\ell}(\sfK)} +  \sum_{\ell=2}^{\infty}\frac{\lambda_{\ell}^{\prime}(\sfK)^2}{\lambda_{1}(\sfK) - \lambda_{\ell}^{\prime}(\sfK)}\\
    & = \sum_{\lambda\in \sigma(\sfK)\setminus\{\lambda_{1}(\sfK)\}}\frac{\lambda_{1}(\sfK)\lambda}{\lambda_{1}(\sfK) - \lambda}(Z_{\lambda}^2-1) + \sum_{\lambda\in \sigma(\sfK)\setminus\{\lambda_{1}(\sfK)\}}\frac{\lambda^2}{\lambda_{1}(\sfK) - \lambda}
\end{split}\end{align}
where $\{Z_{\lambda}:\lambda\in \sigma(\sfK)\setminus\{\lambda_{1}(\sfK)\}\}$ are independently generated from $\cN(0,1)$.  We can rewrite \eqref{eq:lambdanflucexpand2} as
\begin{align*}
    \P\left(\lambda_{1}(\sfKmat_{n}) - n\lambda_{1}(\sfK)\leq t\right)\leq \P\left(-T_{n,m,1} + T_{n,m,2} + \lambda_{1}(\sfKmat_{n})t_{n}\leq t + O_{\sfK}(\sqrt{\vep})\right) + 9\sqrt{\vep}.
\end{align*}
Recalling $\zeta_m$ from \eqref{e:TTdiff},  we can rewrite the above expression as
\begin{align}\label{e:mid1}
    \limsup_{n\ra\infty}\P\left(\lambda_{1}(\sfKmat_{n}) - n\lambda_{1}(\sfK)\leq t\right)\leq\P\left(-\lambda_{1}(\sfK) + \zeta_{m}\leq t + O_{\sfK}(\sqrt{\vep})\right) + 9\sqrt{\vep}.
\end{align}
As $m\rightarrow \infty$, $\zeta_m \dto  \zeta_\infty$ as constructed in \eqref{e:zetainf}. 
Then recalling that $m\geq m(\vep)$ in \eqref{e:mid1} was arbitrarily chosen we get by taking $m\ra\infty$,
\begin{align*}
    \limsup_{n\ra\infty}\P\left(\lambda_{1}(\sfKmat_{n}) - n\lambda_{1}(\sfK)\leq t\right)\leq \P\left(\zeta_{\infty}\leq t + O_{\sfK}(\sqrt{\vep}) + \lambda_{1}(\sfK)\right) + 9\sqrt{\vep}.
\end{align*}
Finally recalling that $\vep>0$ was chosen arbitrarily small we get,
\begin{align*}
    \limsup_{n\ra\infty}\P\left(\lambda_{1}(\sfKmat_{n}) - n\lambda_{1}(\sfK)\leq t\right)\leq \P\left(\zeta_{\infty} - \lambda_{1}(\sfK)\leq t\right).
\end{align*}
\cblue
Similarly we can show that,
\begin{align*}
    \limsup_{n\ra\infty}\P\left(\lambda_{1}(\sfKmat_{n}) - n\lambda_{1}(\sfK)> t\right)\leq \P\left(\zeta_{\infty} - \lambda_{1}(\sfK)> t\right).
\end{align*}
\cblack
Thus we conclude that,
\begin{align*}
    \lambda_{1}(\sfKmat_{n}) - (n-1)\lambda_{1}(\sfK)\dto\zeta_{\infty}.
\end{align*}
This finishes the proof of the second statement in \Cref{t:main1}.

\subsubsection{Proof of Proposition \ref{prop:Tnm12convg}}\label{sec:proofofTnm12convg}
Recall that all the eigenfunctions of $\sfK$ are orthonormal. Then the in probability convergence of $T_{n,m,1}$ is immediate by \eqref{eq:lambda1Wnconc} and the weak law of large numbers. Next we show the in distribution convergence of $T_{n,m,2}$. For $\phi_{1}\equiv 1$ almost surely, recalling the definition of $T_{n,m,2}$ we get,
\begin{align*}
    T_{n,m,2} = \frac{n\lambda_{1}(\sfK)}{\lambda_{1}(\sfKmat_{n})}\left[\sum_{\ell=2}^{m}\frac{\lambda_{\ell}(\sfK)\lambda_{1}(\sfK)}{\lambda_{1}(\sfK) - \lambda_{\ell}^{\prime}(\sfK)}\left(\frac{1}{\sqrt{n}}\sum_{i=1}^{n}\phi_{\ell}(U_{i})\right)^2+ \sum_{\ell=1}^{m}\frac{\lambda_{\ell}^{\prime}(\sfK)\lambda_{1}(\sfK)}{\lambda_{1}(\sfK) - \lambda_{\ell}^{\prime}(\sfK)}\left(\frac{1}{\sqrt{n}}\sum_{i=1}^{n}\phi_{\ell}^{\prime}(U_{i})\right)^2\right].
\end{align*}
\normalsize
Define,
\begin{align*}
    \bm\phi_{m}(U_{i}) = (\phi_{2}(U_{i}),\cdots,\phi_{m}(U_{i}),\phi_{1}^{\prime}(U_{i}),\cdots,\phi_{m}^{\prime}(U_{i})).
\end{align*}
Then recalling the orthonormality of eigenfunctions and the multivariate CLT we find,
\begin{align*}
    \frac{1}{\sqrt{n}}\sum_{i=1}^{n}\bm\phi_{m}(U_{i})\dto \cN_{2m-1}\left(\bm 0_{2m-1}, \bI_{2m-1}\right).
\end{align*}
The proof is now completed by an application of the continuous mapping theorem, \eqref{eq:lambda1Wnconc} and Slutsky's Lemma.

\cblue
\subsection{Proof of \Cref{prop:IIandIIIreplace}}\label{sec:proofofIIandIIIreplace}
We start by recalling the master equation \eqref{eq:1AB1=12},
\begin{align*}
    \lambda_{1}(\sfK)\Phi_{1}(\bU_{n})^{\top}(\bcA_{n} - \bcB_{n})^{-1}\Phi_{1}(\bU_{n}) = 1.
\end{align*}
By basic algebra, we can reformulate \eqref{eq:1AB1=12} as
\small
\begin{align}\begin{split}\label{e:decomp}
\frac{1}{ \lambda_{1}(\sfK)}
&=\Phi_{1}(\bU_{n})^{\top}(\bcA_{n} - \bcB_{n})^{-1}\Phi_{1}(\bU_{n}) \\
&=\frac{\Phi_{1}(\bU_{n})^{\top}\Phi_{1}(\bU_{n}) }{\lambda_1(\sfKmat_n)}
+\frac{\Phi_{1}(\bU_{n})^{\top}(\lambda_1(\sfKmat_n)-\bcA_n+\bcB_n)(\bcA_n-\bcB_n)^{-1}\Phi_{1}(\bU_{n}) }{\lambda_1(\sfKmat_n)}.
\end{split}\end{align}
\normalsize
We can further decompose the last term on the righthand side of \eqref{e:decomp} as
\begin{align}\label{e:decomposeII}
\frac{\Phi_{1}(\bU_{n})^{\top}(\lambda_1(\sfKmat_n)-\bcA_n+\bcB_n)(\bcA_n-\bcB_n)^{-1}\Phi_{1}(\bU_{n}) }{\lambda_1(\sfKmat_n)}=:II+III,
\end{align}
where
\small
\begin{align}\begin{split}\label{e:defII}
&II:=\frac{\Phi_{1}(\bU_{n})^{\top}(\lambda_1(\sfKmat_n)-\bcA_n+\bcB_n)\Phi_{1}(\bU_{n}) }{\lambda^2_1(\sfKmat_n)},\\
&III:=\frac{\Phi_{1}(\bU_{n})^{\top}(\lambda_1(\sfKmat_n)-\bcA_n+\bcB_n)(\bcA_n-\bcB_n)^{-1}(\lambda_1(\sfKmat_n)-\bcA_n+\bcB_n)\Phi_{1}(\bU_{n}) }{\lambda^2_1(\sfKmat_n)}.
\end{split}\end{align}
\normalsize
In the following proposition we show that we can replace $\bcA_{n} - \bcB_{n}$ in \eqref{e:defII} by $\bcA_{n}^{(m)} - \bcB_{n}^{(m)}$ with a small error. The proof of this proposition is deferred to Section \ref{sec:proofofIIand}
\begin{prop}\label{prop:IIandIIIreplace2}
    Recall the matrices $\bcA_{n}, \bcB_{n}, \bcA_{n}^{(m)} ,\bcB_{n}^{(m)}$ from \eqref{e:defAB} and \eqref{e:defABk}.
           For any fixed $\vep>0$ there exists $m(\vep)\in\mathbb{N}$ such that for any fixed $m\geq m(\vep)$, for all $n\geq n(m,\vep)$ the following holds for $II$ and $III$ from \eqref{e:defII}.
    With probability $1-7\sqrt{\vep}$,
    \begin{align}\label{eq:IIapprox}
        \left|II - \frac{\Phi_{1}(\bU_{n})^{\top}\left(\lambda_{1}(\sfKmat_{n}) - \bcA_{n}^{(m)} + \bcB_{n}^{(m)}\right)\Phi_{1}(\bU_{n})}{\lambda_{1}(\sfKmat_{n})^2}\right|\lesssim_{\sfK}\frac{\vep}{n}
    \end{align}
    and 
    \footnotesize
    \begin{align}\begin{split}\label{e:replace2}
        \left| 
        III-\frac{\Phi_{1}(\bU_{n})^{\top}(\lambda_1(\sfKmat_n)-\bcA^{(m)}_n+\bcB^{(m)}_n)(\bcA^{(m)}_n-\bcB^{(m)}_n)^{-1}(\lambda_1(\sfKmat_n)-\bcA^{(m)}_n+\bcB^{(m)}_n)\Phi_{1}(\bU_{n}) }{\lambda^2_1(\sfKmat_{n})}\right|\lesssim_{\sfK}\frac{\sqrt{\vep}}{n}.
        \end{split}\end{align}
    \normalsize
    \end{prop}

    Then combining \Cref{prop:IIandIIIreplace} with \eqref{e:decomposeII} and \eqref{e:defII}, we can replace $(\bcA_{n} - \bcB_{n})$ in \eqref{e:decomp} by $(\bcA_{n}^{(m)} - \bcB_{n}^{(m)})$ and we get,
    \begin{align*}
        \left|\frac{1}{\lambda_{1}(\sfK)} - \Phi_{1}(\bU_{n})^{\top}\left(\bcA_{n}^{(m)} - \bcB_{n}^{(m)}\right)^{-1}\Phi_{1}(\bU_{n})\right|\lesssim_{\sfK} \frac{\sqrt{\vep}}{n}
    \end{align*}
    with probability at least $1-8\sqrt{\vep}$ for the choice of $\vep, m$ and $n$ as in \Cref{prop:IIandIIIreplace}. 
\cblack
    \subsubsection{Proof of Proposition \ref{prop:IIandIIIreplace2}}\label{sec:proofofIIand}
    Recall the matrices $\bcA_{n}, \bcB_{n}, \bcA_{n}^{(m)} ,\bcB_{n}^{(m)}$ from \eqref{e:defAB} and \eqref{e:defABk}, and  $II, III$ from \eqref{e:defII}.
    Before going ahead with the proof of Proposition \ref{prop:IIandIIIreplace2} we first state the following Lemmas \ref{lemma:inftytok} and \ref{lemma:IIItermOn}, which will be used extensively in the proof of Proposition \ref{prop:IIandIIIreplace2}. In particular, lemma \ref{lemma:inftytok} states that $(\bcA_{n} - \bcB_{n})$ and $\bcA_{n}^{(m)} - \bcB_{n}^{(m)}$ can be arbitrarily close provided we take $m$ large enough and Lemma \ref{lemma:IIItermOn} gives an efficient estimate on the inner product of $\bcA_{n}^{(m)} - \bcB_{n}^{(m)}$ with the vector $\Phi_{1}(\bU_{n})$. Both lemmas will be used to replace $(\bcA_{n} - \bcB_{n})$ in $II, III$ (recall from \eqref{e:defII}) to $\bcA_{n}^{(m)} - \bcB_{n}^{(m)}$ with arbitrarily small error.
    
    \begin{lemma}\label{lemma:inftytok}
        Recall the matrices $\bcA_{n}, \bcB_{n}, \bcA_{n}^{(m)} ,\bcB_{n}^{(m)}$ from \eqref{e:defAB} and \eqref{e:defABk}.
               For any fixed $\vep>0$ there exists $m(\vep)\in\mathbb{N}$ such that for any fixed $m\geq m(\vep)$, for all $n\geq n(m,\vep)$ we have,
            \begin{align}
                &\left|\Phi_{1}(\bU_{n})^{\top}\left((\bcA_{n} - \bcB_{n}) - (\bcA_{n}^{(m)} - \bcB_{n}^{(m)})\right)\Phi_{1}(\bU_{n})\right|\lesssim_{\sfK} \vep n,\label{eq:AnBnAkBk1}\\
                &\left\|\left((\bcA_{n} - \bcB_{n}) - (\bcA_{n}^{(m)} - \bcB_{n}^{(m)})\right)\Phi_{1}(\bU_{n})\right\|_{2}\lesssim_{\sfK}\vep n,\label{eq:AnBnAkBk2}\\
                &\left\|(\bcA_{n} - \bcB_{n}) - (\bcA_{n}^{(m)} - \bcB_{n}^{(m)})\right\|_{2\ra 2}\leq \vep n
                \label{eq:AnBnAkBk3}
            \end{align}
            with probability at least $1-\vep$.
    \end{lemma}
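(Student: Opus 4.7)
The plan is to first identify $\Delta_{n,m} := (\bcA_n - \bcB_n) - (\bcA_n^{(m)} - \bcB_n^{(m)})$ as a hollow kernel matrix built from the tail kernel $\sfK - \sfK_m$. Using \eqref{e:defAB} and \eqref{e:defABk}, the diagonal contributions cancel exactly: $\bcA_n - \bcA_n^{(m)}$ is the diagonal matrix $\diag\{(\sfK - \sfK_m)(U_i, U_i)\}$, which agrees with the diagonal of $\bcB_n - \bcB_n^{(m)}$, while off-diagonally $\bcA_n - \bcA_n^{(m)}$ vanishes and $(\bcB_n - \bcB_n^{(m)})_{ij} = (\sfK - \sfK_m)(U_i, U_j)$. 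Thus $(\Delta_{n,m})_{ij} = -(\sfK - \sfK_m)(U_i, U_j)\delta_{i\ne j}$. Since $\sfK$ is Lipschitz, the eigenvalues are summable (in fact $\lambda_j(\sfK) = O(j^{-3/2-})$), so both $\|\sfK - \sfK_m\|_{L^2}$ and $\|\sfK - \sfK_m\|_{\mathrm{op}} \leq \max(\lambda_{m+1}(\sfK), |\lambda'_{m+1}(\sfK)|)$ can be made arbitrarily small by taking $m$ large.

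For \eqref{eq:AnBnAkBk3}, I would apply \Cref{lemma:lambda1Wnconc} to the kernels $\pm(\sfK - \sfK_m)$ to obtain
$\|\Delta_{n,m}\|_{2\to 2}/n \leq 2\|\sfK - \sfK_m\|_{\mathrm{op}} + o(1) \leq \vep$
with probability at least $1 - \vep/3$, after first choosing $m$ large and then $n$ large. For \eqref{eq:AnBnAkBk1}, write
\[
\Phi_1(\bU_n)^\top \Delta_{n,m}\Phi_1(\bU_n) = -\sum_{i\neq j} h_m(U_i, U_j), \quad h_m(x,y) := (\sfK-\sfK_m)(x,y)\phi_1(x)\phi_1(y).
\]
The crucial observation is that $\phi_1$ is the top eigenfunction of both $\sfK$ and $\sfK_m$ with the same eigenvalue $\lambda_1(\sfK)$, so $T_{\sfK - \sfK_m}\phi_1 \equiv 0$. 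Consequently
\[
\E[h_m(U_1,U_2)\mid U_1] = \phi_1(U_1)\int (\sfK-\sfK_m)(U_1,y)\phi_1(y)\,\rd y = 0,
\]
which makes the U-statistic completely degenerate. The variance therefore collapses from the generic $O(n^3)$ to
$\mathrm{Var}\!\bigl(\sum_{i\neq j} h_m(U_i,U_j)\bigr) \lesssim_{\sfK} n^2 \|\phi_1\|_\infty^4 \|\sfK-\sfK_m\|_{L^2}^2$, and Chebyshev gives $|\Phi_1^\top \Delta_{n,m}\Phi_1| = O_p(n\|\sfK-\sfK_m\|_{L^2}) \leq \vep n$ with probability $\geq 1-\vep/3$ for $m$ large.

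For \eqref{eq:AnBnAkBk2}, I would bound $\|\Delta_{n,m}\Phi_1(\bU_n)\|_2^2$ coordinatewise. Writing
$(\Delta_{n,m}\Phi_1(\bU_n))_i = -\sum_{j\ne i}(\sfK-\sfK_m)(U_i,U_j)\phi_1(U_j)$
and conditioning on $U_i$, the eigenfunction identity above gives conditional mean zero and conditional variance at most $(n-1)\int (\sfK-\sfK_m)^2(U_i,y)\phi_1^2(y)\,\rd y$. Summing over $i$ and taking expectation yields
$\E \|\Delta_{n,m}\Phi_1(\bU_n)\|_2^2 \lesssim_{\sfK} n^2 \|\phi_1\|_\infty^2 \|\sfK-\sfK_m\|_{L^2}^2$,
so Markov's inequality gives $\|\Delta_{n,m}\Phi_1(\bU_n)\|_2 \leq \vep n$ with probability $\geq 1-\vep/3$. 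A union bound over the three events then produces the joint probability $1-\vep$ claimed in the lemma.

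The main obstacle is to exploit the degeneracy in \eqref{eq:AnBnAkBk1} and \eqref{eq:AnBnAkBk2} carefully: without the vanishing of $T_{\sfK-\sfK_m}\phi_1$, a generic (non-degenerate) symmetric U-statistic would have standard deviation $\Theta(n^{3/2})$, which would only yield a $O(\sqrt{n})$ bound and spoil the $\vep n$ target in \eqref{eq:AnBnAkBk1}. It is precisely the fact that $\sfK_m$ preserves the top eigenpair of $\sfK$ (by construction) that drives the sharper rate. Handling the eigenfunctions $\phi_\ell, \phi'_\ell$ for large $\ell$ (which are potentially poorly controlled pointwise) is avoided because the bounds reduce to the $L^2$ norm of $\sfK - \sfK_m$ together with the fixed bounds $\|\phi_1\|_\infty$, both of which are manageable under \Cref{assmp:assumptionkernel}.
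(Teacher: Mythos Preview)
Your proposal is correct and follows essentially the same route as the paper: you identify $(\bcA_n-\bcB_n)-(\bcA_n^{(m)}-\bcB_n^{(m)})$ with (minus) the hollow kernel matrix of the tail $\sfK-\sfK_m$, exploit the orthogonality $T_{\sfK-\sfK_m}\phi_1\equiv 0$ to make the U-statistic in \eqref{eq:AnBnAkBk1} degenerate, bound the second moments for \eqref{eq:AnBnAkBk1} and \eqref{eq:AnBnAkBk2} via Markov, and use an operator-norm concentration for \eqref{eq:AnBnAkBk3}. The only cosmetic difference is that the paper invokes \Cref{lemma:TW0minusTW} directly for the operator norm in \eqref{eq:AnBnAkBk3}, whereas you phrase it via \Cref{lemma:lambda1Wnconc} applied to $\pm(\sfK-\sfK_m)$; both rely on the same Lipschitz embedding estimate with an $m$-dependent constant absorbed by taking $n\geq n(m,\vep)$.
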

        \begin{lemma}\label{lemma:IIItermOn}
        Recall the matrices $\bcA_{n}, \bcB_{n}, \bcA_{n}^{(m)} ,\bcB_{n}^{(m)}$ from \eqref{e:defAB} and \eqref{e:defABk}.
            For any fixed $\vep>0$, there exists $m(\vep)\in \N$ such that for all $m\geq m(\vep)$, for all $n\geq n(m,\vep)$ we have,
            \begin{align*}
                \left\|\left(\lambda_{1}(\sfKmat_{n}) - \bcA_{n}^{(m)} + \bcB_{n}^{(m)}\right)\Phi_{1}(\bU_{n})\right\|_{2}\lesssim_{\sfK} \frac{n}{\vep^{1/4}} + \sqrt{n}
            \end{align*}
            with probability at least $1-\sqrt{\vep}$.
        \end{lemma}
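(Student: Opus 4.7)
The plan is to reduce the problem to a second-moment estimate followed by Markov's inequality. Starting from \eqref{eq:AnkminusBnk} and cancelling the diagonal correction, one obtains the clean identity
\begin{align*}
\lambda_1(\sfKmat_n)\bI_n - \bcA_n^{(m)} + \bcB_n^{(m)} = \sfKmat_{n,m} - \lambda_1(\sfK)\Phi_1(\bU_n)\Phi_1(\bU_n)^\top.
\end{align*}
Let $\sfK_m^-(x,y) := \sfK_m(x,y) - \lambda_1(\sfK)\phi_1(x)\phi_1(y)$, and compute the $i$th coordinate of this matrix applied to $\Phi_1(\bU_n)$. The leading $n\lambda_1(\sfK)\phi_1(U_i)$ pieces cancel once one accounts for the zero diagonal of $\sfKmat_{n,m}$, leaving
\begin{align*}
\bigl((\lambda_1(\sfKmat_n)\bI_n - \bcA_n^{(m)} + \bcB_n^{(m)})\Phi_1(\bU_n)\bigr)_i = -\lambda_1(\sfK)\phi_1(U_i)^3 + Z_i,\qquad Z_i := \sum_{j \neq i}\sfK_m^-(U_i, U_j)\phi_1(U_j).
\end{align*}
Since $\|\phi_1\|_\infty \lesssim_\sfK 1$ by Lemma \ref{lemma:efuncbdd}, the first summand contributes at most $O_\sfK(\sqrt n)$ to the $\ell^2$ norm deterministically, which accounts for the $\sqrt n$ term in the bound.

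The main work is to control $\sqrt{\sum_i Z_i^2}$. The crucial observation is that $T_{\sfK_m^-}\phi_1 \equiv 0$: the eigenfunctions $\phi_2,\ldots,\phi_m,\phi_1^\prime,\ldots,\phi_m^\prime$ of $\sfK_m^-$ are $L^2$-orthogonal to $\phi_1$ by construction, and $\phi_1$ itself is sent to zero by the rank-one subtraction. Hence for every $j \neq i$ one has $\E[\sfK_m^-(U_i, U_j)\phi_1(U_j)\mid U_i] = T_{\sfK_m^-}\phi_1(U_i) = 0$, so the conditional cross-terms vanish and
\begin{align*}
\E[Z_i^2] \leq (n-1)\iint \sfK_m^-(x,y)^2\phi_1(y)^2\,\rd x\,\rd y \leq (n-1)\|\phi_1\|_\infty^2\|\sfK_m^-\|_2^2 \lesssim_\sfK n,
\end{align*}
where $\|\sfK_m^-\|_2^2 \leq \|\sfK\|_2^2 \leq 1$ from the eigen-expansion. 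Summing over $i$ yields $\E\bigl[\sum_i Z_i^2\bigr] \lesssim_\sfK n^2$.

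A Markov inequality now closes the argument: with probability at least $1-\sqrt\vep$ one has $\sum_i Z_i^2 \leq C_\sfK n^2/\sqrt\vep$, so $\sqrt{\sum_i Z_i^2} \lesssim_\sfK n/\vep^{1/4}$, and combining with the deterministic $O_\sfK(\sqrt n)$ contribution from the diagonal piece yields the stated estimate. The only genuinely delicate step I expect is the initial algebraic simplification: one has to track carefully how the diagonal corrections $\bD_{n,\ell}$ baked into $\bcA_n^{(m)}$ exactly reconcile the zero-diagonal kernel matrix $\sfKmat_{n,m}$ with the naive full-rank sum $\sum_\ell \lambda_\ell\Phi_\ell\Phi_\ell^\top$, because without this cancellation the $n\lambda_1(\sfK)\phi_1(U_i)$ main term would dominate and the bound would fail. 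After that, everything reduces to a single second-moment computation controlled by $\|\sfK_m^-\|_2 \leq \|\sfK\|_2$ together with Markov's inequality, which explains why the bound is independent of $m$ (the condition $m \geq m(\vep)$ is inherited from Lemma \ref{lemma:inftytok} for downstream use).
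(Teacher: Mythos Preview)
Your proposal is correct and follows essentially the same approach as the paper: both reduce to the identity $\lambda_1(\sfKmat_n)\bI_n - \bcA_n^{(m)} + \bcB_n^{(m)} = \widetilde{\sfKmat}_{n,m} - \lambda_1(\sfK)\bD_{n,1}$ (your $\sfK_m^-$ is the paper's $\widetilde{\sfK}_{n,m}$), compute the second moment of $\widetilde{\sfKmat}_{n,m}\Phi_1(\bU_n)$ using the orthogonality $T_{\widetilde{\sfK}_{n,m}}\phi_1 = 0$ to kill cross terms, and finish with Markov's inequality. Your bound $\|\sfK_m^-\|_2^2 \leq \|\sfK\|_2^2$ via Parseval is slightly more direct than the paper's detour through $\widetilde{\sfK}_{n,m} = \sfK - \widetilde{\sfK}_{-m} - \lambda_1\phi_1\otimes\phi_1$, and you are right that this makes the threshold $m(\vep)$ unnecessary for the lemma itself.
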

    
    The proof of Lemmas \ref{lemma:inftytok} and \ref{lemma:IIItermOn} are given in sections \ref{sec:proofofinftytok} and \ref{section:proofofIIItermOn}.
    Having stated the above lemmas we now proceed with the proof of Proposition \ref{prop:IIandIIIreplace2}. First we prove the approximation to the term $II$ in \eqref{eq:IIapprox}. Notice that by Lemma \ref{lemma:lambda1Wnconc},
    \begin{align}\label{eq:lambda1Wnconc}
        \left|\frac{\lambda_{1}(\sfKmat_{n})}{n} - \lambda_{1}(\sfK)\right|\lesssim_{\sfK}\frac{\log n}{\sqrt{n}}
    \end{align}
    with probability $1-8n\exp\left(-\frac{1}{6}(\log n)^2\right)$. Then by Lemma \ref{lemma:inftytok} and \eqref{eq:lambda1Wnconc} for any $\vep>0$ and $m\geq m(\vep)$ there exists $n(m,\vep)\geq 1$ such that for all $n\geq n(m,\vep)$,
    \begin{align*}
        \left|II - \frac{\Phi_{1}(\bU_{n})^{\top}\left(\lambda_{1}(\sfKmat_{n}) - \bcA_{n}^{(m)} + \bcB_{n}^{(m)}\right)\Phi_{1}(\bU_{n})}{\lambda_{1}(\sfKmat_{n})^2}\right|\lesssim_{\sfK} \frac{\vep}{n}
    \end{align*}
    with probability at least $1-8n\exp\left(-\frac{1}{6}(\log n)^2\right) - \vep$. Now to approximate $III$ we first consider the following bound,
    \footnotesize
    \begin{align}\label{eq:replace}
        &\phantom{{}={}}\left| 
        III-\frac{\Phi_{1}(\bU_{n})^{\top}(\lambda_1(\sfKmat_n)-\bcA^{(m)}_n+\bcB^{(m)}_n)(\bcA^{(m)}_n-\bcB^{(m)}_n)^{-1}(\lambda_1(\sfKmat_n)-\bcA^{(m)}_n+\bcB^{(m)}_n)\Phi_{1}(\bU_{n}) }{\lambda^2_1(\sfKmat_{n})}\right|\nonumber\\
        &\leq \left|\frac{\Phi_{1}(\bU_{n})^{\top}(\lambda_1(\sfKmat_n)-\bcA^{(m)}_n+\bcB^{(m)}_n)((\bcA_n-\bcB_n)^{-1}-(\bcA^{(m)}_n-\bcB^{(m)}_n)^{-1})(\lambda_1(\sfKmat_n)-\bcA^{(m)}_n+\bcB^{(m)}_n)\Phi_{1}(\bU_{n}) }{\lambda^2_1(\sfKmat_{n})}\right|\nonumber\\
        &+ \left|\frac{\Phi_{1}(\bU_{n})^{\top}(\bcA_n-\bcB_n-\bcA^{(m)}_n+\bcB^{(m)}_n)(\bcA^{(m)}_n-\bcB^{(m)}_n)^{-1}(\lambda_1(\sfKmat_n)-\bcA^{(m)}_n+\bcB^{(m)}_n)\Phi_{1}(\bU_{n}) }{\lambda^2_1(\sfKmat_{n})}\right|\nonumber\\
        &+\left|\frac{\Phi_{1}(\bU_{n})^{\top}(\lambda_1(\sfKmat_n)-\bcA_n+\bcB_n)(\bcA^{(m)}_n-\bcB^{(m)}_n)^{-1}(\bcA_n-\bcB_n-\bcA^{(m)}_n+\bcB^{(m)}_n)\Phi_{1}(\bU_{n}) }{\lambda^2_1(\sfKmat_{n})}\right|.
    \end{align}
    \normalsize
    Because of the bounds from Lemma \ref{lemma:inftytok} it is now enough to find the error of approximating $\bcA_{n} - \bcB_{n}$ by $\bcA_{n}^{(m)} - \bcB_{n}^{(m)}$ and a bound on the inner product of $\bcA_{n} - \bcB_{n}$ with the vector $\Phi_{1}(\bU_{n})$. With that goal we first find the approximation error. 
    \cred
    Towards that we define the following finite rank kernel $\sfK_m$,
    \begin{align}\label{eq:defKm}
        \sfK_{m}(x,y) = \sum_{\ell=1}^{m}\lambda_{\ell}(\sfK)\phi_{\ell}(x)\phi_{\ell}(y) + \sum_{\ell=1}^{m}\lambda_{\ell}^{\prime}(\sfK)\phi_{\ell}^{\prime}(x)\phi_{\ell}^{\prime}(y).
    \end{align}
    We denote the corresponding kernel matrix of $\sfK_m$ as
    \begin{align}\label{eq:Knmmat}
        \sfKmat_{n,m}:= ((\sfK_{m}(U_{i},U_{j})\delta_{i\neq j}))_{i,j=1}^{n}.
    \end{align}
    Recalling the definition of $\bcA_n^{(m)}, \bcB_n^{(m)}$ from \eqref{e:defABk} shows
    \small
    \begin{align}\label{eq:AnmBnmdiff}
        \bcA_n^{(m)} - \bcB_n^{(m)} = \lambda_1(\sfKmat_n) + \lambda_1(\sfK)\bD_{n,1} - \left(\sfKmat_{n,m} - \lambda_1(\sfK)\left(\Phi_1(\bU_n)\Phi_1(\bU_n)^\top - \bD_{n,1}\right)\right).
    \end{align}
    \normalsize
    \cblack
    Hence we can now apply Proposition \ref{lemma:AnBninvertible} and notice that for given $m\geq 2$ there exists $n(m)\geq 1$ such that for all $n\geq n(m)$, $\bcA_{n}^{(m)} - \bcB_{n}^{(m)}$ is invertible and,
    \begin{align}\label{eq:AnkBnkinvnormbd}
        \left\|\left(\bcA_{n}^{(m)} - \bcB_{n}^{(m)}\right)^{-1}\right\|_{2\ra 2}\leq \frac{|\lambda_{1}(\sfK_{m}) - \lambda_{2}(\sfK_{m})|}{2n} =   \frac{|\lambda_{1}(\sfK) - \lambda_{2}(\sfK)|}{2n}
    \end{align}
    with probability at least $1-16n\exp\left(-\frac{1}{6}(\log n)^2\right)$. As an easy consequence of Lemma \ref{lemma:inftytok}, Proposition \ref{lemma:AnBninvertible} and \eqref{eq:AnkBnkinvnormbd} we get that for any $\vep>0$ and $m\geq m(\vep)$ there exists $n(m,\vep)\geq 1$ such that for all $n\geq n(m,\vep)$,
    \begin{align}\label{eq:AnBnconcAnkBnk}
        \left\|\left(\bcA_{n} - \bcB_{n}\right)^{-1} - \left(\bcA_{n}^{(m)} - \bcB_{n}^{(m)}\right)^{-1}\right\|_{2\ra 2}\lesssim_{\sfK} \frac{\vep}{n}
    \end{align}
    with probability at least $1-32n\exp\left(-\frac{1}{6}(\log n)^2\right) - \vep$, giving us the approximation error. Now combining Lemma \ref{lemma:inftytok} and Lemma \ref{lemma:IIItermOn} we get that for a given $\vep$, there exists $m(\vep)\in\N$ such that for all $m\geq m(\vep)$ and for all $n\geq n(m, \vep)$,
    \begin{align*}
        \left\|\left(\lambda_{1}(\sfKmat_{n}) - \bcA_{n} + \bcB_{n}\right)\Phi_{1}(\bU_{n})\right\|_{2}\lesssim_{\sfK}\frac{n}{\vep^{1/4}} + \sqrt{n} + \vep n
    \end{align*}
    with probability at least $1-\sqrt{\vep} - \vep$, which gives us a bound on the inner product. Now recall the bound from \eqref{eq:replace}. Then by Lemma \ref{lemma:inftytok}, \eqref{eq:lambda1Wnconc}, \eqref{eq:AnkBnkinvnormbd}, \eqref{eq:AnBnconcAnkBnk} and Lemma \ref{lemma:IIItermOn} for small enough $\vep>0$ and $m\geq m(\vep)$ there exists $n(m,\vep)\geq 1$ such that for all $n\geq n(m,\vep)$,
    \footnotesize
    \begin{align*}
        \left| III-\frac{\Phi_{1}(\bU_{n})^{\top}(\lambda_1(\sfKmat_n)-\bcA^{(m)}_n+\bcB^{(m)}_n)(\bcA^{(m)}_n-\bcB^{(m)}_n)^{-1}(\lambda_1(\sfKmat_n)-\bcA^{(m)}_n+\bcB^{(m)}_n)\Phi_{1}(\bU_{n}) }{\lambda^2_1(\sfKmat_{n})}\right|\lesssim_{\sfK} \frac{\sqrt{\vep}}{n}
    \end{align*}
    \normalsize
    with probability at least $1-2\sqrt{\vep} - 3\vep -Cn\exp(-\frac{1}{6}(\log n)^2)$. Choosing $n(m,\vep)$ large enough \blue{this bounds} the approximation errors from $II$ (from \eqref{eq:AnkBnkinvnormbd}) and $III$ happens with probability at least $1-7\sqrt{\vep}$, completing the proof.
    
    \subsubsection{Proof of Lemma \ref{lemma:inftytok}}\label{sec:proofofinftytok}
    
    Consider the modified function,
    \begin{align}\label{eq:defWtildeminusk}
        \widetilde{\sfK}_{-m}(x,y) := \sfK(x,y) - \sum_{\ell=1}^{m}\lambda_{1}(\sfK)\phi_{1}(x)\phi_{1}(y) - \sum_{\ell=1}^{m}\lambda_{1}^{\prime}(\sfK)\phi_{1}^{\prime}(x)\phi_{1}^{\prime}(y)
    \end{align}
    where the equality is in $L_{2}$ sense. Note that by definition,
    \begin{align*}
        \widetilde{\sfK}_{-m}(x,y) = \sum_{\ell>m}\lambda_{1}(\sfK)\phi_{1}(x)\phi_{1}(y) + \sum_{\ell>m}\lambda_{1}^{\prime}(\sfK)\phi_{1}^{\prime}(x)\phi_{1}^{\prime}(y).
    \end{align*}
    \blue{Orthonormality} of \blue{eigenfunctions} implies,
    \begin{align}\label{eq:orthotildeWk}
        \int\widetilde{\sfK}_{-m}(x,y)\phi_{1}(y)\rd y = 0\text{ for almost every }x\in [0,1].
    \end{align}
    Also note that,
    \begin{align}\label{eq:AnBnAkBktildeW}
        (\bcA_{n} - \bcB_{n}) - (\bcA_{n}^{(m)} - \bcB_{n}^{(m)}) = \widetilde{\sfKmat}_{n,-m},\quad \widetilde{\sfKmat}_{n,-m}:=\left(\left(\widetilde{\sfK}_{-m}(U_{i},U_{j})\delta_{i\neq j}\right)\right)_{i,j=1}^{n}.
    \end{align}
    Now fix $\vep>0$ and consider $m(\vep)\geq 1$ such that,
    \begin{align}\label{eq:bddsqeigensum}
        \sum_{\ell> m(\vep)}\lambda_{\ell}^{2}(\sfK) + \sum_{\ell>m(\vep)}\lambda_{\ell}^{\prime^2}(\sfK)\leq \frac{\vep^3}{6}.
    \end{align}
    In particular, this implies that $|\lambda_{m}(\sfK)|\leq \vep/2$ for all $m\geq m(\vep)$ and $\|\widetilde{\sfK}_{-m}\|_{2\ra 2}\leq \vep/2$. Now note that for all $m\geq m(\vep)$ and $n\geq 2$, by \eqref{eq:orthotildeWk} and \eqref{eq:bddsqeigensum} we get,
    \begin{align*}
        \E\left[\left(\Phi_{1}(\bU_{n})^{\top}\widetilde{\sfKmat}_{n,-m}\Phi_{1}(\bU_{n})\right)^2\right] 
        & = \E\left[\left(\sum_{i\neq j}\phi_{1}(U_{i})\phi_{1}(U_{j})\widetilde{\sfK}_{-m}(U_{i},U_{j})\right)^2\right]\\
        & \blue{= 2\sum_{i\neq j}\E\left[\phi_{1}^2(U_{i})\phi_{1}^2(U_{j})\widetilde{\sfK}_{-m}^2(U_{i}, U_{j})\right]}\\
        & \blue{\leq 2n^2\int \phi_{1}^2(x)\phi_{1}^2(y)\widetilde{\sfK}_{-m}^2(x,y)\rd x\rd y}\leq \frac{\vep^3n^2}{3\lambda_{1}(\sfK)^4}
    \end{align*}
    where the last inequality follows by the bounds from Lemma \ref{lemma:efuncbdd} replacing $f$ by $\sfK$. Thus, Markov Inequality along with \eqref{eq:AnBnAkBktildeW} shows,
    \begin{align*}
        \P\left(\left|\Phi_{1}(\bU_{n})^{\top}\left(\bcA_{n} - \bcB_{n} - \bcA_{n}^{(m)} - \bcB_{n}^{(m)}\right)\Phi_{1}(\bU_{n})\right|>\frac{\vep n}{\lambda_{1}(\sfK)^2}\right)\leq \frac{\vep}{3}
    \end{align*}
    for all $m\geq m(\vep)$ and $n\geq 2$ completing the proof of \eqref{eq:AnBnAkBk1}. Observe that it is enough to bound $\|\widetilde{\sfKmat}_{n,-m}\Phi_{1}(\bU_{n})\|_{2}$ in $L_{2}$ to show \eqref{eq:AnBnAkBk2}. Note that,
    \begin{align}\label{eq:WnminuskPhi1}
        \E\left[\|\widetilde{\sfKmat}_{n,-m}\Phi_{1}(\bU_{n})\|_{2}^2\right] 
        & = \sum_{i=1}^{n}\E\left[\left(\sum_{j=1, j\neq i}^{n}\widetilde{\sfK}_{-m}(U_{i}, U_{j})\phi_{1}(U_{j})\right)^2\right]\nonumber\\
        & = \sum_{i=1}^{n}\E\left[\sum_{\substack{j,\ell=1,\\j,\ell\neq i}}^{n}\phi_{1}(U_{j})\widetilde{\sfK}_{-m}(U_{i}, U_{j})\phi_{1}(U_{\ell})\widetilde{\sfK}_{-m}(U_{i}, U_{\ell})\right]\nonumber\\
        & = \sum_{i\neq j}\E\left[\phi_{1}^2(U_{j})\widetilde{\sfK}_{-m}^2(U_{i}, U_{j})\right]\leq \frac{\vep^3n^2}{3\lambda_{1}(\sfK)^2},
    \end{align}
    where the last inequality once again follows by the bounds from Lemma \ref{lemma:efuncbdd} replacing $f$ by $\sfK$ and \eqref{eq:bddsqeigensum}. By Markov inequality,
    \begin{align*}
        \P\left(\left\|\left(\bcA_{n} - \bcB_{n} - \bcA_{n}^{(m)} - \bcB_{n}^{(m)}\right)\Phi_{1}(\bU_{n})\right\|_{2}>\frac{\vep n}{\lambda_{1}(\sfK)}\right)\leq \frac{\vep}{3}
    \end{align*}
    for all $m\geq m(\vep)$ and $n\geq 2$, which shows the bound from \eqref{eq:AnBnAkBk2}. For the proof of \eqref{eq:AnBnAkBk3} notice that by definition there exists constants $L(m,\sfK)$ and $B(m,\sfK)$ such that $|\widetilde{\sfK}_{-m}|\leq B(m,\sfK)$ and \blue{$\widetilde{\sfK}_{-m}$ is Lipschitz} with Lipschitz constant $L(m,\sfK)$. Then by Lemma \ref{lemma:TW0minusTW} we get,
    \begin{align*}
        \left|\frac{1}{n}\left\|\widetilde{\sfKmat}_{n,-m}\right\|_{2\ra 2} - \|\widetilde{\sfK}_{-m}\|_{2\ra 2}\right|\lesssim_{m,\sfK} \frac{2L(m,W)\log n}{\sqrt{n}} + \frac{2B(m,W)}{\sqrt{n}}
    \end{align*}
    with probability at least $1-4n\exp\left(-\frac{1}{6}(\log n)^2\right)$. There exists $n(m,\vep)$ such that,
    \begin{align*}
        \frac{2L(m,W)\log n}{\sqrt{n}} + \frac{2B(m,W)}{\sqrt{n}}\leq \frac{\vep}{2}\text{ and }4n\exp\left(-\frac{1}{6}(\log n)^2\right)\leq \frac{\vep}{3}\text{ for all }n\geq n(m,\vep).
    \end{align*}
    Then for all $n\geq n(m,\vep)$ we have,
    \begin{align*}
        \P\left(\left\|\bcA_{n} - \bcB_{n} - \bcA_{n}^{(m)} + \bcB_{n}^{(m)}\right\|_{2\ra 2}>n\vep\right)\leq \frac{\vep}{3}.
    \end{align*}
    which completes the proof of \eqref{eq:AnBnAkBk3}.
    
    \subsubsection{Proof of Lemma \ref{lemma:IIItermOn}}\label{section:proofofIIItermOn}
    \blue{Recall the finite rank kernel $\sfK_m$ from \eqref{eq:defKm}, the corresponding kernel matrix $\sfKmat_{n,m}$ from \eqref{eq:Knmmat}. The identity \eqref{eq:AnmBnmdiff} can be written as,
    \begin{align*}
        \lambda_{1}(\sfKmat_{n}) - \bcA_{n}^{(m)} - \bcB_{n}^{(m)} = \sfKmat_{n,m} - \lambda_{1}(\sfK)\Phi_{1}(\bU_{n})\Phi_1(\bU_{n})^{\top}.
    \end{align*}}
    Define $\widetilde{\sfK}_{n,m}(x,y) := \sfK_{m}(x,y) - \lambda_{1}(\sfK)\phi_{1}(x)\phi_{1}(y)$. Then it is easy to observe that,
    \begin{align}\label{eq:lambda1AnBneqWnkDn1}
        \lambda_{1}(\sfKmat_{n}) - \bcA_{n}^{(m)} - \bcB_{n}^{(m)} = \widetilde{\sfKmat}_{n,m} - \bD_{n,1}
    \end{align}
    where,
    \begin{align*}
        \widetilde{\sfKmat}_{n,m} = \left(\left(\widetilde{\sfK}_{n,m}(U_{i},U_{j})\right)\right)_{i\neq j}.
    \end{align*}
    Following arguments similar to \eqref{eq:WnminuskPhi1} we get,
    \begin{align*}
        \E\left[\left\|\widetilde{\sfKmat}_{n,m}\Phi_{1}(\bU_{n})\right\|_{2}^{2}\right] = \sum_{i\neq j}\E\left[\phi_{1}(U_{j})^2\widetilde{\sfK}_{n,m}(U_{i},U_{j})^2\right].
    \end{align*}
    Now recall $\widetilde{\sfK}_{-m}$ from \eqref{eq:defWtildeminusk} and choose $k(\vep)\in \N$ such that for all $m\geq m(\vep)$, $\|\widetilde{\sfK}_{-m}\|_{2}\leq \vep$. Noting that, $\widetilde{\sfK}_{n,m}(x,y)  = \sfK(x,y) - W_{-m}(x,y) - \lambda_{1}\phi_{1}(x)\phi_{1}(y)$ we get,
    \begin{align}\label{eq:markov1bd}
        \E\left[\left\|\widetilde{\sfKmat}_{n,m}\Phi_{1}(\bU_{n})\right\|_{2}^{2}\right]\leq 8n^2\frac{\lambda_{1}(\sfK)^2(1+\vep^2) + 1}{\lambda_{1}(\sfK)^4}\leq 8n^2\frac{2\lambda_{1}(\sfK)^2+1}{\lambda_{1}(\sfK)^4}.
    \end{align}
    Recalling the bound on $|\phi_{1}|$ from Lemma \ref{lemma:efuncbdd}, replacing $f$ by $\sfK$ shows,
    \begin{align}\label{eq:markov2}
        \left\|\left(\widetilde{\sfKmat}_{n,m} - \bD_{n,1}\right)\Phi_{1}(\bU_{n})\right\|_{2}\leq \left\|\widetilde{\sfKmat}_{n,m}\Phi_{1}(\bU_{n})\right\|_{2} + \frac{\sqrt{n}}{|\lambda_{1}(\sfK)|^3}.
    \end{align}
    An easy application of Markov inequality along with \eqref{eq:markov1bd} and \eqref{eq:markov2} shows,
    \begin{align*}
        \P\bigg(\left\|\left(\widetilde{\sfKmat}_{n,m} - \bD_{n,1}\right)\Phi_{1}(\bU_{n})\right\|_{2}>\frac{n}{\vep^{1/4}}\sqrt{8\frac{2\lambda_{1}(\sfK)^2 + 1}{\lambda_{1}(\sfK)^4}} + \frac{\sqrt{n}}{|\lambda_{1}(\sfK)|^3}\bigg)\leq \sqrt{\vep}.
    \end{align*}
    The proof is now completed by recalling \eqref{eq:lambda1AnBneqWnkDn1}.

    \subsection{Proof of Proposition \ref{p:la1decomposition}}\label{sec:proofofla1decomposition}
    We recall from \eqref{eq:reducedtok}, the following holds with probability $1-8\sqrt{\varepsilon}$,
    \begin{align}\label{eq:reducedtok2}
        \left|\frac{1}{\lambda_{1}(\sfK)} - \Phi_{1}(\bU_{n})^{\top}\left(\bcA_{n}^{(m)} - \bcB_{n}^{(m)}\right)^{-1}\Phi_{1}(\bU_{n})\right|\lesssim_{\sfK} \frac{\sqrt{\vep}}{n}.
    \end{align}
    
    \Cref{p:la1decomposition} infact follows from a special case of the following general proposition. \blue{For notational brevity we resuse notations, which will be clear from the context. The proof of this proposition is postponed to Section \ref{sec:proofoffinitelambdaexpansion}.}
    \begin{prop}\label{prop:finitelambdaexpansion}
        Let $f:[0,1]^2\ra\mathbb R$ be a symmetric Lipschitz function with Lipschitz constant $L_{f}$ and $|f|\leq B_{f}$ and suppose $f$ has $k\geq 1$ many non-zero eigenvalues $\lambda_{1}(f)>\lambda_{2}(f)\geq\lambda_{3}(f)\geq\cdots\geq\lambda_{k}(f)$ with corresponding eigenfunctions $\phi_{i}, 1\leq i\leq k$. Let $\bF_{n} = ((f(U_{i}, U_{j})))_{i\neq j}$ with largest eigenvalue $\lambda_{1}(\bF_{n})$. Define,
        \begin{align*}
            \Phi_{i}(\bU_{n}):= \left(\phi_{i}(U_{1}),\ldots,\phi_{i}(U_{n})\right)^\top,\text{ and }\bD_{n,i} = \diag(\phi_{i}^{2}(U_{1}),\ldots,\phi_{i}^{2}(U_{n})), 1\leq i\leq k.
        \end{align*}
        Consider $\blambda_{n}\in\bR$ satisfying 
         \begin{align}\label{eq:blambdanconc2}
            \left|\frac{\blambda_{n}}{n} - \lambda_{1}(f)\right|\lesssim_{f}\frac{\log n}{\sqrt{n}}
        \end{align}
        with probability at least $1 - 8n\exp\left(-\frac{1}{6}(\log n)^2\right)$. Define,
        \begin{align}\label{e:cAcB}
            \bcA_{n}:= \blambda_{n}\bI_{n} + \sum_{\ell=1}^{k}\lambda_{\ell}(f)\bD_{n,\ell}\text{ and }\bcB_{n}:= \sum_{\ell=2}^{k}\lambda_{\ell}(f)\Phi_{\ell}(\bU_{n})\Phi_{\ell}(\bU_{n})^{\top}.
        \end{align}
        Then for large enough $n, (\bcA_{n} - \bcB_{n})$ is invertible with probability at least $1-12n\exp\left(-\frac{1}{6}(\log n)^2\right)$ and for $|t_{n}|\lesssim_{f}k^{2}\frac{(\log n)^3}{n^{3/2}}$,
        \footnotesize
        \begin{align}
            \lambda_{1}(f)\Phi_{1}(\bU_{n})^{\top}\left(\bcA_{n} - \bcB_{n}\right)^{-1}\Phi_{1}(\bU_{n})
            & = \frac{\lambda_{1}(f)}{\blambda_{n}}\|\Phi_{1}(\bU_{n})\|_{2}^2 -\frac{\lambda_{1}(f)}{\blambda_{n}^2}\sum_{\ell=1}^{k}\lambda_{\ell}(f)\sum_{i=1}^{n}\phi_{1}(U_{i})^2\phi_{\ell}(U_{i})^2\nonumber\\
            &+ \frac{\lambda_{1}(f)}{\blambda_{n}^2}\sum_{j=2}^{k}\frac{\lambda_{j}(f)\lambda_{1}(f)}{\lambda_{1}(f) - \lambda_{j}(f)}\left(\sum_{i=1}^{n}\phi_{1}(U_{i})\phi_{j}(U_{i})\right)^2 + t_{n}\label{eq:finitekexpansion}
        \end{align}
        \normalsize
        with probability at least $1-Cnk\exp\left(-\frac{1}{6}(\log n)^2\right)$, where $C>0$ is a universal constant.
    \end{prop}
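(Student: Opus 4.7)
The plan is to exploit the finite rank of $\bcB_n$ (at most $k-1$) and the diagonal structure of $\bcA_n$ to reduce the quadratic form $\lambda_1(f)\Phi_1(\bU_n)^\top(\bcA_n-\bcB_n)^{-1}\Phi_1(\bU_n)$ to an explicit $(k-1)$-dimensional computation via the Woodbury identity. Invertibility of $\bcA_n-\bcB_n$ is inherited from Proposition \ref{lemma:AnBninvertible}: substituting the spectral decomposition $\bF_n = \sum_{\ell=1}^k\lambda_\ell(f)(\Phi_\ell(\bU_n)\Phi_\ell(\bU_n)^\top-\bD_{n,\ell})$ into \eqref{e:defXn} identifies $\bm{X}_n$ with $\bcA_n-\bcB_n$, so the bound $\|(\bcA_n-\bcB_n)^{-1}\|_{2\to 2}\lesssim_f 1/n$ holds on the claimed high-probability event. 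Writing $\bcB_n=\bV\bm{\Lambda}_f\bV^\top$ with $\bV=[\Phi_2(\bU_n)\mid\cdots\mid\Phi_k(\bU_n)]$ and $\bm{\Lambda}_f=\mathrm{diag}(\lambda_2(f),\ldots,\lambda_k(f))$, the Woodbury identity gives
\begin{align*}
(\bcA_n-\bcB_n)^{-1} = \bcA_n^{-1} + \bcA_n^{-1}\bV\bigl(\bm{\Lambda}_f^{-1}-\bV^\top\bcA_n^{-1}\bV\bigr)^{-1}\bV^\top\bcA_n^{-1},
\end{align*}
which splits the target quadratic form into a diagonal contribution and a Woodbury correction.

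For the diagonal piece, $\bcA_n$ has diagonal entries $d_i=\blambda_n+\sum_{\ell=1}^k\lambda_\ell(f)\phi_\ell^2(U_i)$; by \eqref{eq:blambdanconc2} and the uniform bound $|\phi_\ell|\lesssim_f 1$ from Lemma \ref{lemma:efuncbdd}, $d_i=\blambda_n(1+O_f(1/n))$, and Taylor-expanding $1/d_i$ to second order then summing $\sum_i\phi_1(U_i)^2/d_i$ yields the first two terms of \eqref{eq:finitekexpansion} up to a remainder $O_f(1/n^2)$ after multiplication by $\lambda_1(f)$. For the Woodbury correction I expand each of its three factors around its leading order: coordinate-wise, $(\bV^\top\bcA_n^{-1}\Phi_1)_j = \blambda_n^{-1}\sum_i\phi_j(U_i)\phi_1(U_i)+O_f(1/n^2)$, and $\bV^\top\bcA_n^{-1}\bV = \lambda_1(f)^{-1}\bm{I}_{k-1}+\bm{E}$, where $\bm{E}$ collects a deterministic $O_f(1/n)$ contribution from the $d_i$ expansion and a random part controlled by Hoeffding's inequality applied to $n^{-1}\sum_i\phi_j(U_i)\phi_\ell(U_i)-\delta_{j\ell}$, bounded by $O_f(\log n/\sqrt n)$ entrywise on a union-bound event over the $O(k^2)$ inner products. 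Since $\bM_0:=\bm{\Lambda}_f^{-1}-\lambda_1(f)^{-1}\bm{I}_{k-1}$ is diagonal with entries $(\lambda_1(f)-\lambda_\ell(f))/(\lambda_1(f)\lambda_\ell(f))$, nonzero by the spectral gap and by $\lambda_\ell(f)\neq 0$, it satisfies $\|\bM_0^{-1}\|\lesssim_f 1$; a Neumann expansion $(\bM_0-\bm{E})^{-1}=\bM_0^{-1}+\bM_0^{-1}\bm{E}\bM_0^{-1}+O(\|\bm{E}\|^2)$, combined with $(\bV^\top\Phi_1)_j=\sum_i\phi_j(U_i)\phi_1(U_i)$, then reproduces the third term of \eqref{eq:finitekexpansion}.

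The main obstacle will be the careful bookkeeping required to match the sharp $k^{3/2}(\log n)^3/n^{3/2}$ rate on the remainder $t_n$. The dominant error contribution comes from the second-order Neumann term: using the Frobenius-norm bound $\|\bm{E}\|_{2\to 2}\leq\|\bm{E}\|_F\lesssim_f k\log n/\sqrt n$ (which gains a factor $\sqrt k$ over a crude entrywise estimate) together with the Hoeffding bound $|(\bV^\top\Phi_1)_j|\lesssim_f \sqrt n\log n$ giving $\|\bV^\top\Phi_1\|_2\lesssim_f\sqrt{kn}\log n$, the contribution $\blambda_n^{-2}\lambda_1(f)(\bV^\top\Phi_1)^\top\bM_0^{-1}\bm{E}\bM_0^{-1}(\bV^\top\Phi_1)$ is bounded by $O_f(k^2(\log n)^3/n^{5/2})$, comfortably within the stated budget. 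The remaining cross-terms involving the $O_f(\sqrt k/n)$ corrections to $\bV^\top\bcA_n^{-1}\Phi_1$ and the Taylor remainders from $1/d_i$ are routinely verified to be smaller. The final high-probability conclusion then follows from a union bound combining the invertibility event from Proposition \ref{lemma:AnBninvertible}, the norm control on $\bF_n$ from Lemma \ref{lemma:lambda1Wnconc}, and the $O(k^2)$ Hoeffding events for the empirical inner products, at polynomial cost in $n$ and $k$.
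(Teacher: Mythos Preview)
Your approach is essentially identical to the paper's: both use the Woodbury identity with $\bcB_n=\bV\bm{\Lambda}_f\bV^\top$, expand $\bcA_n^{-1}$ via a Neumann series around $\blambda_n^{-1}\bI_n$ (the paper's Lemma \ref{lemma:normMbdd}), approximate $\bV^\top\bcA_n^{-1}\bV$ by $\lambda_1(f)^{-1}\bI_{k-1}$ using Hoeffding on the empirical inner products (the paper's Lemma \ref{lemma:concVTAV}), and collect the remaining cross-terms (the paper's Lemma \ref{lemma:form1tAinvV}). One arithmetic slip to fix: your second-order Neumann contribution is $O_f(k^2(\log n)^3/n^{3/2})$, not $n^{5/2}$, since $\blambda_n^{-2}\cdot\|\bV^\top\Phi_1\|_2^2\cdot\|\bm E\|\lesssim_f n^{-2}\cdot kn(\log n)^2\cdot k\log n/\sqrt n$; this is still within the needed budget because $k$ is ultimately fixed in the application to Proposition~\ref{p:la1decomposition}.
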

    \begin{proof}[Proof of \Cref{p:la1decomposition}]
    We take $f=\sfK$, $k=2m$, $\bm \lambda_n=\lambda_1(\bm\sfK_n)$, 
    $(\lambda_1(f)>\lambda_2(f)\geq \cdots \geq\lambda_k(f))=(\lambda_1(\sfK)>\lambda_2(\sfK)\geq \cdots \geq \lambda_m(\sfK)\geq\lambda_m'(\sfK)\geq \cdots\geq \lambda_2'(\sfK)\geq \lambda_1'(\sfK))$,
    then $\bm \cA_n, \bm \cB_n$ in \Cref{e:cAcB} are $\bm \cA^{(m)}_n, \bm \cB^{(m)}_n$ in \eqref{eq:reducedtok}. Then \eqref{eq:lambda1Wnconc}
    verifies that \eqref{eq:blambdanconc2} holds with probability  $1-8n\exp\left(-\frac{1}{6}(\log n)^2\right)$. \Cref{prop:finitelambdaexpansion} together with \eqref{eq:reducedtok2} gives that the following holds with probability $1-9\sqrt{\varepsilon}$,
    \begin{align}\begin{split}\label{e:la1est}
      \frac{\sqrt{\varepsilon}}{n}
      &\gtrsim_\sfK  \left|\frac{\lambda_1(\bm\sfK_n)}{n}-\frac{\lambda_1(\sfK)\lambda_1(\bm\sfK_n)}{n} \Phi_{1}(\bU_{n})^{\top}\left(\bcA_{n}^{(m)} - \bcB_{n}^{(m)}\right)^{-1}\Phi_{1}(\bU_{n})\right|\\
      &=\left|\frac{\lambda_1(\bm\sfK_n)}{n}-
       \frac{\lambda_1(\sfK)\|\Phi_{1}(\bU_{n})\|_{2}^2}{n} +\frac{1}{n}T_{n,m,1} -\frac{1}{n}T_{n,m,2}+\frac{\lambda_1(\bm\sfK_n)t_{n}}{n}
      \right|
    \end{split}  \end{align}
    where $|t_{n}|\lesssim_{\sfK}m^{2}\frac{(\log n)^3}{n^{3/2}}$. The claim of \Cref{p:la1decomposition} follows from rearranging \eqref{e:la1est}.
    \end{proof}
    
    \subsubsection{Proof of \Cref{prop:finitelambdaexpansion}}\label{sec:proofoffinitelambdaexpansion}

        Without loss of generality we can consider $B_{f}=1$. Observe that by definition,
        \begin{align*}
            \bcA_{n} - \bcB_{n} = \blambda_{n} + \lambda_{1}(f)\bD_{n,1} - (\bF_{n} - \lambda_{1}(f)(\Phi_{1}(\bU_{n})\Phi_{1}(\bU_{n})^\top - \bD_{n,1})).
        \end{align*}
        Then by Proposition \ref{lemma:AnBninvertible} $\bcA_{n} - \bcB_{n}$ is invertible with probability $1-16n\exp\left(-\frac{1}{6}(\log n)^2\right)$. By definition, 
        \begin{align*}
            \bcB_{n} = \bV\bLambda\bV^{\top}
        \end{align*}
        where,
        \begin{align}\label{eq:defVLambda}
            \bV = [\Phi_{2}(\bU_{n}),\cdots, \Phi_{k}(\bU_{n})]\text{ and }\bLambda = \diag(\lambda_{2}(f),\cdots,\lambda_{k}(f)),
        \end{align}
        and by  Woodbury's formula we have,
        \begin{align}\label{eq:Woodbury}
            (\bcA_{n} - \bcB_{n})^{-1} = \bcA_{n}^{-1} - \bcA_{n}^{-1}\bV(-\bLambda^{-1} + \bV^{\top}\bcA_{n}^{-1}\bV)^{-1}\bV^{\top}\bcA_{n}^{-1}.
        \end{align}
        To proceed with the proof \blue{of} Proposition \ref{prop:finitelambdaexpansion} we first provide a Taylor expansion of $\bcA_{n}^{-1}$ and use the dominating terms to provide an expression of the quadratic form up to a negligible error. With that goal in mind note that,
        \begin{align}\label{eq:defM}
            \bcA_{n} = \blambda_{n}\left[\bI_{n} + \frac{\bm M_{n}}{\blambda_{n}}\right]\text{ where }\bM_{n}:= \sum_{\ell=1}^{k}\lambda_{\ell}(f)\bD_{n,\ell}.
        \end{align}
        In the following lemma we provide a bound on the norm of $\bM_{n}$ which in particular shows that we can have a Taylor series expansion of $\bcA_{n}^{-1}$.
        \begin{lemma}\label{lemma:normMbdd}
            For the $n\times n$ matrix $\bM_{n}$ defined in \eqref{eq:defM}, there exists $n_{1}\in\mathbb{N}$ such that,
            \begin{align*}
                \left\|\frac{\bM_{n}}{\blambda_{n}}\right\|_{2\ra 2}<1
            \end{align*}
            with probability at least $1 - 8n\exp\left(-\frac{1}{6}(\log n)^2\right)$ for all $n\geq n_1$.
        \end{lemma}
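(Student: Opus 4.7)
The plan is to exploit the fact that $\bM_n$ is a diagonal matrix, so its operator norm is simply the maximum absolute value of its diagonal entries. I will then identify these entries with a pointwise evaluation of $f$ on the diagonal, which gives a free bound of $B_f = 1$, and compare with a lower bound on $|\bm\lambda_n|$ coming from \eqref{eq:blambdanconc2}.

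Since each $\bD_{n,\ell}=\diag(\phi_\ell^2(U_1),\ldots,\phi_\ell^2(U_n))$ is diagonal, $\bM_n$ is diagonal with $(i,i)$-entry equal to $\sum_{\ell=1}^{k}\lambda_\ell(f)\phi_\ell^2(U_i)$. Because $f$ has exactly $k$ nonzero eigenvalues, the spectral expansion
$$f(x,y)=\sum_{\ell=1}^{k}\lambda_\ell(f)\phi_\ell(x)\phi_\ell(y)$$
terminates as a finite sum. A priori this identity holds in $L^2([0,1]^2)$, but both sides are continuous (the left by the Lipschitz hypothesis on $f$, the right as a finite sum of products of Lipschitz eigenfunctions via Lemma \ref{lemma:efuncbdd}(b)), so the identity in fact holds pointwise on $[0,1]^2$. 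Setting $x=y=U_i$ yields $\sum_{\ell=1}^{k}\lambda_\ell(f)\phi_\ell^2(U_i)=f(U_i,U_i)$, and therefore
$$\|\bM_n\|_{2\to 2}=\max_{1\leq i\leq n}|f(U_i,U_i)|\leq B_f=1$$
deterministically, with no probabilistic ingredient needed at this stage.

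For the denominator, \eqref{eq:blambdanconc2} asserts $|\bm\lambda_n/n-\lambda_1(f)|\lesssim_f \log n/\sqrt{n}$ on an event of probability at least $1-8n\exp\bigl(-\tfrac{1}{6}(\log n)^2\bigr)$. Choose $n_1=n_1(f)$ large enough that on this event $|\bm\lambda_n|\geq n\lambda_1(f)/2$ and $2/(n\lambda_1(f))<1$ for all $n\geq n_1$. Combining the two bounds gives
$$\left\|\frac{\bM_n}{\bm\lambda_n}\right\|_{2\to 2}\leq \frac{\|\bM_n\|_{2\to 2}}{|\bm\lambda_n|}\leq \frac{2}{n\lambda_1(f)}<1$$
on the same event, which is precisely the claim with the stated probability. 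No step here is delicate; the main conceptual point, and the reason this short argument works, is that the finite-rank spectral sum collapses exactly to $f(U_i,U_i)$ on the diagonal, bypassing any need to bound individual eigenfunctions (though one could alternatively invoke Lemma \ref{lemma:efuncbdd}(a) to get the weaker but still sufficient estimate $\|\bM_n\|_{2\to 2}\leq \sum_{\ell=1}^k |\lambda_\ell(f)|^{-1}$).
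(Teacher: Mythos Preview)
Your proof is correct and in fact slightly sharper than the paper's. The paper bounds the diagonal entries of $\bM_n$ crudely via the eigenfunction sup-norm bounds from Lemma~\ref{lemma:efuncbdd}(a), obtaining $|\bM_n(i,i)|\leq \sum_{\ell=1}^{k}|\lambda_\ell(f)|^{-1}$ (this is exactly \eqref{eq:bddMii}), and then divides by $|\blambda_n|$. You instead identify $\bM_n(i,i)$ exactly with $f(U_i,U_i)$ through the finite spectral expansion and the continuity of both sides, which immediately gives the deterministic bound $\|\bM_n\|_{2\to 2}\leq B_f=1$. Your route avoids the $k$-dependent constant, though for the purposes of this lemma either bound suffices; amusingly, the alternative you mention in your closing parenthetical is precisely the argument the paper uses.
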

        The proof of Lemma \ref{lemma:normMbdd} is given in Section C.1. By Lemma \ref{lemma:normMbdd}, and the Taylor expansion we get,
        \footnotesize
        \begin{align}\label{eq:taylorexpand}
            \Phi_{1}(\bU_{n})^{\top}\bcA_{n}^{-1}\Phi_{1}(\bU_{n}) = \frac{\|\Phi_{1}(\bU_{n})\|_{2}^2}{\blambda_{n}} - \frac{\Phi_{1}(\bU_{n})^{\top}\bM_{n}\Phi_{1}(\bU_{n})}{\blambda_{n}^2} + \underbrace{\sum_{\ell=2}^{\infty}(-1)^{\ell}\frac{\Phi_{1}(\bU_{n})^{\top}\bM_{n}^{\ell}\Phi_{1}(\bU_{n})}{\blambda_{n}^{\ell+1}}}_{L_{n}}
        \end{align}
        \normalsize
        with probability at least $1 - 8n\exp\left(-\frac{1}{6}(\log n)^2\right)$ for all large \blue{enough} $n$. Next we show that first two terms in the expansion of \eqref{eq:finitekexpansion} are contributed by the first two terms of \eqref{eq:taylorexpand}, while the third term is negligible with high probability. Note that,
        \begin{align*}
            \Phi_{1}(\bU_{n})^{\top}\bM_{n}\Phi_{1}(\bU_{n}) = \sum_{\ell=1}^{k}\lambda_{\ell}(f)\sum_{i=1}^{n}\phi_{1}(U_{i})^2\phi_{\ell}(U_{i})^2
        \end{align*}
        which contributed the second term in the expansion \eqref{eq:finitekexpansion}. Next we show that $L_{n}$ in \eqref{eq:taylorexpand} is negligible. By the bounds from Lemma \ref{lemma:efuncbdd} it is easy to conclude that,
        \begin{align}\label{eq:bddMii}
            |\bM_{n}(i,i)|\leq \sum_{\ell=1}^{k}|\lambda_{\ell}(f)|^{-1}\text{ for all }1\leq i\leq n.
        \end{align}
        Hence,
        \begin{align*}
            \left|\Phi_{1}(\bU_{n})^{\top}\bM_{n}^{\ell}\Phi_{1}(\bU_{n})\right|\leq \|\Phi_{1}(\bU_{n})\|_{2}^2\|\bM_{n}^\ell\|_{2\ra 2}\leq \|\Phi_{1}(\bU_{n})\|_{2}^2\left(\sum_{\ell=1}^{k}|\lambda_{\ell}(f)|^{-1}\right)^{\ell}.
        \end{align*}
        Then recalling \eqref{eq:blambdanconc2} and bounds from Lemma \ref{lemma:efuncbdd} shows,
        \begin{align*}
            |L_{n}|=\left|\sum_{\ell=2}^{\infty}(-1)^{\ell}\frac{\Phi_{1}(\bU)^{\top}\bM_{n}^{\ell}\Phi_{1}(\bU)}{\blambda_{n}^{\ell + 1}}\right|\leq \|\Phi_{1}(\bU)\|_{2}^2\sum_{\ell=2}^{\infty}\frac{\left(\sum_{\ell=1}^{k}|\lambda_{\ell}(f)|^{-1}\right)^{\ell}}{|\blambda_{n}|^{\ell + 1}}\lesssim_{f}\frac{1}{n^2}
        \end{align*}
        with probability at least $1 - 8n\exp\left(-\frac{1}{6}(\log n)^2\right)$. Thus by the expansion from \eqref{eq:taylorexpand}, for all large enough $n$, we get,
        \footnotesize
        \begin{align}\label{eq:expr1A1}
            \lambda_{1}(f)\Phi_{1}(\bU_{n})^{\top}\bcA_{n}^{-1}\Phi_{1}(\bU_{n}) =
            & \frac{\lambda_{1}(f)}{\blambda_{n}}\|\Phi_{1}(\bU_{n})\|_{2}^2 -\frac{\lambda_{1}(f)}{\blambda_{n}^2}\sum_{\ell=1}^{k}\lambda_{\ell}(f)\sum_{i=1}^{n}\phi_{1}(U_{i})^2\phi_{\ell}(U_{i})^2 + O\left(n^{-2}\right)
        \end{align}
        \normalsize
        with probability at least $1 - 8n\exp\left(-\frac{1}{6}(\log n)^2\right)$. Note that we already have the first two terms in the expansion of \eqref{eq:finitekexpansion}. Now we analyse the second term in \eqref{eq:Woodbury} which contributes the third term in \eqref{eq:finitekexpansion}. Recalling the expression of the second term, we first analyse $(-\bLambda^{-1} + \bV^{\top}\bcA_{n}^{-1}\bV)^{-1}$. In particular, in the following lemma, we start by showing that $\bV^{\top}\bcA_{n}^{-1}\bV$ is approximately a constant times identity matrix. 
        \begin{lemma}\label{lemma:concVTAV}
        For the matrices $\bV$ defined in \eqref{eq:defVLambda} and $\bcA_{n}$ and for large enough $n$,
        \begin{align*}
            \left\|\bV^{\top}\bcA_{n}^{-1}\bV - \frac{\bI_{k-1}}{\lambda_{1}(f)}\right\|_{2\ra 2}\lesssim_{f}k\left(\frac{\log n}{\sqrt{n}} + \frac{1}{n}\right)
        \end{align*}
        with probability at least $1-17nk\exp\left(-\frac{1}{6}(\log n)^2\right)$.
        \end{lemma}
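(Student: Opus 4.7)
\textbf{Proof proposal for Lemma \ref{lemma:concVTAV}.}
The plan is to expand $\bcA_n^{-1}$ in a Neumann series as in the proof of Proposition \ref{prop:finitelambdaexpansion}, evaluate $\bV^\top\bcA_n^{-1}\bV$ entrywise, and convert the entrywise bounds to an operator-norm bound through $\|M\|_{2\to 2}\le (k-1)\max_{ij}|M_{ij}|$, which is where the factor of $k$ in the statement arises. Since $\bcA_n = \blambda_n\bI_n + \bM_n$ with $\bM_n = \sum_{\ell=1}^k \lambda_\ell(f)\bD_{n,\ell}$ diagonal, Lemma \ref{lemma:normMbdd} guarantees $\|\bM_n/\blambda_n\|_{2\to 2}<1$ with probability at least $1-8n\exp(-\tfrac16(\log n)^2)$, so on that event
\begin{align*}
\bcA_n^{-1} = \frac{1}{\blambda_n}\bI_n - \frac{1}{\blambda_n^2}\bM_n + \sum_{\ell\ge 2}\frac{(-1)^\ell}{\blambda_n^{\ell+1}}\bM_n^\ell,
\end{align*}
which I would substitute into $\bV^\top\bcA_n^{-1}\bV$ and analyze the three contributions separately.

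The leading piece contributes $(\bV^\top\bV/\blambda_n)_{i-1,j-1} = \frac{1}{\blambda_n}\sum_{r=1}^n \phi_i(U_r)\phi_j(U_r)$. By orthonormality of the eigenfunctions, $\mathbb{E}[\phi_i(U)\phi_j(U)] = \delta_{ij}$, and the summands are bounded by $|\lambda_i(f)\lambda_j(f)|^{-1}$ thanks to Lemma \ref{lemma:efuncbdd}. Applying Hoeffding's inequality with deviation of order $\sqrt n \log n$ and a union bound over the $(k-1)^2$ pairs $(i,j)$ gives
\begin{align*}
\max_{i,j}\left|\sum_{r=1}^n \phi_i(U_r)\phi_j(U_r)-n\delta_{ij}\right|\lesssim_f \sqrt n\,\log n
\end{align*}
with probability $1-Ck^2\exp(-\tfrac16(\log n)^2)$. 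Combined with the hypothesis \eqref{eq:blambdanconc2} on $\blambda_n$, this yields the target $\bI_{k-1}/\lambda_1(f)$ with entrywise error $O_f(\log n/\sqrt n)$.

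For the remainder, I use $|(\bM_n)_{rr}|\le \sum_\ell |\lambda_\ell(f)|^{-1}=:C_f$ from \eqref{eq:bddMii} together with $\|\Phi_i(\bU_n)\|_2\le \sqrt n/|\lambda_i(f)|$, which gives $|\Phi_i^\top\bM_n^\ell\Phi_j|\lesssim_f n\,C_f^\ell$; summing the geometric tail against $|\blambda_n|^{\ell+1}\asymp n^{\ell+1}$ produces a per-entry contribution of $O_f(1/n)$. Combining the leading error $O_f(\log n/\sqrt n)$ with the remainder $O_f(1/n)$ and converting to operator norm via the factor $(k-1)$ yields the stated bound, and a union bound consolidating the probabilities from Lemma \ref{lemma:normMbdd}, the assumption on $\blambda_n$, and the $O(k^2)$ Hoeffding events gives the $1 - 17nk\exp(-\tfrac16(\log n)^2)$ estimate. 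The main technical obstacle is purely bookkeeping: ensuring that the $k$-dependence (appearing both in the size of the error and in the failure probability) is tracked consistently across the Neumann tail and the union bounds, so that the constant $C_f$ from $\bM_n$ does not interact badly with taking $k$ even moderately large. Conceptually, nothing new is needed beyond the tools already used in Proposition \ref{prop:finitelambdaexpansion}.
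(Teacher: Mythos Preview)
Your proposal is correct and follows essentially the same route as the paper: a Neumann expansion of $\bcA_n^{-1}$ via Lemma~\ref{lemma:normMbdd}, entrywise control of the leading term $\bV^\top\bV/\blambda_n$ by Hoeffding plus the concentration \eqref{eq:blambdanconc2} on $\blambda_n$, and a geometric bound on the tail using $\|\bM_n\|_{2\to2}\le C_f$ together with $\|\bV\|_F^2\lesssim_f nk$. The only cosmetic differences are that the paper passes from entries to operator norm through the Frobenius norm rather than your $(k-1)\max_{ij}|M_{ij}|$ bound, and it applies Hoeffding with a cruder deviation threshold; both choices yield the same $k(\log n/\sqrt n + 1/n)$ estimate and the same form of the failure probability after the union bound.
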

        The proof of Lemma \ref{lemma:concVTAV} is given in Section C.2. Now we show that $(-\bLambda^{-1} + \bV^{\top}\bcA_{n}^{-1}\bV)^{-1}$ can be replaced by $\left(\bLambda^{-1} - \frac{\bI_{k-1}}{\lambda_{1}(f)}\right)^{-1}$. Note that,
        \begin{align}
            \bigg\|
            & \left(\bLambda^{-1} - \bV^{\top}\bcA_{n}^{-1}\bV\right)^{-1} - \left(\bLambda^{-1} - \frac{\bI_{k-1}}{\lambda_{1}(f)}\right)^{-1}\bigg\|_{2\ra 2}\nonumber\\
            & \leq \left\|\left(\bLambda^{-1} - \frac{\bI_{k-1}}{\lambda_{1}(f)}\right)^{-1}\right\|_{2\ra 2}\left\|\bV^{\top}\bcA_{n}^{-1}\bV - \frac{\bI_{k-1}}{\lambda_{1}(f)}\right\|_{2\ra 2}\left\|\left(\bLambda^{-1} - \bV^{\top}\bcA_{n}^{-1}\bV\right)^{-1}\right\|_{2\ra 2}\label{eq:bdonmiddle}.
        \end{align}
        By Lemma \ref{lemma:concVTAV} and Weyl's inequality observe that for all $1\leq i\leq k-1$,
        \begin{align*}
            \left|\lambda_{i}\left(\bLambda^{-1} - \bV^{\top}\bcA_{n}^{-1}\bV\right) - \lambda_{i}\left(\bLambda^{-1} - \frac{\bI_{k-1}}{\lambda_{1}(f)}\right)\right|\lesssim_{f}k\left(\frac{\log n}{\sqrt{n}} + \frac{1}{n}\right)
        \end{align*}
        and hence for large enough $n$,
        \begin{align*}
            \min_{1\leq i\leq k-1}\left|\lambda_{i}\left(\bLambda^{-1} - \bV^{\top}\bcA_{n}^{-1}\bV\right)\right|\geq \frac{1}{2}\min_{2\leq i\leq k}\left|\frac{\lambda_{1}(f) - \lambda_{i}(f)}{\lambda_{1}(f)\lambda_{i}(f)}\right|
        \end{align*}
        for large enough $n$ with probability at least $1-17nk\exp\left(-\frac{1}{6}(\log n)^2\right)$. Thus once again using Lemma \ref{lemma:concVTAV} along with \eqref{eq:bdonmiddle} we have,
        \begin{align}\label{eq:approxmiddle}
            \bigg\|\left(\bLambda^{-1} - \bV^{\top}\bcA_{n}^{-1}\bV\right)^{-1} 
            & - \left(\bLambda^{-1} - \frac{\bI_{k-1}}{\lambda_{1}(f)}\right)^{-1}\bigg\|_{2\ra 2}\lesssim_{f}k\left(\frac{\log n}{\sqrt{n}} + \frac{1}{n}\right)
        \end{align}
        with probability at least $1-34nk\exp\left(-\frac{1}{6}(\log n)^2\right)$. Next, once again recalling the expression of the second term from \eqref{eq:Woodbury} we now provide an expansion of the term $\Phi_{1}(\bU_{n})^{\top}\bcA_{n}^{-1}\bV$, showing a simplification with an additional error term.
    
        \begin{lemma}\label{lemma:form1tAinvV}
            For the matrix $\bV$ defined in \eqref{eq:defVLambda},
            \begin{align*}
                \Phi_{1}(\bU_{n})^{\top}\bcA_{n}^{-1}\bV = \frac{\Phi_{1}(\bU_{n})^{\top}\bV}{\blambda_{n}} + \frac{\bs_{n}}{\blambda_{n}}
            \end{align*}
            where $\left\|\bs_{n}\right\|_{2}\lesssim_{f}\sqrt{k}$ with probability at least $1-16n\exp\left(-\frac{1}{6}(\log n)^2\right)$.
        \end{lemma}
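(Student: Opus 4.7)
\textbf{Plan for the proof of Lemma \ref{lemma:form1tAinvV}.} The strategy is a direct Neumann expansion of $\bcA_{n}^{-1}$ combined with norm bounds on $\bM_{n}$, $\Phi_{1}(\bU_{n})$, and $\bV$ coming from Lemma \ref{lemma:efuncbdd} and the hypothesis \eqref{eq:blambdanconc2}. Write $\bcA_{n} = \blambda_{n}(\bI_{n} + \bM_{n}/\blambda_{n})$ as in \eqref{eq:defM}. By Lemma \ref{lemma:normMbdd}, on an event $\cE_{1}$ of probability at least $1-8n\exp(-(\log n)^{2}/6)$ we have $\|\bM_{n}/\blambda_{n}\|_{2\to 2}<1$, so the Neumann series
\begin{align*}
\bcA_{n}^{-1} = \frac{1}{\blambda_{n}}\sum_{j\geq 0}(-1)^{j}\Bigl(\frac{\bM_{n}}{\blambda_{n}}\Bigr)^{j}
\end{align*}
converges in operator norm. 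Multiplying by $\Phi_{1}(\bU_{n})^{\top}$ on the left and $\bV$ on the right isolates the $j=0$ term and produces $\Phi_{1}(\bU_{n})^{\top}\bcA_{n}^{-1}\bV = \Phi_{1}(\bU_{n})^{\top}\bV/\blambda_{n} + \bs_{n}/\blambda_{n}$, where
\begin{align*}
\bs_{n} \,:=\, \sum_{j\geq 1}(-1)^{j}\,\Phi_{1}(\bU_{n})^{\top}\Bigl(\frac{\bM_{n}}{\blambda_{n}}\Bigr)^{j}\bV.
\end{align*}

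Next I bound $\|\bs_{n}\|_{2}$ term by term. Because $\bM_{n}$ is diagonal with $|\bM_{n}(i,i)|\leq \sum_{\ell=1}^{k}|\lambda_{\ell}(f)|^{-1}=:C_{f}$ (a consequence of the pointwise bound $|\phi_{\ell}|\leq 1/|\lambda_{\ell}(f)|$ from Lemma \ref{lemma:efuncbdd}), we have $\|\bM_{n}\|_{2\to 2}\leq C_{f}$. On the event $\cE_{2}$ where \eqref{eq:blambdanconc2} holds (probability at least $1-8n\exp(-(\log n)^{2}/6)$), we also have $\blambda_{n}\geq n|\lambda_{1}(f)|/2$ for large $n$. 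The same pointwise bounds give $\|\Phi_{1}(\bU_{n})\|_{2}\leq \sqrt{n}/|\lambda_{1}(f)|$ and, using the columnwise decomposition of $\bV = [\Phi_{2}(\bU_{n}),\ldots,\Phi_{k}(\bU_{n})]$, $\|\bV\|_{2\to 2}\leq \|\bV\|_{F}\leq \sqrt{n(k-1)}/\min_{2\leq \ell\leq k}|\lambda_{\ell}(f)|$. Combining these on $\cE_{1}\cap \cE_{2}$, for every $j\geq 1$,
\begin{align*}
\bigl\|\Phi_{1}(\bU_{n})^{\top}(\bM_{n}/\blambda_{n})^{j}\bV\bigr\|_{2}
\,\leq\, \|\Phi_{1}(\bU_{n})\|_{2}\,\|\bM_{n}/\blambda_{n}\|_{2\to 2}^{j}\,\|\bV\|_{2\to 2}
\,\lesssim_{f}\, \sqrt{k}\,\Bigl(\frac{2C_{f}}{|\lambda_{1}(f)|\,n}\Bigr)^{j-1}.
\end{align*}
The $j=1$ contribution is $O_{f}(\sqrt{k})$, while contributions for $j\geq 2$ are $O_{f}(\sqrt{k}/n^{j-1})$ and are summable with a vanishing remainder. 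Hence $\|\bs_{n}\|_{2}\lesssim_{f}\sqrt{k}$ on $\cE_{1}\cap \cE_{2}$.

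A union bound gives that $\cE_{1}\cap \cE_{2}$ has probability at least $1-16n\exp(-(\log n)^{2}/6)$, as claimed. I do not anticipate a real obstacle: the argument is purely algebraic once the ingredients (Neumann series, diagonal bound on $\bM_{n}$, eigenfunction bounds from Lemma \ref{lemma:efuncbdd}, the lower bound on $\blambda_{n}$) are in hand. The only mild care required is to make the $n$ in $\blambda_{n}\asymp n$ strictly compete with the factor $\|\bM_{n}\|_{2\to 2}$, which is $O_{f}(1)$; this is what forces the geometric series $(C_{f}/n)^{j}$ to sum to $O(1/n)$, leaving the leading $j=1$ contribution of size $\sqrt{k}$ as the dominant one.
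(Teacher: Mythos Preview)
Your proposal is correct and follows essentially the same approach as the paper: both use the Neumann series expansion $\bcA_{n}^{-1}=\blambda_{n}^{-1}\sum_{j\geq 0}(-1)^{j}(\bM_{n}/\blambda_{n})^{j}$ from Lemma \ref{lemma:normMbdd}, bound the remainder via $\|\bM_{n}\|_{2\to 2}\leq C_{f}$, $\|\Phi_{1}(\bU_{n})\|_{2}\lesssim_{f}\sqrt{n}$, $\|\bV\|_{F}\lesssim_{f}\sqrt{nk}$, and $|\blambda_{n}|\gtrsim_{f} n$, and combine the two events from Lemma \ref{lemma:normMbdd} and \eqref{eq:blambdanconc2} by a union bound to get the stated probability $1-16n\exp(-(\log n)^{2}/6)$.
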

        The proof of Lemma \ref{lemma:form1tAinvV} is given in Section C.3. Having detailed the expansions of the terms involved up to negligible constants, we are now ready to collect the results. First we show that the quadratic term $$\Phi_{1}(\bU_{n})^{\top}\bcA_{n}^{-1}\bV\left(\bLambda^{-1} - \bV^{\top}\bcA_{n}^{-1}\bV\right)^{-1}\bV^{\top}\bcA_{n}^{-1}\Phi_{1}(\bU_{n})^{\top}$$ contributed by the second term in \eqref{eq:Woodbury} can be replaced by,
        \begin{align*}
            \Phi_{1}(\bU_{n})^{\top}\bcA_{n}^{-1}\bV\left(\bLambda^{-1} - \frac{1}{\lambda_{1}(f)}\bI\right)^{-1}\bV^{\top}\bcA_{n}^{-1}\Phi_{1}(\bU_{n})^{\top}
        \end{align*} 
        up to an additive negligible error. Towards that notice,
        \small
        \begin{align*}
            \bigg|\Phi_{1}(\bU_{n})^{\top}\bcA_{n}^{-1}
            &\bV\left[\left(\bLambda^{-1} - \bV^{\top}\bcA_{n}^{-1}\bV\right)^{-1} - \left(\bLambda^{-1} - \frac{1}{\lambda_{1}(f)}\bI\right)^{-1}\right]\bV^{\top}\bcA_{n}^{-1}\Phi_{1}(\bU_{n})^{\top}\bigg|\\
            &\leq \left\|\Phi_{1}(\bU_{n})^{\top}\bcA_{n}^{-1}\bV\right\|_{2}^2\left\|\left(\bLambda^{-1} - \bV^{\top}\bcA_{n}^{-1}\bV\right)^{-1} - \left(\bLambda^{-1} - \frac{1}{\lambda_{1}(f)}\bI\right)^{-1}\right\|_{2\ra 2}.
        \end{align*}
        \normalsize
        By \eqref{eq:approxmiddle} we already have a bound on the second term in the R.H.S. So now we only need to figure out a bound on the first term. Recalling the expansion from by Lemma \ref{lemma:form1tAinvV} note that,
        \begin{align*}
            \left\|\Phi_{1}(\bU_{n})^{\top}\bV\right\|_{2}^{2} = \sum_{\ell=2}^{k}\left(\sum_{i=1}^{n}\phi_{1}(U_{i})\phi_{\ell}(U_{i})\right)^2.
        \end{align*}
        \cblue
        Recall $\phi_{j},1\leq j\leq k$ are orthonormal, then by the bounds from Lemma \ref{lemma:efuncbdd} and Hoeffding inequality we have,
        \begin{align*}
            \left|\frac{1}{n}\sum_{i=1}^{n}\phi_{1}(U_{i})\phi_{\ell}(U_{i})\right|\lesssim_{f} \frac{\log n}{\sqrt{n}}
        \end{align*}
        with probability at least $1-2\exp( -\frac{1}{6}(\log n)^2)$ for all $2\leq \ell\leq k$. Recalling the bound from \eqref{eq:blambdanconc2} and using union bound we get,
        \begin{align}\label{eq:bdd1Vnorm}
            \left\|\frac{\Phi_{1}(\bU_{n})^{\top}\bV}{\blambda_{n}}\right\|_{2}\leq \frac{C\sqrt{k}\log n}{\sqrt{n}}
        \end{align}
        with probability at least $1-9nk\exp\left(-\frac{1}{6}(\log n)^2\right)$ for large enough $n$. 
        Once again using \eqref{eq:blambdanconc2} and Lemma \ref{lemma:form1tAinvV} we conclude,
        \begin{align}\label{eq:bdd1AinvV}
            \left\|\Phi_{1}(\bU_{n})^{\top}\bcA_{n}^{-1}\bV\right\|_{2}\leq C\sqrt{k}\left(\frac{\log n}{\sqrt{n}} + \frac{1}{n}\right)
        \end{align}
        with probability at least $1-25nk\exp\left(-\frac{1}{6}(\log n)^2\right)$. Combining \eqref{eq:approxmiddle} and \eqref{eq:bdd1AinvV} we conclude,
        \small
        \begin{align}\label{eq:woodsecondtermequiv}
            \bigg|\Phi_{1}(\bU_{n})^{\top}\bcA_{n}^{-1}\bV\left[\left(\bLambda^{-1} - \bV^{\top}\bcA_{n}^{-1}\bV\right)^{-1} - \left(\bLambda^{-1} - \frac{1}{\lambda_{1}(f)}\bI\right)^{-1}\right]
            & \bV^{\top}\bcA_{n}^{-1}\Phi_{1}(\bU_{n})^{\top}\bigg|\nonumber\\
            & \lesssim_{f}k^{2}\left(\frac{\log n}{\sqrt{n}} + \frac{1}{n}\right)^3
        \end{align}
        \normalsize
        with probability at least $1-59nk\exp\left(-\frac{1}{6}(\log n)^2\right)$.\cblack In the final step using the above approximations we further simplify the term
        \begin{align*}
            \Phi_{1}(\bU_{n})^{\top}\bcA_{n}^{-1}\bV\left(\bLambda^{-1} - \frac{1}{\lambda_{1}(f)}\bI\right)^{-1}\bV^{\top}\bcA_{n}^{-1}\Phi_{1}(\bU_{n})^{\top}.
        \end{align*}
        to gather the third term in R.H.S of \eqref{eq:finitekexpansion} with a negligible error. Note that by Lemma \ref{lemma:form1tAinvV} we have,
        \begin{align}\label{eq:T1T2T3}
            \Phi_{1}(\bU_{n})^{\top}\bcA_{n}^{-1}\bV\left(\bLambda^{-1} - \frac{1}{\lambda_{1}(f)}\bI\right)^{-1}\bV^{\top}\bcA_{n}^{-1}\Phi_{1}(\bU_{n})^{\top} = T_{1} + T_{2} + 2T_{3}
        \end{align}
        with probability at least $1-16n\exp\left(-\frac{1}{6}(\log n)^2\right)$ where,
        \footnotesize
        \begin{align*}
            T_{1} = \frac{\Phi_{1}(\bU_{n})^{\top}\bV}{\blambda_{n}}\left(\bLambda^{-1} - \frac{1}{\lambda_{1}(f)}\bI\right)^{-1}\left(\frac{\Phi_{1}(\bU_{n})^{\top}\bV}{\blambda_{n}}\right)^{\top}, T_{2} = \frac{\bs_{n}}{\blambda_{n}}\left(\bLambda^{-1} - \frac{1}{\lambda_{1}(f)}\bI\right)^{-1}\left(\frac{\bs_{n}}{\blambda_{n}}\right)^{\top},
        \end{align*}
        \normalsize
        and,
        \begin{align*}
            T_{3} = \frac{\Phi_{1}(\bU_{n})^{\top}\bV}{\blambda_{n}}\left(\bLambda^{-1} - \frac{1}{\lambda_{1}(f)}\bI\right)^{-1}\left(\frac{\bs_{n}}{\blambda_{n}}\right)^{\top}.
        \end{align*}
        Using \eqref{eq:blambdanconc} and Lemma \ref{lemma:form1tAinvV} note that $$|T_{2}|\lesssim_{f}k/n^2  \text{ with probability at least } 1-24n\exp\left(-\frac{1}{6}(\log n)^2\right).$$ Additionally using \eqref{eq:bdd1Vnorm} we get $|T_{3}|\lesssim_{f}k\log n/n^{3/2}$ with probability at least $1-34nk\exp\left(-\frac{1}{6}(\log n)^2\right)$. Thus recalling \eqref{eq:T1T2T3}, we have,
        \begin{align}\label{eq:1AVinvVA1}
            \Phi_{1}(\bU_{n})^{\top}\bcA_{n}^{-1}\bV\bigg(\bLambda^{-1}- \frac{1}{\lambda_{1}(f)}\bI\bigg)^{-1}\bV^{\top}\bcA_{n}^{-1}\Phi_{1}(\bU_{n})^{\top} = T_{1} + a_{n}
        \end{align}
        with probability at least $1-74nk\exp\left(-\frac{1}{6}(\log n)^2\right)$, where $|a_{n}|\lesssim_{f}\frac{k\log n}{n^{3/2}}$. Note that by definition,
        \begin{align}\label{eq:1VinvV1}
            T_{1} = \frac{1}{\blambda_{n}^2}\sum_{j=2}^{k}\frac{\lambda_{j}(f)\lambda_{1}(f)}{\lambda_{1}(f) - \lambda_{j}(f)}\left(\sum_{i=1}^{n}\phi_{1}(U_{i})\phi_{j}(U_{i})\right)^2
        \end{align}
        which is exactly the third term on R.H.S of \eqref{eq:finitekexpansion}.
        The proof is now completed by collecting \eqref{eq:Woodbury}, \eqref{eq:expr1A1}, \eqref{eq:woodsecondtermequiv} and \eqref{eq:1AVinvVA1}.
    
\section{Proof of results from Section \ref{s:prft2}}\label{s:Matrixeig}

\blue{In this section we complete the proof of \Cref{t:main2} by providing proofs of \Cref{lemma:Aninv}, \Cref{p:Cninvnorm}, \Cref{prop:vphiconc} and \Cref{p:main3}.}

\subsection{Proof of \Cref{lemma:Aninv}}\label{sec:proofofAninv}

In the following for any matrix $\bm S_{n}$ we will consider the eigenvalues as,
\begin{align*}
    \lambda_{1}(\bm S_{n})\geq \lambda_{1}(\bm S_{n})\geq\cdots\geq\lambda_{n}(\bm S_{n}).
\end{align*}
Define,
\begin{align*}
    \widetilde{\bW}_{n} = \sum_{i=2}^{n}\lambda_{i}(\bW_{n})\bv_{i}\blue{\bv_i^\top.}
\end{align*}
Then note that the spectrum of $\widetilde{\bW}_{n}$ is given by,
\begin{align*}
    \sigma(\widetilde{\bW}_{n}) = \{\lambda_{j}(\bW_{n}):j\neq 1\}\bigcup\{0\}.
\end{align*}
Now by Weyl's inequality,
\footnotesize
\begin{align}\label{eq:lbbAAtildediff}
    \min_{j=1}^{n}\left|\frac{\lambda_{1}(\bA_{n})}{n} - \frac{\lambda_{j}(\widetilde{\bA}_{n})}{n}\right|
    &\geq \min_{j=1}^{n}\left|\frac{\lambda_{1}(\bA_{n})}{n} - \frac{\lambda_{j}(\widetilde{\bW}_{n})}{n}\right| - \frac{1}{n}\left\|\bA_{n} - \bW_{n}\right\|_{2\ra 2}\nonumber\\
    &\geq \min\left\{\left|\frac{\lambda_{1}(\bW_{n})}{n} - \frac{\lambda_{j}(\bW_{n})}{n}\right|: j\neq 1, \frac{|\lambda_{1}(\bW_{n})|}{n}\right\} - \frac{2}{n}\left\|\bA_{n} - \bW_{n}\right\|_{2\ra 2}\nonumber\\
    &\geq \min\left\{\left|\frac{\lambda_{1}(\bW_{n})}{n} - \frac{\lambda_{2}(\bW_{n})}{n}\right|, \frac{|\lambda_{1}(\bW_{n})|}{n}\right\} - \frac{2}{n}\left\|\bA_{n} - \bW_{n}\right\|_{2\ra 2}.
\end{align}
\normalsize
Notice,
\begin{align*}
    \left|\frac{\lambda_{1}(\bW_{n})}{n} - \frac{\lambda_{2}(\bW_{n})}{n}\right|\geq |\lambda_{1}(W) - \lambda_{2}(W)|
    & - \left|\frac{\lambda_{1}(\bW_{n})}{n} - \lambda_{1}(W)\right|\\
    & - \left|\frac{\lambda_{2}(\bW_{n})}{n} - \lambda_{2}(W)\right|\bone\left\{\lambda_{2}(\bW_{n})\geq 0\right\}.
\end{align*}
Following the proof of Lemma \ref{lemma:lambda1Wnconc} it can be easily shown that,
\begin{align*}
    \left|\frac{\lambda_{2}(\bW_{n})}{n} - \lambda_{2}(W)\right|\bone\left\{\lambda_{2}(\bW_{n})\geq 0\right\}\lesssim_{W}\frac{\log n}{n}
\end{align*}
with probability at least $1-8n\exp\left(-\frac{1}{6}(\log n)^2\right)$. Additionally using the bound from Lemma \ref{lemma:lambda1Wnconc} we conclude,
\begin{align}\label{eq:lambda12Wnlbb}
    \left|\frac{\lambda_{1}(\bW_{n})}{n} - \frac{\lambda_{2}(\bW_{n})}{n}\right|\geq \frac{|\lambda_{1}(W) - \lambda_{2}(W)|}{2}
\end{align}
with probability at least $1-16n\exp\left(-\frac{1}{6}(\log n)^2\right)$. The proof is now completed by collecting the lower bounds from \eqref{eq:lbbAAtildediff}, \eqref{eq:lambda12Wnlbb}, Lemma \ref{lemma:lambda1Wnconc} and the upper bound from \eqref{eq:Bnbddvershynin}.

\subsection{Proof of \Cref{p:Cninvnorm}}\label{sec:proofofCinvnorm}
The first statement \eqref{eq:Bnbddvershynin} follows from Corollary 4.4.8 from \cite{vershynin2018high}.

Next we prove \eqref{eq:Cnorm}. The spectrum of the matrix $\bC_{n}$ is given by,
\begin{align}\label{eq:defsigmaCn}
    \sigma(\bC_{n}) = \left\{\lambda_{1}(\bA_{n}), \lambda_{1}(\bA_{n}) - \lambda_{j}(\bW_{n}): j\neq 1\right\}.
\end{align}
Now,
\begin{align*}
    \min_{j\neq 1}\left|\frac{\lambda_{1}(\bA_{n})}{n} - \frac{\lambda_{j}(\bW_{n})}{n}\right|
    &\geq \min_{j\neq 1}\left|\frac{\lambda_{1}(\bW_{n})}{n} - \frac{\lambda_{j}(\bW_{n})}{n}\right| - \frac{1}{n}\left\|\bA_{n} - \bW_{n}\right\|_{2\ra 2}\\
    &\geq \left|\frac{\lambda_{1}(\bW_{n})}{n} - \frac{\lambda_{2}(\bW_{n})}{n}\right| - \frac{1}{n}\left\|\bmB_{n} \right\|_{2\ra 2}.
\end{align*}
Now, to provide a further lower bound, we provide a lower bound on the difference between the eigenvalues $\lambda_1(\bA_n)$ and $\lambda_{1}(\bW_n)$. 
\cred
Notice that using traingle inequality we have,
\begin{align*}
    \left|\frac{\lambda_{1}(\bW_{n})}{n} - \frac{\lambda_{2}(\bW_{n})}{n}\right|\geq \left|\lambda_1(W) - \lambda_2(W)\right| - \left|\lambda_1(W) - \frac{\lambda_1(\bW_n)}{n}\right| - \left|\lambda_2(W) - \frac{\lambda_2(\bW_n)}{n}\right|.
\end{align*}
Combining the above lower bounds and following the proof of Lemma \ref{lemma:lambda1Wnconc} (in particular to control the last term in the above lower bound) shows,
\begin{align*}
    \left|\frac{\lambda_{1}(\bW_{n})}{n} - \frac{\lambda_{2}(\bW_{n})}{n}\right|\geq \left|\lambda_1(W) - \lambda_2(W)\right| - O\left(\frac{\log n}{\sqrt{n}}\right)
\end{align*}
with probability at least $1-16n\exp\left(-\frac{1}{6}(\log n)^2\right)$. Then for large enough $n$ we get,
\begin{align}\label{eq:lambda12Wnlbb0}
    \left|\frac{\lambda_{1}(\bW_{n})}{n} - \frac{\lambda_{2}(\bW_{n})}{n}\right|\geq \frac{2}{3}|\lambda_{1}(W) - \lambda_{2}(W)|.
\end{align}
Following the bound \eqref{eq:lambda12Wnlbb0} and the upper bound of $\|\bmB_{n}\|_{2\ra 2}$ from \eqref{eq:Bnbddvershynin} we get,
\begin{align}\label{eq:lbbAnWn}
    \min_{j\neq 1}\left|\frac{\lambda_{1}(\bA_{n})}{n} - \frac{\lambda_{j}(\bW_{n})}{n}\right|\geq \frac{|\lambda_{1}(W) - \lambda_{2}(W)|}{2}
\end{align}
with probability at least $1-17n\exp\left(-\frac{1}{6}(\log n)^2\right)$.\cblack Additionally using the bound from Lemma \ref{lemma:lambda1Wnconc} and \eqref{eq:Bnbddvershynin} we get,
\begin{align}\label{eq:lbb1An}
    \frac{|\lambda_{1}(\bA_{n})|}{n}\geq \frac{|\lambda_{1}(\bW_{n})|}{n} - \frac{1}{n}\left\|\bA_{n} - \bW_{n}\right\|_{2\ra 2}\geq \frac{|\lambda_{1}(W)|}{2}.
\end{align}
with probability at least $1-9n\exp\left(-\frac{1}{6}(\log n)^2\right)$. The proof of \eqref{eq:Cnorm} is now completed by recalling the collection $\sigma(\bC_{n})$ from \eqref{eq:defsigmaCn} and the lower bounds from \eqref{eq:lbbAnWn} and \eqref{eq:lbb1An}.

Thanks to \eqref{eq:Bnbddvershynin} and \eqref{eq:Cnorm}, we have $\|\bC_{n}^{-1}\bmB_{n}\|_{2\ra2}\lesssim_{W}n^{-1/2}$ with probability at least $1-Cn\exp\left(-\frac{1}{6}(\log n)^2\right)$. Then for large $n$ we have the following \blue{Taylor} Expansion,
\small
\begin{align*}
    (\lambda_{1}(\bA_{n})\bI_{n} - \widetilde{\bA}_{n})^{-1} = (\bI_{n} - \bC_{n}^{-1}\bmB_{n})^{-1}\bC_{n}^{-1} = \bC_{n}^{-1} + \bC_{n}^{-1}\bmB_{n}\bC_{n}^{-1} + \sum_{k\geq 2}(\bC_{n}^{-1}\bmB_{n})^k\bC_{n}^{-1}.
\end{align*}
\normalsize
Recalling the definition of $\bC_{n}$ from \eqref{eq:defCn} it is easy to see that $\lambda_{1}(\bA_{n})\bC_{n}^{-1}\bv_{1} = \bv_{1}$. Multiplying $\bv_1$ from the left and right on both sides of the above Taylor expansion gives,
\begin{align}\label{eq:taylorfirstdiff1}
    \bv_1^\top  (\lambda_{1}(\bA_{n})\bI_{n} - \widetilde{\bA}_{n})^{-1} \bv_1= \frac{1}{\lambda_{1}(\bA_{n})} +  \frac{\bv^\top_1 \bmB_{n}\bv_1}{\lambda_{1}(\bA_{n})^2} + \frac{\sum_{k\geq 1}\bv_{1}^\top \bmB_n (\bC_{n}^{-1}\bmB_{n})^k\bv_{1}}{\lambda_{1}(\bA_{n})^2}.
\end{align}
Condition on that $\|\bmB_n\|_{2\ra2}\lesssim_W \sqrt n$ and $\|\bC_{n}^{-1}\bmB_{n}\|_{2\ra2}\lesssim_{W}n^{-1/2}$, for $k\geq 2$, we have
\begin{align}\label{e:highorder}
\bv_{1}^\top \bmB_n (\bC_{n}^{-1}\bmB_{n})^k\bv_{1}\lesssim_W \sqrt n n^{-k/2}=n^{-(k-1)/2}.
\end{align}
Plugging \eqref{e:highorder} into \eqref{eq:taylorfirstdiff1}, and using the equation \eqref{eq:Anresolvent}, we conclude the statement \eqref{eq:taylorfirstdiff}
\begin{align*}
    \frac{\lambda_{1}(\bA_{n})}{\lambda_{1}(\bW_{n})}(\lambda_{1}(\bA_{n}) - \lambda_{1}(\bW_{n})) = \bv_{1}^{\top}\bmB_{n}\bv_{1} + \bv_{1}^{\top}\bmB_{n}\bC_{n}^{-1}\bmB_{n}\bv_{1} + O_{W}\left(\frac{1}{\sqrt{n}}\right)
\end{align*}
with probability at least $1-Cn\exp\left(-\frac{1}{6}(\log n)^2\right)$.

\subsection{Proof of  \Cref{prop:vphiconc}}\label{sec:proofofvphiconc}

Before proceeding with the proofs we first introduce some notation which will be used throughout this section. Recalling $U_{1}, U_{2},\ldots, U_{n}$ consider the permutation matrix $\Pi_{n}$ from Lemma \ref{lemma:permmat}. We define, $\bu_{n}^{\Pi} = \Pi_{n}\bu_n$ for any vector $\bu_n\in \bR^{n}$ and $\bm S_{n}^{\Pi} = \Pi_n(\bm S_n)\Pi^{\top}_n$ for any matrix $\bm S_n$. Further for any vector $\bu_n\in \bR^{n}$ we consider a functional embedding on $[0,1]$ as,
\begin{align*}
    \blue{f_{\bu_{n}}(x) = \sum_{j=1}^{n}\sqrt{n}u_{j}\one\left[x\in I_{j}\right]}\text{ where }I_{j} = \left[\frac{j-1}{n}, \frac{j}{n}\right), 1\leq j\leq n.
\end{align*}
By definition notice that for two vectors $\bu_{n,1}$ and $\bu_{n,2}$,
\begin{align}\label{eq:equivfvector0}
    \left\|f_{\bu_{n,1}} - f_{\bu_{n,2}}\right\|_{2} = \|\bu_{n,1} - \bu_{n,2}\|_{2}.
\end{align}

We recall $\bPhi_1$ from \Cref{e:defPhi}, and $\bv_1$ is the  eigenvector of $\bW_n$ corresponding to the largest eigenvalue. 
We first show that $f_{\bv_{1}}$ and $f_{\bPhi_{1}}$ are close in $\|\cdot\|_{2}$ norm, which will imply that $\bv_{1}$ is close to $\bPhi_{1}$. In particular the following lemma states that $f_{\bPhi_{1}^{\Pi}}$ is close to $\phi_{1}$ with high probability. \blue{The proof is given in Section D.1 in the supplementary material.}
\begin{lemma}\label{lemma:fphiapproxphi}
    For the graphon $W$,
    \begin{align*}
        \left\|f_{\bPhi_{1}^{\Pi}} - \phi_{1}\right\|_{2}\lesssim_{W}\frac{\log n}{\sqrt{n}}
    \end{align*}
    with probability at least $1-2n\exp(-2(\log n)^2/3)$.
\end{lemma}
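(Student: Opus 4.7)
The plan is to unwind the definition of $f_{\bPhi_1^\Pi}$ and bound it pointwise by exploiting the Lipschitz regularity of $\phi_1$ together with the concentration of the order statistics $U_{(j)}$ around their means $j/n$.

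First, I would observe that by definition
\begin{align*}
    f_{\bPhi_1^\Pi}(x) \;=\; \sum_{j=1}^n \phi_1(U_{(j)})\,\mathbf{1}\{x\in I_j\},
\end{align*}
where $I_j=[(j-1)/n,j/n)$, because the permutation $\Pi_n$ arranges the coordinates of $\bPhi_1$ in the order of $U_{(1)}\le\cdots\le U_{(n)}$ and the $\sqrt n$ factor cancels the $1/\sqrt n$ normalization in $\bPhi_1$. Therefore
\begin{align*}
    \bigl\|f_{\bPhi_1^\Pi}-\phi_1\bigr\|_2^2
    \;=\;\sum_{j=1}^n\int_{I_j}\bigl|\phi_1(x)-\phi_1(U_{(j)})\bigr|^2\,\rd x,
\end{align*}
so it suffices to control $|x-U_{(j)}|$ uniformly for $x\in I_j$ and invoke Lipschitz continuity of $\phi_1$.

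Next, from Lemma \ref{lemma:efuncbdd} applied to $f=W$, the eigenfunction $\phi_1$ is Lipschitz with constant $L_W/|\lambda_1(W)|$, which only depends on $W$. For the location of the order statistics I would use a Bernstein/Hoeffding-type concentration for each $U_{(j)}$ (noting that $U_{(j)}\sim \mathrm{Beta}(j,n-j+1)$ is a sum-type quantity, or equivalently that $j/n-F_n(U_{(j)}^-)=0$ reduces the problem to uniform deviation of the empirical cdf). Taking deviation level $t=\log n/\sqrt n$ gives, for each fixed $j$,
\begin{align*}
    \mathbb P\!\left(\,|U_{(j)} - j/n|>\tfrac{\log n}{\sqrt n}\right)\;\le\; 2\exp\!\bigl(-\tfrac{2}{3}(\log n)^2\bigr),
\end{align*}
and a union bound over $j=1,\dots,n$ yields $\max_j|U_{(j)}-j/n|\lesssim \log n/\sqrt n$ with probability at least $1-2n\exp(-\tfrac{2}{3}(\log n)^2)$, matching the claimed exceptional probability.

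On the good event, for any $x\in I_j$ we have $|x-U_{(j)}|\le 1/n+|j/n-U_{(j)}|\lesssim \log n/\sqrt n$, so Lipschitz continuity gives $|\phi_1(x)-\phi_1(U_{(j)})|\lesssim_W \log n/\sqrt n$ uniformly. Plugging this pointwise bound into the displayed integral identity and summing the $n$ contributions each of length $1/n$ yields
\begin{align*}
    \bigl\|f_{\bPhi_1^\Pi}-\phi_1\bigr\|_2^2 \;\lesssim_W\; \Bigl(\tfrac{\log n}{\sqrt n}\Bigr)^{2},
\end{align*}
which is the desired bound. There is no real obstacle here; the only quantitative choice is the deviation level $t=\log n/\sqrt n$, tuned so that (i) the union bound gives the advertised $1-2n\exp(-\tfrac{2}{3}(\log n)^2)$ probability and (ii) it dominates the $1/n$ bin width, so the Lipschitz step is lossless in order.
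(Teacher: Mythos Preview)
Your proposal is correct and essentially identical to the paper's argument: both decompose $\|f_{\bPhi_1^\Pi}-\phi_1\|_2^2$ over the intervals $I_j$, invoke the Lipschitz continuity of $\phi_1$ from Lemma~\ref{lemma:efuncbdd}, and use the order-statistic concentration $\max_j|U_{(j)}-j/n|\le \log n/\sqrt{n}$ (exactly Lemma~\ref{lemma:concorderU} in the paper) together with a union bound. The only cosmetic difference is that the paper inserts the intermediate point $\phi_1(j/n)$ via the triangle inequality, whereas you bound $|x-U_{(j)}|$ directly; the resulting estimates and probabilities coincide.
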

Next we turn our attention to the vector $\bv_{1}$. In the following proposition, \blue{with proof given in Section D.2}, we study the approximation of $\phi_{1}$ by $f_{\bv_{1}^{\Pi}}$. 
\begin{prop}\label{prop:fv1approxphi1}
    Recalling the eigenvector $\bv_{1}$ define,
    \begin{align*}
        \widetilde{\bv}_{1}^{\Pi} = \bv_{1}^{\Pi}\one\left[\langle \phi_{1}, f_{\bv_{1}^{\Pi}}\rangle>0\right] - \bv_{1}^{\Pi}\one\left[\langle \phi_{1}, f_{\bv_{1}^{\Pi}}\rangle\leq 0\right].
    \end{align*}
    Then for large enough $n$,
    \begin{align*}
        \left\|f_{\widetilde{\bv}_{1}^{\Pi}} - \phi_{1}\right\|_{2}\lesssim_{W}\left(\frac{\log n}{\sqrt{n}}\right)^{1/2}
    \end{align*}
    with probability at least $1-40n\exp\left(-\frac{1}{6}(\log n)^2\right)$.
\end{prop}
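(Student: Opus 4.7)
The plan is to identify $f_{\bv_1^\Pi}$ as a unit-norm top eigenfunction of the piecewise-constant integral operator $T_{W_n^\circ}$ from \eqref{eq:Wncirc}, and then exploit the operator-norm bound of Lemma \ref{lemma:TW0minusTW} together with the variational characterization of $\lambda_1(W)$ and the spectral gap in Assumption \ref{assmp:assumptionkernel}(2). The square-root factor in the claimed bound will come from a quadratic-form comparison rather than a full resolvent / Davis--Kahan estimate.

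First I would establish a matrix--operator dictionary. For $\bu \in \mathbb{R}^n$, the identity $\int_{I_j} f_\bu(y)\,\rd y = u_j/\sqrt n$ yields, for $x \in I_i$, $T_{W_n^\circ}(f_\bu)(x) = \sum_{j \neq i} W(U_{(i)}, U_{(j)}) u_j/\sqrt n = (\bW_n^\Pi \bu)_i/\sqrt n$. Thus $\bu \mapsto f_\bu$ is an isometry that intertwines $\bW_n^\Pi/n$ with $T_{W_n^\circ}$ on the subspace of functions constant on each $I_j$, and $T_{W_n^\circ}$ annihilates the orthogonal complement. Hence the nonzero spectrum of $T_{W_n^\circ}$ equals $\{\mu/n : \mu \in \sigma(\bW_n)\setminus\{0\}\}$, so $f_{\bv_1^\Pi}$ is a unit-norm eigenfunction of $T_{W_n^\circ}$ with top eigenvalue $\mu_n := \lambda_1(\bW_n)/n$, and in particular $\langle T_{W_n^\circ} f_{\bv_1^\Pi}, f_{\bv_1^\Pi}\rangle = \mu_n$.

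Next, on the event of probability at least $1 - Cn\exp(-\tfrac{1}{6}(\log n)^2)$ on which Lemmas \ref{lemma:TW0minusTW} and \ref{lemma:lambda1Wnconc} both hold, I have $\|T_W - T_{W_n^\circ}\|_{2\to 2} \lesssim_W \log n/\sqrt n$ and $|\mu_n - \lambda_1(W)| \lesssim_W \log n/\sqrt n$, so
\begin{equation*}
\langle T_W f_{\bv_1^\Pi}, f_{\bv_1^\Pi}\rangle \geq \mu_n - \|T_W - T_{W_n^\circ}\|_{2\to 2} \geq \lambda_1(W) - C_W \frac{\log n}{\sqrt n}.
\end{equation*}
Expanding $f_{\bv_1^\Pi}$ in the orthonormal eigenbasis of $T_W$ as $f_{\bv_1^\Pi} = a_1 \phi_1 + \sum_{j\geq 2} a_j \phi_j + \sum_{j\geq 1} b_j \phi_j'$ with $a_1^2 + \sum_{j\geq 2} a_j^2 + \sum_{j\geq 1} b_j^2 = 1$, and using $\lambda_j(W) \leq \lambda_2(W)$ for $j \geq 2$ together with $\lambda_j'(W) \leq 0 \leq \lambda_2(W)$, I get
\begin{equation*}
\langle T_W f_{\bv_1^\Pi}, f_{\bv_1^\Pi}\rangle \leq \lambda_1(W) a_1^2 + \lambda_2(W)(1 - a_1^2).
\end{equation*}
Combining the two displays and dividing by $\lambda_1(W) - \lambda_2(W) > 0$ (which is positive by Assumption \ref{assmp:assumptionkernel}(2)) yields $a_1^2 \geq 1 - C_W \log n/\sqrt n$.

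Finally, the sign convention defining $\widetilde{\bv}_1^\Pi$ ensures $a_1 = \langle \phi_1, f_{\widetilde{\bv}_1^\Pi}\rangle \geq 0$, so $a_1 \geq 1 - C_W \log n / \sqrt n$. Since $\|f_{\widetilde{\bv}_1^\Pi} - \phi_1\|_2^2 = 2(1 - a_1)$, taking square roots delivers the stated bound $\|f_{\widetilde{\bv}_1^\Pi} - \phi_1\|_2 \lesssim_W (\log n/\sqrt n)^{1/2}$. The main obstacle is the first step: one must verify carefully that the zero diagonal of $\bW_n$, translated into the vanishing diagonal blocks $I_i \times I_i$ of $W_n^\circ$, does not generate spurious top-of-spectrum eigenfunctions for $T_{W_n^\circ}$ outside the piecewise-constant subspace; the remaining estimates are then routine given Lemma \ref{lemma:TW0minusTW} and the spectral-gap assumption.
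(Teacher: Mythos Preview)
Your argument is correct and takes a genuinely different, more elementary route than the paper's. The paper proceeds via a resolvent-perturbation scheme: it strips off the rank-one piece $P=\lambda_1(W)\phi_1\phi_1^\ast$, shows that $\lambda_1(\bW_n)/n$ lies in the resolvent set of both $T_W-P$ and $T_{h_{\bW_n^{\perm}}}-P$ (using the spectral gap and Lemma~\ref{lemma:TW0minusTW}), inverts $(\lambda_1(\bW_n)/n-T_0)$ via the resolvent identity, and from this extracts $\bigl|1-|\langle\phi_1,f_{\bv_1^\Pi}\rangle|\bigr|\lesssim_W\|T_W-T_{W_n^\circ}\|_{2\to2}$. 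You bypass all of the resolvent machinery by a direct Rayleigh-quotient comparison: $\langle T_W f_{\bv_1^\Pi},f_{\bv_1^\Pi}\rangle\ge \lambda_1(W)-C_W\log n/\sqrt n$ combined with the eigenbasis upper bound $\le \lambda_1(W)a_1^2+\lambda_2(W)(1-a_1^2)$ immediately pins down $1-a_1^2$. Both approaches rest on the same inputs (Lemmas~\ref{lemma:TW0minusTW} and~\ref{lemma:lambda1Wnconc} and the gap in Assumption~\ref{assmp:assumptionkernel}(2)), but yours is shorter and avoids invertibility checks; the paper's resolvent setup, on the other hand, is the same machinery reused elsewhere (e.g.\ in Proposition~\ref{lemma:AnBninvertible}), so its choice is partly for uniformity.

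Two small remarks. First, your eigenbasis expansion should in principle include a component $g_0\in\ker T_W$, since the $\phi_j,\phi_j'$ need not span $L^2[0,1]$; however your inequality $\langle T_W f,f\rangle\le \lambda_1(W)a_1^2+\lambda_2(W)(1-a_1^2)$ survives unchanged because $\lambda_2(W)\ge 0$ absorbs both the negative-eigenvalue and kernel contributions. Second, the ``obstacle'' you flag at the end is not one: $T_{W_n^\circ}$ maps all of $L^2[0,1]$ into the piecewise-constant subspace and annihilates its orthogonal complement, so every nonzero eigenvalue of $T_{W_n^\circ}$ already has a piecewise-constant eigenfunction, and since $\lambda_1(\bW_n)/n>0$ on the high-probability event, $f_{\bv_1^\Pi}$ is indeed the top eigenfunction with no spurious competitors.
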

Now combining Lemma \ref{lemma:fphiapproxphi} and Proposition \ref{prop:fv1approxphi1} and \eqref{eq:equivfvector0} we get,
\begin{align}\label{eq:Phivdiff}
    \left\|\bPhi_{1}^{\Pi}- \widetilde{\bv}_{1}^{\Pi}\right\|\lesssim_{W}\left(\frac{\log n}{\sqrt{n}}\right)^{1/2}
\end{align}
with probability at least $1-42n\exp\left(-\frac{1}{6}(\log n)^2\right)$. Now note that,
\begin{align*}
    \left(\widetilde{\bv}_{1}^{\Pi}\right)^{\top}\bmB_{n}^{\Pi}\widetilde{\bv}_{1}^{\Pi} = \left(\bv_{1}^{\Pi}\right)^{\top}\bmB_{n}^{\Pi}\bv_{1}^{\Pi} = \bv_{1}^{\top}\bmB_{n}\bv_{1}\text{ and }\left(\bPhi_{1}^{\Pi}\right)^{\top}\bmB_{n}^{\Pi}\bPhi_{1}^{\Pi} = \bPhi_{1}^{\top}\bmB_{n}\bPhi_{1}.
\end{align*}
Then,
\begin{align}\label{eq:vPhiquadbdd}
    \left|\bv_{1}^{\top}\bmB_{n}\bv_{1} - \bPhi_{1}^{\top}\bmB_{n}\bPhi_{1}\right|
    & = \left|\left(\widetilde{\bv}_{1}^{\Pi}\right)^{\top}\bmB_{n}^{\Pi}\widetilde{\bv}_{1}^{\Pi} - \left(\bPhi_{1}^{\Pi}\right)^{\top}\bmB_{n}^{\Pi}\bPhi_{1}^{\Pi} \right|\nonumber\\
    &\leq \left|\left(\widetilde{\bv}_{1}^{\Pi} - \bPhi_{1}^{\Pi}\right)^{\top}\bmB_{n}^{\Pi}\bPhi_{1}^{\Pi} \right| + \left|\left(\widetilde{\bv}_{1}^{\Pi} - \bPhi_{1}^{\Pi}\right)^{\top}\bmB_{n}^{\Pi}\widetilde{\bv}_{1}^{\Pi} \right|.
\end{align}
Now, given $U_{1},U_{2},\ldots, U_{n}$ the entries of the symmetric matrix (above the diagonal) are independent with bounded subgaussian norm. Then using the Hoeffding-Inequality with conditioning on $U_{1},U_{2},\ldots, U_{n}$ shows,
\begin{align*}
    \P\left(\left|\left(\widetilde{\bv}_{1}^{\Pi} - \bPhi_{1}^{\Pi}\right)^{\top}\bmB_{n}^{\Pi}\widetilde{\bv}_{1}^{\Pi} \right|>t|U_{1},\cdots, U_{n}\right)
    &\leq 2\exp\left(-\frac{c_{W}t^2}{\left\|\widetilde{\bv}_{1}^{\Pi} - \bPhi_{1}^{\Pi}\right\|_{2}^2\left\|\widetilde{\bv}_{1}^{\Pi}\right\|_{2}^2}\right).
\end{align*}
Taking expectations on both sides yields,
\begin{align*}
    \P\left(\left|\left(\widetilde{\bv}_{1}^{\Pi} - \bPhi_{1}^{\Pi}\right)^{\top}\bmB_{n}^{\Pi}\widetilde{\bv}_{1}^{\Pi} \right|>t\right)
    &\leq2\E\left[\exp\left(-\frac{c_{W}t^2}{\left\|\widetilde{\bv}_{1}^{\Pi} - \bPhi_{1}^{\Pi}\right\|_{2}^2}\right)\right]\\
    &\leq 2\exp\left(-\frac{c_{W}t^2\sqrt{n}}{\log n}\right) + O\left(n\exp\left(-\frac{1}{6}(\log n)^2\right)\right)
\end{align*}
where the last inequality follows from \eqref{eq:Phivdiff}. Choosing $t = \left((\log n)^3/6c_{W}\sqrt{n}\right)^{1/2}$ shows,
\begin{align*}
    \P\left(\left|\left(\widetilde{\bv}_{1}^{\Pi} - \bPhi_{1}^{\Pi}\right)^{\top}\bmB_{n}^{\Pi}\widetilde{\bv}_{1}^{\Pi} \right|>\frac{(\log n)^{3/2}}{6c_{W}n^{1/4}}\right) \lesssim n\exp\left(-\frac{1}{6}(\log n)^2\right).
\end{align*}
Similarly we can show,
\begin{align*}
    \P\left(\left|\left(\widetilde{\bv}_{1}^{\Pi} - \bPhi_{1}^{\Pi}\right)^{\top}\bmB_{n}^{\Pi}\bPhi_{1}^{\Pi} \right|>\frac{(\log n)^{3/2}}{6c_{W}n^{1/4}}\right) \lesssim n\exp\left(-\frac{1}{6}(\log n)^2\right).
\end{align*}
Finally recalling \eqref{eq:vPhiquadbdd} shows,
\small
\begin{align*}
    \left|\bv_{1}^{\top}\bmB_{n}\bv_{1} - \bPhi_{1}^{\top}\bmB_{n}\bPhi_{1}\right|\lesssim_{W}\left(\frac{\log^{3}n}{\sqrt{n}}\right)^{\frac{1}{2}}\text{ with probability at least }1 -  Cn\exp\left(-\frac{1}{6}(\log n)^2\right).
\end{align*}
\normalsize
This finishes the proof of \eqref{e:replace}. 

Next we prove \eqref{e:vBCBv}. The proof proceeds stepwise by replacing the matrix $\bC_{n}$ up to negligible error. In the following for any matrix $\bm S_{n}$ we will consider the eigenvalues as,
\begin{align*}
    \lambda_{1}(\bm S_{n})\geq \lambda_{1}(\bm S_{n})\geq\cdots\geq\lambda_{n}(\bm S_{n}).
\end{align*}
In the following lemma we replace $\bC_{n}$ and $\bv_1$ in the \blue{expression} $\bv_{1}^{\top}\bmB_{n}\bC_{n}^{-1}\bmB_{n}\bv_{1}$ by terms depending only on the matrix $\bW_{n}$ and $\bPhi_{1}$. 
\begin{lemma}\label{lemma:replaceC}
    Consider,
    \begin{align*}
        \bcC_{n} = \lambda_{1}(\bW_{n})\bI_{n} - \bW_{n} + \lambda_{1}(\bW_{n})\bPhi_{1}\bPhi_{1}^{\top}.
    \end{align*}
    Then $\|\bcC_{n}^{-1}\|_{2\ra 2}\lesssim_{W}n^{-1}$ and ,
    \begin{align*}
        \left|\bv_{1}^{\top}\bmB_{n}\bC_{n}^{-1}\bmB_{n}\bv_{1} - \left(\bPhi_{1}^{\Pi}\right)^{\top}\bmB_{n}^{\Pi}\left(\bcC_{n}^{\Pi}\right)^{-1}\bmB_{n}^{\Pi}\bPhi_{1}^{\Pi}\right|\lesssim_{W}\left(\frac{\log n}{\sqrt{n}}\right)^{1/2}
    \end{align*}
    with probability at least $1-Cn\exp\left(-\frac{1}{6}(\log n)^2\right)$
\end{lemma}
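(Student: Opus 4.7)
The plan is to handle the two assertions of the lemma sequentially: first the operator norm bound on $\bcC_n^{-1}$, then the quadratic form approximation. A convenient preliminary observation is that since $\Pi$ is orthogonal, conjugation cancels and
\[
(\bPhi_1^\Pi)^\top\bmB_n^\Pi(\bcC_n^\Pi)^{-1}\bmB_n^\Pi\bPhi_1^\Pi=\bPhi_1^\top\bmB_n\bcC_n^{-1}\bmB_n\bPhi_1,
\]
so the $\Pi$'s drop out and I may work with the unpermuted version throughout.

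\emph{Stage one: bounding $\|\bcC_n^{-1}\|_{2\to 2}$.} I would view $\bcC_n$ as a perturbation of the auxiliary matrix $\bcC_n^{(0)}:=\lambda_1(\bW_n)\mathbb{I}_n-\bW_n+\lambda_1(\bW_n)\bv_1\bv_1^\top$, which is diagonalised in the eigenbasis of $\bW_n$ with eigenvalues $\lambda_1(\bW_n)$ at $\bv_1$ and $\lambda_1(\bW_n)-\lambda_j(\bW_n)$ at $\bv_j$ for $j\geq 2$. By Lemma \ref{lemma:lambda1Wnconc} and the spectral gap from \Cref{assmp:assumptionkernel}, both quantities are at least $c_W n$ with high probability, so $\lambda_{\min}(\bcC_n^{(0)})\geq c_W n$. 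Setting $\bu:=\bPhi_1-\widetilde{\bv}_1$ and using $\bv_1\bv_1^\top=\widetilde{\bv}_1\widetilde{\bv}_1^\top$,
\[
\bcC_n-\bcC_n^{(0)}=\lambda_1(\bW_n)\bigl(\widetilde{\bv}_1\bu^\top+\bu\widetilde{\bv}_1^\top+\bu\bu^\top\bigr),
\]
so its operator norm is at most $\lambda_1(\bW_n)(2\|\bu\|+\|\bu\|^2)\lesssim_W n^{3/4}(\log n)^{1/2}$ by Proposition \ref{prop:fv1approxphi1}. Since this is $o(n)$, Weyl's inequality yields $\lambda_{\min}(\bcC_n)\geq c_W n/2$ for large $n$, and hence $\|\bcC_n^{-1}\|_{2\to 2}\lesssim_W 1/n$.

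\emph{Stage two: quadratic form comparison.} I split the target difference into a resolvent-replacement part $\mathrm{(a)}:=\bv_1^\top\bmB_n(\bC_n^{-1}-\bcC_n^{-1})\bmB_n\bv_1$ and an eigenvector-replacement part $\mathrm{(b)}:=\bv_1^\top\bmB_n\bcC_n^{-1}\bmB_n\bv_1-\bPhi_1^\top\bmB_n\bcC_n^{-1}\bmB_n\bPhi_1$. For (a), the second resolvent identity $\bC_n^{-1}-\bcC_n^{-1}=\bC_n^{-1}(\bcC_n-\bC_n)\bcC_n^{-1}$ applies with
\[
\bcC_n-\bC_n=(\lambda_1(\bW_n)-\lambda_1(\bA_n))\mathbb{I}_n+\lambda_1(\bW_n)(\bPhi_1\bPhi_1^\top-\bv_1\bv_1^\top).
\]
Weyl applied to $\bA_n=\bW_n+\bmB_n$ together with \eqref{eq:Bnbddvershynin} gives $|\lambda_1(\bA_n)-\lambda_1(\bW_n)|\lesssim_W\sqrt n$, while the second term has norm $\lesssim_W n^{3/4}(\log n)^{1/2}$ by the Stage one computation. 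Combining with $\|\bC_n^{-1}\|_{2\to 2}\lesssim_W 1/n$ from \Cref{p:Cninvnorm} and Stage one yields $\|\bC_n^{-1}-\bcC_n^{-1}\|_{2\to 2}\lesssim_W n^{-5/4}(\log n)^{1/2}$, hence $|\mathrm{(a)}|\leq\|\bmB_n\|_{2\to 2}^2\|\bC_n^{-1}-\bcC_n^{-1}\|_{2\to 2}\lesssim_W(\log n/\sqrt n)^{1/2}$ by \eqref{eq:Bnbddvershynin}. For (b), using $\bv_1\bv_1^\top=\widetilde{\bv}_1\widetilde{\bv}_1^\top$ the difference factors as $(\widetilde{\bv}_1-\bPhi_1)^\top\bmB_n\bcC_n^{-1}\bmB_n(\widetilde{\bv}_1+\bPhi_1)$, bounded by $\|\widetilde{\bv}_1-\bPhi_1\|\cdot\|\bmB_n\|_{2\to 2}^2\|\bcC_n^{-1}\|_{2\to 2}\cdot(\|\widetilde{\bv}_1\|+\|\bPhi_1\|)\lesssim_W(\log n/\sqrt n)^{1/2}$, using $\|\bmB_n\|_{2\to 2}^2\|\bcC_n^{-1}\|_{2\to 2}=O_W(1)$, Proposition \ref{prop:fv1approxphi1} for $\|\widetilde{\bv}_1-\bPhi_1\|$, and Lemma \ref{lemma:fphiapproxphi} for $\|\bPhi_1\|=1+o_W(1)$. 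Summing (a) and (b) gives the claim.

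The principal obstacle is Stage one. The perturbation $\lambda_1(\bW_n)(\bPhi_1\bPhi_1^\top-\bv_1\bv_1^\top)$ has operator norm of order $n^{3/4}(\log n)^{1/2}\to\infty$, so the perturbation argument succeeds only because this remains strictly smaller than the $\Theta(n)$ spectral gap of $\bcC_n^{(0)}$ inherited from \Cref{assmp:assumptionkernel}. In particular, a merely qualitative statement $\|\widetilde{\bv}_1-\bPhi_1\|\to 0$ would not suffice; the quantitative rate from Proposition \ref{prop:fv1approxphi1} is essential. Once $\|\bcC_n^{-1}\|_{2\to 2}\lesssim_W 1/n$ is secured, Stage two reduces to clean perturbative estimates driven by the same eigenvector rate and by the Wigner-type norm bound $\|\bmB_n\|_{2\to 2}\lesssim_W\sqrt n$.
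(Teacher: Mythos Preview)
Your proof is correct and follows essentially the same route as the paper: introduce the intermediate matrix $\bcC_n^{(0)}=\lambda_1(\bW_n)\mathbb{I}_n-\bW_n+\lambda_1(\bW_n)\bv_1\bv_1^\top$ (the paper's $\bC_{n,1}$), control its inverse, perturb to $\bcC_n$ via the eigenvector estimate \eqref{eq:Phivdiff}, and then bound the quadratic-form difference using the second resolvent identity together with $\|\bmB_n\|_{2\to 2}\lesssim_W\sqrt n$. Your version is slightly more streamlined in two places---you read off the eigenvalues of $\bcC_n^{(0)}$ directly from the eigendecomposition of $\bW_n$ rather than bootstrapping from $\|\bC_n^{-1}\|$, and you compare $\bC_n^{-1}$ to $\bcC_n^{-1}$ in a single step rather than passing through $\bC_{n,1}^{-1}$---but the substance is the same.
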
 

Now we analyse the term $\left(\bPhi_{1}^{\Pi}\right)^{\top}\bmB_{n}^{\Pi}\left(\bcC_{n}^{\Pi}\right)^{-1}\bmB_{n}^{\Pi}\bPhi_{1}^{\Pi}$. Note that for all $1\leq k\leq n$,
\begin{align*}
    \left(\bmB_{n}^{\Pi}\Phi_{1}^{\Pi}\right)_{k} = \frac{1}{\sqrt{n}}\sum_{j<k}\bmB_{n}^{\Pi}(k,j)\phi_{1}(U_{(j)}) + \frac{1}{\sqrt{n}}\sum_{j>k}\bmB_{n}^{\Pi}(k,j)\phi_{1}(U_{(j)})
\end{align*}
which follows by \eqref{eq:UorderPieq} from Lemma \ref{lemma:permmat}. Define,
\begin{align*}
    \bZ_{1} = \left(\frac{1}{\sqrt{n}}\sum_{j<k}\bmB_{n}^{\Pi}(k,j)\phi_{1}(U_{(j)})\right)_{k=1}^{n}\text{ and }\bZ_{2} = \left(\frac{1}{\sqrt{n}}\sum_{j>k}\bmB_{n}^{\Pi}(k,j)\phi_{1}(U_{(j)})\right)_{k=1}^{n}
\end{align*}
where by convention sum over empty sets is set as $0$.
Then,
\begin{align}\label{eq:BCtermexpand}
    \left(\bPhi_{1}^{\Pi}\right)^{\top}\bmB_{n}^{\Pi}\left(\bcC_{n}^{\Pi}\right)^{-1}\bmB_{n}^{\Pi}\bPhi_{1}^{\Pi} 
    & = (\bZ_{1} + \bZ_{2})^{\top}\left(\bcC_{n}^{\Pi}\right)^{-1}(\bZ_{1} + \bZ_{2})\nonumber\\
    & = \bZ_{1}^{\top}\left(\bcC_{n}^{\Pi}\right)^{-1}\bZ_{1} + 2\bZ_{1}^{\top}\left(\bcC_{n}^{\Pi}\right)^{-1}\bZ_{2} + \bZ_{2}^{\top}\left(\bcC_{n}^{\Pi}\right)^{-1}\bZ_{2}.
\end{align}
By a conditional version of Hanson-Wright inequality we get,
\begin{align*}
    \P\bigg(\bigg|\bZ_{1}^{\top}\left(\bcC_{n}^{\Pi}\right)^{-1}\bZ_{1}
    & - \E\left[\bZ_{1}^{\top}\left(\bcC_{n}^{\Pi}\right)^{-1}\bZ_{1}|\bU_{n}\right]\bigg|>t|\bU_{n}\bigg)\\
    &\leq 2\exp\left( - c_{W}\min\left\{\frac{t^2}{\|\left(\bcC_{n}^{\Pi}\right)^{-1}\|_{F}^2},\frac{t}{\|\left(\bcC_{n}^{\Pi}\right)^{-1}\|_{2\ra 2}}\right\}\right)
\end{align*}
where $\bU_{n} = (U_{1},\cdots, U_{n})$. Then for large enough $n$ choosing $t = \frac{\log n}{\sqrt{6c_{W}n}}$, and using Lemma \ref{lemma:replaceC} with expectations on both sides of the above inequality shows,
\begin{align}\label{eq:Zquadconc}
    \left|\bZ_{1}^{\top}\left(\bcC_{n}^{\Pi}\right)^{-1}\bZ_{1} - \E\left[\bZ_{1}^{\top}\left(\bcC_{n}^{\Pi}\right)^{-1}\bZ_{1}|\bU_{n}\right]\right|\lesssim_{W}\frac{\log n}{\sqrt{n}}
\end{align}
with probability at least $1-Cn\exp\left(-\frac{1}{6}(\log n)^2\right)$. Similarly one can show,
\begin{align}\label{eq:Z2quadconc}
    \left|\bZ_{2}^{\top}\left(\bcC_{n}^{\Pi}\right)^{-1}\bZ_{2} - \E\left[\bZ_{2}^{\top}\left(\bcC_{n}^{\Pi}\right)^{-1}\bZ_{2}|\bU_{n}\right]\right|\lesssim_{W}\frac{\log n}{\sqrt{n}}
\end{align}
with probability at least $1-Cn\exp\left(-\frac{1}{6}(\log n)^2\right)$. Now consider $\bS_{n}$ to be a ${n\choose 2}\times{n\choose 2}$ matrix with entries,
\begin{align*}
    \bm S_{n}\left[(a,b),(c,d)\right] = \phi_{1}(U_{(a)})\left(\bcC_{n}^{\Pi}\right)^{-1}[b,c]\phi_{1}(U_{(d)})\text{ for all }1\leq a<b\leq n\text{ and }1\leq c<d\leq n
\end{align*}
and consider a vector $\bm X_{n}$ as ,
\begin{align*}
    \bm X_{n} = \left(\bmB_{n}^{\Pi}(a,b)\right)_{1\leq a<b\leq n}.
\end{align*}
Then by definition,
\begin{align*}
    \bZ_{1}^{\top}\left(\bcC_{n}^{\Pi}\right)^{-1}\bZ_{2} = \bm{X}_{n}^{\top}\bm{S}_{n}\bm{X}_{n}.
\end{align*}
Note that $\|\bm S_{n}\|_{F}\lesssim_{W}\|\left(\bcC_{n}^{\Pi}\right)^{-1}\|_{F}$ and hence once again using the Hanson Wright inequality along with Lemma \ref{p:Cninvnorm} as in the proof of \eqref{eq:Zquadconc}, we get,
\begin{align}\label{eq:Z1Z2termconc}
    \left|\bm{X}_{n}^{\top}\bm{S}_{n}\bm{X}_{n} - \E\left[\bm{X}_{n}^{\top}\bm{S}_{n}\bm{X}_{n}|\bU_{n}\right]\right|\lesssim_{W}\frac{\log n}{n^{1/4}}
\end{align}
with probability at least $1-Cn\exp\left(-\frac{1}{6}(\log n)^2\right)$. Now by direct computation,
\small
\begin{align*}
    \E\left[\bm{X}_{n}^{\top}\bm{S}_{n}\bm{X}_{n}|\bU_{n}\right] = \frac{1}{n}\sum_{i < j}\phi_{1}(U_{(i)})\phi_{1}(U_{(j)})\left(\bcC_{n}^{\Pi}\right)^{-1}[i,j]W(U_{(i)}, U_{(j)})(1-W(U_{(i)}, U_{(j)})).
\end{align*}
\normalsize
Then by the bounds on $\phi_{1}$ from Lemma \ref{lemma:efuncbdd},
\begin{align}\label{eq:Z1Z2small}
    \left|\E\left[\bm{X}_{n}^{\top}\bm{S}_{n}\bm{X}_{n}|\bU_{n}\right]\right|\lesssim_{W}\frac{1}{n}\sum_{i<j}\left|\left(\bcC_{n}^{\Pi}\right)^{-1}[i,j]\right|\leq \left\|\left(\bcC_{n}^{\Pi}\right)^{-1}\right\|_{F}\leq \frac{1}{\sqrt{n}}
\end{align}
with probability at least $1-Cn\exp\left(-\frac{1}{6}(\log n)^2\right)$, where the final bound follows from the bound on the operator norm from Lemma \ref{p:Cninvnorm}. Now combining the concentrations from \eqref{eq:Zquadconc}, \eqref{eq:Z2quadconc}, \eqref{eq:Z1Z2termconc}, along with the expansion from \eqref{eq:BCtermexpand} and the bound from \eqref{eq:Z1Z2small} we get,
\small
\begin{align*}
    \left|\left(\bPhi_{1}^{\Pi}\right)^{\top}\bmB_{n}^{\Pi}\left(\bcC_{n}^{\Pi}\right)^{-1}\bmB_{n}^{\Pi}\bPhi_{1}^{\Pi} - \E\left[\bZ_{1}^{\top}\left(\bcC_{n}^{\Pi}\right)^{-1}\bZ_{1}|\bU_{n}\right] - \E\left[\bZ_{2}^{\top}\left(\bcC_{n}^{\Pi}\right)^{-1}\bZ_{2}|\bU_{n}\right]\right|\lesssim_{W}\frac{\log n}{n^{1/4}}
\end{align*}
\normalsize 
with probability at least $1-Cn\exp\left(-\frac{1}{6}(\log n)^2\right)$. Invoking the following lemma, with proof given in Appendix D.4, completes the proof of \eqref{e:vBCBv}.
\begin{lemma}\label{lemma:Ctermfinalconvg}
    Consider,
    \begin{align*}
        T(\bU_{n}) = \E\left[\bZ_{1}^{\top}\left(\bcC_{n}^{\Pi}\right)^{-1}\bZ_{1}|\bU_{n}\right] + \E\left[\bZ_{2}^{\top}\left(\bcC_{n}^{\Pi}\right)^{-1}\bZ_{2}|\bU_{n}\right].
    \end{align*}
    Then with probability at least $1-Cn\exp\left(-\frac{1}{6}(\log n)^2\right)$, 
    \begin{align*}
        \left|T(\bU_{n}) - \frac{1}{\lambda_{1}(W)}\int\frac{\phi_{1}^{2}(x) + \phi_{1}^{2}(y)}{2}W(x,y)(1-W(x,y))\rd x\rd y\right|\lesssim_{W}\frac{\log n}{\sqrt{n}}.
    \end{align*}
\end{lemma}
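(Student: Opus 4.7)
The plan is to first make $T(\bU_n)$ explicit. Conditional on $\bU_n$, the entries $\bmB_n(s,t)$ for $s<t$ are independent centered with variance $\sigma^2(U_s,U_t) := W(U_s,U_t)(1-W(U_s,U_t))$. A direct covariance computation (using that for $j<k$ and $j'<l$ the only way to have $\{k,j\}=\{l,j'\}$ is $k=l,\,j=j'$) shows that $\E[(\bZ_a)_k(\bZ_a)_l\mid \bU_n]$ is diagonal for both $a=1,2$. Combining the two pieces and undoing the permutation through $i=\sigma(k)$ gives
\begin{align*}
T(\bU_n) \;=\; \sum_{i=1}^n \bigl((\bcC_n)^{-1}\bigr)_{ii}\, h_n(U_i), \qquad h_n(U_i) \;:=\; \frac{1}{n}\sum_{l\neq i}\sigma^2(U_i,U_l)\,\phi_1(U_l)^2,
\end{align*}
and $\|h_n\|_\infty \lesssim_W 1$ by \Cref{lemma:efuncbdd}.

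Next, I replace $\bPhi_1$ by the actual top eigenvector $\bv_1$ of $\bW_n$ (with the sign chosen so that $\|\bPhi_1-\bv_1\|_2 \lesssim_W (\log n/\sqrt n)^{1/2}$ via \Cref{prop:fv1approxphi1}) and work with $\bcC_n^{\star} := \lambda_1(\bW_n)\bI_n - \bW_n + \lambda_1(\bW_n)\bv_1\bv_1^\top$. The perturbation $\bcC_n - \bcC_n^{\star}$ has rank at most $2$ and operator norm $\lesssim_W n(\log n/\sqrt n)^{1/2}$, so the first-order resolvent identity yields $\|(\bcC_n)^{-1}-(\bcC_n^{\star})^{-1}\|_{2\to 2} \lesssim_W (\log n/\sqrt n)^{1/2}/n$ with the rank-$2$ structure preserved, and the bound $|\Tr(\diag(h_n)M)| \le \|h_n\|_\infty \|M\|_{\mathrm{tr}}$ makes this replacement error negligible. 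For $\bcC_n^{\star}$ an explicit eigendecomposition is available, and the algebraic identity $\tfrac{1}{\lambda_1(\bW_n)-\lambda_j(\bW_n)} = \tfrac{1}{\lambda_1(\bW_n)} + \tfrac{\lambda_j(\bW_n)}{\lambda_1(\bW_n)(\lambda_1(\bW_n)-\lambda_j(\bW_n))}$ telescoped over $j\ge 2$ gives
\begin{align*}
(\bcC_n^{\star})^{-1} \;=\; \frac{1}{\lambda_1(\bW_n)}\bI_n \;+\; \frac{1}{\lambda_1(\bW_n)}\bm{N}_n, \quad \bm{N}_n := \sum_{j\ge 2}\frac{\lambda_j(\bW_n)}{\lambda_1(\bW_n)-\lambda_j(\bW_n)}\bv_j\bv_j^\top.
\end{align*}
The spectral gap of $W$ combined with \Cref{lemma:lambda1Wnconc} (applied to the top two eigenvalues) gives $\lambda_1(\bW_n)-\lambda_j(\bW_n) \gtrsim_W n$ uniformly in $j\ge 2$, whence $\|\bm{N}_n\|_{\mathrm{tr}} \lesssim_W n^{-1}\|\bW_n\|_{\mathrm{tr}} \le n^{-1}\sqrt{n}\,\|\bW_n\|_F \lesssim_W \sqrt n$, and consequently $\lambda_1(\bW_n)^{-1}\sum_i(\bm{N}_n)_{ii}h_n(U_i)=O_W(1/\sqrt n)$.

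This leaves the leading term $\lambda_1(\bW_n)^{-1}\sum_i h_n(U_i)=(n/\lambda_1(\bW_n))\,V_n$ with $V_n := n^{-2}\sum_{i\neq l}\sigma^2(U_i,U_l)\phi_1(U_l)^2$. The latter is a bounded U-statistic, so bounded-differences concentration (McDiarmid) yields $|V_n - \E V_n|\lesssim \sqrt{\log n/n}$ with high probability, while $\E V_n \to \int \sigma^2(x,y)\phi_1(y)^2\,\d x\,\d y$. Combining with $|\lambda_1(\bW_n)/n - \lambda_1(W)|\lesssim \log n/\sqrt n$ from \Cref{lemma:lambda1Wnconc}, the leading term equals $\lambda_1(W)^{-1}\int\sigma^2(x,y)\phi_1(y)^2\,\d x\,\d y$ up to $O_W(\log n/\sqrt n)$. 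Finally, the symmetry $W(x,y)=W(y,x)$ lets me symmetrize $\phi_1(y)^2$ to $\tfrac12(\phi_1(x)^2+\phi_1(y)^2)$ inside the integral, recovering the claimed limit. The main technical obstacle is the trace-norm bound on $\bm{N}_n$, which requires uniform spectral-gap control over the full spectrum of $\bW_n$ away from $\lambda_1(\bW_n)$ rather than just the top two eigenvalues; this is precisely where the spectral gap assumption in \Cref{assmp:assumptionkernel} becomes essential.
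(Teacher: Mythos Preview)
Your argument is correct and reaches the same conclusion with the required rate and probability, but it proceeds by a genuinely different route than the paper. The paper works directly with $\bcC_n$: it uses the single resolvent identity
\[
(\bcC_n^\Pi)^{-1}-\tfrac{1}{\lambda_1(\bW_n)}\bI_n=\tfrac{1}{\lambda_1(\bW_n)}\bigl(\bW_n^\Pi-\lambda_1(\bW_n)\bPhi_1^\Pi(\bPhi_1^\Pi)^\top\bigr)(\bcC_n^\Pi)^{-1},
\]
bounds the resulting diagonal correction by the entrywise $\ell_1$--Frobenius estimate $\tfrac{1}{n}\sum_{i,j}\bigl|(\bcC_n^\Pi)^{-1}(i,j)\bigr|\lesssim\|(\bcC_n^\Pi)^{-1}\|_F\lesssim_W 1/\sqrt{n}$, and then passes to the limit by replacing each $U_{(j)}$ with $j/n$ via \Cref{lemma:concorderU} together with Lipschitz continuity of $W$ and $\phi_1$, followed by a Riemann sum. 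You instead first swap $\bPhi_1$ for the true eigenvector $\bv_1$ (a rank-two, hence trace-norm controlled, perturbation), then exploit the exact eigendecomposition of $\bcC_n^{\star}$ and the global trace-norm bound $\|\bm{N}_n\|_{\mathrm{tr}}\lesssim_W n^{-1}\|\bW_n\|_{\mathrm{tr}}\le n^{-1}\sqrt{n}\,\|\bW_n\|_F\lesssim_W\sqrt{n}$, and finish with McDiarmid on the bounded U-statistic $V_n$. Your route is more spectral and avoids any appeal to order statistics or Riemann sums; the paper's route is more direct and never leaves $\bcC_n$. Both hinge on the same spectral-gap input (\Cref{assmp:assumptionkernel}) to get the uniform lower bound $\lambda_1(\bW_n)-\lambda_j(\bW_n)\gtrsim_W n$ for $j\ge 2$. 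One small point: to match the stated failure probability $Cn\exp\bigl(-\tfrac16(\log n)^2\bigr)$ you should apply McDiarmid at threshold $t\asymp \log n/\sqrt{n}$ rather than $\sqrt{\log n/n}$, which still fits inside the claimed $O_W(\log n/\sqrt{n})$ error.
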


\subsection{Proof of  \Cref{p:main3}}\label{sec:proofofmain3}

By plugging  \eqref{e:replace} and \eqref{e:vBCBv} into \eqref{eq:taylorfirstdiff}, we conclude that,
\footnotesize
\begin{align}\begin{split}\label{eq:replacelAWn}
& \phantom{{}={}} \frac{\lambda_{1}(\bA_{n})}{\lambda_{1}(\bW_{n})}(\lambda_{1}(\bA_{n}) - \lambda_{1}(\bW_{n})) 
  = \bv_{1}^{\top}\bmB_{n}\bv_{1} + \bv_{1}^{\top}\bmB_{n}\bC_{n}^{-1}\bmB_{n}\bv_{1} + O_{W}\left(\frac{1}{\sqrt{n}}\right)\\
 &=\bPhi_{1}^{\top}\bmB_{n}\bPhi_{1}
 +\frac{1}{\lambda_{1}(W)}\int\frac{\phi_{1}^{2}(x) + \phi_{1}^{2}(y)}{2}W(x,y)(1-W(x,y))\rd x\rd y+O_{W}\left(\frac{\log^{3}n}{\sqrt{n}}\right)^{1/2}
\end{split}\end{align}
\normalsize
with probability at least $1-Cn\exp\left(-\frac{1}{6}(\log n)^2\right)$.
\cblue
We now notice that $\bPhi_{1}^{\top}\bmB_{n}\bPhi_{1}$ in \eqref{e:replace} is given by
\begin{align*}
    \bPhi_{1}^{\top}\bmB_{n}\bPhi_{1} = \frac{2}{n}\sum_{i<j}\phi_{1}(U_{i})\phi_{1}(U_{j})(\bA_{n}(i,j) - W(U_{i},U_{j})).
\end{align*}
Notice that conditional on $\bU_n$, by the above decomposition, $\bPhi_{1}^{\top}\bmB_n\bPhi_1$ is a sum of independent elements. To find a CLT, we will now use Lyapunov's version, albeit in a conditional sense. Define,
\begin{align*}
    s_{n}^2 = \frac{4}{n^{2}}\sum_{i<j}\phi_{1}^2(U_{i})\phi_{1}^2(U_{j})W(U_{i},U_{j})(1 - W(U_{i},U_{j})),
\end{align*}
which is a U-statistics. 
By Theorem 5.4.A from \cite{serfling2009approximation} there exists a set $\cA$ of $(U_1, U_2,U_3,\cdots)$ such that $\mathbb P(\cA)=1$ on the set $\cA$,
\begin{align}\label{e:sn2}
    s_{n}^2\ra 2\int \phi_{1}^{2}(x)\phi_{1}^{2}(y)W(x,y)(1-W(x,y))\rd x\rd y,
\end{align}
and as $n\rightarrow \infty$,
\begin{align}\label{e:sn3}
   \sum_{i<j}\E_{\bA_n}\left[\left(\frac{2}{n}\phi_{1}(U_{i})\phi_{1}(U_{j})(\bA_{n}(i,j)-W(U_{i},U_{j}))\right)^3\middle|U_{1},\cdots U_{n}\right]\ra 0,
\end{align}
where the convergence follows by noticing that $\phi_1$ and $\bA_n(i,j) - W(U_i,U_j)$ are bounded by an universal constant depending on $W$.
The two statements \eqref{e:sn2} and \eqref{e:sn3} verify the Lyapunov condition  for $\bPhi_{1}^{\top}\bmB_{n}\bPhi_{1}$ conditioning on $\bU_{n}$. Now recalling the convergence from \eqref{e:sn2} we conclude that on $\cA$, $\bPhi_{1}^{\top}\bmB_{n}\bPhi_{1}$ converges to the normal distribution,
\begin{align}\label{e:pBpconverge}
    \bPhi_{1}^{\top}\bmB_{n}\bPhi_{1}|\bU_{n}\dto \cN(0,\sigma^2),
\end{align}
where
\begin{align*}
\sigma^2= 2\int \phi_{1}^{2}(x)\phi_{1}^{2}(y)W(x,y)(1-W(x,y))\rd x\rd y.
\end{align*}
\cblack
By Lemma \ref{lemma:lambda1Wnconc} notice that $\lambda_{1}(\bW_n)/n\overset{p}{\rightarrow}\lambda_{1}(W)$. Additionally, an application of Weyl's inequality shows that $\left|\lambda_{1}(\bA_n)/n - \lambda_{1}(\bW_n)/n\right| \overset{p}{\rightarrow} 0$. Combining we conclude that the ratio $\lambda_1(\bA_n)/\lambda_1(\bW_n)\rightarrow 1$ in probability. Then it follows from \eqref{eq:replacelAWn}, that conditioned on $\bU_n$,
$\lambda_{1}(\bA_{n}) - \lambda_{1}(\bW_{n})$
converges to the normal distribution $\cN(\alpha,\sigma^2)$ as in \eqref{e:cltla}. This finishes the proof of \Cref{p:main3}.

\color{black}
\small 
\bibliographystyle{abbrvnat}
\bibliography{ref} 
\normalsize

\appendix

\section{Hilbert Schmidt Operators from Kernel and Kernel Matrix}\label{s:kernelproof}
Consider $f:[0,1]^2\ra \bR$ to be a Lipschitz continuous and symmetric function with Lipschitz constant $L_{f}$.   Now for a the symmetric function $f:[0,1]^2\ra\bR$ define the Hilbert Schmidt Operator from $L^2([0,1])$ to $L^2([0,1])$,
    \begin{align}\label{eq:defHilbertf}
        T_{f}g (x) = \int f(x,y)g(y)\rd y,
    \end{align}

\subsection{Eigenfunctions of Hilbert Schmidt Operators from Kernel}
 
In this section we prove Lemma 5.1, which states that the eigenfunctions of Hilbert Schmidt Operator $T_f$ from \eqref{eq:defHilbertf} are bounded and Lipschitz.
\begin{proof}[Proof of Lemma 5.1]
    \blue{To prove part (a) notice that by definition,
    \begin{align*}
        \phi_j(x) = \frac{1}{\lambda_j}\int f(x,y)\phi_j(y)\rd y.
    \end{align*}
    Hence an application of Cauchy-Schwarz inequality shows that,
    \begin{align*}
        |\phi_j(x)|\leq \frac{1}{|\lambda_j(f)|}\sqrt{\int f^2(x,y)\rd y}\sqrt{\int \phi_j(y)^2\rd y}\leq \frac{B_f}{|\lambda_j(f)|}.
    \end{align*}}
    Now for part (b) conside $j\geq 1$ and note that,
    \begin{align*}
        \left|\lambda_{j}(f)\right|\left|\phi_{j}(x) - \phi_{j}(x')\right| = \left|\int_{0}^{1}\left(f(x,y) - f(x',y)\right)\phi_{j}(y)\rd y\right|\leq L_{f}|x-x'|\int_{0}^{1}|\phi_{j}(y)|\rd y
    \end{align*}
    Recall that $\phi_{j}$ are orthonormal, hence by Cauchy Schwarz inequality,
    \begin{align*}
        \left|\phi_{j}(x) - \phi_{j}(x')\right|\leq \frac{L_{f}}{|\lambda_{j}(f)|}|x-x'|
    \end{align*}
    which shows that $\phi_{j}$ is Lipschitz continuous with Lipschitz constant $L_{f}/|\lambda_{j}(f)|$. A similar proof holds for $\phi_{j}'$ for all $j\geq 1$
\end{proof}

\subsection{Concentration of Hilbert Schmidt Operators from Kernel Matrix}
Consider a sequence $U_{1},U_{2},\ldots,U_{n}$ of randomly drawn samples from the Uniform distribution on $[0,1]$. In this section we consider a $n\times n$ matrix with elements $f(U_{(i)}, U_{(j)})$, where $U_{(1)},\ldots,U_{(n)}$ are the order statistics of $U_1,\ldots,U_n$ and study the concentration of an operator derived from such a matrix by embedding it in $[0,1]^2$.\\

First we show a high probability approximation to the position of the order statistics of the random sample $U_{1},U_{2},\ldots,U_{n}$. 
\begin{lemma}\label{lemma:concorderU}
    Let $U_{1},U_{2},\ldots, U_{n}$ be randomly generated from $\text{Unif}[0,1]$. Let $U_{(1)}\leq U_{(2)}\leq \cdots\leq U_{(n)}$ be the arrangement of $\{U_{i}:1\leq i\leq n\}$ in increasing order. Then,
    \begin{align*}
        \P\left(\left|U_{(k)} - \frac{k}{n}\right|>\frac{\log n}{\sqrt{n}}, 1\leq k\leq n\right)\leq 2n\exp\left(-\frac{2}{3}(\log n)^2\right)
    \end{align*}
    for all $n>2$.
    \end{lemma}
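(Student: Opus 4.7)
The plan is to exploit the classical coupling between uniform order statistics and binomial tail probabilities, combined with Hoeffding's inequality and a union bound over $k$. This is a standard argument; no delicate step is required.

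First I would fix $1 \le k \le n$ and observe that for any $t \in [0,1]$,
\[
\{U_{(k)} \le t\} \;=\; \Bigl\{ S_n(t) \ge k \Bigr\}, \qquad S_n(t) := \sum_{i=1}^{n} \mathbbm{1}\{U_i \le t\},
\]
and $S_n(t) \sim \mathrm{Binomial}(n,t)$ with mean $nt$. This reduces a deviation of the order statistic to a deviation of a sum of i.i.d.\ $[0,1]$-valued random variables, to which Hoeffding's inequality applies directly.

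Next I would bound each tail. For the upper tail, set $t_+ = k/n + \log n/\sqrt n$ (if $t_+ > 1$ the tail event is empty). Then
\[
\{U_{(k)} > t_+\} \;=\; \{S_n(t_+) \le k-1\} \;\subseteq\; \{S_n(t_+) - nt_+ \le -\sqrt{n}\,\log n\},
\]
so Hoeffding's inequality yields $\P(U_{(k)} > t_+) \le \exp(-2(\log n)^2)$. Symmetrically, with $t_- = k/n - \log n/\sqrt n$ (the event is empty if $t_- < 0$),
\[
\{U_{(k)} < t_-\} = \{S_n(t_-) \ge k\} \subseteq \{S_n(t_-) - nt_- \ge \sqrt n \log n\},
\]
giving $\P(U_{(k)} < t_-) \le \exp(-2(\log n)^2)$ by the same inequality.

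Finally I would apply a union bound over $k = 1, \ldots, n$ and over the two tails to obtain
\[
\P\Bigl(\exists\, k\le n:\; |U_{(k)} - k/n| > \log n/\sqrt n\Bigr) \;\le\; 2n \exp\!\bigl(-2(\log n)^2\bigr) \;\le\; 2n\exp\!\bigl(-\tfrac{2}{3}(\log n)^2\bigr),
\]
which is the stated bound (in fact with room to spare; the weaker constant $2/3$ in the statement suggests the authors had some slack in mind, perhaps to accommodate using Bernstein instead of Hoeffding). There is no substantive obstacle here — the only book-keeping is handling the boundary cases where $t_\pm \notin [0,1]$, which are trivially handled since the corresponding deviation events are empty.
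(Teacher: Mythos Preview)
Your argument is correct and actually yields the sharper exponent $-2(\log n)^2$. The paper takes a different route: it invokes a Bernstein-type concentration bound for the Beta distribution (Lemma~3.1.1 of Reiss, \emph{Approximate Distributions of Order Statistics}), which controls $|U_{(k)} - k/(n+1)|$ with variance parameter $\sigma_k^2 = \tfrac{k}{n+1}(1-\tfrac{k}{n+1}) \le \tfrac14$, producing the constant $2/3$ after bounding the denominator; a final triangle-inequality step shifts the center from $k/(n+1)$ to $k/n$. Your binomial/Hoeffding approach is more elementary and avoids both the external citation and the centering adjustment, at the cost of not being sharp in the variance (Hoeffding ignores that $\sigma_k^2$ can be much smaller than $1/4$); but since the lemma only needs a crude polylogarithmic deviation, that sharpness is irrelevant here. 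Your guess about the origin of the $2/3$ is exactly right.
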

    \begin{proof}
    By union bound it is enough to show that for all $1\leq k\leq n$,
    \begin{align*}
        \P\left(\left|U_{(k)} - \frac{k}{n}\right|>\frac{\log n}{\sqrt{n}}\right)\leq 2\exp\left(-\frac{2}{3}(\log n)^2\right)
    \end{align*}
    By Lemma 3.1.1 from \cite{reiss2012approximate} we get,
    \begin{align*}
        \P\left(\left|U_{(k)} - \frac{k}{n+1}\right|>\frac{\vep}{\sqrt{n}}\right)\leq 2\exp\left(-\frac{\vep^2}{3\left(\sigma_{k}^2 + \vep/\sqrt{n}\right)}\right)
    \end{align*}
    with $\sigma_{k}^2 = \frac{k}{n+1}\left(1-\frac{k}{n+1}\right)$. Choosing $\vep = \frac{\log n}{2}$ we have,
    \begin{align*}
        \P\left(\left|U_{(k)} - \frac{k}{n+1}\right|>\frac{\log n}{2\sqrt{n}}\right)\leq 2\exp\left(-\frac{(\log n)^2}{3\left(\sigma_{k}^2 + (\log n)/2\sqrt{n}\right)}\right)
    \end{align*}
    Now observe that $\sigma_{k}^2\leq 1/4$ for all $1\leq k\leq n$ and $\frac{\log n}{2\sqrt{n}}\leq \frac{1}{4}$ for all $n\geq 1$. Then we have,
    \begin{align}\label{eq:Ukconcfirst}
        \P\left(\left|U_{(k)} - \frac{k}{n+1}\right|>\frac{\log n}{2\sqrt{n}}\right)\leq 2\exp\left(-\frac{2}{3}(\log n)^2\right)
    \end{align}
    Finally for all $n>2$, by \eqref{eq:Ukconcfirst} shows,
    \begin{align*}
        \P\left(\left|U_{(k)} - \frac{k}{n}\right|>\frac{\log n}{\sqrt{n}}\right)
        &\leq \P\left(\left|U_{(k)} - \frac{k}{n+1}\right|>\frac{\log n}{2\sqrt{n}}\right)\leq 2\exp\left(-\frac{2}{3}(\log n)^2\right)
    \end{align*}
    \end{proof}
    
    Next, we show that embedding the matrix $\bm \cF_{n}:=\left(\left(f(U_{(i)}, U_{(j)})\right)\right)$ in $[0,1]^2$ gives a good approximation to the function $f$ with high probability.

    \begin{lemma}\label{lemma:WnWdiff}
    For a Lipschitz continuous, symmetric function $f:[0,1]^2\ra\bR$ with Lipschitz constant $L_{f}$ and $U_{1},U_{2}.\ldots, U_{n}$ generated randomly from $\text{Unif}[0,1]$ define,
    \begin{align}\label{eq:defWnfn}
        f_{n}(x,y) = \sum_{i=1}^{n}\sum_{j=1}^{n}f\left(U_{(i)}, U_{(j)}\right)\one\left\{\frac{i-1}{n}<x\leq \frac{i}{n}, \frac{j-1}{n}<y\leq \frac{j}{n}\right\}.
    \end{align}
    Then,
    \begin{align*}
        \sup_{x,y\in [0,1]}|f(x,y) - f_{n}(x,y)|\lesssim_{f} \frac{\log n}{\sqrt{n}}
    \end{align*}
    with probability at least $1-4n\exp\left(-\frac{1}{6}(\log n)^2\right)$
    \end{lemma}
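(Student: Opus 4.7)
The statement claims that the piecewise constant function $f_n$ built from the sorted samples $U_{(1)}\leq \cdots \leq U_{(n)}$ uniformly approximates the Lipschitz kernel $f$ on $[0,1]^2$. My plan is to combine the Lipschitz continuity of $f$ with the concentration of order statistics already proved in Lemma \ref{lemma:concorderU}, which tells us that with probability at least $1-2n\exp(-\tfrac23(\log n)^2)$ we have $|U_{(k)} - k/n| \leq (\log n)/\sqrt{n}$ simultaneously for all $1\leq k\leq n$. Since $\tfrac23 > \tfrac16$ and $2n \leq 4n$, this probability bound is at least $1-4n\exp(-\tfrac16(\log n)^2)$, matching the statement.

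Work on the above high-probability event. Fix any $(x,y)\in [0,1]^2$, and let $i,j$ be the unique indices with $x\in ((i-1)/n, i/n]$ and $y\in ((j-1)/n, j/n]$, so that by construction of \eqref{eq:defWnfn} we have $f_n(x,y) = f(U_{(i)}, U_{(j)})$. By Lipschitz continuity with constant $L_f$ (treating $[0,1]^2$ with the $\ell_1$ metric, which only changes constants)
\begin{align*}
|f(x,y) - f_n(x,y)| &\leq L_f\bigl(|x - U_{(i)}| + |y - U_{(j)}|\bigr)\\
&\leq L_f\bigl(|x - i/n| + |i/n - U_{(i)}| + |y - j/n| + |j/n - U_{(j)}|\bigr).
\end{align*}
The first and third terms are bounded by $1/n$ by construction of the rectangles, while the second and fourth terms are bounded by $(\log n)/\sqrt{n}$ on our event. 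Hence $|f(x,y) - f_n(x,y)| \leq 2L_f(1/n + (\log n)/\sqrt n)\lesssim_f (\log n)/\sqrt n$, with a bound that is uniform in $(x,y)$. Taking the supremum then yields the claim.

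There is no real obstacle: the argument is a deterministic Lipschitz bound combined with one probabilistic input, Lemma \ref{lemma:concorderU}. The only minor bookkeeping is matching the probability constants, which as noted follows from the inequalities $2 \leq 4$ and $\tfrac23 \geq \tfrac16$.
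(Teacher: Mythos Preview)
Your proposal is correct and follows essentially the same approach as the paper: fix $(x,y)$, locate its grid cell, and combine the Lipschitz bound with the concentration of order statistics from Lemma~\ref{lemma:concorderU}. The only cosmetic difference is that you route through the grid point $i/n$ (bounding $|x-i/n|\le 1/n$ deterministically and then $|i/n-U_{(i)}|$ probabilistically), whereas the paper bounds $|x-U_{(i)}|\le\max\{|U_{(i)}-i/n|,|U_{(i)}-(i-1)/n|\}$ directly and applies the concentration to both endpoints, which is why it picks up the factor $4n$ rather than the $2n$ your argument actually needs.
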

    \begin{proof}
    Fix $(x,y)\in [0,1]^2$ and without loss of generality suppose that $(x,y)\in \left(\frac{i-1}{n}, \frac{i}{n}\right]\times \left(\frac{j-1}{n}, \frac{j}{n}\right]$. Then recalling that $f$ is Lipschitz we have,
    \small
    \begin{align*}
        |f(x,y) - f_{n}(x,y)|\leq L_{f}\left\|(x,y) - (U_{(i)},U_{(j)})\right\|_{2}\leq L_{f}\sqrt{2}\max_{1\leq i\leq n}\left\{\left|U_{(i)} - \frac{i}{n}\right|, \left|U_{(i)} - \frac{i-1}{n}\right|\right\}
    \end{align*}
    \normalsize
    By Lemma \ref{lemma:concorderU} we easily conclude that,
    \begin{align*}
        \P\left(\left|U_{(k)} - \frac{k}{n}\right|>\frac{\log n}{\sqrt{n}}, 1\leq k\leq n\right)\leq 2n\exp\left(-\frac{1}{6}(\log n)^2\right)
    \end{align*}
    and,
    \begin{align*}
        \P\left(\left|U_{(k)} - \frac{k-1}{n}\right|>\frac{\log n}{\sqrt{n}}, 1\leq k\leq n\right)\leq 2n\exp\left(-\frac{1}{6}(\log n)^2\right)
    \end{align*}
    Then,
    \begin{align}\label{eq:whpbddUi}
        \P\left(\max_{1\leq k\leq n}\left\{\left|U_{(k)} - \frac{k-1}{n}\right|, \left|U_{(k)} - \frac{k}{n}\right|\right\}>\frac{\log n}{\sqrt{n}}\right)\leq 4n\exp\left(-\frac{1}{6}(\log n)^2\right)
    \end{align}
    Since our choice of $(x,y)$ was arbitrary, then we can conclude that,
    \begin{align*}
        \P\left(\sup_{x,y\in [0,1]}\left|\sfK(x,y) - \sfK_{n}(x,y)\right|>\frac{\sqrt{2}L\log n}{\sqrt{n}}\right)\leq 4n\exp\left(-\frac{1}{6}(\log n)^2\right)
    \end{align*}
    completing the proof of the lemma.
    \end{proof}

      In the following lemma we show that Hilbert Schmidt operator corresponding to the functions $f$ and $f_{n}$ (defined in \eqref{eq:defWnfn}) are close with high probability.
    \begin{lemma}\label{lemma:TWminusTWn}
    For a Lipschitz symmetric function $f:[0,1]^2\ra\bR$ with lipschitz constant $L_{f}$,
    \begin{align*}
        \|T_f - T_{f_{n}}\|_{2\ra 2}\lesssim_{f} \frac{\log n}{\sqrt{n}}\text{ with probability at least } 1 - 4n\exp\left(-\frac{1}{6}(\log n)^2\right)
    \end{align*}
    where $f_{n}$ is defined in \eqref{eq:defWnfn}.
    \end{lemma}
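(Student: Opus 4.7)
The plan is to bound the operator norm by a kernel norm and then invoke the preceding uniform estimate. Concretely, for any Hilbert–Schmidt integral operator with symmetric kernel $K \in L^2([0,1]^2)$, the operator norm on $L^2([0,1])$ is dominated by the Hilbert–Schmidt norm,
\begin{equation*}
\|T_K\|_{2\to 2} \;\leq\; \|T_K\|_{\mathrm{HS}} \;=\; \Big(\int_{[0,1]^2} K(x,y)^2\,\rd x\,\rd y\Big)^{1/2},
\end{equation*}
and since $[0,1]^2$ has unit measure, this is further dominated by $\sup_{x,y\in[0,1]}|K(x,y)|$. Applied to $K = f - f_n$, which is clearly symmetric and bounded (hence in $L^2$), this gives
\begin{equation*}
\|T_f - T_{f_n}\|_{2\to 2} \;\leq\; \sup_{x,y\in[0,1]} |f(x,y) - f_n(x,y)|.
\end{equation*}

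The key input is then Lemma \ref{lemma:WnWdiff}, which was already established in the previous subsection. That lemma says precisely that
\begin{equation*}
\sup_{x,y\in[0,1]} |f(x,y) - f_n(x,y)| \;\lesssim_f\; \frac{\log n}{\sqrt{n}}
\end{equation*}
with probability at least $1 - 4n\exp\bigl(-\tfrac{1}{6}(\log n)^2\bigr)$. Chaining the two inequalities yields the claim on the same event with the same probability bound.

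So the proof is essentially a one-liner: use the standard $\|\cdot\|_{2\to 2} \le \|\cdot\|_{\mathrm{HS}} \le \|\cdot\|_\infty$ chain to reduce to the uniform kernel approximation, then cite Lemma \ref{lemma:WnWdiff}. There is no real obstacle here — all the probabilistic work (the order-statistic concentration via Lemma \ref{lemma:concorderU}, and the Lipschitz transfer to $f_n$) was done when proving Lemma \ref{lemma:WnWdiff}. The only thing worth a brief sentence in the write-up is to note symmetry and measurability of $f - f_n$ so that the Hilbert–Schmidt bound is applicable, but these are immediate from the assumptions on $f$ and the piecewise-constant definition of $f_n$ in \eqref{eq:defWnfn}.
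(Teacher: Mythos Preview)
Your proof is correct and essentially the same as the paper's: both reduce the operator norm bound to the uniform kernel estimate of Lemma~\ref{lemma:WnWdiff}. The only cosmetic difference is that you pass through the Hilbert--Schmidt norm, whereas the paper writes out the bilinear form $\sup_{\|h\|_2=\|g\|_2=1}\int h(x)(f-f_n)(x,y)g(y)\,\rd x\,\rd y$ and bounds it directly by $\|f-f_n\|_\infty$ via Cauchy--Schwarz; these are interchangeable routes to the same $\|\cdot\|_{2\to2}\leq\|\cdot\|_\infty$ inequality.
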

    \begin{proof}
    By definition,
    \begin{align*}
        \|T_f - T_{f_{n}}\|_{2\ra 2} = \sup_{\|h\|_{2} = \|g\|_{2}=1}\int h(x)(f(x,y) - f_{n}(x,y))g(y)\rd x\rd y
    \end{align*}
    Now by Lemma \ref{lemma:WnWdiff} and Cauchy Schwarz inequality we get,
    \begin{align*}
        \|T_f - T_{f_{n}}\|_{2\ra 2}\leq \frac{2L_{f}\log n}{\sqrt{n}}\sup_{\|h\|_{2} = \|g\|_{2}=1}\int |h(x)g(y)|\rd x\rd y\leq \frac{2L_{f}\log n}{\sqrt{n}}
    \end{align*}
    with probability at least $1-4n\exp\left(-\frac{1}{6}(\log n)^2\right)$
    \end{proof}
    

In the following we prove Lemma 5.4, which is an easy consequence of \Cref{lemma:TWminusTWn}. 
       \begin{proof}[Proof of Lemma 5.4]
    Note that,
    \begin{align*}
        \left\|T_{f_{n}} - T_{f_{n}^{\circ}}\right\|_{2}\leq \frac{2B_{f}}{\sqrt{n}}
    \end{align*}
    The proof is now completed by invoking Lemma \ref{lemma:TWminusTWn} along with the triangle inequality. With probability at least $1 - 4n\exp\left(-\frac{1}{6}(\log n)^2\right)$
 \begin{align*}
        \|T_f - T_{f^\circ_{n}}\|_{2\ra 2}\leq 
        \|T_f - T_{f_{n}}\|_{2\ra 2}+\|T_{f_n} - T_{f^\circ_{n}}\|_{2\ra 2}
        \lesssim_{f} \frac{\log n}{\sqrt{n}}+\frac{2B_f}{\sqrt n}.
    \end{align*}
    \end{proof}
    
    \subsection{Eigenvalues of Uniform Kernel Matrix}

    In this section we study the concentration of sample eigenvalues of kernel matrix $\left(\left(f(U_{(i)}, U_{(j)})\right)\right)_{i\neq j}$. First we prove Lemma 5.2 that the spectrum of the above matrix is same as $\left(\left(f(U_{i}, U_{j})\right)\right)_{i\neq j}$.

    \begin{proof}[Proof of Lemma 5.2]
    \blue{For all $1\leq j\leq n$ let $\text{Rank}(U_{j})$ be the rank of $U_{j}$ among $U_{1},\ldots,U_{n}$.} Consider $\pi:[n]\ra[n]$ to be a permutation such that,
    \begin{align*}
        \left\{\pi^{-1}(j) = \text{Rank}(U_{j}): 1\leq j\leq n\right\}.
    \end{align*}
    Then by definition,
    \begin{align}\label{eq:Upiequalorder}
        U_{\pi(j)} = U_{(j)}, 1\leq j\leq n.
    \end{align}
    Now consider $\Pi_{n}$ to be the permutation matrix corresponding to $\pi$. Then for any matrix $\bA_{n}$ we must have,
    \begin{align*}
        \Pi_{n}\bA_{n}\Pi_{n}^{\top} = \left(\left(\bA_{n}(\pi(i),\pi(j))\right)\right)_{1\leq i,j\leq n}
    \end{align*}
    which now completes the proof. 
    \end{proof}
    
 Recall $\bm F_{n}:= ((f(U_{i},U_{j})))_{i\neq j=1}^{n}$, next we prove 
 Lemma 5.3, which states that the largest eigenvalue of $\bm F_n$ concentrates around $\lambda_1(f)$.
    
    \begin{proof}[Proof of Lemma 5.3]
    Recalling $(5.2)$ and Lemma 5.2 it is easy to note that $\lambda$ is an eigenvalue of $\bm F_{n}$ if and only if $\lambda/n$ is an eigenvalue of the operator $T_{f_{n}^{\circ}}$. By Lemma 5.4 we have,
    \begin{align*}
        \|T_{f} - T_{f_{n}^{\circ}}\|_{2\ra 2}\leq C_{f}\frac{\log n}{\sqrt{n}}\text{ with probability at least }1-4n\exp\left(-\frac{1}{6}(\log n)^2\right).
    \end{align*}
    Observe that,
    \begin{align*}
        \P\left(\lambda_{1}(\bm F_{n})\leq 0\right)
        & \leq \P\left(\lambda_{1}(\bm F_{n})\leq 0, \|T_{f} - T_{f_{n}}^{\circ}\|_{2\ra 2}\leq C_{f}\frac{\log n}{\sqrt{n}}\right) + 4n\exp\left(-\frac{1}{6}(\log n)^2\right)\\
        & \leq \P\left(|\lambda_{1}(f)|\leq C_{f}\frac{\log n}{\sqrt{n}}\right)+ 4n\exp\left(-\frac{1}{6}(\log n)^2\right)
    \end{align*}
    where the last inequality follows by noting that on the event $\lambda_{1}(\bm F_{n})\leq 0$ the operator $T_{f_{n}^{\circ}}$ has no positive eigenvalues and invoking Lemma \ref{l:eigdist}. Then for large enough $n$,
    \begin{align*}
        \lambda_{1}(\bm F_{n})>0\text{ with probability at least }1-4n\exp\left(-\frac{1}{6}(\log n)^2\right).
    \end{align*}
    Now once again invoking Lemma \ref{l:eigdist} and Lemma 5.4 we get,
    \begin{align*}
        \P\bigg(\bigg|\frac{\lambda_{1}(\bm F_{n})}{n}
        & - \lambda_{1}(f)\bigg|> C_{f}\frac{\log n}{\sqrt{n}}\bigg)\\
        &\leq \P\left(\left|\frac{\lambda_{1}(\bm F_{n})}{n} - \lambda_{1}(f)\right|> C_{f}\frac{\log n}{\sqrt{n}}, \lambda_{n}(\bm F_{n})>0\right) + 4n\exp\left(-\frac{1}{6}(\log n)^2\right)\\
        &\leq \P\left(\|T_{f} - T_{f_{n}^{\circ}}\|>C_{f}\frac{\log n}{\sqrt{n}}\right) + 4n\exp\left(-\frac{1}{6}(\log n)^2\right)\\
        &\leq 8n\exp\left(-\frac{1}{6}(\log n)^2\right)
    \end{align*}
    for all large enough $n$, thus completing the proof of the lemma.
    \end{proof}

    \section{Proof of Proposition 5.1}\label{s:kernelproof2}
    Consider,
     \begin{align}\label{eq:defftilde}
         \widetilde{f}(x,y): = f(x,y) - \lambda_{1}(f)\phi_{1}(x)\phi_{1}(y)
     \end{align}
     Then it is easy to observe that,
     \begin{align*}
         \widetilde{\bF}_{n}: = \bF_{n} - \lambda_{1}(f)\left(\Phi_{1}(\bU)\Phi_{1}(\bU)^{\top}- \bD_{n}\right) = \left(\left(\widetilde{f}(U_{i},U_{j})\right)\right)_{i\neq j}
     \end{align*}
     By definition $\bm X_{n} = \blambda_{n}\mathbb{I}_{n} - \left(\widetilde{\bF}_{n} - \lambda_{1}(f)\bD_{n}\right)$. Note that proving the lemma amounts to showing,
     \begin{align}\label{eq:toshowlambdalower}
         \inf_{1\leq i\leq n}\left|\frac{\blambda_{n}}{n} - \frac{1}{n}\lambda_{i}\left(\widetilde{\bF}_{n} - \lambda_{1}(f)\bD_{n}\right)\right| \geq \frac{1}{2}|\lambda_{1}(f) - \lambda_{2}(f)|
     \end{align}
     with probability at least $1 - 16n\exp\left(-\frac{1}{6}(\log n)^2\right)$ for large enough $n$. With that goal in mind, first we show a lower bound on L.H.S of \eqref{eq:toshowlambdalower}.
     \begin{lemma}\label{lemma:firstlambdalbd}
         Let $U_{(1)}\leq \cdots\leq U_{(n)}$ be the non-decreasing ordering of $U_{1},\ldots, U_{n}$. Recalling $\widetilde{f}$ defined in \eqref{eq:defftilde}, consider $\widetilde{\bF}_{n}^{\perm}$ to be a $n\times n$ matrix with $0's$ on the diagonal and the $(i,j)^{th}$ entry given by $\widetilde{f}(U_{(i)},U_{(j)})$ for all $1\leq i,j\leq n$. Then,
         \small
         \begin{align*}
             \inf_{1\leq i\leq n}\left|\frac{\blambda_{n}}{n} - \frac{1}{n}\lambda_{i}\left(\widetilde{\bF}_{n} - \lambda_{1}(f)\bD_{n}\right)\right|\geq |\lambda_{1}(f) - \lambda_{2}(f)| - \|T_{h_{\widetilde{\bF}_{n}^{\perm}}} - T_{\widetilde{f}}\|_{2\ra 2} - \left|\frac{\blambda_{n}}{n} - \lambda_{1}(f)\right|
         \end{align*}
         \normalsize
         where,
         \begin{align}\label{eq:hFntilde}
             h_{\widetilde{\bF}_{n}^{\perm}}(x,y) = \sum_{i\neq j}\widetilde{f}\left(U_{(i)}, U_{(j)}\right)\one\left\{\frac{i-1}{n}<x\leq \frac{i}{n}, \frac{j-1}{n}<y\leq \frac{j}{n}\right\}
         \end{align}
         and $T_{\gamma}$ is the Hilbert Schmidt integral operator corresponding to the function $\gamma$.
     \end{lemma}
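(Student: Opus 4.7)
The plan is to combine the permutation similarity from \Cref{lemma:permmat}, the step-function embedding used in the proof of \Cref{lemma:lambda1Wnconc}, and Weyl's inequality at both the matrix and operator levels. The key structural observation is that $\widetilde f(x,y)=f(x,y)-\lambda_1(f)\phi_1(x)\phi_1(y)$ is exactly $f$ with its top spectral component subtracted off, so $\sigma(T_{\widetilde f})=\{0\}\cup\{\lambda_j(f):j\ge 2\}\cup\{\lambda_j'(f):j\ge 1\}$, and in particular the largest eigenvalue of $T_{\widetilde f}$ is $\lambda_2(f)$.

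First, by \Cref{lemma:permmat} the matrix $\widetilde\bF_n-\lambda_1(f)\bD_n$ is permutation similar to $\widetilde\bF_n^{\perm}-\lambda_1(f)\bD_n^{\perm}$, where $\bD_n^{\perm}=\mathrm{diag}(\phi_1(U_{(i)})^2)\succeq 0$, so the two matrices share their spectrum. Since $\lambda_1(f)>0$ and $\bD_n^{\perm}\succeq 0$, Weyl's inequality yields $\lambda_i(\widetilde\bF_n^{\perm}-\lambda_1(f)\bD_n^{\perm})\le\lambda_i(\widetilde\bF_n^{\perm})$ for every $i$, so it suffices to upper bound $\lambda_i(\widetilde\bF_n^{\perm})/n$ from above.

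I would next repeat verbatim the embedding argument from the proof of \Cref{lemma:lambda1Wnconc} applied to $\widetilde\bF_n^{\perm}$: for any eigenvector $\bw$ of $\widetilde\bF_n^{\perm}$ with eigenvalue $n\mu\ne 0$, the step function $\sqrt n\sum_{i=1}^n w_i\,\one[x\in((i-1)/n,i/n]]$ is an eigenfunction of $T_{h_{\widetilde\bF_n^{\perm}}}$ with eigenvalue $\mu$ (direct check, since $h_{\widetilde\bF_n^{\perm}}$ is piecewise constant on the grid cells). Hence every nonzero $\lambda_i(\widetilde\bF_n^{\perm})/n$ lies in $\sigma(T_{h_{\widetilde\bF_n^{\perm}}})$, so $\max_i\lambda_i(\widetilde\bF_n^{\perm})/n\le\max\bigl(0,\lambda_1(T_{h_{\widetilde\bF_n^{\perm}}})\bigr)$. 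By Weyl's inequality for compact self-adjoint operators and the identification $\lambda_1(T_{\widetilde f})=\lambda_2(f)\ge 0$,
\[
\max_{1\le i\le n}\frac{\lambda_i(\widetilde\bF_n^{\perm})}{n}\;\le\;\lambda_2(f)+\|T_{h_{\widetilde\bF_n^{\perm}}}-T_{\widetilde f}\|_{2\to 2}.
\]

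Combining the previous two displays gives $\lambda_i(\widetilde\bF_n-\lambda_1(f)\bD_n)/n\le\lambda_2(f)+\|T_{h_{\widetilde\bF_n^{\perm}}}-T_{\widetilde f}\|_{2\to 2}$ for every $i$, and then a triangle inequality using $\lambda_2(f)\ge 0$ and the spectral-gap hypothesis $\lambda_1(f)>\lambda_2(f)$ produces
\[
\left|\frac{\blambda_n}{n}-\frac{\lambda_i(\widetilde\bF_n-\lambda_1(f)\bD_n)}{n}\right|\ge\frac{\blambda_n}{n}-\frac{\lambda_i(\widetilde\bF_n-\lambda_1(f)\bD_n)}{n}\ge |\lambda_1(f)-\lambda_2(f)|-\|T_{h_{\widetilde\bF_n^{\perm}}}-T_{\widetilde f}\|_{2\to 2}-\left|\frac{\blambda_n}{n}-\lambda_1(f)\right|,
\]
which is the claimed bound (and is meaningful once the right-hand side is positive, guaranteed with high probability by \Cref{lemma:TW0minusTW} and the concentration hypothesis on $\blambda_n$). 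The only subtle step is the matrix-to-operator spectral identification for $\widetilde\bF_n^{\perm}$ used above; this is essentially the same calculation as in the proof of \Cref{lemma:lambda1Wnconc}, but applied to the rank-one-corrected kernel $\widetilde f$ rather than $f$, and care must be taken to use the PSD-monotonicity step to handle the auxiliary diagonal correction $\lambda_1(f)\bD_n^{\perm}$ at zero cost rather than via a Weyl-style bound that would produce an extraneous $O(1/n)$ term.
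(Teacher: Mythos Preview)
Your proof is correct and follows the same overall architecture as the paper's argument: pass to the ordered matrix $\widetilde{\bF}_n^{\perm}$ via \Cref{lemma:permmat}, identify its scaled spectrum with that of the step-kernel operator $T_{h_{\widetilde{\bF}_n^{\perm}}}$, apply operator-level Weyl (\Cref{l:eigdist}) to compare with $T_{\widetilde f}$, and use that $\lambda_1(T_{\widetilde f})=\lambda_2(f)$.

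The one genuine difference is how you dispose of the diagonal correction $\lambda_1(f)\bD_n$. The paper uses two-sided matrix Weyl together with the boundedness of $\phi_1$ from \Cref{lemma:efuncbdd}, which produces an extra additive $C_f/n$ term (absorbed later into the $\log n/\sqrt{n}$ budget). You instead exploit that $\lambda_1(f)>0$ and $\bD_n\succeq 0$ to get the one-sided monotonicity $\lambda_i(\widetilde{\bF}_n-\lambda_1(f)\bD_n)\le\lambda_i(\widetilde{\bF}_n)$ for free, which yields exactly the stated inequality with no extraneous term. This is a clean improvement: it matches the lemma as written precisely, at the cost of being one-sided---but since $|x|\ge x$ holds unconditionally, the resulting deterministic inequality is valid regardless, and becomes informative once the right-hand side is positive (which is all that is needed downstream in \Cref{lemma:AnBninvertible}).
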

     The proof of Lemma \ref{lemma:firstlambdalbd} is given in Section \ref{section:proofoffirstlambdalbd}. Recalling the bound from $(5.3)$ and Lemma \ref{lemma:firstlambdalbd} the natural next step is to show an upper bound on $\|T_{h_{\widetilde{\bF}_{n}^{\perm}}} - T_{\widetilde{f}}\|_{2\ra 2}$. Towards that define the matrix $\bF_{n}^{\perm}$ to be a $n\times n$ matrix with the $(i,j)^{th}$ entry given by $f(U_{(i)},U_{(j)})$ for all $1\leq i,j\leq n$. Now similar to \eqref{eq:hFntilde} consider,
     \begin{align}\label{eq:hFnperm}
         h_{\bF_{n}^{\perm}}(x,y) = \sum_{i\neq j}f\left(U_{(i)}, U_{(j)}\right)\one\left\{\frac{i-1}{n}<x\leq \frac{i}{n}, \frac{j-1}{n}<y\leq \frac{j}{n}\right\}
     \end{align}
     By triangle inequality note that,
     \begin{align}\label{eq:bddonTdiff}
         \left\|T_{h_{\widetilde{\bF}_{n}^{\perm}}} - T_{\widetilde{f}}\right\|_{2\ra 2}\leq \left\|h_{\widetilde{\bF}_{n}^{\perm}} - h_{\bF_{n}^{\perm}} - \widetilde{f} + f\right\|_{2} + \left\|T_{h_{\bF_{n}^{\perm}}} - T_{f}\right\|_{2\ra 2}
     \end{align}
     By Lemma $5.4$ it is now enough to have a bound on $\left\|h_{\widetilde{\bF}_{n}^{\perm}} - h_{\bF_{n}^{\perm}} - \widetilde{f} + f\right\|_{2}$, which is provided in the following result.
     \begin{lemma}\label{lemma:upbdong}
         Recalling \eqref{eq:hFntilde} and \eqref{eq:hFnperm} we have,
         \begin{align*}
             \left\|h_{\widetilde{\bF}_{n}^{\perm}} - h_{\bF_{n}^{\perm}} - \widetilde{f} + f\right\|_{2}\lesssim_{f}\frac{\log n}{\sqrt{n}}
         \end{align*}
         with probability at least $1 - 4n\exp\left(-\frac{1}{6}(\log n)^2\right)$.
     \end{lemma}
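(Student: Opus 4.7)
The plan is to compute the difference $h_{\widetilde{\bF}_{n}^{\perm}} - h_{\bF_{n}^{\perm}} - \widetilde f + f$ explicitly, and then split it into a Riemann-sum approximation error for $\phi_{1}\otimes\phi_{1}$ plus a tiny diagonal correction. Since $\widetilde f(x,y)-f(x,y)=-\lambda_{1}(f)\phi_{1}(x)\phi_{1}(y)$, using the definitions \eqref{eq:hFntilde} and \eqref{eq:hFnperm} the target difference equals
\begin{align*}
\lambda_{1}(f)\Bigl[\phi_{1}(x)\phi_{1}(y)-\sum_{i\neq j}\phi_{1}(U_{(i)})\phi_{1}(U_{(j)})\mathbf 1_{I_i\times I_j}(x,y)\Bigr],
\end{align*}
where $I_{k}=((k-1)/n,k/n]$. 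The natural decomposition is to add and subtract the full (diagonal-included) sum $S_n(x,y):=\sum_{i,j}\phi_{1}(U_{(i)})\phi_{1}(U_{(j)})\mathbf 1_{I_i\times I_j}(x,y)$, giving the two pieces
$\phi_{1}(x)\phi_{1}(y)-S_n(x,y)$ (the Riemann-type error) and $\sum_{i}\phi_{1}(U_{(i)})^{2}\mathbf 1_{I_{i}\times I_{i}}(x,y)$ (the diagonal correction).

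For the diagonal piece I would use the uniform bound $|\phi_{1}|\le 1/|\lambda_{1}(f)|$ from Lemma~\ref{lemma:efuncbdd}(a); its squared $L_{2}$ norm equals $\sum_{i}\phi_{1}(U_{(i)})^{4}/n^{2}\lesssim_{f} 1/n$, hence its $L_{2}$ norm is $O_{f}(n^{-1/2})$ deterministically. For the Riemann-type piece I would argue pointwise: for $(x,y)\in I_{i}\times I_{j}$,
\begin{align*}
|\phi_{1}(x)\phi_{1}(y)-\phi_{1}(U_{(i)})\phi_{1}(U_{(j)})|
\le \tfrac{1}{|\lambda_{1}(f)|}\bigl(|\phi_{1}(x)-\phi_{1}(U_{(i)})|+|\phi_{1}(y)-\phi_{1}(U_{(j)})|\bigr),
\end{align*}
and invoke the Lipschitz bound on $\phi_{1}$ with constant $L_{f}/|\lambda_{1}(f)|$ from Lemma~\ref{lemma:efuncbdd}(b) together with the order-statistics concentration \eqref{eq:whpbddUi}, which controls $\max_{k}\bigl\{|U_{(k)}-(k-1)/n|,|U_{(k)}-k/n|\bigr\}\lesssim \log n/\sqrt n$ on an event of probability at least $1-4n\exp(-(\log n)^{2}/6)$. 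On that event $|x-U_{(i)}|, |y-U_{(j)}|\lesssim \log n/\sqrt n$ uniformly, so $\|\phi_{1}\otimes\phi_{1}-S_n\|_{\infty}\lesssim_{f}\log n/\sqrt n$, and a fortiori the $L_{2}$ norm on $[0,1]^{2}$ satisfies the same bound.

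Adding the two pieces and multiplying by $|\lambda_{1}(f)|$ yields $\|h_{\widetilde{\bF}_{n}^{\perm}} - h_{\bF_{n}^{\perm}} - \widetilde f + f\|_{2}\lesssim_{f}\log n/\sqrt n$ on an event of probability at least $1-4n\exp(-(\log n)^{2}/6)$, as required. The only subtle point is not an obstacle but a bookkeeping item: all implicit constants must be tracked in terms of $L_{f}$, $B_{f}$, and $|\lambda_{1}(f)|$ so that the final bound is genuinely uniform and the dependencies match the $\lesssim_{f}$ convention used elsewhere in the paper. No further probabilistic input beyond Lemma~\ref{lemma:concorderU} is needed.
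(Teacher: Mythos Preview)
Your proof is correct and follows essentially the same route as the paper's: both compute the difference explicitly as $\lambda_{1}(f)\bigl[\phi_{1}(x)\phi_{1}(y)-\sum_{i\neq j}\phi_{1}(U_{(i)})\phi_{1}(U_{(j)})\mathbf 1_{I_i\times I_j}\bigr]$, bound the off-diagonal contribution using the Lipschitz property of $\phi_{1}$ together with the order-statistics concentration \eqref{eq:whpbddUi}, and control the diagonal blocks via the uniform bound on $\phi_{1}$ from Lemma~\ref{lemma:efuncbdd}. The only cosmetic difference is that you add and subtract the full tensorized sum $S_n$ so as to treat the Riemann error uniformly over all blocks, whereas the paper separates the diagonal blocks from the outset; the resulting estimates and probabilities are identical.
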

     The proof of Lemma \ref{lemma:upbdong} is given in Section \ref{sec:proofofupbdong}. Now we are ready to complete the proof of Proposition $5.1$. Lemma \ref{lemma:upbdong}, Lemma $5.4$ and \eqref{eq:bddonTdiff} combines to show,
     \begin{align*}
         \left\|T_{h_{\widetilde{\bF}_{n}^{\perm}}} - T_{\widetilde{f}}\right\|_{2\ra 2}\lesssim_{f}\frac{\log n}{\sqrt{n}}
     \end{align*}
     with probability at least $1 - 8n\exp\left(-\frac{1}{6}(\log n)^2\right)$. Recall the lower bound from Lemma \ref{lemma:firstlambdalbd}, then using $(5.3)$ we get,
     \begin{align*}
         \inf_{1\leq i\leq n}\left|\frac{\blambda_{n}}{n} - \frac{1}{n}\lambda_{i}\left(\widetilde{\bF}_{n} - \lambda_{1}(f)\bD_{n}\right)\right|\geq |\lambda_{1}(f) - \lambda_{2}(f)| - C_{f}\frac{\log n}{\sqrt{n}}
     \end{align*} 
     with probability at least $1 - 16n\exp\left(-\frac{1}{6}(\log n)^2\right)$. The proof is now completed by noting that R.H.S in the above inequality is lower bounded by $|\lambda_{1}(f) - \lambda_{2}(f)|/2$ for large enough $n$.

 \subsection{Proof of Lemma \ref{lemma:firstlambdalbd}}\label{section:proofoffirstlambdalbd}
     Observe that $\lambda$ is an eigenvalue of $\widetilde{\bF}_{n}^{\perm}$ if and only if $\lambda/n$ is an eigenvalue of the operator $T_{h_{\widetilde{\bF}_{n}^{\perm}}}$ and similarly $\lambda$ is an eigenvalue of $\bF_{n}^{\perm}$ if and only if $\lambda/n$ is an eigenvalue of $T_{h_{\bF_{n}^{\perm}}}$. Now consider,
     \begin{align*}
         \lambda_{1}(h_{\widetilde{\bF}_{n}^{\perm}})\geq \lambda_{2}(h_{\widetilde{\bF}_{n}^{\perm}})\geq\cdots\geq 0\text{ and }\lambda_{1}^{\prime}(h_{\widetilde{\bF}_{n}^{\perm}})\leq \lambda_{2}^{\prime}(h_{\widetilde{\bF}_{n}^{\perm}})\leq\cdots\leq 0
     \end{align*}
     be the collection of positive and negative eigenvalues (padded with $0's$) of $T_{f_{\widetilde{\bF}_{n}^{\perm}}}$. Similarly let,
     \begin{align*}
         \lambda_{1}(\widetilde{f})\geq \lambda_{2}(\widetilde{f})\geq \cdots\geq 0\text{ and }\lambda_{1}^{\prime}(\widetilde{f})\leq \lambda_{2}^{\prime}(\widetilde{f})\leq\cdots\leq 0
     \end{align*}
     be the collection of positive and negative eigenvalue of $T_{\widetilde{f}}$. For an arbitray eigenvalue $\lambda(h_{\widetilde{\bF}_{n}^{\perm}})$ define,
     \begin{align}\label{eq:lambdatildef}
         \lambda(\widetilde{f}) = 
         \begin{cases}
             \lambda_{j}(\widetilde{f}) & \text{ if } \lambda(h_{\widetilde{\bF}_{n}^{\perm}}) = \lambda_{j}(h_{\widetilde{\bF}_{n}^{\perm}})\text{ for some }j\in\mathbb{N}\\
             \lambda_{j}^{\prime}(\widetilde{f}) & \text{ if } \lambda(h_{\widetilde{\bF}_{n}^{\perm}}) = \lambda_{j}^{\prime}(h_{\widetilde{\bF}_{n}^{\perm}})\text{ for some }j\in\mathbb{N}
         \end{cases}
     \end{align}
     For an operator $T$ let $\sigma(T)$ denote the collection of eigenvalue of $T$. By definition, $T_{h_{\widetilde{\bF}_{n}^{\perm}}}$ and $T_{\widetilde{f}}$ are self-adjoint compact operators. Then by Lemma \ref{l:eigdist} we get,
     \begin{align}\label{eq:diffwyloplambda}
         \left|\lambda(h_{\widetilde{\bF}_{n}^{\perm}}) - \lambda(\widetilde{f})\right|\leq \left\|T_{h_{\widetilde{\bF}_{n}^{\perm}}} - T_{\widetilde{f}}\right\|_{2\ra 2}
     \end{align}
     Now recall that $\widetilde{\bF}_{n}$ and $\widetilde{\bF}_{n}^{\perm}$ has the same spectrum. Then by Weyl's inequality and Lemma $5.1$ we get,
     \begin{align}
         \inf_{1\leq i\leq n}\left|\frac{\blambda_{n}}{n} - \frac{1}{n}\lambda_{i}\left(\widetilde{\bF}_{n} - \lambda_{1}(f)\bD_{n}\right)\right|
         & \geq \inf_{1\leq i\leq n}\frac{1}{n}\left|\blambda_{n} - \lambda_{i}(\widetilde{\bF}_{n})\right| - \frac{C_{f}}{n}\nonumber\\
         & \geq \inf_{1\leq i\leq n}\frac{1}{n}\left|\blambda_{n} - \lambda_{i}(\widetilde{\bF}_{n}^{\perm})\right| - \frac{C_{f}}{n}\label{eq:lowerbdeq1}
     \end{align}
     for some constant $C_{f}>0$ depending on the function $f$. Once again recalling the equivalence between eigenvalues of $\widetilde{\bF}_{n}^{\perm}$ and the operator $T_{h_{\widetilde{\bF}_{n}^{\perm}}}$ note that,
     \begin{align}
         \inf_{1\leq i\leq n}\frac{1}{n}\left|\blambda_{n} - \lambda_{i}(\widetilde{\bF}_{n}^{\perm})\right|
         & \geq \inf_{\sigma\left(T_{h_{\widetilde{\bF}_{n}^{\perm}}}\right)}\left|\frac{\blambda_{n}}{n} - \lambda(h_{\widetilde{\bF}_{n}^{\perm}})\right|\label{eq:lowerbdeq2}
     \end{align}
     Considering an arbitrary eigenvalue $\lambda(h_{\widetilde{\bF}_{n}^{\perm}})$ and using \eqref{eq:diffwyloplambda} observe that,
     \begin{align}
         \left|\frac{\blambda_{n}}{n} - \lambda(h_{\widetilde{\bF}_{n}^{\perm}})\right|
         & \geq \left|\lambda_{1}(f) - \lambda(h_{\widetilde{\bF}_{n}^{\perm}})\right| - \left|\frac{\blambda_{n}}{n} - \lambda_{1}(f)\right|\nonumber\\
         &\geq \left|\lambda_{1}(f) - \lambda(\widetilde{f})\right| - \|T_{h_{\widetilde{\bF}_{n}^{\perm}}} - T_{f_{\widetilde{f}}}\|_{2\ra 2} - \left|\frac{\blambda_{n}}{n} - \lambda_{1}(f)\right|\nonumber\\
         &\geq d(\lambda_{1}(f), T_{\widetilde{f}}) - \|T_{h_{\widetilde{\bF}_{n}^{\perm}}} - T_{f_{\widetilde{f}}}\|_{2\ra 2} - \left|\frac{\blambda_{n}}{n} - \lambda_{1}(f)\right|\label{eq:lowerbdeq3}
     \end{align}
     where,
     \begin{align*}
         d(\lambda_{1}(f), T_{\widetilde{f}}):=\inf\{|\lambda_{1}(f) - \lambda_{j}(\widetilde{f})|,|\lambda_{1}(f) - \lambda_{j}^{\prime}(\widetilde{f})|, j\geq 1\}
     \end{align*}
     Recalling definition of $\widetilde{f}$ it follows that $d(\lambda_{1}(f), T_{\widetilde{f}}) = |\lambda_{1}(f) - \lambda_{2}(f)|>0$, which now completes the proof.
 
     \subsection{Proof of Lemma \ref{lemma:upbdong}}\label{sec:proofofupbdong}
     Define,
     \begin{align*}
         g = f_{\widetilde{\bF}_{n}^{\perm}} - f_{\bF_{n}^{\perm}} - \widetilde{f} + f
     \end{align*}
     Then,
     \begin{align}\label{eq:normgsq}
         \|g\|_{2}^2 = \sum_{i\neq j}\int_{I_{i}\times I_{j}}g(x,y)^2\mathrm{d}x\mathrm{d}y + \sum_{\ell=1}^{n}\int_{I_{\ell}\times I_{\ell}}g(x,y)^2\mathrm{d}x\mathrm{d}y.
     \end{align}
     Suppose $i\neq j$ and consider $(x,y)\in I_{i}\times I_{j}$. Then by definition,
     \begin{align*}
         g(x,y) = \lambda_{1}(f)\left(\phi_{1}(x)\phi_{1}(y) - \phi_{1}(U_{(i)})\phi_{1}(U_{(j)})\right)
     \end{align*}
     and hence,
     \begin{align*}
         |g(x,y)|
         &\leq |\lambda_{1}(f)|\left[|\phi_{1}(x)|\left|\phi_{1}(y) - \phi_{1}(U_{(j)})\right| + |\phi_{1}(U_{(j)})|\left|\phi_{1}(x) - \phi_{1}(U_{(i)})\right|\right]\\
         &\leq B_{f}\left|\phi_{1}(y) - \phi_{1}(U_{(j)})\right| + B_{f}\left|\phi_{1}(x) - \phi_{1}(U_{(i)})\right|
     \end{align*}
     where the last inequality follows by noting the bound from Lemma $5.1$. Recalling that $\phi_{1}$ is Lipschitz from Lemma $5.1$ we conclude that,
     \begin{align*}
         |g(x,y)|\leq L_{1,f}\max_{i=1}^{n}\left\{\left|U_{(i)} - \frac{i}{n}\right|, \left|U_{(i)} - \frac{i-1}{n}\right|\right\}
     \end{align*}
     where $L_{1,f} = 2BL_{\phi_{1}}$ with $L_{\phi_{1}}$ the Lipschitz constant of $\phi_{1}$. Now if $(x,y)\in I_{i}\times I_{i}$ then,
     \begin{align*}
         g(x,y) = \lambda_{1}(f)\phi_{1}(x)\phi_{1}(y)
     \end{align*}
     Recalling \eqref{eq:normgsq} we get,
     \begin{align*}
         \|g\|_{2}^2\leq \frac{n(n-1)}{n^2}\left(L_{1,f}\max_{i=1}^{n}\left\{\left|U_{(i)} - \frac{i}{n}\right|, \left|U_{(i)} - \frac{i-1}{n}\right|\right\}\right)^2 + B_{f}^2|\lambda_{1}(f)|^{-2}\frac{1}{n}
     \end{align*}
     Then,
     \begin{align*}
         \|g\|_{2}^2\lesssim_{f}\left(\max_{i=1}^{n}\left\{\left|U_{(i)} - \frac{i}{n}\right|, \left|U_{(i)} - \frac{i-1}{n}\right|\right\}^2 + \frac{1}{n}\right)
     \end{align*}
     The proof is now concluded by recalling \eqref{eq:whpbddUi}.
 
 \section{Proof of Results from Section $6$}\label{s:Kerneleigproof}
     \subsection{Proof of Lemma $6.3$}\label{section:proofofnormMnbdd}
     By $(6.27)$ observe that,
         \begin{align}\label{eq:bddtraceM}
             \left\|\frac{\bM_{n}}{\blambda_{n}}\right\|_{2\ra 2}\leq \frac{\sum_{\ell=1}^{k} |\lambda_{\ell}(f)|^{-1}}{|\blambda_{n}|}.
         \end{align}
         Now by $(6.33)$ recall that for large enough $n$, 
         \begin{align*}
             \left|\frac{\blambda_{n}}{n} - \lambda_{1}(f)\right|\leq \frac{C\log n}{\sqrt{n}}
         \end{align*}
         with probability at least $1-8n\exp\left(-\frac{1}{6}(\log n)^2\right).$ Thus for large enough $n$,
         \begin{align*}
             \left\|\frac{\bM_{n}}{\blambda_{n}}\right\|_{2\ra 2}\leq \frac{\sum_{\ell=1}^{k} |\lambda_{\ell}(f)|^{-1}}{2n|\lambda_{1}(f)| - C\sqrt{n}\log n}<1
         \end{align*}
         with probability at least $1-8n\exp\left(-\frac{1}{6}(\log n)^2\right).$
 
         \subsection{Proof of Lemma $6.4$}\label{section:proofofconcVTAV}
         By a Taylor series expansion of $\bcA_{n}$ and Lemma $6.3$ note that,
             \begin{align*}
                 \bV^{\top}\bcA_{n}^{-1}\bV = \frac{\bV^{\top}\bV}{\blambda_{n}} + \sum_{\ell=1}^{\infty}(-1)^{\ell}\frac{\bV^{\top}\bM_{n}^{\ell}\bV}{\blambda_{n}^{\ell+1}}
             \end{align*}
             
             Observe that,
             \begin{align*}
                 \left\|\frac{\bV^{\top}\bV}{\blambda_{n}} - \frac{\bI_{k-1}}{\lambda_{1}(f)}\right\|_{2\ra 2}\leq \left(\sum_{i=2}^{k}\sum_{j=2}^{k}\left(\frac{\Phi_{i}(\bU_{n})^{\top}\Phi_{j}(\bU_{n})}{\blambda_{n}} - \frac{\delta_{ij}}{\lambda_{1}(f)}\right)^2\right)^{\frac{1}{2}}
             \end{align*}
             For fixed $2\leq i,j\leq k$ by definition we get,
             \begin{align*}
                 \frac{\Phi_{i}(\bU_{n})^{\top}\Phi_{j}(\bU_{n})}{\blambda_{n}} = \frac{n}{\blambda_{n}}\frac{1}{n}\sum_{\ell=1}^{n}\phi_{i}(U_{\ell})\phi_{j}(U_{\ell})
             \end{align*}
             By Lemma $5.1$ and Hoeffding's inequality we have,
             \begin{align*}
                 \left|\frac{1}{n}\sum_{\ell=1}^{n}\phi_{i}(U_{\ell})\phi_{j}(U_{\ell}) - \delta_{ij}\right|\lesssim_{f}\frac{\log n}{\sqrt{n}}
             \end{align*}
             with probability $1-2\exp(-\frac{1}{6}(\log n)^2)$. Now recalling the bound from $(6.33)$,
             \begin{align*}
                 \left|\frac{\Phi_{i}(\bU_{n})^{\top}\Phi_{j}(\bU_{n})}{\blambda_{n}} - \frac{\delta_{ij}}{\lambda_{1}(f)}\right|\lesssim_{f} \frac{\log n}{\sqrt{n}}
             \end{align*}
             with probability at least $1 - 9n\exp\left(-\frac{1}{6}(\log n)^2\right)$ for large enough $n$. Using an union bound argument we get, 
             \begin{align}\label{eq:concVTVI}
                 \left\|\frac{\bV^{\top}\bV}{\blambda_{n}} - \frac{\bI_{k-1}}{\lambda_{1}(f)}\right\|_{2\ra 2}\lesssim_{f} \frac{k\log n}{\sqrt{n}}
             \end{align}
             with probability at least $1 - 9nk\exp( - \frac{1}{6}(\log n)^2)$. Recalling that $\bM_{n}$ is a diagonal matrix and using $(6.27)$ note that for all $\ell\geq 1$,
             \begin{align*}
                 \left\|\bV^{\top}\bM^{\ell}\bV\right\|_{2\ra 2}\leq \left(\sum_{\ell=1}^{k}|\lambda_{\ell}(f)|^{-1}\right)^{\ell}\|\bV\|_{F}^2\leq C_{f}^{\ell}nk
             \end{align*}
             where the last inequality follows by the bounds from Lemma $5.1$. Thus by the bounds from $(6.33)$,
             \begin{align}\label{eq:concVTVIrem}
                 \sum_{\ell=1}^{\infty}\frac{\left\|\bV^{\top}\bM_{n}^{\ell}\bV\right\|_{2\ra 2}}{|\blambda_{n}|^{\ell+1}}\leq k\sum_{\ell=1}^{\infty}\frac{C_{f}^{\ell}}{n^{\ell}}\left|\frac{n}{\blambda_{n}}\right|^{\ell+1} \lesssim_{f}\frac{k}{n}
             \end{align}
             with probability at least $1-8n\exp\left(-\frac{1}{6}(\log n)^2\right)$ for large enough $n$. Now combining \eqref{eq:concVTVI} and \eqref{eq:concVTVIrem} we conclude that,
             \begin{align*}
                 \left\|\bV^{\top}\bcA_{n}^{-1}\bV - \frac{\bI_{k-1}}{\lambda_{1}(f)}\right\|_{2\ra 2}\lesssim_{f}k\left(\frac{\log n}{\sqrt{n}} + \frac{1}{n}\right)
             \end{align*}
             with probability at least $1-17nk\exp\left(-\frac{1}{6}(\log n)^2\right)$
         
         \subsection{Proof of Lemma $6.5$}\label{sec:proofofform1tAinvV}
         
         Once again by a Taylor series expansion of $\bcA_{n}$ and Lemma $6.3$ note that,
             \begin{align*}
                 \Phi_{1}(\bU_{n})^{\top}\bcA_{n}^{-1}\bV = \frac{\Phi_{1}(\bU_{n})^{\top}\bV}{\blambda_{n}} + \sum_{\ell=1}^{\infty}(-1)^{\ell}\frac{\Phi_{1}(\bU_{n})^{\top}\bM_{n}^{\ell}\bV}{\blambda_{n}^{\ell+1}}
             \end{align*}
             with probability at least $1-8n\exp\left(-\frac{1}{6}(\log n)^2\right)$. Now recalling the bound on eigenfunctions from Lemma $5.1$ we have,
             \begin{align*}
                 \left\|\Phi_{1}(\bU_{n})^{\top}\bM_{n}^{\ell}\bV\right\|_{2}^{2}\leq \|\Phi_{1}(\bU_{n})\|_{2}^2\left\|\bV\right\|_{F}^2\left\|\bM_{n}^{\ell}\right\|_{2\ra 2}^{2}\leq C_{f}^{2\ell}k n^2 .
             \end{align*}
             for some constant $C_f$ depending on $f$. Thus recalling the bounds from $(6.33)$,
             \begin{align*}
                 \left\|\sum_{\ell=1}^{\infty}(-1)^{\ell}\frac{\Phi_{1}(\bU_{n})^{\top}\bM_{n}^{\ell}\bV}{\blambda_{n}^{\ell}}\right\|_{2} 
                 \leq \frac{\sqrt{k}n}{\left|\blambda_{n}\right|}\sum_{\ell=0}^{\infty}\frac{C_f^{\ell}}{n^{\ell}}\frac{n^{\ell}}{\left|\blambda_{n}\right|^{\ell}}\lesssim_{f}\sqrt{k}
             \end{align*}
             with probability at least $1-8n\exp\left(-\frac{1}{6}(\log n)^2\right)$.

 \section{Proof of Results from Section $7$} \label{s:Matrixeigproof}
 Before proceeding with the proofs we first introduce some notation which will be used throughout this section. Recalling $U_{1}, U_{2},\ldots, U_{n}$ consider the permutation matrix $\Pi_{n}$ from Lemma $5.2$. We define, $\bu_{n}^{\Pi} = \Pi_{n}\bu$ for any vector $\bu\in \bR^{n}$ and $\bm S_{n}^{\Pi} = \Pi(\bm S_n)\Pi^{\top}$ for any matrix $\bm S_n$. Further for any vector $\bu\in \bR^{n}$ we consider a functional embedding on $[0,1]$ as,
 \begin{align*}
     f_{\bu_{n}}(x) = \sum_{i=1}^{n}\sqrt{n}u_{j}\one\left[x\in I_{j}\right]\text{ where }I_{j} = \left[\frac{j-1}{n}, \frac{j}{n}\right), 1\leq j\leq n.
 \end{align*}
 By definition notice that for two vectors $\bu_{n,1}$ and $\bu_{n,2}$,
 \begin{align}\label{eq:equivfvector}
     \left\|f_{\bu_{n,1}} - f_{\bu_{n,2}}\right\|_{2} = \|\bu_{n,1} - \bu_{n,2}\|_{2}.
 \end{align}

 \subsection{Proof of Lemma $7.1$} Consider $\pi$ to be the permutation corresponding to the permutation matrix $\Pi$. Then,
 \begin{align*}
     \left\|f_{\bPhi_{1}^{\Pi}} - \phi_{1}\right\|_{2}^2
     &\lesssim \sum_{j=1}^{n}\int_{I_{j}}|\phi_{1}(t) - \phi_{1}(j/n)|^2\rd t + \int_{I_{j}}|\phi_{1}(j/n) - \phi_{i}(U_{\pi(j)})|^2\rd t\\
     &\lesssim_{W}\frac{1}{n^2} + \frac{1}{n}\sum_{j=1}^{n}|U_{\pi(j)} - j/n|^2
 \end{align*}
 where the last step uses the Lipschitz property of $W$. By Lemma \ref{lemma:concorderU} and \eqref{eq:Upiequalorder} we know,
 \begin{align*}
     |U_{\pi(j)} - j/n| = |U_{(j)} - j/n|\leq \frac{\log n}{\sqrt{n}}, 1\leq j\leq n,
 \end{align*}
 with probability at least $1-2n\exp(-2(\log n)^2/3)$. Combining we conclude,
 \begin{align*}
     \left\|f_{\bPhi_{1}^{\Pi}} - \phi_{1}\right\|_{2}\lesssim_{W} \frac{\log n}{\sqrt{n}}
 \end{align*}
 with probability at least $1-2n\exp(-2(\log n)^2/3)$
 
 \subsection{Proof of Proposition $7.1$}
 Consider the matrix $\bW_{n}^{\perm} = \Pi_{n}\bW_{n}\Pi_{n}$. Then define the function,
 \begin{align*}
     h_{\bW_{n}^{\perm}} = \sum_{i\neq j}W\left(U_{(i)}, U_{(j)}\right)\one\left\{\frac{i-1}{n}<x\leq \frac{i}{n}, \frac{j-1}{n}<y\leq \frac{j}{n}\right\}
 \end{align*}
 Now for the functions $W$ and $h_{\bW_{n}^{\perm}}$ consider the Hilbert-Schmidt operators $T_{W}$ and $T_{h_{\bW_{n}^{\perm}}}$ as defined in \eqref{eq:defHilbertf}. By definition it is now easy to note that, $\lambda_{1}(\bW_{n})/n$ is an eigenvalue of $T_{h_{\bW_{n}^{\perm}}}$ with eigenfunction $f_{\bv_{1}^{\Pi}}$. Now consider the following operators,
 \begin{align*}
     \Delta = T_{h_{\bW_{n}^{\perm}}} - T_{W}
 \end{align*}
 and let $P$ be the Hilbert Schmidt operator with kernel $k_{P} = \lambda_{1}\phi_{1}(x)\phi_{1}(y)$. Define,
 \begin{align*}
     T_{0} = T_{W} - P + \Delta.
 \end{align*}
 Then $T_{h_{\bW_{n}^{\perm}}} = P + T_{0}$. Now note that by definition,
 \begin{align}\label{eq:v1first}
     (T_{h_{\bW_{n}^{\perm}}} - T_{0})f_{\bv_{1}^{\Pi}}(\cdot) = \lambda_{1}(W)\phi_{1}(\cdot)\langle\phi_{1},f_{\bv_{1}^{\Pi}}\rangle
 \end{align}
 Further by recalling that $\lambda_{1}(\bW_{n})/n$ is an eigenvalue of $T_{h_{\bW_{n}^{\perm}}}$ we get,
 \begin{align}\label{eq:v1second}
     (T_{h_{\bW_{n}^{\perm}}} - T_{0})f_{\bv_{1}^{\Pi}} = \left(\frac{\lambda_{1}(\bW_{n})}{n} - T_{0}\right)f_{\bv_{1}^{\Pi}}
 \end{align}
 In the following we first show that $\lambda_{1}(\bW_{n})/n\not\in\sigma(T_W-P)$ with high probability. Note that,
 \begin{align*}
     \sigma(T_{W} - P) = \left\{\lambda_{j}(W):j\neq 1\right\}\bigcup\{\lambda_{j}^{\prime}(W):j\geq 1\}\bigcup\{0\}.
 \end{align*}
 For any $\lambda\in \sigma(T_{W} - P)$,
 \begin{align*}
     \left|\frac{\lambda_{1}(\bW_{n})}{n} - \lambda\right|
     &\geq|\lambda_{1}(W) - \lambda| -\left|\frac{\lambda_{1}(\bW_{n})}{n} - \lambda_{1}(W)\right|\\
     &\geq \min\{|\lambda_{1}(W) - \lambda_{2}(W)|,|\lambda_{1}(W)|\} - \left|\frac{\lambda_{1}(\bW_{n})}{n} - \lambda_{1}(W)\right|.
 \end{align*}
 Then by Lemma $5.3$, for large enough $n$,
 \begin{align}\label{eq:lbbTWP}
     \mathrm{dist}\left(\frac{\lambda_{1}(\bW_{n})}{n}, \sigma(T_{W} - P)\right)\geq \min\{|\lambda_{1}(W) - \lambda_{2}(W)|,|\lambda_{1}(W)|\}/2>0
 \end{align}
 with probability at least $1-8n\exp\left(-\frac{1}{6}(\log n)^2\right)$. Then by Lemma \ref{lemma:normbddresolvent} we get,
 \begin{align}\label{eq:Minversebdd}
     \left\|\left(\frac{\lambda_{1}(\bW_{n})}{n} - T_{W} + P\right)^{-1}\right\|_{2\ra 2} \lesssim_{W}1
 \end{align}
 with probability at least $1-8n\exp\left(-\frac{1}{6}(\log n)^2\right)$. Now recalling the expnasion of the resolvent from \eqref{eq:resolventexpand} it is now easy to see that,
 \begin{align}\label{eq:Minversephi1}
     \left(\frac{\lambda_{1}(\bW_{n})}{n} - T_{W} + P\right)^{-1}\phi_{1} = \frac{n}{\lambda_{1}(\bW_{n})}\phi_{1}
 \end{align} 
 with probability at least $1-8n\exp\left(-\frac{1}{6}(\log n)^2\right)$. Additionally following the arguments from \eqref{eq:diffwyloplambda}, in particular considering corresponding eigenvalues of $T_{W}-P$ and $T_{h_{\bW_{n}^{\perm}}}-P$ as in \eqref{eq:diffwyloplambda} it can be showed that with probability at least $1-12n\exp\left(-\frac{1}{6}(\log n)^2\right)$,
 \begin{align*}
     \mathrm{dist}\left(\frac{\lambda_{1}(\bW_{n})}{n}, \sigma(T_{h_{\bW_{n}^{\perm}}} - P)\right)
     &\geq \mathrm{dist}\left(\frac{\lambda_{1}(\bW_{n})}{n}, \sigma(T_{W} - P)\right) - \left\|T_{W} - T_{h_{\bW_{n}^{\perm}}}\right\|\\
     &\geq \min\{|\lambda_{1}(W) - \lambda_{2}(W)|,|\lambda_{1}(W)|\}/4
 \end{align*}
 where the last inequality follows from \eqref{eq:lbbTWP} and Lemma $5.4$. Then once again using Lemma \ref{lemma:normbddresolvent} shows,
 \begin{align}\label{eq:Linversebdd}
     \left\|\left(\frac{\lambda_{1}(\bW_{n})}{n} - T_{h_{\bW_{n}^{\perm}}} + P\right)^{-1}\right\|_{2\ra 2} \lesssim_{W}1
 \end{align}
 with probability at least $1-12n\exp\left(-\frac{1}{6}(\log n)^2\right)$. Now combining \eqref{eq:v1first},\eqref{eq:v1second} with the bounds from \eqref{eq:Minversebdd}, \eqref{eq:Linversebdd} and the equality from \eqref{eq:Minversephi1} along with the identity,
 \footnotesize
 \begin{align*}
     \left(\frac{\lambda_{1}(\bW_{n})}{n} - T_{0}\right)^{-1} = \left(\frac{\lambda_{1}(\bW_{n})}{n} - T_{W} + P\right)^{-1} + \left(\frac{\lambda_{1}(\bW_{n})}{n} - T_{0}\right)^{-1}\Delta\left(\frac{\lambda_{1}(\bW_{n})}{n} - T_{W} + P\right)^{-1}
 \end{align*}
 \normalsize
 shows,
 \begin{align*}
     \left|\left\|f_{\bv_{1}^{\Pi}}\right\|_{2} - \left|\frac{\lambda_{1}(W)n}{\lambda_{1}(\bW_{n})}\langle\phi_{1},f_{\bv_{1}^{\Pi}}\rangle\right|\right|\lesssim_{W}\|\Delta\|_{2\ra 2}
 \end{align*}
 with probability at least $1-28n\exp\left(-\frac{1}{6}(\log n)^2\right)$. Finally recalling the approximation from Lemma $5.4$ and \eqref{eq:equivfvector} shows,
 \begin{align*}
     \left|\left\|f_{\bv_{1}^{\Pi}}\right\|_{2} - \left|\frac{\lambda_{1}(W)n}{\lambda_{1}(\bW_{n})}\langle\phi_{1},f_{\bv_{1}^{\Pi}}\rangle\right|\right| = \left|1 - \left|\frac{\lambda_{1}(W)n}{\lambda_{1}(\bW_{n})}\langle\phi_{1},f_{\bv_{1}^{\Pi}}\rangle\right|\right|\lesssim_{W} \frac{\log n}{\sqrt{n}}
 \end{align*}
 with probability at least $1-32n\exp\left(-\frac{1}{6}(\log n)^2\right)$. By Lemma $5.3$ and the Cauchy-Schwarz inequality note that,
 \begin{align*}
     \left|1 - \left|\langle\phi_{1},f_{\bv_{1}^{\Pi}}\rangle\right|\right|
     &\leq \left|1 - \left|\frac{\lambda_{1}(W)n}{\lambda_{1}(\bW_{n})}\langle\phi_{1},f_{\bv_{1}^{\Pi}}\rangle\right|\right| + \left|\langle\phi_{1},f_{\bv_{1}^{\Pi}}\rangle\right|\left|1 - \left|\frac{\lambda_{1}(W)n}{\lambda_{1}(\bW_{n})}\right|\right|\\
     &\lesssim_{W}\frac{\log n}{\sqrt{n}}
 \end{align*}
 with probability at least $1-40n\exp\left(-\frac{1}{6}(\log n)^2\right)$. Now note that,
 \begin{align*}
     \|f_{\widetilde{\bv}_{1}^{\Pi}} - \phi_{1}\|_{2}^2 = 2 - 2\left|\langle\phi_{1},f_{\bv_{1}^{\Pi}}\rangle\right|\lesssim_{W}\frac{\log n}{\sqrt{n}}
 \end{align*}
 with probability at least $1-40n\exp\left(-\frac{1}{6}(\log n)^2\right)$.
 \subsection{Proof of Lemma $7.2$}
 Define,
 \begin{align*}
     \bC_{n,1} = \lambda_{1}(\bW_{n})\bI_{n} - \bW_{n} + \lambda_{1}(\bW_{n})\bv_{1}\bv_{1}^{\top}.
 \end{align*}
 Then by Weyl's inequality note that for any $1\leq i\leq n$,
 \begin{align*}
     \min_{i=1}^{n}\lambda_{i}(\bC_{n,1})\geq \min_{i=1}^{n}\lambda_{i}(\bC_{n}) - \|\bA_{n} - \bW_{n}\|_{2\ra 2}
 \end{align*}
 Then combining Lemma $4.4$ and $(4.29)$ we conclude that $\|\bC_{n,1}^{-1}\|_{2\ra 2}\lesssim_{W}1/n$ with probability at least $1-Cn\exp\left(-\frac{1}{6}(\log n)^2\right)$. Using the identity $\bC_{n,1}^{-1} - \bC_{n}^{-1} = \bC_{n}^{-1}(\bC_{n} - \bC_{n,1})\bC_{n,1}^{-1}$ we conclude that $\|\bC_{n}^{-1} - \bC_{n,1}^{-1}\|_{2\ra 2}\lesssim_{W}n^{-3/2}$ with probability at least $1-Cn\exp\left(-\frac{1}{6}(\log n)^2\right)$. Now define,
 \begin{align*}
     \bC_{n,2} = \lambda_{1}(\bW_{n})\bI_{n} - \bW_{n} + \lambda_{1}(\bW_{n})\bPhi_{1}\bPhi_{1}^{\top}
 \end{align*}
 By $(7.8)$ and Lemma $5.1$ we get,
 \begin{align}\label{eq:phivouterprodbdd}
     \left\|\widetilde{\bv}_{1}^{\Pi}\left(\widetilde{\bv}_{1}^{\Pi}\right)^{\top} - \bPhi_{1}^{\Pi}\left(\bPhi_{1}^{\Pi}\right)^{\top}\right\|\lesssim_{W}\left(\frac{\log n}{\sqrt{n}}\right)^{1/2}
 \end{align}
 with probability at least $1-Cn\exp\left(-\frac{1}{6}(\log n)^2\right)$.
 \small
 \begin{align*}
     \min_{i=1}^{n}\lambda_{i}(\bC_{n,2})
     &\geq \min_{i=1}^{n}\lambda_{i}(\bC_{n,1}) - |\lambda_{1}(\bW_{n})|\left\|\bv_{1}\bv_{1}^{\top} - \bPhi_{1}\bPhi_{1}^{\top}\right\|_{2\ra 2}\\
     & = \min_{i=1}^{n}\lambda_{i}(\bC_{n,1}) - |\lambda_{1}(\bW_{n})|\left\|\widetilde{\bv}_{1}^{\Pi}\left(\widetilde{\bv}_{1}^{\Pi}\right)^{\top} - \bPhi_{1}^{\Pi}\left(\bPhi_{1}^{\Pi}\right)^{\top}\right\|_{2\ra 2}\\
     &\geq \min_{i=1}^{n}\lambda_{i}(\bC_{n}) - \|\bA_{n} - \bW_{n}\|_{2\ra 2} - |\lambda_{1}(\bW_{n})|\left\|\widetilde{\bv}_{1}^{\Pi}\left(\widetilde{\bv}_{1}^{\Pi}\right)^{\top} - \bPhi_{1}^{\Pi}\left(\bPhi_{1}^{\Pi}\right)^{\top}\right\|_{2\ra 2}
 \end{align*}
 \normalsize
 Now combining Lemma $4.4$, Lemma $5.3$ along with \eqref{eq:phivouterprodbdd} and $(4.29)$ we conclude $\|\bC_{n,2}^{-1}\|_{2\ra 2}\lesssim_{W}1/n$ with probability at least $1-Cn\exp\left(-\frac{1}{6}(\log n)^2\right)$. Notice that by \eqref{eq:phivouterprodbdd} and recalling the bound from $(6.6)$ we get,
 \begin{align*}
     \left\|\bC_{n,1} - \bC_{n,2}\right\|_{2\ra 2} = |\lambda_{1}(\bW_{n})|\left\|\widetilde{\bv}_{1}^{\Pi}\left(\widetilde{\bv}_{1}^{\Pi}\right)^{\top} - \bPhi_{1}^{\Pi}\left(\bPhi_{1}^{\Pi}\right)^{\top}\right\|\lesssim_{W}n^{3/4}\sqrt{\log n}
 \end{align*}
 with probability at least $1-Cn\exp\left(-\frac{1}{6}(\log n)^2\right)$. Then once again considering the identity $\bC_{n,2}^{-1} - \bC_{n,1}^{-1} = \bC_{n,1}^{-1}(\bC_{n,1} - \bC_{n,2})\bC_{n,2}^{-1}$ we conclude that $\|\bC_{n,1}^{-1} - \bC_{n,2}^{-1}\|_{2\ra 2}\lesssim_{W}n^{-5/4}\sqrt{\log n}$ with probability at least $1-Cn\exp\left(-\frac{1}{6}(\log n)^2\right)$. Then,
 \begin{align*}
     \left|\bv_{1}^{\top}\bmB_{n}\bC_{n}^{-1}\bmB_{n}\bv_{1} - \bv_{1}^{\top}\bmB_{n}\bC_{n,2}^{-1}\bmB_{n}\bv_{1}\right|\leq \|\bmB_{n}\|_{2\ra 2}^2\left\|\bC_{n} - \bC_{n,2}\right\|_{2\ra 2}\lesssim\left(\frac{\log n}{\sqrt{n}}\right)^{1/2}
 \end{align*}
 with probability at least $1-Cn\exp\left(-\frac{1}{6}(\log n)^2\right)$, where the last inequality follows by combining the bounds on $\left\|\bC_{n} - \bC_{n,1}\right\|_{2\ra 2}$ and $\left\|\bC_{n,1} - \bC_{n,2}\right\|_{2\ra 2}$ and bounds on $\|\bmB_{n}\|_{2\ra 2}$ from $(4.29)$.
 Observe that,
 \begin{align*}
     \bv_{1}^{\top}\bmB_{n}\bC_{n,2}^{-1}\bmB_{n}\bv_{1} = \left(\widetilde{\bv}_{1}^{\Pi}\right)^{\top}\bmB_{n}^{\Pi}\left(\bC_{n,2}^{\Pi}\right)^{-1}\bmB_{n}^{\Pi}\widetilde{\bv}_{1}^{\Pi}.
 \end{align*}
 Recalling the bound from $(7.8)$ we can equivalently write,
 \begin{align*}
     \left|\bv_{1}^{\top}\bmB_{n}\bC_{n}^{-1}\bmB_{n}\bv_{1} - \left(\bPhi_{1}^{\Pi}\right)^{\top}\bmB_{n}^{\Pi}\left(\bC_{n,2}^{\Pi}\right)^{-1}\bmB_{n}^{\Pi}\bPhi_{1}^{\Pi}\right|\lesssim\left(\frac{\log n}{\sqrt{n}}\right)^{1/2}
 \end{align*}
 with probability at least $1-Cn\exp\left(-\frac{1}{6}(\log n)^2\right)$, which completes the proof. 
 
 \subsection{Proof of Lemma $7.3$} 
 By definition note that,
 \footnotesize
 \begin{align*}
     \E\left[\bZ_{1}^{\top}\left(\bcC_{n}^{\Pi}\right)^{-1}\bZ_{1}|\bU_{n}\right] = \frac{1}{n}\sum_{i}\left(\bcC_{n}^{\Pi}\right)^{-1}[i,i]\sum_{j<i}\phi_{1}^{2}(U_{(j)})W(U_{(i)},U_{(j)})(1-W(U_{(i)},U_{(j)})).
 \end{align*}
 \normalsize
 and,
 \small
 \begin{align*}
     \E\left[\bZ_{2}^{\top}\left(\bcC_{n}^{\Pi}\right)^{-1}\bZ_{2}|\bU_{n}\right] = \frac{1}{n}\sum_{i}\left(\bcC_{n}^{\Pi}\right)^{-1}[i,i]\sum_{j>i}\phi_{1}^{2}(U_{(j)})W(U_{(i)},U_{(j)})(1-W(U_{(i)},U_{(j)})).
 \end{align*}
 \normalsize
 Notice,
 \begin{align}\label{eq:Cdiagapproxlambda1}
     \frac{1}{n}\sum_{i}\sum_{j<i}\left|\left(\bcC_{n}^{\Pi}\right)^{-1}[i,i] - \frac{1}{\lambda_{1}(\bW_{n})}\right|
     &\left|\phi_{1}^{2}(U_{(j)})W(U_{(i)},U_{(j)})(1-W(U_{(i)},U_{(j)}))\right|.\\
     &\lesssim_{W}\sum_{i}\left|\left(\bcC_{n}^{\Pi}\right)^{-1}[i,i] - \frac{1}{\lambda_{1}(\bW_{n})}\right|\nonumber
 \end{align}
 Recall that for two matrices $\bm S_{1}$ and $\bm S_{2}$, $\bm{S}_{1}^{-1} - \bm{S}_{2}^{-1} = \bm{S}_{2}^{-1}(\bm S_{2} - \bm S_{1})\bm{S}_{1}^{-1}.$ Invoking this identity we get,
 \begin{align*}
     \left(\bcC_{n}^{\Pi}\right)^{-1} - \frac{1}{\lambda_{1}(\bW_{n})}\bI = \frac{1}{\lambda_{1}(\bW_{n})}\left(\bW_{n}^{\Pi} - \lambda_{1}(\bW_{n})\bPhi_{1}^{\Pi}\left(\bPhi_{1}^{\Pi}\right)^{\top}\right)\left(\bcC_{n}^{\Pi}\right)^{-1}
 \end{align*}
 and hence,
 \footnotesize
 \begin{align*}
     \sum_{i}\left|\left(\bcC_{n}^{\Pi}\right)^{-1}[i,i] - \frac{1}{\lambda_{1}(\bW_{n})}\right| 
     & = \frac{1}{|\lambda_{1}(\bW_{n})|}\sum_{i}\left|\left[\left(\bW_{n}^{\Pi} - \lambda_{1}(\bW_{n})\bPhi_{1}^{\Pi}\left(\bPhi_{1}^{\Pi}\right)^{\top}\right)\left(\bcC_{n}^{\Pi}\right)^{-1}\right](i,i)\right|\\
     & \leq \frac{1}{|\lambda_{1}(\bW_{n})|}\sum_{i,j}\left|\left[\bW_{n}^{\Pi} - \lambda_{1}(\bW_{n})\bPhi_{1}^{\Pi}\left(\bPhi_{1}^{\Pi}\right)^{\top}\right](i,j)\left(\bcC_{n}^{\Pi}\right)^{-1}(j,i)\right|\\
     &\lesssim_{W}\frac{1}{n}\sum_{i,j}\left|\left(\bcC_{n}^{\Pi}\right)^{-1}(j,i)\right|
 \end{align*}
 \normalsize
 with probability at least $1-Cn\exp\left(-\frac{1}{6}(\log n)^2\right)$, where the last inequality follows from the definition of $\bW_{n}^{\Pi}$, $\bPhi_{1}^{\Pi}$ and the bounds from Lemma $5.3$. Finally recalling the bound from $(7.14)$ shows,
 \begin{align*}
     \sum_{i}\left|\left(\bcC_{n}^{\Pi}\right)^{-1}[i,i] - \frac{1}{\lambda_{1}(\bW_{n})}\right| \lesssim_{W}\frac{1}{\sqrt{n}}
 \end{align*}
 with probability at least $1-Cn\exp\left(-\frac{1}{6}(\log n)^2\right)$. Now recalling \eqref{eq:Cdiagapproxlambda1} shows,
 \begin{align*}
     \frac{1}{n}\sum_{i}\sum_{j<i}\left|\left(\bcC_{n}^{\Pi}\right)^{-1}[i,i] - \frac{1}{\lambda_{1}(\bW_{n})}\right|
     &\left|\phi_{1}^{2}(U_{(j)})W(U_{(i)},U_{(j)})(1-W(U_{(i)},U_{(j)}))\right|\lesssim_{W}\frac{1}{\sqrt{n}}
 \end{align*}
 with probability at least $1-Cn\exp\left(-\frac{1}{6}(\log n)^2\right)$. Similarly,
 \begin{align*}
     \frac{1}{n}\sum_{i}\sum_{j>i}\left|\left(\bcC_{n}^{\Pi}\right)^{-1}[i,i] - \frac{1}{\lambda_{1}(\bW_{n})}\right|
     &\left|\phi_{1}^{2}(U_{(j)})W(U_{(i)},U_{(j)})(1-W(U_{(i)},U_{(j)}))\right|\lesssim_{W}\frac{1}{\sqrt{n}}
 \end{align*}
 with probability at least $1-Cn\exp\left(-\frac{1}{6}(\log n)^2\right)$. Combining,
 \begin{align*}
     \left|T(\bU_{n}) - \frac{1}{n\lambda_{1}(\bW_{n})}\sum_{i}\sum_{j\neq i}\phi_{1}^{2}(U_{(j)})W(U_{(i)},U_{(j)})(1-W(U_{(i)},U_{(j)}))\right|\lesssim_{W}\frac{1}{\sqrt{n}}
 \end{align*}
 Recall the Lipschitz property of $W$ and $\phi_{1}$ as well as the bounds from Lemma $5.1$ and Lemma $5.3$. Then using the concentration from Lemma \ref{lemma:concorderU} shows,
 \begin{align*}
     \left|T(\bU_{n}) - \frac{1}{n^2\lambda_{1}(W)}\sum_{i}\sum_{j\neq i}\phi_{1}^{2}\left(\frac{j}{n}\right)W\left(\frac{i}{n},\frac{j}{n}\right)\left(1-W\left(\frac{i}{n},\frac{j}{n}\right)\right)\right|\lesssim_{W}\frac{\log n}{\sqrt{n}}
 \end{align*}
 with probability at least $1-Cn\exp\left(-\frac{1}{6}(\log n)^2\right)$. Finally recalling that $W$ is symmetric we get,
 \begin{align*}
     \left|T(\bU_{n}) - \frac{1}{n^2\lambda_{1}(W)}\sum_{i,j}\frac{\phi_{1}^{2}\left(\frac{i}{n}\right) + \phi_{1}^{2}\left(\frac{j}{n}\right)}{2}W\left(\frac{i}{n},\frac{j}{n}\right)\left(1-W\left(\frac{i}{n},\frac{j}{n}\right)\right)\right|\lesssim_{W}\frac{\log n}{\sqrt{n}}
 \end{align*}
 with probability at least $1-Cn\exp\left(-\frac{1}{6}(\log n)^2\right)$. The proof is now completed by a \blue{Riemann} sum approximation argument.

 \section{Spectrum of Self-Adjoint Compact Operators}\label{s:operator}
 In this section we collect various useful results about the spectrum of compact self-adjoint operators on a Hilbert space $\cH$. We start this section with a self-contained proof of the min-max theorem for operators showing equivalence between the non-negative eigenvalues and the \blue{Rayleigh} Quotient of an operator $T$. 
     \begin{theorem}\label{t:min-max}
         Given a self-adjoint compact operator $T$ on a Hilbert space $\cH$. We enumerate positive eigenvalues of $T$ as (if $T$ only have $\ell$ positive eigenvalues, we make the convention that $\lambda_k(T)=0$ for $k\geq \ell+1$) 
         \begin{align*}
         \lambda_1(T)\geq \lambda_2(T)\geq \lambda_3(T)\geq \cdots,\end{align*}
         Then the following Min-Max statement holds
         \begin{align}\label{e:minmax}
         \lambda_k(T)=\sup_{S_k}\min_{x\in S_k, \|x\|=1}\langle x, Tx\rangle,
         \end{align}
         where $S_k\subset \cH$ is a $k$-dimensional subspace.
         \end{theorem}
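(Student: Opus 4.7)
The plan is to use the spectral theorem for compact self-adjoint operators and reduce \eqref{e:minmax} to two inequalities. By the spectral theorem, there is a countable orthonormal system $\{\phi_i\}$ and real eigenvalues $\{\mu_i\}$ (with $0$ as the only possible accumulation point) such that $T = \sum_i \mu_i \phi_i \otimes \phi_i$ and $\cH = \overline{\mathrm{span}}(\phi_i) \oplus \ker(T)$. Let $e_1,e_2,\ldots$ denote the eigenvectors associated with the positive eigenvalues $\lambda_1(T) \geq \lambda_2(T) \geq \cdots$ listed with multiplicity (a finite list of length $\ell$ if there are only $\ell$ positive eigenvalues).

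For the upper bound $\lambda_k(T) \geq \sup_{S_k}\min_{x\in S_k,\|x\|=1}\langle x,Tx\rangle$, I will use the standard codimension trick. Let $E = \mathrm{span}(e_1,\ldots,e_{\min(k-1,\ell)})$, which has dimension at most $k-1$. For any $k$-dimensional subspace $S_k$, the intersection $S_k \cap E^{\perp}$ contains a unit vector $x$. Expanding $x$ in the spectral basis and noting that on $E^\perp$ all eigenvalues of $T$ are at most $\lambda_k(T)$ (equal to $0$ in the convention $k > \ell$, since only non-positive eigenvalues remain), I obtain $\langle x,Tx\rangle \leq \lambda_k(T)$, hence $\min_{x\in S_k,\|x\|=1}\langle x,Tx\rangle \leq \lambda_k(T)$.

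For the matching lower bound, I split into cases. If $k \leq \ell$, the choice $S_k = \mathrm{span}(e_1,\ldots,e_k)$ gives $\langle x,Tx\rangle = \sum_{i=1}^k \lambda_i(T) c_i^2 \geq \lambda_k(T)$ whenever $\sum c_i^2=1$, which directly achieves the sup. If $k > \ell$ and $\dim\ker(T) \geq k-\ell$, I take $S_k = \mathrm{span}(e_1,\ldots,e_\ell) \oplus F$ with $F\subset\ker(T)$ of dimension $k-\ell$; on this subspace $\langle x,Tx\rangle \geq 0 = \lambda_k(T)$. The remaining case is $k > \ell$ with $\dim\ker(T) < k-\ell$; then, since $\cH$ is infinite-dimensional, $T$ must possess infinitely many negative eigenvalues accumulating at $0$, and for any $\varepsilon>0$ I can pick $k-\ell-\dim\ker(T)$ eigenvectors $e'_{n_1},\ldots$ whose eigenvalues lie in $(-\varepsilon,0)$, adjoin them to $\mathrm{span}(e_1,\ldots,e_\ell) \oplus \ker(T)$ (restricted suitably), and verify $\langle x,Tx\rangle \geq -\varepsilon$ on this $k$-dimensional subspace. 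Sending $\varepsilon \to 0$ yields $\sup_{S_k}\min \geq 0 = \lambda_k(T)$.

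The routine step is the upper bound, which is a one-line dimension-counting argument. The only delicate point is the case $k > \ell$ in the lower bound, where the supremum may not be attained and one must rely on the compactness of $T$ to drive the small negative eigenvalues to $0$ and construct near-optimal subspaces; the convention $\lambda_k(T)=0$ for $k\geq\ell+1$ is essential for the equality to hold.
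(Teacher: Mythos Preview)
Your proof is correct and follows essentially the same approach as the paper: the codimension/dimension-counting argument for the upper bound, and the construction of near-optimal subspaces using eigenvectors with small non-positive eigenvalues for the lower bound when $k>\ell$. The only minor difference is that the paper delegates the case $\lambda_k(T)>0$ to ``the standard Min-Max theorem'' and focuses on $k>\ell$, whereas you give a unified self-contained treatment; your finer case split on $\dim\ker(T)$ versus the availability of small negative eigenvalues is exactly what the paper's phrase ``find $k-\ell$ non-positive eigenvalues in $[-\delta,0]$'' is implicitly relying on.
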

         \begin{proof}
         If $\lambda_k(T)>0$, the above statement follows from the standard Min-Max theorem. If $T$ has only $\ell$ positive eigenvalues with $\ell<k$, by our convention we have $\lambda_k(T)=0$. We denote the eigenvectors corresponding to $\lambda_1(T), \cdots, \lambda_\ell(T)$ as $u_1, u_2,\cdots, u_\ell$. Then $T-\sum_{i=1}^\ell \lambda_i(T)u_iu_i^*$ is a non-positive semi-definite operator, i.e. for any $x\in \cH$,
         \begin{align}\label{e:negative}
         \langle x, (T-\sum_{i=1}^\ell \lambda_i(T)u_iu_i^*)x\rangle \leq 0.
         \end{align}
         For any $k$-dimensional subspace $S_k\subset \cH$, there exists a $v\in S_k$ such that $\langle v, u_i\rangle=0$ for $1\leq i\leq \ell$ (here we used that $\ell<k$). Then using \eqref{e:negative}
         \begin{align*}
         \min_{x\in S_k, \|x\|=1}\langle x, Tx\rangle\leq \langle v, Tv\rangle =\langle v, (T-\sum_{i=1}^\ell \lambda_i(T)u_iu_i^*)v\rangle \leq 0=\lambda_k(T).
         \end{align*}
         We conclude that
         \begin{align} \label{e:oneside}
         0=\lambda_k(T)\geq \sup_{S_k}\min_{x\in S_k, \|x\|=1}\langle x, Tx\rangle.
         \end{align}
         
         Since $0$ is the only possible cluster point of the eigenvalues of $T$ and $T$ only has $\ell$ positive eigenvalues, for any $\delta>0$, we can find $k-\ell$ non-positive eigenvalues of $T$ such that
         \begin{align*}
         -\delta\leq \widetilde \lambda_{1}(T), \widetilde\lambda_{2}(T),\cdots, \widetilde\lambda_{k-\ell}(T)\leq 0.
         \end{align*}
         We denote their corresponding eigenvectors as $\widetilde u_1, \widetilde u_2,\cdots, \widetilde u_{k-\ell}$. If we take the $k$-dimensional space $S_k={\rm Span}(u_1, u_2,\cdots, u_{\ell}, \widetilde u_1, \widetilde u_2,\cdots, \widetilde u_{k-\ell})$, then
         \begin{align*} 
         \min_{x\in S_k, \|x\|=1}\langle x, Tx\rangle\geq -\delta.
         \end{align*}
         Since we can take $\delta>0$ arbitrarily small, we conclude that 
         \begin{align}\label{e:twoside}
         \sup_{S_k}\min_{x\in S_k, \|x\|=1}\langle x, Tx\rangle\geq 0=\lambda_k(T).
         \end{align}
         The estimate \eqref{e:oneside} and \eqref{e:twoside} together give \eqref{e:minmax}. 
         \end{proof}
         
 Next, we study the difference between corresponding eigenvalues of two compact self-adjoint operators $T_{1}$ and $T_{2}$. In particular, we echo and extend results from matrix theory showing that corresponding eigenvalues of operators must be close if the operators are close in appropriate norm.
         \begin{lemma}\label{l:eigdist}
         Fix small $\varepsilon>0$. Given two self-adjoint compact operators $T_1, T_2$ on a Hilbert space $\cH$, such that $\|T_1-T_2\|_{\cH\rightarrow \cH}\leq \varepsilon$, then the following holds. If we enumerate positive eigenvalues of $T_1, T_2$ as (if $T_i$ only have $\ell$ positive eigenvalues, we make the convention that $\lambda_k(T_i)=0$ for $k\geq \ell+1$) 
         \begin{align*}
         \lambda_1(T_1)\geq \lambda_2(T_1)\geq \lambda_3(T_1)\geq \cdots,\\
         \lambda_1(T_2)\geq \lambda_2(T_2)\geq \lambda_3(T_2)\geq \cdots,
         \end{align*}
         then for any $k\geq 1$,
         \begin{align}\label{e:eigdif1}
         |\lambda_k(T_1)-\lambda_k(T_2)|\leq \varepsilon.
         \end{align}
         The same statement holds for negative eigenvalues. We enumerate negative eigenvalues of $T_1, T_2$ as (if $T_i$ only have $\ell$ negative eigenvalues, we make the convention that $\lambda_k'(T_i)=0$ for $k\geq \ell+1$) 
         \begin{align*}
         \lambda'_1(T_1)\leq \lambda'_2(T_1)\leq \lambda'_3(T_1)\leq \cdots,\\
         \lambda'_1(T_2)\leq \lambda'_2(T_2)\leq \lambda'_3(T_2)\leq \cdots,
         \end{align*}
         then for any $k\geq 1$,
         \begin{align}\label{e:eigdif2}
         |\lambda'_k(T_1)-\lambda_k'(T_2)|\leq \varepsilon.
         \end{align}
         \end{lemma}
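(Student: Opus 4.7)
The plan is to reduce the lemma to the min-max characterization just proved in Theorem \ref{t:min-max}. For the positive eigenvalues, I will use $\lambda_k(T_i) = \sup_{S_k}\min_{x\in S_k,\,\|x\|=1}\langle x, T_i x\rangle$ (valid for all $k\geq 1$ under the convention that $\lambda_k(T_i)=0$ once $T_i$ runs out of positive eigenvalues) and exploit that $T_1-T_2$ has operator norm at most $\varepsilon$ to compare the two Rayleigh forms.

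The key observation is that for every unit vector $x\in\cH$,
\begin{align*}
\langle x, T_1 x\rangle = \langle x, T_2 x\rangle + \langle x, (T_1-T_2)x\rangle
\leq \langle x, T_2 x\rangle + \|T_1-T_2\|_{\cH\rightarrow\cH}
\leq \langle x, T_2 x\rangle + \varepsilon.
\end{align*}
Fixing any $k$-dimensional subspace $S_k\subset\cH$ and taking the minimum over unit vectors in $S_k$, then the supremum over all $k$-dimensional subspaces, the min-max identity of Theorem \ref{t:min-max} yields $\lambda_k(T_1)\leq \lambda_k(T_2)+\varepsilon$. The inequality $\lambda_k(T_2)\leq \lambda_k(T_1)+\varepsilon$ follows by swapping the roles of $T_1$ and $T_2$, proving \eqref{e:eigdif1}. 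The argument works uniformly in $k$, including the boundary case where one (or both) of $\lambda_k(T_1), \lambda_k(T_2)$ equals zero by convention, since Theorem \ref{t:min-max} asserts the min-max identity for every $k\geq 1$ regardless of how many positive eigenvalues the operator possesses.

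For the negative eigenvalue statement \eqref{e:eigdif2}, I will apply \eqref{e:eigdif1} to the pair $-T_1, -T_2$, which are again self-adjoint and compact with $\|(-T_1)-(-T_2)\|_{\cH\rightarrow\cH}=\|T_1-T_2\|_{\cH\rightarrow\cH}\leq\varepsilon$. Since the negative eigenvalues $\lambda'_1(T_i)\leq \lambda'_2(T_i)\leq\cdots\leq 0$ of $T_i$ correspond bijectively (under negation) to the positive eigenvalues $-\lambda'_1(T_i)\geq -\lambda'_2(T_i)\geq\cdots\geq 0$ of $-T_i$ in the decreasing enumeration, we have $\lambda_k(-T_i)=-\lambda'_k(T_i)$, again consistent with the zero-padding convention. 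Then \eqref{e:eigdif1} applied to $-T_1,-T_2$ gives $|\lambda'_k(T_1)-\lambda'_k(T_2)|=|\lambda_k(-T_1)-\lambda_k(-T_2)|\leq\varepsilon$.

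There is no substantive obstacle here beyond being careful about the padding convention in the min-max identity; the entire argument is a direct Weyl-type comparison made possible by Theorem \ref{t:min-max}. The only point requiring a line of care is verifying that the Rayleigh comparison step does not rely on strict positivity of the eigenvalues, which is automatic because Theorem \ref{t:min-max} was proved uniformly for all $k$.
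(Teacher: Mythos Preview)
Your proof is correct and follows essentially the same route as the paper: both use the min-max characterization from Theorem \ref{t:min-max} together with the pointwise Rayleigh-quotient bound $|\langle x,(T_1-T_2)x\rangle|\leq\varepsilon$, and both reduce the negative-eigenvalue case to the positive one via $T_i\mapsto -T_i$. The only cosmetic difference is that the paper introduces an auxiliary $\delta>0$ to pick a near-optimal subspace and then sends $\delta\to 0$, whereas you take the minimum and supremum directly on the pointwise inequality, which is slightly cleaner and avoids the approximation step altogether.
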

         
         \begin{proof}
         We will only prove \eqref{e:eigdif1}, the proof of \eqref{e:eigdif2} follows from considering $-T_1, -T_2$.  
         By the Min-Max theorem \ref{t:min-max},
         \begin{align}\label{e:minmax2}
         \lambda_k(T_1)=\sup_{S_k}\min_{x\in S_k, \|x\|=1}\langle x, T_1x\rangle,
         \quad
         \lambda_k(T_2)=\sup_{S_k}\min_{x\in S_k, \|x\|=1}\langle x, T_2x\rangle,
         \end{align}
         where $S_k\subset \cH$ is a $k$-dimensional subspace.

         Using the first relation in \eqref{e:minmax2}, for any $\delta>0$, there exists a $k$-dimensional subspace $V_k\subset H$, such that
         \begin{align}\label{e:bb1}
         \lambda_k(T_1)-\delta \leq \min_{x\in V_k}\langle x, T_1x\rangle.
         \end{align}
         By the second relation in \eqref{e:minmax2}, we have 
         \begin{align}\label{e:bb2}
         \lambda_k(T_2)\geq \min_{x\in V_k, \|x\|=1}\langle x, T_2x\rangle=\langle y, T_2 y\rangle,
         \end{align}
         for some $y\in V_k$ with $\|y\|=1$. Then combining \eqref{e:bb1} and \eqref{e:bb2}, and using $\|T_1-T_2\|_{\cH\rightarrow \cH}\leq \varepsilon$, we get\
         cblue
         \begin{align*}
         \lambda_k(T_2)\geq\langle y, T_2 y\rangle
         &=\langle y, T_1 y\rangle+\langle y, (T_2-T_1) y\rangle
         \geq \langle y, T_1 y\rangle-\|T_2-T_1\|_{\cH\rightarrow \cH}\\
         &\geq \min_{x\in V_k, \|x\|=1}\langle x, T_1x\rangle-\varepsilon \geq \lambda_k(T_1)-\delta-\varepsilon.
         \end{align*}
         \cblack
         Since $\delta>0$ can be arbitrarily small, by sending $\delta\rightarrow 0$, we conclude that 
         \begin{align*}
         \lambda_k(T_2)\geq \lambda_k(T_1)-\varepsilon.
         \end{align*}
         Repeating the above argument with $(T_1, T_2)$ replaced by $(T_2, T_1)$, we get that $\lambda_k(T_1)\geq \lambda_k(T_2)-\varepsilon$. Thus the claim \eqref{e:eigdif1} follows.
         
         \end{proof}

     Finally, we provide an immediate corollary of the above lemma which shows that for two close (in the appropriate norm) operators the distance of eigenvalue of one operator to the spectrum of the other is also small. 
         
         \begin{corollary}\label{c:eigtoeig}
         Fix small $\varepsilon>0$. Given two self-adjoint compact operators $T_1, T_2$ on a Hilbert space $\mathcal H$, such that $\|T_1-T_2\|_{\cH\rightarrow \cH}\leq \varepsilon$, then for any eigenvalue $\lambda$ of $T_1$, the following holds
         \begin{align}\label{e:distla}
         {\rm dist}(\lambda, {\rm \sigma}(T_2))\leq \varepsilon.
         \end{align}
         \end{corollary}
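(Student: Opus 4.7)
The plan is to derive Corollary \ref{c:eigtoeig} as a direct bookkeeping consequence of Lemma \ref{l:eigdist}, handling three cases depending on the sign of the eigenvalue $\lambda$ of $T_1$.

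First, suppose $\lambda > 0$. Then under the enumeration introduced in Lemma \ref{l:eigdist}, we have $\lambda = \lambda_k(T_1)$ for some $k \geq 1$. Lemma \ref{l:eigdist} gives $|\lambda_k(T_1) - \lambda_k(T_2)| \leq \varepsilon$. Now by the convention of the paper, $\lambda_k(T_2)$ is either a genuine positive eigenvalue of $T_2$ (in which case it clearly belongs to $\mathrm{Spec}(T_2)$), or else it equals $0$ because $T_2$ has fewer than $k$ positive eigenvalues. In the latter case, $0$ still lies in $\mathrm{Spec}(T_2)$, since the spectrum of a compact self-adjoint operator on an infinite-dimensional Hilbert space always contains $0$ (and in the finite-dimensional case, $\|T_1-T_2\|\le\varepsilon$ together with the matrix Weyl inequality closes the same argument directly). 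Either way, $\mathrm{dist}(\lambda, \mathrm{Spec}(T_2)) \leq \varepsilon$.

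The case $\lambda < 0$ is entirely symmetric: I would apply the second half of Lemma \ref{l:eigdist} with the negative-eigenvalue enumeration $\lambda'_k$ to obtain a point in $\mathrm{Spec}(T_2)$ within $\varepsilon$ of $\lambda$. The remaining case $\lambda = 0$ is immediate, since $0 \in \mathrm{Spec}(T_2)$ for any compact self-adjoint $T_2$, so $\mathrm{dist}(0,\mathrm{Spec}(T_2)) = 0 \leq \varepsilon$.

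There is essentially no substantive obstacle; the corollary is pure bookkeeping on top of Lemma \ref{l:eigdist}. The one subtlety worth flagging is the convention of padding the enumeration with zeros when $T_1$ or $T_2$ has only finitely many positive (respectively, negative) eigenvalues, but this is harmless precisely because $0$ already lies in the spectrum of a compact operator, so the padded value always corresponds to a genuine element of $\mathrm{Spec}(T_2)$.
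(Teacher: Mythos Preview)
Your proof is correct and follows essentially the same approach as the paper: both split on the sign of $\lambda$, invoke Lemma \ref{l:eigdist} for the nonzero case to produce a nearby $\lambda_k(T_2)$ (which is either a genuine eigenvalue or the padded value $0$), and use that $0$ lies in $\overline{\mathrm{Spec}(T_2)}$ for compact $T_2$. The only cosmetic difference is that the paper writes $\overline{\mathrm{Spec}(T_2)}$ rather than $\mathrm{Spec}(T_2)$, which sidesteps the finite-dimensional caveat you added parenthetically.
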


         \begin{proof}
         If $\lambda=0$, then \eqref{e:distla} follows from  $0\in \overline{{\rm \sigma}(T_2)}$. Otherwise, by symmetry we assume $\lambda>0$. We enumerate the positive eigenvalues of $T_1, T_2$ as ( if $T_i$ only has 
         $\ell$ positive eigenvalues, we make the convention that $\lambda_k(T_i)=0$ for $k\geq \ell+1$) 
         \begin{align*}
         \lambda_1(T_1)\geq \lambda_2(T_1)\geq \lambda_3(T_1)\geq \cdots,\\
         \lambda_1(T_2)\geq \lambda_2(T_2)\geq \lambda_3(T_2)\geq \cdots,
         \end{align*}
         Since $0$ is the only cluster point of the eigenvalues of $T_1$ and $\lambda>0$, there exists an index $k$ such that $\lambda=\lambda_k(T_1)$, and Lemma \ref{l:eigdist} implies that
         \begin{align*}
         |\lambda_k(T_1)-\lambda_k(T_2)|\leq \varepsilon.
         \end{align*}
         Either $\lambda_k(T_2)>0$, or $\lambda_k(T_2)=0$. In both cases we have $\lambda_k(T_2)\in\overline{{\rm \sigma}(T_2)}$, and it follows that ${\rm dist}(\lambda, {\rm \sigma}(T_2))\leq \varepsilon$. This finishes the proof of Corollary \ref{c:eigtoeig}.
         \end{proof}
 
         \begin{lemma}\label{lemma:normbddresolvent}
             Consider a compact self-adjoint operator $T:L_{2}[0,1]\ra L_{2}[0,1]$. Let $\sigma(T)$ be the spectrum of $T$. Then for $z\not\in \sigma(T)\bigcup \{0\}$,
             \begin{align*}
                 \left\|\blue{(z - T)^{-1}}\right\|_{2\ra 2}\leq \frac{1}{{\rm dist}(z,\sigma(T))}
             \end{align*}
         \end{lemma}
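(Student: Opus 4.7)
The plan is to diagonalize $T$ via the spectral theorem for compact self-adjoint operators and then simply read off the operator norm of the resolvent from its spectral representation. (I am reading the statement as a bound on $\|(zI-T)^{-1}\|_{2\to 2}$, since that is how the lemma is used in \eqref{eq:Minversebdd} and \eqref{eq:Linversebdd}.)

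First, I would invoke the spectral theorem: since $T$ is compact and self-adjoint on $L_2[0,1]$, there is a countable orthonormal system $\{e_k\}$ of eigenvectors with real eigenvalues $\{\lambda_k\}$ (accumulating only at $0$), which together with an orthonormal basis of $\ker T$ forms an orthonormal basis of $L_2[0,1]$. In this basis, $T$ acts diagonally, multiplying $e_k$ by $\lambda_k$ and vectors in $\ker T$ by $0$.

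Next, for $z\not\in\sigma(T)\cup\{0\}$ the operator $zI-T$ is invertible and also diagonal in the same basis, with diagonal entries $z-\lambda_k$ on the span of $\{e_k\}$ and $z$ on $\ker T$. Consequently $(zI-T)^{-1}$ is diagonal with entries $1/(z-\lambda_k)$ and $1/z$ respectively, and since any operator diagonal in an orthonormal basis has operator norm equal to the supremum of the absolute values of its diagonal entries,
\begin{align*}
\|(zI-T)^{-1}\|_{2\to 2} \;=\; \sup_{\lambda\in\sigma(T)} \frac{1}{|z-\lambda|},
\end{align*}
where the supremum ranges over $\{\lambda_k\}\cup\{0\}\subseteq\sigma(T)$ (and the hypothesis $z\neq 0$ ensures the contribution from $\ker T$ is controlled even when $0$ appears in $\sigma(T)$ only as an accumulation point, as is automatic in infinite dimensions).

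Finally, since $\sigma(T)$ is a compact subset of $\mathbb{R}$ and $z\notin\sigma(T)$, the supremum is attained at the $\lambda\in\sigma(T)$ closest to $z$, giving $\sup_{\lambda\in\sigma(T)} 1/|z-\lambda| = 1/\mathrm{dist}(z,\sigma(T))$, which is the claimed bound. The only mild subtlety is bookkeeping for a possibly nontrivial $\ker T$ and the automatic inclusion of $0$ in $\sigma(T)$ in the infinite-dimensional setting; both are handled cleanly by the condition $z\neq 0$ in the hypothesis. There is no substantive obstacle beyond this.
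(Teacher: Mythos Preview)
Your proposal is correct and follows essentially the same approach as the paper: diagonalize $T$ via the spectral theorem and read off the resolvent norm from the diagonal entries. If anything, you are slightly more careful than the paper in explicitly accounting for the contribution of $\ker T$ (the paper's written computation only sums over the nonzero eigenvectors, relying implicitly on Bessel's inequality), and you correctly note that the statement is about $\|(zI-T)^{-1}\|_{2\to 2}$ rather than $\|zI-T\|_{2\to 2}$ as typeset.
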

         \begin{proof}
             By the spectral theorem note that,
             \begin{align*}
                 T = \sum_{i\geq 1}\lambda_{i}\phi_{i}\phi_{i}^{\star}
             \end{align*}
             where $|\lambda_{1}|\geq |\lambda_{2}|\geq \cdots$ are eigenvalues of the operator $T$ and $\phi_{i}$ is the eigenfunction corresponding to the eigenvalue $\lambda_{i}$ for all $i\geq 1$. Note that $\{\phi_{i}:i\geq 1\}$ forms an orthonormal collection in $L_{2}[0,1]$. Then for $z\not\in\sigma(T)\bigcup \{0\}$  the resolvent $(z-T)^{-1}$ is well defined and,
             \begin{align}\label{eq:resolventexpand}
                 (z-T)^{-1} = \sum_{i\geq 1}(z-\lambda_{i})^{-1}\phi_{i}\phi_{i}^{\star}.
             \end{align}
             Note that for any $v\in L_{2}[0,1]$,
             \begin{align*}
                 \left\|(z-T)^{-1}v\right\|_{2} \leq \sqrt{\sum_{i\geq 1}\frac{1}{|z-\lambda_{i}|^2}|\langle\phi_{i},v\rangle|^2}\leq \sqrt{\frac{\sum_{i\geq 1}|\langle\phi_{i},v\rangle|^2}{\rm dist(z,\sigma(T))^2}} = \frac{\|v\|}{\rm dist(z,\sigma(T))}.
             \end{align*}
             The proof is now completed by recalling the definition of operator norm.
         \end{proof}
 
\end{document}